\def\isarxiv{1}
\def\bE{\mathbb{E}}
\def\bP{\mathbb{P}}
\def\bR{\mathbb{R}}
\def\bZ{\mathbb{Z}}
\def\cA{\mathcal{A}}
\def\cF{\mathcal{F}}
\def\cO{\mathcal{O}}
\def\cP{\mathcal{P}}
\def\cX{\mathcal{X}}
\def\cY{\mathcal{Y}}
\def\cZ{\mathcal{Z}}
\DeclareFontFamily{U}{mathx}{}
\DeclareFontShape{U}{mathx}{m}{n}{<-> mathx10}{}
\DeclareSymbolFont{mathx}{U}{mathx}{m}{n}
\DeclareMathAccent{\widehat}{0}{mathx}{"70}
\DeclareMathAccent{\widecheck}{0}{mathx}{"71}
\def\iidsim{\stackrel{\text{i.i.d.}}{\sim}}
\def\wh{\widehat}
\def\wt{\widetilde}
\def\aas{\text{a.a.s.}}
\newcommand{\circnum}[1]{%
  \text{\ding{\the\numexpr #1+191}}%
}
\DeclareMathOperator{\argmax}{arg\,max}
\DeclareMathOperator\Aut{\mathrm{Aut}}
\DeclareMathOperator\BP{\mathrm{BP}}
\DeclareMathOperator\BOT{\mathrm{BOT}}
\DeclareMathOperator\BSC{\mathrm{BSC}}
\DeclareMathOperator\diag{\mathrm{diag}}
\DeclareMathOperator\EC{\mathrm{EC}}
\DeclareMathOperator\FSC{\mathrm{FSC}}
\DeclareMathOperator\Id{\mathrm{Id}}
\DeclareMathOperator\Pois{\mathrm{Pois}}
\DeclareMathOperator\SBM{\mathrm{SBM}}
\DeclareMathOperator\SKL{\mathrm{SKL}}
\DeclareMathOperator\Unif{\mathrm{Unif}}
\DeclareMathOperator\Var{\mathrm{Var}}
\newtheorem{theorem}{Theorem}
\newtheorem{lemma}[theorem]{Lemma}
\newtheorem{proposition}[theorem]{Proposition}
\theoremstyle{definition}
\newtheorem{definition}[theorem]{Definition}
\begin{document}

\ifdefined\isarxiv
\title{Uniqueness of BP fixed point for the Potts model and applications to community detection}
\date{}
\author{
Yuzhou Gu\thanks{\texttt{yuzhougu@mit.edu}. MIT.}
\and
Yury Polyanskiy\thanks{\texttt{yp@mit.edu}. MIT.}
}
\else
\title[Uniqueness of BP fixed point for the Potts model]{Uniqueness of BP fixed point for the Potts model and applications to community detection}
\coltauthor{%
 \Name{Yuzhou Gu} \Email{yuzhougu@mit.edu}\\
 \addr Masaschusetts Institute of Technology
 \AND
 \Name{Yury Polyanskiy} \Email{yp@mit.edu}\\
 \addr Masaschusetts Institute of Technology%
}
\fi

\maketitle

\begin{abstract}%
  In the study of sparse stochastic block models (SBMs) one often needs to analyze a distributional recursion, known as the belief propagation (BP) recursion. Uniqueness of the fixed point of this recursion implies several results about the SBM, including optimal recovery algorithms for SBM \cite{mossel2016belief} and SBM with side information \cite{mossel2016local}, and a formula for SBM mutual information \cite{abbe2021stochastic}. The 2-community case corresponds to an Ising model, for which~\cite{yu2022ising} established uniqueness for all cases.

  In this paper we analyze the $q$-ary Potts model, i.e., broadcasting of $q$-ary spins on a Galton-Watson tree with expected offspring degree $d$ through Potts channels with second-largest eigenvalue $\lambda$. We allow the intermediate vertices to be observed through noisy channels (side information). We prove that BP uniqueness holds with and without side information when $d\lambda^2 \ge 1 + C \max\{\lambda, q^{-1}\}\log q$ for some absolute constant $C>0$ independent of $q,\lambda,d$. For large $q$ and $\lambda = o(1/\log q)$, this is asymptotically achieving the Kesten-Stigum threshold $d\lambda^2=1$.
  %For the low SNR regime, we prove that these properties hold whenever $d\lambda^2 < q^{-2}$.
  These results imply mutual information formulas and optimal recovery algorithms for the $q$-community SBM in the corresponding ranges.

  % In this paper, we study uniqueness of BP fixed point and boundary irrelevance for the $q$-ary Potts model. We prove that these properties hold for a wide range of parameters. In particular, for ferromagnetic Potts models, these properties hold whenever signal-to-noise-ratio (SNR) is outside the range $[f(q), g(q)]$, where $f(q) = 1/q^2$, $g(q) = \Theta(\log q)$ as $q\to \infty$. This improves over the previous best result \cite{chin2020optimal} which proved Uniqueness of BP Fixed Point for SNR larger than $\poly(q)$.

  For $q\ge 4$, \cite{sly2011reconstruction,mossel2022exact} showed that there exist choices of $q,\lambda,d$ below Kesten-Stigum (i.e. $d\lambda^2 < 1$) but reconstruction is possible.
  Somewhat surprisingly, we show that in such regimes BP uniqueness does not hold at least in the presence of weak side information.

  Our technical tool is a theory of $q$-ary symmetric channels, that we initiate here, generalizing the classical and widely-utilized information-theoretic characterization of BMS (binary memoryless symmetric) channels.
\end{abstract}

\ifdefined\isarxiv
\else
\begin{keywords}%
  stochastic block model, Potts model, broadcasting on trees, cavity equation, belief propagation, boundary irrelevance, $q$-ary symmetric channels
\end{keywords}
\fi

% \tableofcontents

\section{Introduction}
\paragraph{Stochastic block model.}
The stochastic block model (SBM) is a random graph model with community structures. It has a rich set of results and phenomena, investigated in the last decade (see \cite{abbe2017community} for a survey).
In this paper, we focus on the sparse symmetric multi-community case. The model has four parameters: $n\in \bZ_{\ge 1}$, the number of vertices; $q\in \bZ_{\ge 2}$, the number of communities; $a,b\in \bR_{\ge 0}$, parameters controlling edge probabilities.
The model is defined as follows.
First, we assign a random label (community) $X_i \sim \Unif([q])$ i.i.d~for $i\in V=[n]$. Then a random graph $G=(V, E)$ is generated, where $(i,j)\in E$ with probability $\frac an$ if $X_i=X_j$, and with probability $\frac bn$ if $X_i \ne X_j$, independently for all $(i,j)\in \binom V2$.
When $a>b$, we say the model is assortative. When $a<b$, we say the model is disassortative.

For the SBM, an important problem is weak recovery. We say the model admits weak recovery if there exists an estimator $\wh X(G) \in [q]^V$ such that
\begin{align}
  &\lim_{n\to \infty} \frac 1n \bE d_H(X, \wh X(G)) < 1-\frac 1q, \\
  \text{where}\quad &d_H(X,Y) := \min_{\tau \in \Aut([q])} \sum_{i\in [n]} \mathbbm{1}\{X_i \ne \tau(Y_i)\}.
\end{align}
\cite{decelle2011asymptotic} conjectured that the (algorithmic) weak recovery threshold is at the Kesten-Stigum threshold, and there is an information-computation gap for $q\ge 5$. The positive (algorithmic) part of their conjecture has been established by a series of works \cite{massoulie2014community,mossel2018proof,abbe2016achieving,abbe2015community,abbe2015detection,abbe2018proof,bordenave2015non,stephan2019robustness}, in a very general sense allowing asymmetric communities.
\cite{abbe2015community,abbe2015detection,abbe2018proof} gave inefficient reconstruction algorithms below the Kesten-Stigum threshold, for disassortative models with $q\ge 4$ and assortative models with $q\ge 5$, giving evidence for the information-computation gap.
The informational weak recovery threshold has been established in special cases by \cite{mossel2015reconstruction,mossel2018proof} ($q=2$), \cite{mossel2022exact} ($q=3,4$ assuming large enough degree), \cite{coja2017information} (general $q$, disassortative).
For the assortative case, the informational weak recovery threshold for $q=3,4$ with small degree or $q\ge 5$ is still open, despite some partial progress \cite{banks2016information,gu2020non}.
The information-computation gap is also wide open.

When weak recovery is possible, the natural follow-up question is to determine the optimal recovery accuracy
\begin{align}
  \sup_{\wh X=\wh X(G)} \lim_{n\to \infty} \bE \left[1-\frac 1n d_H(X, \wh X(G))\right].
\end{align}
\cite{decelle2011asymptotic} conjectured that the belief propagation algorithm is optimal.
The conjecture is not proved yet but significant progress has been made: a series of works \cite{mossel2016belief,abbe2021stochastic,yu2022ising} established an optimal recovery algorithm for the symmetric two-community SBM; \cite{chin2020optimal,chin2021optimal} gave optimal recovery algorithms for the general case (general $q$, not necessarily symmetric) under the condition that SNR is large enough.

A fundamental quantity of the stochastic block model is its (normalized) mutual information
\begin{align}
  \lim_{n\to \infty} \frac 1n I(X; G).
\end{align}
\cite{coja2017information} proved a mutual information formula for the disassortative model (general $q$).
\cite{abbe2021stochastic,yu2022ising} proved a mutual information formula for the $q=2$ case.
\cite{dominguez2022mutual} conjectured another formula for the $q=2$ case and proved a matching lower bound. It is not known whether their conjectured formula is equivalent to the one proved by \cite{abbe2021stochastic,yu2022ising}.

\paragraph{Broadcasting on trees.}
The stochastic block model has a close relationship with the broadcasting on trees (BOT) model. The reason is that in SBM, the local neighborhood  of a random vertex converges (in the sense of local weak convergence) to a Galton-Watson tree with Poisson offspring distribution. Therefore, properties of BOT can often imply corresponding results on SBM.

For the symmetric $q$-SBM, the corresponding model is the Potts model, i.e., the BOT model whose broadcasting channels are Potts channels. This model has three parameters: $q\in \bZ_{\ge 2}$, the number of colors; $\lambda \in [-\frac 1{q-1}, 1]$, edge correlation strength; $d\in \bZ_{\ge 0}$, expected offspring.
The Potts model is defined as follows.
Let $T$ be a regular tree of offspring $d$ or a Galton-Watson tree with offspring distribution $\Pois(d)$ (Poisson distribution with expectation $d$).
Let $\rho$ be the root of $T$.
We assign to every vertex $v$ a color $\sigma_v \in [q]$ according to the following process:
(1) $\sigma_\rho \sim \Unif([q])$;
(2) given $\sigma_u$, colors of children of $u$ are $\iidsim P_\lambda(\cdot | \sigma_u)$,
where $P_\lambda$ is the Potts channel defined as
$P_\lambda(j|i) = \lambda \mathbbm{1}\{i=j\} + \frac{1-\lambda}q.$

An important problem on BOT is the reconstruction problem, asking whether we can gain any non-trivial information about the root given observation of far away vertices.
We say the model admits reconstruction if
\begin{align}
  \lim_{k\to \infty} I(\sigma_\rho ; \sigma_{L_k} | T_k) > 0,
\end{align}
where $L_k$ stands for the set of vertices at distance $k$ to the root $\rho$.
We say the model admits non-reconstruction if the limit is zero.
It is known that non-reconstruction results for the Potts model imply impossibility of weak recovery for the corresponding SBM, but the other direction does not hold: in the case $a=0$, there is a gap of factor $2$ (as $q\to \infty$) between the BOT reconstruction threshold and the SBM weak recovery threshold.

The reconstruction problem on trees has been studied a lot under many different settings, e.g., \cite{bleher1995purity,evans2000broadcasting,mossel2001reconstruction,mossel2003information,mezard2006reconstruction,borgs2006kesten,bhatnagar2010reconstruction,sly2009reconstruction,kulske2009symmetric,liu2019large,gu2020non,mossel2022exact}. For the Potts model, it is known \cite{sly2011reconstruction,mossel2022exact} that the Kesten-Stigum threshold \cite{kesten1966additional} $d\lambda^2=1$ is tight (i.e., equal to the reconstruction threshold) for $q=3,4$ when $d$ is large enough, and is not tight when $q\ge 5$ or when $q=4$, $\lambda<0$ and $d$ is small enough.

\paragraph{Belief propagation.}
Belief propagation is a powerful tool for studying the BOT model. It is usually described as an algorithm for computing posterior distribution of vertex colors given observation. Here we take an information-theoretic point of view and describe BP in terms of constructing communication channels.

We view the BOT model as an information channel from the root color to the observation.
Let $M_k$ denote the channel $\sigma_\rho \mapsto (T_k, \sigma_{L_k})$.%, where $L_k$ denotes the set of vertices at distance $k$ to the root $\rho$.
Then $(M_k)_{k\in \bZ_{\ge 0}}$ satisfies the following recursion, which we call belief propagation recursion:
\begin{align}
  M_{k+1} = \bE_b (M_k \circ P_\lambda)^{\star b}
\end{align}
where $b$ follows the branching number distribution (constant in the regular tree case, $\Pois(d)$ in the Poisson tree case), and $(\cdot)^{\star b}$ denotes $\star$-convolution power. % (see Section~\ref{sec:fms} for more information).
Let $\BP$ be the operator
\begin{align} \label{eqn:bp-operator}
  \BP(M) := \bE_b (M \circ P_\lambda)^{\star b}
\end{align}
defined on the space of information channels with input alphabet $[q]$. Due to symmetry in colors, we can regard $\BP$ as an operator on the space of FMS channels (see Section~\ref{sec:fms}).
In terms of the $\BP$ operator, the reconstruction problem can be rephrased as whether the limit channel $\BP^\infty(\Id) := \lim_{n\to \infty} \BP^n(\Id)$ (where $\Id$ stands for the identity channel $\Id(y|x) = \mathbbm{1}\{x=y\}$) is trivial or not.
The problem of optimal recovery for SBM can be reduced to the following problem on trees: whether the limit
\begin{align}
  \lim_{n\to \infty} I(\sigma_\rho; \omega_{L_k} | T_k)
\end{align}
where $\omega$ is the observation of $\sigma$ through a non-trivial channel $W$, stays the same for any non-trivial FMS channel $W$. Therefore, it is important to study the non-trivial fixed points of the $\BP$ operator (the trivial channel is always a fixed point).

\cite{mossel2016belief} proved uniqueness of BP fixed point for $q=2$ and large enough SNR.
\cite{abbe2021stochastic} improved to $q=2$ and SNR $\not \in [1,3.513]$.
\cite{yu2022ising} proved uniqueness of BP fixed point for $q=2$ and any parameter $\lambda,d$, closing the question for binary symmetric models.
For $q\ge 3$, \cite{chin2020optimal} proved a local version of BP uniqueness, i.e., when the initial channel $U$ is close enough to $\Id$, and $d\lambda^2 > C_q$, where $C_q$ is a constant depending on $q$, then $\BP^\infty(U) = \BP^\infty(\Id)$. They did not give asymptotics for $C_q$, but it seems like it is at least polynomial in $q$.
\cite{chin2021optimal} generalized \cite{chin2020optimal} to asymmetric models.

\paragraph{Boundary irrelevance.}
\cite{abbe2021stochastic} reduced the SBM mutual information problem to the boundary irrelevance problem, on a tree model called the broadcasting on trees with survey (BOTS) model. In the BOTS model, we observe label of every vertex through a noisy FMS channel $W$ (called the survey).
We say the model admits boundary irrelevance with respect to $W$ if
\begin{align}
  \lim_{k\to \infty} I(\sigma_\rho ; \sigma_{L_k} | T_k, \omega_{T_k})=0,
\end{align}
where $T_k$ is the set of all vertices within distance at most $k$ to the root, and $\omega$ is the observation of $\sigma$ through $W$.
We say the model admits boundary irrelevance if the model admits boundary irrelevance with respect to all erasure channels $\EC_\epsilon$ with $0\le \epsilon<1$.
Boundary irrelevance is equivalent to the condition that the operator
\begin{align} \label{eqn:bp-w-operator}
  \BP_W(M) := \left(\bE_b (M\circ P_\lambda)^{\star b}\right) \star W
\end{align}
has a unique fixed point in the space of FMS channels.
Because $\BP$ and $\BP_W$ have very similar forms, the boundary irrelevance problem has a close relationship with the problem of uniqueness of BP fixed point. Indeed, these two problems can be solved using the same method.

% \paragraph{Symmetric channels.}
% One technical tool we develop is a theory of $q$-ary input symmetric channels (Section~\ref{sec:fms}).
% The topic of symmetric channels is classic in information theory (see e.g.~\cite[Section 19.4]{polyanskiy2023information}). The binary-input case, i.e., the BMS channels, has been studied extensively and found numerous applications (see e.g.,~\cite[Chapter 4]{richardson2008modern}).
% However, the theory of comparison between $q$-ary symmetric channels is much less developed before the current work, with only example known to us being~\cite{makur2018comparison}.

\paragraph{Our results.}
Our first main result is uniqueness of BP fixed point and boundary irrelevance for a wide range of parameters. In fact, we prove the stronger result that the BP fixed point satisfies global stability.
\begin{theorem}[Uniqueness of BP fixed point and boundary irrelevance] \label{thm:bi-imprecise}
  There exists an absolute constant $C>0$ such that the following statement holds.
  Consider the $q$-ary Potts model with broadcasting channel $P_\lambda$ on a regular tree or a Poisson tree with expected offspring $d$.
  If either $d\lambda^2 < q^{-2}$ or $d\lambda^2 > 1 + C \max\{\lambda, q^{-1}\} \log q$,
  then boundary irrelevance holds.
  That is, for any non-trivial $q$-FMS survey channel $W$, we have
  \begin{align}
    \lim_{k\to \infty} I(\sigma_\rho; \sigma_{L_k} | T_k, \omega_{T_k}) = 0.
  \end{align}
  Furthermore, under the same conditions, stability of BP fixed point holds, i.e., for any non-trivial $q$-FMS channel $P$, $\BP^k(P)$ and $\BP^k(\Id)$ converge weakly to the same limit as $k\to \infty$.
\end{theorem}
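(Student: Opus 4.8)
The plan is to work in the space of $q$-FMS channels equipped with the degradation order $\preceq$, using the $q$-ary symmetric channel calculus the paper sets up. Both $\BP$ and $\BP_W$ are monotone and order-continuous for $\preceq$; since $\Id$ is the top element, $\BP^k(\Id)$ is decreasing and converges weakly to the maximal fixed point $M_\infty := \BP^\infty(\Id)$ (likewise $\BP_W^k(\Id)$ decreases to $M_\infty^W$), and every fixed point $M$ obeys $M = \BP^k(M)\preceq \BP^k(\Id)$, hence $M\preceq M_\infty$. By the equivalence recalled in the excerpt, boundary irrelevance is exactly the statement that $\BP_W$ has a unique fixed point, and the stability claim is $\BP^k(P)\to M_\infty$ for every non-trivial $P$; the two are driven by the same mechanism, so I describe it for $\BP$.

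The engine is a single scalar functional $\SNR(M)$ — the $\chi^2$-information of $M$ with uniform input, which for a $q$-ary symmetric channel is the integral of an explicit function against the symmetrized posterior measure on the simplex $\Delta^{q-1}$ — together with a quantitative description of how it moves under one $\BP$ step. Using the calculus: post-composition with $P_\lambda$ multiplies $\SNR$ by exactly $\lambda^2$; the $b$-fold $\star$-convolution is additive in $\SNR$ to leading order, with a defect controlled by $\SNR$ itself times a factor of order $\max\{\lambda,q^{-1}\}\log q$ (the $\log q$ entering through the dimension/entropy of the $q$-simplex, the very source of the correction in the theorem); and the offspring average multiplies by $d$. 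Combining, one gets a two-sided estimate $\SNR(\BP(M)) = d\lambda^2\,\SNR(M)\,\bigl(1 + O(\max\{\lambda,q^{-1}\}\log q)\cdot g(\SNR(M))\bigr)$ with $g$ bounded and vanishing at $0$, holding uniformly over all $q$-FMS channels $M$.

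In the subcritical regime $d\lambda^2<q^{-2}$ one complements this with a Lipschitz bound for $\BP$ (and $\BP_W$) in a transportation metric on the posterior measures; the worst-case multiplicative blow-up of $\BP$ is only polynomial in $q$, so once $d\lambda^2<q^{-2}$ the factor $d\lambda^2$ makes $\BP$ (resp.\ $\BP_W$) a strict contraction on the whole space — the trivial channel is then the unique fixed point of $\BP$ and globally attracting, and $\BP_W$ has a unique fixed point, i.e.\ boundary irrelevance. In the supercritical regime $d\lambda^2>1+C\max\{\lambda,q^{-1}\}\log q$ I argue in three steps. \emph{Escape:} the margin is chosen exactly so that $d\lambda^2\bigl(1+O(\max\{\lambda,q^{-1}\}\log q)\,g(s)\bigr)>1$ for all $s\le\tau_0$ (a threshold set by the margin), so $\SNR$ strictly increases along the orbit until it exceeds $\tau_0$; in particular $\SNR(\BP^{k_0}(P))>\tau_0$ for some $k_0$ and the orbit does not converge to the trivial channel. \emph{No spurious equilibria:} the same estimate gives $\SNR(\BP(M))>\SNR(M)$ whenever $0<\SNR(M)<\SNR(M_\infty)$ and $\SNR(\BP(M))<\SNR(M)$ whenever $\SNR(M)>\SNR(M_\infty)$, with local contraction of the $\SNR$-value near $\SNR(M_\infty)$, so $\SNR(\BP^k(P))\to\SNR(M_\infty)$; combined with the remark $M\preceq M_\infty$ and strict monotonicity of $\SNR$ under strict degradation, the only fixed points are the trivial one and $M_\infty$. \emph{Upgrade to weak convergence:} the orbit $(\BP^k(P))$ is precompact; any subsequential weak limit $N$ satisfies $N\preceq M_\infty$ (limit of channels $\preceq\BP^k(\Id)\to M_\infty$) and $\SNR(N)=\SNR(M_\infty)$ (weak continuity of $\SNR$), hence $N=M_\infty$ since $\SNR$ — or the mutual information, which also serves here — is strictly decreased by any strict degradation. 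Therefore $\BP^k(P)\to M_\infty$, and the identical argument for $\BP_W$ (whose bottom fixed point $\underline M^W\succeq W$ is forced to coincide with $M_\infty^W$ by the same $\SNR$-comparison) yields boundary irrelevance.

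I expect the main obstacle to be the \emph{uniform} one-step estimate of the second paragraph: $\SNR(M)$ does not determine $M$, so the linear-plus-controlled-error bound must hold simultaneously for all channels with a given $\SNR$, and over the full range of $\SNR$ up to $\SNR(M_\infty)$ — which is not small when $d\lambda^2$ is large. This forces one to bound the higher symmetric moments of the posterior on $\Delta^{q-1}$ and to track how they are created and destroyed by $\star$-convolution and by $\circ P_\lambda$, uniformly in $q$; it is precisely in making this uniform that the $\log q$ factor and the absolute constant $C$ appear, and it may be necessary to replace $\SNR$ by a free-energy/mutual-information functional on the part of the argument where $\SNR$ is not small, so that the drift toward $M_\infty$ rests on a variational rather than a perturbative footing.
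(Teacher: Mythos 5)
There is a genuine gap, and it is precisely the one you flag at the end: your argument treats $\SNR$ ($\chi^2$-information) as an autonomous one-dimensional coordinate driving the dynamics, and this cannot be made to work. Two obstructions. First, the asserted two-sided uniform estimate
$\SNR(\BP(M)) = d\lambda^2\,\SNR(M)\,(1 + O(\max\{\lambda,q^{-1}\}\log q)\cdot g(\SNR(M)))$ is false: the paper explicitly notes that $\chi^2$-capacity is \emph{not} subadditive under $\star$-convolution for $q\ge 3$ (there are counterexamples), and it only establishes a one-sided \emph{local} subadditivity (Lemma~\ref{lem:subadd}) requiring one of the two channels to be weak. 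That local statement suffices for the \emph{negative} result (Theorem~\ref{thm:non-bi}), where all channels along the orbit stay weak, but it does not give the uniform estimate over all $\SNR$ up to $\SNR(M_\infty)$ that your supercritical argument needs. Second, even granting a clean one-step estimate, the ``no spurious equilibria'' step — $\SNR(\BP(M))>\SNR(M)$ whenever $\SNR(M)<\SNR(M_\infty)$ — cannot be derived from a bound that depends only on $\SNR(M)$: two channels with the same $\SNR$ can have $\SNR(\BP(\cdot))$ on opposite sides, and a fixed point $M\prec M_\infty$ with $\SNR(M)<\SNR(M_\infty)$ is not ruled out by any scalar drift inequality that holds ``uniformly over the fibre.'' This is not a technical difficulty to be patched by tracking a few more moments; it is the structural reason the paper does \emph{not} analyze a single orbit but compares \emph{two coupled orbits} $(M_k)$ and $(\wt M_k)$ related by degradation.

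The paper's mechanism is different in a way worth internalizing. Fix $\wt M_k \le_{\deg} M_k$ (with $M_0 = \Id$), pick a strongly convex (or concave) symmetric $\phi$ on the simplex and set $\Phi(P)=\bE\,\phi(\pi_P)$; show $\Phi(M_k)-\Phi(\wt M_k)\to 0$ and then use strong convexity plus the degradation coupling (Prop.~\ref{prop:phi-contraction-imply-l2}) to conclude $\bE\|\pi_k-\wt\pi_k\|^2\to 0$, hence the two limits coincide. The contraction of the \emph{difference} is obtained from potentials that are \emph{exactly} additive or multiplicative under $\star$-convolution: in the low-SNR regime $\phi^L(\pi)=C_{\SKL}(\FSC_\pi)$ (additive, $1$-strongly convex), giving contraction factor $d\lambda^2 C^L(q,\lambda)\le d\lambda^2 q^2$; in the high-SNR regime $\phi^H(\pi)=Z(\FSC_\pi^R)$ (Bhattacharyya coefficient, multiplicative, strongly concave), and the essential extra ingredient you are missing is the \emph{majority decider} bound (Prop.~\ref{prop:bot-majority}): above KS, $\Phi^H(M_k\circ P_\lambda)$ is bounded strictly below $1$, so in the difference recursion the factor $(\Phi^H(M_k\circ P_\lambda))^{b-1}$ decays exponentially in the branching number $b$. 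Averaging over $b\sim\Pois(d)$ produces the $\exp(-c^H(d\lambda^2-1)/2)$ gain that beats the $C^H(q,\lambda)\le q^{5/2}$ loss once $d\lambda^2 > 1 + C\max\{\lambda,q^{-1}\}\log q$. Your linearized $\SNR$-drift analysis has no analogue of this exponential gain; a $d\lambda^2\cdot\poly(q)$ Lipschitz bound cannot by itself beat $\poly(q)$ when $d\lambda^2$ is only $1+o(1)$. If you want to salvage your approach, replace $\SNR$ by a quantity that is additive/multiplicative under $\star$ and study the \emph{gap} $\Phi(M_k)-\Phi(\wt M_k)$ along the degradation coupling rather than the value $\Phi(M_k)$ alone — which is exactly the route the paper takes.
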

For a more precise statement, see Theorem~\ref{thm:bi-low-snr} and Theorem~\ref{thm:bi-high-snr}.
We generalize this result to asymmetric initial channels in Prop.~\ref{prop:bp-fixed-point-full-rank} and Prop.~\ref{prop:bp-prime-fixed-point-asymm}.
Compared with \cite{chin2020optimal}, our result has a much weaker assumption on the initial channel, and works for a much larger region of $(\lambda,d)$ (at least when $q\to \infty$).

Our second main result is that boundary irrelevance does not hold between the reconstruction threshold and the Kesten-Stigum threshold.
\begin{theorem}[Boundary irrelevance does not always hold] \label{thm:non-bi}
  Consider the $q$-ary Potts model with broadcasting channel $P_\lambda$ on a regular tree or a Poisson tree with expected offspring $d$.
  If $d\lambda^2 < 1$ and reconstruction is possible, then boundary irrelevance does not hold for weak enough survey channel.
  That is, there exists $\epsilon > 0$ such that for any FMS survey channel $W$ with $C_{\chi^2}(W) \le \epsilon$, we have
  $
    \lim_{k\to \infty} I(\sigma_\rho; \sigma_{L_k} | T_k, \omega_{T_k}) > 0.
  $
\end{theorem}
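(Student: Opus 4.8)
The strategy is to separate the two sources of information about $\sigma_\rho$ present in the conditional mutual information: the noisy observations $\omega_{T_k}$, which we will show are \emph{uniformly} weak when $C_{\chi^2}(W)$ is small, and the exact leaf colors $\sigma_{L_k}$, which retain a fixed positive amount of information by the reconstruction hypothesis. By the chain rule for mutual information and data processing,
\[
  I(\sigma_\rho;\sigma_{L_k}\mid T_k,\omega_{T_k})
  \;=\; I(\sigma_\rho;\sigma_{L_k},\omega_{T_k}\mid T_k)-I(\sigma_\rho;\omega_{T_k}\mid T_k)
  \;\ge\; I(\sigma_\rho;\sigma_{L_k}\mid T_k)-I(\sigma_\rho;\omega_{T_k}\mid T_k).
\]
The first term is the plain broadcasting mutual information: $k\mapsto I(\sigma_\rho;\sigma_{L_k}\mid T_k)$ is non-increasing (given $T_{k+1}$, the colors $\sigma_{L_{k+1}}$ are obtained from $\sigma_{L_k}$ by an independent layer of $P_\lambda$-broadcasts, so $\sigma_\rho-\sigma_{L_k}-\sigma_{L_{k+1}}$ is a Markov chain conditionally on the tree), hence converges to $I_{\mathrm{rec}}:=\lim_k I(\sigma_\rho;\sigma_{L_k}\mid T_k)$, which is positive precisely because reconstruction is possible. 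It therefore remains to bound $\sup_k I(\sigma_\rho;\omega_{T_k}\mid T_k)$ by a quantity that vanishes with $C_{\chi^2}(W)$.

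For the survey term, observe that the channel $\sigma_\rho\mapsto(T_k,\omega_{T_k})$ in the BOTS model is exactly the depth-$k$ iterate $\BP_W^{k}(\mathrm{triv})$ (where $\mathrm{triv}$ is the trivial channel): one depth step broadcasts through $P_\lambda$, combines the children channels via $\star$, and finally appends the root survey via $\star W$; since $\BP_W(\mathrm{triv})=W$, we have $\BP_W^{k+1}(\mathrm{triv})=\BP_W^{k}(W)$. Let $S_k:=C_{\chi^2}\!\big(\BP_W^{k}(\mathrm{triv})\big)$. Using the two basic facts about $\chi^2$-information of $q$-FMS channels developed in Section~\ref{sec:fms} --- that one broadcast-and-combine step $M\mapsto\bE_b(M\circ P_\lambda)^{\star b}$ contracts $\chi^2$-information by the factor $d\lambda^2$ (for both the regular and the Poisson tree, up to terms of higher order in $C_{\chi^2}(M)$ coming from the combining step), and that $\star$-combining is subadditive in $\chi^2$-information --- one obtains the recursion
\[
  S_{k+1}\;\le\; d\lambda^2\,S_k+C_{\chi^2}(W)+(\text{higher-order terms}).
\]
Since $d\lambda^2<1$, for $C_{\chi^2}(W)\le\epsilon$ with $\epsilon$ small enough this iteration (started at $S_0=0$) stays below a constant multiple of its leading-order fixed point, say $\sup_k S_k\le \tfrac{2\,C_{\chi^2}(W)}{1-d\lambda^2}$. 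Combining with the elementary inequality $I(\Unif([q]);M)\le C_{\chi^2}(M)$ (from $\KL\le\chi^2$) yields $\sup_k I(\sigma_\rho;\omega_{T_k}\mid T_k)\le \tfrac{2\,C_{\chi^2}(W)}{1-d\lambda^2}$. This is the only step that uses $d\lambda^2<1$, and it is essential: above the Kesten--Stigum threshold the survey information need not remain uniformly bounded, consistent with boundary irrelevance holding there.

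Putting this together, fix $\epsilon:=\tfrac14\,I_{\mathrm{rec}}\,(1-d\lambda^2)>0$. For any non-trivial $q$-FMS survey $W$ with $C_{\chi^2}(W)\le\epsilon$, taking $\liminf_k$ in the first display gives $\liminf_k I(\sigma_\rho;\sigma_{L_k}\mid T_k,\omega_{T_k})\ge I_{\mathrm{rec}}-\tfrac{2\epsilon}{1-d\lambda^2}=\tfrac12 I_{\mathrm{rec}}>0$. That the limit exists (not merely the $\liminf$) follows by writing $I(\sigma_\rho;\sigma_{L_k}\mid T_k,\omega_{T_k})=I(\Unif([q]);\BP_W^{k}(\Id))-I(\Unif([q]);\BP_W^{k}(W))$: by monotonicity of $\BP_W$ in the degradation order the first term is non-increasing and the second non-decreasing in $k$, so the difference decreases to a limit, which is therefore $\ge\tfrac12 I_{\mathrm{rec}}>0$; equivalently $\BP_W$ has at least two distinct fixed points ($\BP_W^\infty(\mathrm{triv})$ and $\BP_W^\infty(\Id)$), so boundary irrelevance with respect to $W$ fails. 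The main obstacle in making this rigorous is the uniform-in-$k$ control of $S_k$ in the previous step: since the $\chi^2$-information of a $\star$-convolution is only approximately additive, one must track the higher-order corrections in the recursion and verify that the strict contraction $d\lambda^2<1$ absorbs them simultaneously at all depths; the remaining ingredients (the chain-rule inequality, the bound $I\le C_{\chi^2}$, monotonicity and finite-alphabet continuity of mutual information, and identifying $I_{\mathrm{rec}}>0$ with reconstruction) are routine.
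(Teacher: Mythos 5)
Your proof is correct and follows essentially the same route as the paper: the chain-rule decomposition $I(\sigma_\rho;\sigma_{L_k}\mid T_k,\omega_{T_k})=I(\sigma_\rho;\sigma_{L_k},\omega_{T_k}\mid T_k)-I(\sigma_\rho;\omega_{T_k}\mid T_k)$, lower-bounding the joint term via the reconstruction hypothesis, and upper-bounding the survey term by showing $C_{\chi^2}$ of the survey channel stays uniformly small under the BP recursion using the $\lambda^2$-contraction of $P_\lambda$ together with local subadditivity of $\chi^2$-capacity under $\star$-convolution, with $d\lambda^2<1$ supplying the contraction. The uniform-in-$k$ control of $S_k$ that you flag at the end as the main obstacle is exactly the content of Proposition~\ref{prop:no-robust-recon} in the paper, which handles the higher-order corrections from Lemma~\ref{lem:subadd} by splitting the expectation over the branching number into a bounded part (where subadditivity applies with small error) and a tail part (controlled by Poisson decay).
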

By \cite{sly2011reconstruction,mossel2022exact}, for any $q\ge 4$, there exist choices of $\lambda,d$ satisfying the assumption in Theorem~\ref{thm:non-bi}.
Therefore for any fixed $q\ge 4$, boundary irrelevance does not always hold, in contrast to the binary case where boundary irrelevance always holds \cite{yu2022ising}.

\paragraph{Applications.}
Main applications of uniqueness of BP fixed point and boundary irrelevance include a mutual information formula and an optimal recovery algorithm.
\begin{theorem}[Mutual information formula] \label{thm:sbm-mutual-info}
  Let $(X,G)\sim \SBM(n,q,\frac an,\frac bn)$.
  Let $d = \frac {a+(q-1)b}q$ and $\lambda = \frac{a-b}{a+(q-1)b}$.
  Let $(T,\sigma)$ be the Potts model with broadcasting channel $P_\lambda$ on a Poisson tree with expected offspring $d$. Let $\rho$ be the root of $T$, $L_k$ be the set of vertices at distance $k$ to $\rho$, $T_k$ be the set of vertices at distance $\le k$ to $\rho$.
  If $(q,\lambda,d)$ satisfies~\eqref{eqn:thm:bi-low-snr-cond} or~\eqref{eqn:thm:bi-high-snr-cond}, then we have
  \begin{align}
    \lim_{n\to \infty} \frac 1n I(X;G) = \int_0^1 \lim_{k\to \infty} I(\sigma_\rho; \omega^\epsilon_{T_k\backslash \rho} | T_k) d\epsilon,
  \end{align}
  where $\omega^\epsilon$ denotes observation through survey channel $\EC_\epsilon$, the erasure channel with erasing probability $\epsilon$.
\end{theorem}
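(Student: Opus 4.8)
The plan is to reduce the SBM mutual information to a quantity on the Galton-Watson tree via an Aizenman-Sims-Starr / interpolation scheme, and then invoke boundary irrelevance (Theorem~\ref{thm:bi-imprecise}) to identify the resulting tree limit with the claimed integral. Concretely, I would proceed as follows.

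First I would establish the existence of the limit $\lim_{n\to\infty}\frac1n I(X;G)$ and write it in an Aizenman-Sims-Starr form. Using the identity $I(X;G) = nH(X_1) - H(X\mid G) = n\log q - H(X\mid G)$ and the chain rule, the increment $\frac1n I(X;G)$ can be expressed (up to $o(1)$ terms) as an expected one-vertex conditional entropy: the information gained about a freshly added vertex's label given the graph and the labels of its neighbors' neighborhoods. By local weak convergence of $\SBM(n,q,\frac an,\frac bn)$ to the Poisson Galton-Watson tree with offspring mean $d$ and edge channel $P_\lambda$ (with $d,\lambda$ as defined in the statement), this one-vertex term converges to a corresponding quantity on $(T,\sigma)$. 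This step is essentially the standard SBM-to-BOT dictionary used in \cite{abbe2021stochastic}; the integral over $\epsilon\in[0,1]$ arises from the usual I-MMSE / area-theorem trick of decomposing mutual information as an integral of incremental information, where the parameter $\epsilon$ indexes an erasure (survey) channel $\EC_\epsilon$ applied to the already-revealed boundary, interpolating between full observation ($\epsilon=0$) and no observation ($\epsilon=1$).

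Second, having arrived at a tree-side expression of the form $\int_0^1 \big(\text{something involving }\lim_k I(\sigma_\rho;\omega^\epsilon_{T_k\setminus\rho}\mid T_k)\big)\,d\epsilon$, I would need to show that this inner limit is exactly the one appearing in the theorem and that the integrand is well-defined (monotone/continuous in $\epsilon$ so Lebesgue/dominated convergence applies, and the interchange of $\lim_n$, $\lim_k$, and $\int$ is justified). This is where the hypothesis "$(q,\lambda,d)$ satisfies \eqref{eqn:thm:bi-low-snr-cond} or \eqref{eqn:thm:bi-high-snr-cond}" enters: under these conditions Theorem~\ref{thm:bi-imprecise} gives boundary irrelevance with respect to every erasure channel $\EC_\epsilon$, $0\le\epsilon<1$, so that the limiting channel $\BP_W^\infty$ is independent of the survey and in particular the boundary term $I(\sigma_\rho;\sigma_{L_k}\mid T_k,\omega_{T_k})\to 0$. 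This is precisely what makes the Aizenman-Sims-Starr upper and lower bounds coincide, pinning down the limit to the single formula; without boundary irrelevance one would only get a variational (inf/sup over FMS channels) expression.

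The main obstacle I expect is the second step: carefully justifying that boundary irrelevance for the BOTS model (a statement about a conditional mutual information vanishing) translates into the matching of the Aizenman-Sims-Starr bounds for the finite SBM, i.e.\ controlling the error terms in the interpolation uniformly in $n$ and $\epsilon$ and handling the edge of the $\epsilon$-interval near $1$ where the survey becomes trivial. The first step (existence of the limit, local weak convergence, the one-vertex reduction, the area-theorem decomposition) is by now fairly routine and can largely be imported from \cite{abbe2021stochastic}; the genuine content is that our much wider parameter range for boundary irrelevance (Theorem~\ref{thm:bi-imprecise}) plugs into that machinery in place of the narrow $q=2$ result used there.
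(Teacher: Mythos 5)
Your proposal is correct and follows essentially the same route as the paper: introduce the erasure survey $\EC_\epsilon$, use the area theorem (the derivative of $\frac1n I(X;G,Y^\epsilon)$ in $\epsilon$ equals $-H(X_u\mid G,Y^\epsilon_{V\setminus u})$) to write $\frac1n I(X;G)$ as an $\epsilon$-integral of a one-vertex conditional mutual information, pass to the Poisson tree via the local coupling together with the no-long-range-correlations estimate (Lemma~\ref{lem:sbm-approx-cond-indep}), and apply boundary irrelevance (Theorem~\ref{thm:bi-imprecise}) so that the tree-side upper and lower bounds $\lim_k I(\sigma_\rho;\omega^\epsilon_{T_k\setminus\rho}\mid T_k)$ and $\lim_k I(\sigma_\rho;\omega^\epsilon_{T_k\setminus\rho},\sigma_{L_k}\mid T_k)$ coincide. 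The only presentational difference is your Aizenman--Sims--Starr cavity framing (a fresh-vertex increment) versus the paper's direct conditioning on a single fixed vertex $u$ for each $n$; both reduce to the same one-vertex quantity and the substance is identical.
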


For SBM with side information, boundary irrelevance immediately implies that the local belief propagation algorithm is optimal.
\begin{theorem}[Optimal recovery for SBM with side information] \label{thm:opt-rec-survey}
  Work under the same setting as Theorem~\ref{thm:sbm-mutual-info}.
  Suppose that in addition to $G$, we observe side information $Y_v \sim W(\cdot | X_v)$ for all $v\in V$, where $W$ is some non-trivial FMS channel.
  If $(q,\lambda,d,W)$ satisfies~\eqref{eqn:thm:bi-low-snr-cond} or~\eqref{eqn:thm:bi-w-high-snr-cond}, then belief propagation (Algorithm~\ref{alg:opt-rec-survey}) achieves the optimal recovery accuracy of
  \begin{align}
    1-\lim_{k\to \infty} P_e(\sigma_\rho | T_k, \omega_{T_k}).
  \end{align}
\end{theorem}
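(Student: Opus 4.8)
The plan is to transfer the tree-level boundary irrelevance statement (Theorem~\ref{thm:bi-imprecise}, via the precise versions for the conditions in~\eqref{eqn:thm:bi-low-snr-cond} and~\eqref{eqn:thm:bi-w-high-snr-cond}) to the graph via local weak convergence, following the template of \cite{mossel2016belief,mossel2016local}. First I would recall that the optimal recovery accuracy for the root label is, by definition of Bayes-optimal estimation, $1 - P_e(X_\rho \mid G, Y)$ where $P_e$ is the error probability of the MAP estimator; the goal is to show this equals $1 - \lim_{k\to\infty} P_e(\sigma_\rho \mid T_k, \omega_{T_k})$ up to $o(1)$ as $n\to\infty$, and that Algorithm~\ref{alg:opt-rec-survey} (local BP truncated at radius $k = k(n)\to\infty$ slowly) attains it. The standard two-sided argument is: (i) no estimator can do better than the tree bound, because the graph contains less information than an idealized ``genie'' that reveals the graph structure plus all labels outside a large neighborhood, and by boundary irrelevance this extra information is asymptotically worthless; (ii) local BP on the finite graph approximates the tree posterior because the radius-$k$ neighborhood of a uniformly random vertex converges in distribution (with labels and side information) to the Poisson Galton–Watson Potts tree with survey $W$.

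The key steps, in order, are as follows. Step 1: Establish local weak convergence of the rooted marked graph $(G, X, Y)$ around a uniform random vertex $v$ to $(T, \sigma, \omega)$, the Poisson$(d)$ Potts tree with the extra survey observation $\omega_u \sim W(\cdot\mid\sigma_u)$ at every vertex; here $d = \frac{a+(q-1)b}{q}$ and $\lambda = \frac{a-b}{a+(q-1)b}$ as in Theorem~\ref{thm:sbm-mutual-info}. This is a routine extension of the coupling arguments for sparse SBM. Step 2: Show that the graph posterior marginal $\bP(X_v = \cdot \mid G, Y)$ is, with high probability over $v$ and the model, close in total variation to the tree posterior $\bP(\sigma_\rho = \cdot \mid T_k, \sigma_{L_k}, \omega_{T_k})$ for $k$ growing slowly; this combines Step 1 with the fact that conditioning on a slowly-growing neighborhood plus its boundary labels is the right finite-radius proxy, using that BP computes exactly the tree posterior. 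Step 3: Invoke boundary irrelevance on the BOTS model with survey $W$: under~\eqref{eqn:thm:bi-low-snr-cond} or~\eqref{eqn:thm:bi-w-high-snr-cond}, $\lim_{k\to\infty} I(\sigma_\rho; \sigma_{L_k}\mid T_k,\omega_{T_k}) = 0$, equivalently $\BP_W$ has a unique fixed point, so the tree posterior stabilizes: $P_e(\sigma_\rho\mid T_k,\sigma_{L_k},\omega_{T_k})$ and $P_e(\sigma_\rho\mid T_k,\omega_{T_k})$ have the same $k\to\infty$ limit, and moreover this limit does not depend on whether boundary labels are revealed. Step 4: Combine — the converse direction uses that revealing boundary information only helps, so the tree bound with boundary labels upper bounds achievable accuracy; the achievability direction uses that local BP (which ignores boundary info, matching $P_e(\sigma_\rho\mid T_k,\omega_{T_k})$) already meets this bound by Step 3. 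Averaging over the random root and using that $\frac1n\bE\, d_H$ reduces to the per-vertex error probability (up to the global relabeling $\tau$, which is negligible since the symmetric posterior pins down the label up to a vanishing ambiguity) closes the argument.

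The main obstacle I expect is Step 2 combined with the uniformity needed for Step 4: showing that the finite-graph BP marginal is genuinely close to the finite-tree posterior \emph{uniformly enough} that averaging the per-vertex error probabilities over all $n$ vertices gives the claimed limit, rather than just for a single typical vertex. One must control (a) the rare vertices whose neighborhood is not tree-like or has atypically large degree, (b) the accumulation of BP approximation error over $k$ rounds on the graph versus the tree (short cycles of length $\le 2k$ cause a mismatch, which is why $k$ must grow like $c\log\log n$ or at most $\epsilon\log n/\log d$), and (c) the subtle point that boundary irrelevance is an asymptotic ($k\to\infty$) statement, so one needs a quantitative rate or a diagonal argument choosing $k(n)\to\infty$ slowly enough that both the local-convergence error and the boundary-irrelevance gap vanish. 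The global stability of the BP fixed point asserted in Theorem~\ref{thm:bi-imprecise} — that $\BP^k(P)$ and $\BP^k(\Id)$ converge to the same limit for \emph{any} non-trivial $P$ — is exactly the tool that lets one initialize BP on the graph from an uninformative prior and still converge to the Bayes-optimal tree posterior, so I would lean on it to handle (b) and (c) together.
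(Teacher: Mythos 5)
Your proposal is correct and follows essentially the same route as the paper: achievability via the SBM--BOT local coupling (BP computes the tree posterior exactly on a tree-like neighborhood of radius $k=o(\log n)$), and the converse via the genie/data-processing chain $P_e(X_\rho\mid G,Y)\ge P_e(X_\rho\mid G,Y,X_{\partial B(\rho,k)})$, approximate conditional independence given the boundary, the coupling again, and finally boundary irrelevance with respect to $W$ to discard the boundary labels. The only slight misplacement is your closing reliance on global stability of $\BP$ — for this theorem the relevant tool is uniqueness of the fixed point of $\BP_W$ (boundary irrelevance w.r.t.\ $W$), since the algorithm is initialized from the side information rather than an uninformative prior; global stability is what is needed for the vanilla-SBM version (Theorem~\ref{thm:opt-rec-vanilla}).
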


For vanilla SBM, uniqueness of BP fixed point implications optimal recovery, given an initial recovery algorithm with nice accuracy guarantees.
\begin{theorem}[Optimal recovery for SBM] \label{thm:opt-rec-vanilla}
  Work under the same setting as Theorem~\ref{thm:sbm-mutual-info}.
  Suppose $d\lambda^2>1$ and $(q,\lambda,d)$ satisfies~\eqref{eqn:thm:bi-high-snr-cond}.
  Suppose there is an algorithm $\cA$ and a constant $\epsilon>0$ (not depending on $n$)  such that with probability $1-o(1)$, the empirical transition matrix $F\in \bR^{q\times q}$ defined as
  \begin{align}
    F_{i,j} := \frac{\#\{v\in V: X_v=i, \wh X_v = j\}}{\#\{v\in V: X_v = i\}}, \qquad \wh X := \cA(G)
  \end{align}
  satisfies
  \begin{enumerate}[label=(\arabic*)]
    \item $\|F^\top \mathbbm{1} - \mathbbm{1}\|_{\infty} = o(1)$; \label{item:thm-opt-rec-vanilla-req-1}
    \item $\sigma_{\min}(F) > \epsilon$, where $\sigma_{\min}$ denotes the smallest singular value; \label{item:thm-opt-rec-vanilla-req-2}
    \item there exists a permutation $\tau\in \Aut([q])$ such that $F_{\tau(i),i} > F_{\tau(i),j}+\epsilon$ for all $i\ne j\in [q]$. \label{item:thm-opt-rec-vanilla-req-3}
  \end{enumerate}
  (Note that we do not asssume $F$ stays the same for different calls to $\cA$.)

  Then there is an algorithm (Algorithm~\ref{alg:opt-rec-vanilla}) achieving the optimal recovery accuracy of
  \begin{align}
    1-\lim_{k\to \infty} P_e(\sigma_\rho | T_k, \sigma_{L_k}).
  \end{align}
\end{theorem}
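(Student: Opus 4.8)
The plan is to build a two-round belief-propagation estimator in the style of \cite{mossel2016belief,abbe2021stochastic}, using $\cA$ only to supply a boundary condition for the second round, and to invoke the full-rank-initialization version of $\BP$ stability, Prop.~\ref{prop:bp-fixed-point-full-rank}, in place of Theorem~\ref{thm:bi-imprecise}. Fix a radius $R=R(n)\to\infty$ slowly enough that depth-$2R$ balls in $G$ are w.h.p.\ trees. For each vertex $v$: delete $B(v,R-1)$ from $G$; call $\cA$ on what remains, so that the restriction $\wh X^{(v)}$ of its output to the distance-$R$ set $L_R(v)$ becomes measurable with respect to $G\setminus B(v,R)$; relabel it by the permutation $\tau$ from condition~\ref{item:thm-opt-rec-vanilla-req-3}; then run BP on the (tree-like) ball $B(v,R)$ with $\tau^{-1}(\wh X^{(v)}_{L_R})$ fed in at depth $R$ as a noisy survey; and output a maximizer over $[q]$ of the resulting posterior. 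Each deleted ball has $(\log n)^{O(1)}$ vertices, so w.h.p.\ the empirical transition matrix of each such modified call to $\cA$ still satisfies \ref{item:thm-opt-rec-vanilla-req-1}--\ref{item:thm-opt-rec-vanilla-req-3} (this is exactly why the theorem allows $F$ to change between calls).

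Two facts drive the analysis. \emph{Locality}: for $v\sim\Unif(V)$, the labeled ball $(B(v,R),X|_{B(v,R)})$ is within $o(1)$ total variation of the depth-$R$ truncation $(T_R,\sigma_{T_R})$ of the $\Pois(d)$ Potts model with parameter $\lambda$ --- the standard local weak convergence of sparse $\SBM$. \emph{Genie coupling}: averaged over $v$, the joint law of $\big(B(v,R),\ \sigma_{T_R},\ \tau^{-1}(\wh X^{(v)}_{L_R})\big)$ is within $o(1)$ of the law obtained by first drawing $(T_R,\sigma_{T_R})$ from the Potts model and then drawing boundary labels from $\prod_{u\in L_R}F'(\,\cdot\mid\sigma_u)$, where $F'$ is the $\tau$-realigned empirical channel --- nearly doubly stochastic by \ref{item:thm-opt-rec-vanilla-req-1}, and non-trivial and full rank by \ref{item:thm-opt-rec-vanilla-req-2}. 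Establishing the genie coupling is the crux, and the main obstacle: one must (a) decouple the boundary estimates from the internal structure of $B(v,R)$, which is the purpose of running $\cA$ after deleting $B(v,R-1)$, together with the asymptotic Markov property of $G\setminus B(v,R)$ over $\sigma_{L_R}$; and (b) show that $\{\wh X^{(v)}_u\}_{u\in L_R}$ are approximately conditionally independent given $\sigma_{L_R}$, with per-coordinate conditional law pinned down by the empirical matrix $F$ --- by a second-moment/coupling estimate over pairs of distance-$R$ vertices, whose relevant neighborhoods in $G\setminus B(v,R)$ are w.h.p.\ vertex-disjoint and far apart. This is precisely where conditions \ref{item:thm-opt-rec-vanilla-req-1}--\ref{item:thm-opt-rec-vanilla-req-3} enter: \ref{item:thm-opt-rec-vanilla-req-3} to realign $\wh X$ into an aligned, informative channel; \ref{item:thm-opt-rec-vanilla-req-1} to keep its input marginal uniform; and \ref{item:thm-opt-rec-vanilla-req-2} to keep it non-trivial and full rank.

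Granting locality and genie coupling, the posterior computed by the estimator is, up to $o(1)$ total variation, the posterior of $\sigma_\rho$ given $(T_R,\omega^{F'}_{L_R})$, where $\omega^{F'}$ is the observation of $\sigma$ through $F'$; by definition of $\BP$ this is the channel $\BP^R(F')$. Hence for each fixed $R$ the estimator's accuracy tends, as $n\to\infty$, to $1-P_e(\sigma_\rho\mid T_R,\omega^{F'}_{L_R})$. Now invoke the full-rank version of $\BP$ stability (valid since $d\lambda^2>1$ and \eqref{eqn:thm:bi-high-snr-cond} holds), with convergence rate depending only on the non-triviality constant $\epsilon$ of \ref{item:thm-opt-rec-vanilla-req-2}: $\BP^R(F')$ and $\BP^R(\Id)$ converge weakly to the common limit $\BP^\infty(\Id)$, so, since $\BP^R(\Id)$ is exactly the channel $\sigma_\rho\mapsto(T_R,\sigma_{L_R})$,
\begin{align}
  \lim_{R\to\infty}\lim_{n\to\infty}\big(\text{estimator accuracy}\big)\;=\;1-\lim_{R\to\infty}P_e(\sigma_\rho\mid T_R,\sigma_{L_R}),
\end{align}
and a standard diagonalization turns this into a single limit with $R=R(n)\to\infty$ slowly. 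The matching converse is the usual genie bound: for any estimator and fixed $R$, w.h.p.\ $B(v,R)\subseteq G$ and $G\setminus B(v,R)$ is asymptotically Markov over $\sigma_{\partial B(v,R)}$, so $P_e(X_v\mid G)\ge P_e(X_v\mid B(v,R),\sigma_{\partial B(v,R)})-o(1)$; averaging over $v$, using locality, and letting $R\to\infty$ shows that no estimator beats $1-\lim_k P_e(\sigma_\rho\mid T_k,\sigma_{L_k})$, which is therefore exactly the accuracy attained above.
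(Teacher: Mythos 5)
Your high-level plan — run $\cA$ on $G$ with the ball $B(v,R)$ deleted, feed its output on $L_R(v)$ as a noisy survey into BP on the tree-like ball, invoke locality, a genie-coupling of boundary labels, and full-rank stability of the BP fixed point to identify the limit — matches the paper's proof at the conceptual level (the paper also proves the converse by the genie bound with Lemma~\ref{lem:sbm-approx-cond-indep}).

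However, there is a genuine gap that your proposal glosses over: you "relabel it by the permutation $\tau$ from condition~\ref{item:thm-opt-rec-vanilla-req-3}," but $\tau$ is defined in terms of the unobserved $X$, so the algorithm has no direct access to it. Worse, since the theorem explicitly does \emph{not} assume $F$ (and hence $\tau$) is the same across calls, each of the $n$ calls $\cA(G\setminus B(v,R-1))$ may return labels under a different unknown permutation. Without an alignment mechanism, the per-vertex BP outputs $\wh X_v$ will sit in mutually inconsistent permutations of $[q]$ and the aggregate $\wh X$ will not come close to the claimed accuracy (recall $d_H$ allows a single global permutation, not one per vertex). Relatedly, to run BP on $B(v,R)$ with $\wh X^{(v)}_{L_R}$ as a survey, the algorithm must \emph{know} (an estimate of) the noisy channel $\sigma_u \mapsto \wh X^{(v)}_u$ — your $F'$ — yet $F'$ is again an unobserved statistic of $(X,\wh X^{(v)})$. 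Both of these are resolved in the paper by a construction you do not supply: Algorithm~\ref{alg:opt-rec-vanilla} reserves a random anchor set $U$ of size $\lfloor\sqrt n\rfloor$, runs one reference call $Y=\cA(G\setminus U)$, and picks anchor vertices $u_1,\ldots,u_q$ whose neighborhoods' empirical $Y$-label counts reveal (via condition~\ref{item:thm-opt-rec-vanilla-req-3} and the sign $s$) which community each $u_i$ belongs to up to a single reference permutation. Every per-vertex call $Y^v=\cA(G\setminus B(v,r-1)\setminus U)$ is then aligned to this reference by comparing its empirical counts at the same anchors, and the transition matrix $M^v$ is simultaneously estimated from those counts. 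This is the crux of the paper's algorithm; your proposal delegates it to "relabel by $\tau$" and "pinned down by the empirical matrix $F$," which is not something the algorithm can do without the anchor mechanism or an equivalent device. The rest of your argument (independence of boundary labels from $B(v,R)$, asymptotic Markov property, locality, degradation from a Potts channel via $\sigma_{\min}(F)>\epsilon$ and Lemma~\ref{lem:sing-deg-potts}, BP stability with rate independent of $n$, diagonalization, converse) is consistent with the paper's proof.
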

Our assumption on the initial recovery algorithm is a generalization of the one used in the $q=2$ case by \cite{mossel2016belief}. Our assumption is much weaker than the one of \cite{chin2020optimal}, which requires the initial point to be close enough to $\Id$ and seems unlikely to hold near the KS threshold.
% The theoretical guarantees of the initial recovery algorithm provided by previous works (that achieve the Kesten-Stigum threshold) \cite{abbe2018proof,stephan2019robustness} do not seem to be enough for our purpose. There are several different ways to formulate the initial point requirement. The one we state here is weaker than \cite{chin2020optimal} (which required the initial point to be close enough to $\Id$, which seems unlikely to hold near the KS threshold), and a generalization of the requirement in the $q=2$ case used by \cite{mossel2016belief}.
In comparison, our initial point assumption seems more likely to hold near the Kesten-Stigum threshold. For example, it is plausible that a balanced algorithm would achieve the empirical transition matrix $F$ to be close to $P_\lambda$ for some $|\lambda|=\Omega(1)$.

\paragraph{Our technique.}
For the positive result (Theorem~\ref{thm:bi-imprecise}), we generalize the degradation method of \cite{abbe2021stochastic} to $q$-ary symmetric channels.
In this method, we find suitable potential functions $\Phi$ on the space of FMS channels, such that for two channels $M,\wt M$ related by degradation ($\wt M\le_{\deg} M$), we have (1) $\Phi(M)-\Phi(\wt M)$ contracts to $0$ under iterations of $\BP$ (2) if $\Phi(M) = \Phi(\wt M)$, then $M=\wt M$.
This shows that the limit channels $\BP^\infty(M)$ and $\BP^\infty(\wt M)$ are equal.

To carry out this method, we develop a theory of $q$-ary symmetric channels, generalizing the classical theory of binary memoryless symmetric (BMS) channels. We show that $q$-ary symmetric channels are equivalent to symmetric distributions on the probability simplex $\cP([q])$, and degradation relationship has a coupling characterization under the distribution interpretation.

We remark that \cite{yu2022ising} also uses channel degradation but in a very different way. For more discussions see Section~\ref{sec:discuss}.

For the negative result (Theorem~\ref{thm:non-bi}), we show that when the survey channel $W$ is weak enough, the limit channel $\BP_W^\infty$ can be arbitrarily weak.
We prove this by studying behavior of $\chi^2$-capacity under BP recursion.
One difficulty of working with $\chi^2$-capacity is that in general they are not subadditive under $\star$-convolution.
Subadditivity is a very desirable property when studying BP recursion, and holds for KL capacity (folklore) and symmetric KL (SKL) capacity \cite{kulske2009symmetric}. For $\chi^2$-capacity, subadditivity is true for BMS channels \cite{abbe2019subadditivity}, but there are counterexamples when $q\ge 3$.
Nevertheless, we establish a local subadditivity result, i.e., $\chi^2$-capacity is almost subadditive under $\star$-convolution for weak enough channels.
% We use $\chi^2$-capacity to characterize the strength of a channel.
% One difficulty of working with $\chi^2$-capacity is that unlike KL capacity or symmetric KL (SKL) capacity, they are not subadditive under $\star$-convolution.
% To deal with this problem, we establish a local subadditivity result (Lemma~\ref{lem:subadd}).
% That is, $\chi^2$-capacity is almost subadditive for weak enough channels.

Mutual information formula (Theorem~\ref{thm:sbm-mutual-info}) and optimal recovery for SBM with side information (Theorem~\ref{thm:opt-rec-survey}) are direct consequences of boundary irrelevance (Theorem~\ref{thm:bi-imprecise}), generalizing \cite{abbe2021stochastic} and \cite{mossel2016local} respectively.
For optimal recovery for the vanilla SBM (Theorem~\ref{thm:opt-rec-vanilla}), we need to handle asymmetric initial points (Section~\ref{sec:asymm}), and suitably generalize the algorithm in \cite{mossel2016belief}.

\paragraph{Structure of the paper.}
In Section~\ref{sec:fms}, we establish a theory of FMS channels.
In Section~\ref{sec:bi}, we prove Theorem~\ref{thm:bi-imprecise}, boundary irrelevance and uniqueness of BP fixed point for a wide range of parameters.
In Section~\ref{sec:non-bi}, we prove Theorem~\ref{thm:non-bi}, that boundary irrelevance does not hold between the reconstruction threshold and the Kesten-Stigum threshold.

In Section~\ref{sec:chan-lim}, we discuss limits of information channels.
In Section~\ref{sec:proof-fms}, we give missing proofs in Section~\ref{sec:fms}.
In Section~\ref{sec:proof-bi}, we give missing proofs in Section~\ref{sec:bi}.
In Section~\ref{sec:proof-non-bi}, we give missing proofs in Section~\ref{sec:non-bi}, including the local subadditivity of $\chi^2$-information (Lemma~\ref{lem:subadd}).
In Section~\ref{sec:asymm}, we discuss asymmetric fixed points of the $\BP$ operator.
In Section~\ref{sec:mutual-info}, we prove Theorem~\ref{thm:sbm-mutual-info}, SBM mutual information formula.
In Section~\ref{sec:opt-rec}, we prove Theorem~\ref{thm:opt-rec-survey} and~\ref{thm:opt-rec-vanilla}, optimal recovery algorithms.
In Section~\ref{sec:discuss}, we discuss several further directions.

\section{FMS Channels} \label{sec:fms}
In this section we introduce $q$-ary fully memoryless symmetric ($q$-FMS) channels.\footnote{Here ``fully'' modifies ``symmetric'', and indicates that the symmetry group of the channel is the full symmetric group $\Aut(\cX)$ as opposed to a subgroup.} They are a generalization of binary memoryless symmetric (BMS) channels to $q$-ary input alphabets.
For background on BMS channels, see e.g., \cite{richardson2008modern}.
\begin{definition}[Fully memoryless symmetric (FMS) channels] \label{defn:fms-channel}
	A $q$-ary fully memoryless symmetric ($q$-FMS) channel (or an FMS channel when $q$ is obvious from context) is a channel $P: \cX \to \cY$ with input alphabet $\cX = [q]$ such that
	there exists a group homomorphism $\iota: \Aut(\cX) \to \Aut(\cY)$ such that
	for any measurable $E\subseteq \cY$, we have
	\begin{align}
		P(E | x) = P(\iota(\tau) E | \tau(x))
  \end{align}
	for all $x\in \cX$, $\tau \in \Aut(\cX)$.
  Here $\Aut(\cX)$ denotes the symmetry group (also known as the automorphism group) of $\cX$.
\end{definition}
By definition, $2$-FMS channels are exactly BMS channels.

We remark that $q$-FMS channels are a special case of input-symmetric channels (see e.g.,~\cite[Chapter 19]{polyanskiy2023information}) whose group of symmetries is the whole $\Aut(\cX)$.
\cite{makur2018comparison} studied comparison between $q$-ary symmetric channels, but their definition of symmetric channels is quite different from ours.

The BSC mixture representation of BMS channels has been useful in proving results about BMS channels. Therefore it is desirable to generalize this theory to FMS channels.
We define fully symmetric channels (FSCs), which generalize BSCs, and will serve as basic building blocks for FMS channels.
\begin{definition} \label{defn:fsc-channel}
	Let $\cX = [q]$, $\cY = \Aut(\cX)$. For $\pi \in \cP(\cX)/\Aut(\cX)$, define channel $\FSC_\pi: \cX \to \cY$ as
	\begin{align}
		\FSC_\pi(\tau | i) = \frac 1{(q-1)!}\pi_{\tau^{-1}(i)} \qquad \forall i\in \cX, \tau\in \Aut(\cX),
	\end{align}
	where $\Aut(\cX)$ acts on $\cY$ by left multiplication.
\end{definition}
We can verify that
\begin{align}
  \FSC_\pi(\eta \tau | \eta(i)) = \frac{1}{(q-1)!} \pi_{(\eta\tau)^{-1}(\eta(i))} = \frac 1{(q-1)!} \pi_{\tau^{-1}(i)} = \FSC_\pi(\tau | i)
\end{align}
for $i\in \cX$, $\eta,\tau\in \Aut(\cX)$. So FSCs are examples of FMS channels.

One of the most basic relationships between two channels is degradation.
\begin{definition} \label{defn:fsc-degradation}
  Let $P: \cX \to \cY$ ad $Q: \cX \to \cZ$ be two channels with the same input alphabet.
  We say $P$ is a degradation of $Q$ (denoted $P \le_{\deg} Q$) if there exists a channel $R: \cZ \to \cY$ respecting $\Aut(\cX)$ action such that $P = R\circ Q$.
  We say $P$ and $Q$ are equivalent if $P \le_{\deg} Q$ and $Q \le_{\deg} P$.
\end{definition}
In other words, $P$ is a degradation of $Q$ if we can simulate $P$ by postprocessing the output of $Q$.

In the binary case, all BMS channels are equivalent to mixtures of BSCs (see e.g.,~\cite{richardson2008modern}). We generalize this result to $q$-ary FMS channels.
\begin{proposition}[Structure of FMS channels] \label{prop:fms-mixture}
  Every FMS channel is equivalent to a mixture of FSCs, i.e., every FMS channel $P:\cX\to \cY$ is equivalent to a channel
	$X\to (\pi, Z)$ where $\pi \sim P_\pi\in \cP(\cP(\cX)/\Aut(\cX))$ is independent of $X$,
	and $Z \sim \FSC_\pi(\cdot | X)$ conditioned on $\pi$ and $X$.
	Furthermore, $P_\pi$ is uniquely deteremined by $P$.
\end{proposition}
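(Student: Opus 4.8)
The plan is to realize $P$, up to equivalence, as the channel that simply outputs the Bayes posterior of the input, to use the FMS symmetry to make the law of that posterior $\Aut(\cX)$-invariant, and then to split the posterior into its $\Aut(\cX)$-orbit (which will play the role of $\pi$) and its position inside that orbit (which will be the output of an $\FSC$); this is the natural $q$-ary analogue of the classical proof that a BMS channel is a mixture of BSCs. Concretely, I would fix a uniform prior $X\sim\Unif(\cX)$, draw $Y\sim P(\cdot\mid X)$, and set $V=P_{X\mid Y}(\cdot\mid Y)\in\cP(\cX)$. Since $P_{X\mid V}(\cdot\mid v)=v$ by construction, $V$ is a sufficient statistic and $X\to Y$ is equivalent to $X\to V$; moreover the defining identity $P(E\mid x)=P(\iota(\tau)E\mid\tau(x))$ becomes $P_{X\mid Y}(\cdot\mid\iota(\tau)y)=\tau_{*}P_{X\mid Y}(\cdot\mid y)$, which gives $\tau_{*}V\stackrel{d}{=}V$ for every $\tau\in\Aut(\cX)$, i.e.\ the law of $V$ is $\Aut(\cX)$-invariant. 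I then define $P_\pi$ to be the pushforward of the law of $V$ along the quotient map $\cP(\cX)\to\cP(\cX)/\Aut(\cX)$.

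Next I would build the $\FSC$ mixture. Fix a measurable section $s\colon\cP(\cX)/\Aut(\cX)\to\cP(\cX)$ and, for an orbit $p$, let $H(p)=\mathrm{Stab}(s(p))\le\Aut(\cX)$ (under the pushforward action). By $\Aut(\cX)$-invariance, conditionally on its orbit $\pi=p$ the posterior $V$ is uniform on the finite orbit $\{\tau_{*}s(p):\tau\in\Aut(\cX)\}$, and a one-line computation gives $P_{X\mid\pi}(\cdot\mid p)$ uniform, so $\pi\perp X$ with $\pi\sim P_\pi$. Now postprocess $V$ into $(\pi,Z)$, where $\pi$ is the orbit of $V$ and $Z$ is drawn uniformly from the left coset $\{\tau\in\Aut(\cX):\tau_{*}s(\pi)=V\}$ of $H(\pi)$. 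Using $\sum_{\tau\in\Aut(\cX)}s(p)_{\tau^{-1}(i)}=(q-1)!$, one checks
\begin{align}
  P(Z=\tau\mid\pi=p,\,X=i)=\frac{s(p)_{\tau^{-1}(i)}}{(q-1)!}=\FSC_{s(p)}(\tau\mid i),
\end{align}
so $X\to(\pi,Z)$ is precisely the claimed mixture of $\FSC$s. In the other direction $V=Z_{*}s(\pi)$ is a deterministic function of $(\pi,Z)$, so $X\to V$ and $X\to(\pi,Z)$ are degradations of one another; letting $\Aut(\cX)$ act trivially on the orbit coordinate and by left multiplication on the $\Aut(\cX)$ coordinate (this is exactly the homomorphism $\iota$ of the $\FSC$ mixture), both postprocessing channels commute with the $\Aut(\cX)$-actions, so $P$ is $\Aut(\cX)$-equivalent to the $\FSC$ mixture with mixing measure $P_\pi$.

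For uniqueness I would argue that the law of the posterior under a uniform prior is invariant under (two-sided) channel equivalence by Blackwell's theorem, so the law of $V$, and hence its pushforward $P_\pi$, depends only on the equivalence class of $P$. Conversely, for any mixture of $\FSC$s with mixing measure $Q_\pi$, averaging $\FSC_{s(p)}(\cdot\mid i)$ over $i\sim\Unif(\cX)$ shows that the $\Aut(\cX)$-component $Z$ of the output is uniform on $\Aut(\cX)$ given the orbit component, so its posterior equals $Z_{*}s(\pi)$ with $Z$ uniform, which is uniform on the orbit of $s(\pi)$; hence the pushforward of its posterior law along the quotient map is exactly $Q_\pi$. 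Comparing the two descriptions of the posterior law forces $Q_\pi=P_\pi$.

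The main obstacle I anticipate is not conceptual but measure-theoretic: for an abstract Polish output space $\cY$ one must justify that $y\mapsto P_{X\mid Y}(\cdot\mid y)$ and the section $s$ can be chosen measurably and that the conditional laws above can be manipulated by disintegration rather than by densities; since $\Aut(\cX)=S_q$ is finite acting on the Polish space $\cP(\cX)$, standard measurable-selection results should handle this. A secondary nuisance is that when $H(p)$ is nontrivial the ``position within the orbit'' is only a coset of $H(p)$, not a single group element, which is why $Z$ must be randomized uniformly over a coset; this is harmless because for such $p$ the columns $\tau$ and $\tau h$ of $\FSC_{s(p)}$ coincide for $h\in H(p)$, so the extra randomization does not affect the equivalence class.
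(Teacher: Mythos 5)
Your proof is correct and takes essentially the same approach as the paper's: under a uniform prior, pass to the Bayes posterior, decompose it by $\Aut(\cX)$-orbits, identify the within-orbit conditional as an $\FSC$, and project the posterior law to get the mixing measure, with uniqueness following because the posterior law is an invariant of channel equivalence. The only implementation difference is that the paper handles nontrivial stabilizers by first replacing $P$ with an equivalent channel whose $\Aut(\cX)$-action on the output is free and then grouping output-space orbits, whereas you stay in the posterior space $\cP(\cX)$ and handle stabilizers explicitly via a measurable section and uniform randomization over the coset $\{\tau : \tau_* s(\pi) = V\}$ — two ways of accomplishing the same thing.
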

Proof is deferred to Section~\ref{sec:proof-fms}.
In the setting of the above proposition, we call $\pi$ the $\pi$-component of $P$, and $P_\pi$ the $\pi$-distribution of $P$.
% We often use the convention that elements $\pi\in\cP(\cX)/\Aut(\cX)$ satisfy $\pi_1\ge \cdots\ge \pi_q$.
Prop.~\ref{prop:fms-mixture} establishes an equivalence between an FMS channel and a probability distribution on $\cP(\cX)/\Aut(\cX)$, which maps an FMS channel to its $\pi$-distribution.
In particular, the $\pi$-distribution is an invariant property of an FMS channel under equivalence.
For BMS channels we usually denote the $\pi$-component as a single number $\Delta\in [0, \frac 12]$, and call it the $\Delta$-component.

Degradation has a nice characterization in terms of the $\pi$-components.
\begin{proposition} \label{prop:fms-deg-couple}
	Let $P, Q$ be two FMS channels, and $\pi_P$ and $\pi_Q$ be their $\pi$-components.
	Then $P\le_{\deg} Q$ if and only if there exists a coupling between $\pi_P$ and $\pi_Q$ such that
	\begin{align}
		\pi \le_m \bE[\pi_Q | \pi_P = \pi] \qquad \forall \pi\in \cP(\cX)/\Aut(\cX),
	\end{align}
	where $\le_m$ denotes majorization (see e.g., \cite[2.18]{hardy1934inequalities}).
  We use the convention that elements $\pi\in\cP(\cX)/\Aut(\cX)$ are non-increasing so that the expectation is well-defined.
\end{proposition}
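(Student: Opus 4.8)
The plan is to pass from channels to \emph{posterior distributions}, where degradation becomes the convex (Blackwell) order, and then match that order against the coupling-plus-majorization condition. For an FMS channel $P$ write $\mu_P\in\cP(\cP(\cX))$ for the law of the posterior vector $\bigl(\bP[X=i\mid Y]\bigr)_{i\in\cX}$ when $X\sim\Unif(\cX)$ and $Y\sim P(\cdot\mid X)$; $\mu_P$ depends only on the equivalence class of $P$, so by Prop.~\ref{prop:fms-mixture} I may compute it on the FSC-mixture representative. A direct computation (uniform input, Bayes rule) gives that under $\FSC_\pi$ the output is uniform on $\Aut(\cX)$ and the posterior given output $\tau$ is $\tau\cdot\pi$, where $(\tau\cdot\pi)_i:=\pi_{\tau^{-1}(i)}$; hence $\mu_P$ is exactly the law of $\Sigma\cdot\pi_P$ with $\Sigma\sim\Unif(\Aut(\cX))$ independent of $\pi_P$, and conversely $\pi_P$ is distributed as the sorted vector $U^\downarrow$ for $U\sim\mu_P$. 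Thus $\mu_P\leftrightarrow\mathrm{law}(\pi_P)$ is the canonical bijection between symmetric laws on $\cP(\cX)$ and laws on $\cP(\cX)/\Aut(\cX)$. The first reduction is: $P\le_{\deg}Q\iff\mu_P\le_{\mathrm{cx}}\mu_Q$, where $\mu\le_{\mathrm{cx}}\nu$ means $\int g\,d\mu\le\int g\,d\nu$ for every continuous convex $g$ on $\cP(\cX)$. The forward implication is the classical Blackwell comparison theorem at the uniform prior (posterior is a sufficient statistic, $\cP(\cX)$ is compact). For the converse, Blackwell produces a garbling $R$ with $P=R\circ Q$ that need not respect $\Aut(\cX)$; averaging $R$ over the $\Aut(\cX)$-action and using the FMS symmetry of $P$ and $Q$ yields an $\Aut(\cX)$-respecting garbling, so $P\le_{\deg}Q$ in the sense of Definition~\ref{defn:fsc-degradation}.

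It remains to show $\mu_P\le_{\mathrm{cx}}\mu_Q$ iff there is a coupling of $\pi_P,\pi_Q$ with $\pi_P\le_m\bE[\pi_Q\mid\pi_P]$ a.s. For the ``if'' direction (coupling $\Rightarrow$ degradation), fix continuous convex $g$ and let $\bar g(v):=\bE_{\Sigma}[g(\Sigma\cdot v)]$ be its symmetrization; $\bar g$ is convex, and being convex and permutation-symmetric it is Schur-convex (Birkhoff--von Neumann). Then, using $\mu_P=\mathrm{law}(\Sigma\cdot\pi_P)$ and $\mu_Q=\mathrm{law}(\Sigma\cdot\pi_Q)$,
$\int g\,d\mu_Q=\bE[\bar g(\pi_Q)]=\bE\bigl[\bE[\bar g(\pi_Q)\mid\pi_P]\bigr]\ge\bE\bigl[\bar g(\bE[\pi_Q\mid\pi_P])\bigr]\ge\bE[\bar g(\pi_P)]=\int g\,d\mu_P$,
the first inequality by Jensen and the second by Schur-convexity applied to $\pi_P\le_m\bE[\pi_Q\mid\pi_P]$. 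Hence $\mu_P\le_{\mathrm{cx}}\mu_Q$.

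The ``only if'' direction is where the real work lies. Starting from $\mu_P\le_{\mathrm{cx}}\mu_Q$, Strassen's theorem gives a martingale coupling $(U,V)$ of $\mu_P,\mu_Q$, i.e. $\bE[V\mid U]=U$. Symmetrize it by replacing its law with $\tfrac1{q!}\sum_{\tau}(\tau\times\tau)_*\mathrm{law}(U,V)$; this is still a coupling of $\mu_P,\mu_Q$ (which are symmetric) and still a martingale coupling (a mixture of martingale couplings whose mixing variable is suppressed is again a martingale coupling). Invariance of the symmetrized coupling under $(u,v)\mapsto(\tau u,\tau v)$, together with $(\tau v)^\downarrow=v^\downarrow$, forces $\bE[V^\downarrow\mid U=u]$ to depend on $u$ only through $u^\downarrow$, say $\bE[V^\downarrow\mid U]=h(U^\downarrow)$. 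Now use the elementary fact that $\bE[W^\downarrow]\ge_m\bE[W]$ for any bounded random vector $W$: both sides have the same coordinate sum, and the top-$k$ partial sum of $\bE[W^\downarrow]$ equals $\bE[\max_{|S|=k}\sum_{i\in S}W_i]\ge\max_{|S|=k}\sum_{i\in S}\bE[W_i]$, the top-$k$ partial sum of $\bE[W]$. Applying this conditionally on $U$ gives $h(U^\downarrow)=\bE[V^\downarrow\mid U]\ge_m\bE[V\mid U]=U=_m U^\downarrow$. Therefore setting $\pi_P:=U^\downarrow$ and $\pi_Q:=V^\downarrow$ yields a coupling of $\pi_P,\pi_Q$ with $\bE[\pi_Q\mid\pi_P]=\bE[V^\downarrow\mid U^\downarrow]=h(\pi_P)\ge_m\pi_P$, as required.

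The delicate points I expect to have to handle carefully are: verifying the hypotheses of Strassen's theorem (straightforward since $\cP(\cX)$ is compact); making the step ``$\bE[V^\downarrow\mid U]$ is a function of $U^\downarrow$'' rigorous via disintegration transported along the measure-preserving symmetry maps, being mindful of ties in the sorting map $v\mapsto v^\downarrow$ (which is nonetheless continuous); and the averaging argument upgrading a generic Blackwell garbling to an $\Aut(\cX)$-respecting one. Everything else is a routine assembly of Jensen's inequality, the equivalence of majorization with domination of top-$k$ partial sums, and the fact that symmetric convex functions are Schur-convex.
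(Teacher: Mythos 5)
Your proof is correct, but it takes a genuinely different route from the paper's. You reduce degradation to the Blackwell--Sherman--Stein theorem (convex order of posterior laws at the uniform prior), upgrade the resulting garbling to an $\Aut(\cX)$-equivariant one by averaging, and for the converse extract a martingale coupling via Strassen, symmetrize it, and sort. The paper instead argues constructively and avoids both Blackwell and Strassen: for ``degradation $\Rightarrow$ coupling'' it observes that the degradation kernel itself already furnishes the martingale coupling of the unsorted posteriors (so Strassen is unnecessary there), and then projects to $\cP(\cX)/\Aut(\cX)$; for ``coupling $\Rightarrow$ degradation'' it uses the Hardy--Littlewood--P\'olya representation of majorization ($\pi \le_m \pi'$ iff $\pi_i = \sum_\sigma a_\sigma \pi'_{\sigma^{-1}(i)}$ for some $a \in \cP(\Aut(\cX))$) to write down an explicit post-processing kernel between FSCs, and factors the general case through an intermediate channel with $\pi$-component $\bE[\pi_Q \mid \pi_P]$. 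Your approach buys generality and modularity at the cost of heavier machinery; the paper's buys a short, self-contained, explicit kernel construction. You were right to flag the step where $\bE[V^\downarrow \mid U]$ must be shown to depend on $U$ only through $U^\downarrow$ as delicate: it is genuinely needed, since the set $\{w : w \ge_m \pi\}$ is not convex (e.g.\ $(1,0,0)$ and $(0,0,1)$ both majorize $(0.6,0.2,0.2)$ but their average does not), so one cannot simply condition on $U^\downarrow$ by the tower property without first establishing that symmetry; your symmetrization of the coupling handles this correctly since $\Aut(\cX)$ is finite.
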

Proof is deferred to Section~\ref{sec:proof-fms}.

There are different ways to construct new FMS channels from given FMS channels.
In this paper we focus on two ways: composition with Potts channels and $\star$-convolution.

Fix $q\ge 2$. For $\lambda \in [-\frac{1}{q-1}, 1]$, define Potts channel $P_\lambda: [q] \to [q]$ as
$
  P_\lambda(y | x) = \lambda \mathbbm{1}\{x=y\} + \frac{1-\lambda}q
$
for $x,y\in [q]$.
Then given any $q$-FMS channel $P$, $P\circ P_\lambda$ is also a $q$-FMS channel.
Furthermore, the $\pi$-distribution of $P\circ P_\lambda$ is $f_*(P_\pi)$, where $P_\pi$ is the $\pi$-distrbution of $P$, $f(\pi) = \lambda \pi + \frac {1-\lambda} q$, and $f_*$ is the induced pushforward map.

Given two channels $P: \cX\to \cY$, $Q: \cY\to \cZ$, their $\star$-convolution $P\star Q: \cX \to \cY \times \cZ$ is defined by letting $P$ and $Q$ acting on the same input independently.
When $P$ and $Q$ are $q$-FMS channels, $P\star Q$ has a natural $q$-FMS structure.
If the $\pi$-component of $P$ (resp.~$Q$) has distribution $P_\pi$ (resp.~$Q_\pi$), then the $\pi$-component of $P\star Q$ has distribution
\begin{align}
  &\bE_{\substack{\pi \sim P_\pi \\ \pi' \sim Q_\pi}}\left[ \sum_{\tau \in \Aut([q])} \left(\frac 1{(q-1)!}\sum_{i\in [q]} \pi_i \pi'_{\tau(i)}\right) \mathbbm{1}_{\pi \star_\tau \pi'}\right]
  \label{eqn:star-formula}\\
  \text{where}\quad &\pi \star_\tau \pi' := \left(\frac {\pi_i \pi'_{\tau(i)}}{\sum_{j\in [q]} \pi_j \pi'_{\tau(j)}}\right)_{i\in [q]} \in \cP([q])/\Aut([q])
\end{align}
and $\mathbbm{1}_\theta \in \cP(\cP([q])/\Aut([q]))$ denotes the point distribution at $\theta\in \cP([q])/\Aut([q])$.
We use $M^{\star b}$ to denote the $b$-th $\star$-power: $M^{\star 0} = \Id$ and $M^{\star b} = M^{\star (b-1)} \star M$.

Given any $q$-FMS channel $P$, we can restrict the input alphabet to get a $q'$-FMS for $q'\le q$. (Because of symmetry, the restricted channel is unique up to channel equivalence no matter what size-$q'$ subset we choose.)
In this paper we only use the case $q'=2$, i.e., restrict to a BMS channel.
We use $P^R$ to denote the restricted BMS channel.\footnote{Here ``$R$'' stands for ``restriction''.}

We study the behavior of information measures under belief propagation. The following information measures are particularly useful.
\begin{definition} \label{defn:fms-info-measure}
  Let $P$ be a $q$-FMS channel and $\pi$ be its $\pi$-component. We define the following quantities.
  \begin{align*}
    P_e(P) &= \bE \min\{1-\pi_i : i\in [q]\},  \tag{probability of error} \\
    C(P) &= \log q - \bE \sum_{i\in [q]} \pi_i \log \frac 1{\pi_i}, \tag{capacity} \\
    C_{\chi^2}(P) &= \bE \left[q \sum_{i\in [q]} \pi_i^2 -1\right], \tag{$\chi^2$-capacity} \\
    C_{\SKL}(P) &= \bE \left[\sum_{i\in [q]} \left(\pi_i - \frac 1q\right) \log(\pi_i) \right]. \tag{SKL capacity}
  \end{align*}
  % \begin{align*}
  %   P_e(P) &= \bE  \min\{\pi_i : i\in [q]\}  \tag{probability of error} \\
  %   C(P) &= \bE D(\pi \| \Unif([q])) \tag{capacity} \\
  %   &= \log q - \bE \sum_{i\in [q]} \pi_i \log \frac 1{\pi_i} \\
  %   C_{\chi^2}(P) &= \bE \chi^2 (\pi \| \Unif([q])) \tag{$\chi^2$-capacity} \\
  %   &= \bE \left[q \sum_{i\in [q]} \pi_i^2 -1\right] \\
  %   C_{\SKL}(P) &= \bE D_{\SKL}(\pi \| \Unif(q))  \tag{SKL capacity}\\
  %   &= \bE \left(D(\pi \| \Unif(q)) + D(\Unif(q) \| \pi)\right) \\
  %   &= \bE \sum_{i\in [q]} (\pi_i - \frac 1q) \log(\pi_i).
  % \end{align*}
  For BMS channels we also define the Bhattacharyya coefficient
  \begin{align}
    Z(P) &= \bE \left[2\sqrt{\Delta(1-\Delta)}\right]
    \tag{Bhattacharyya coefficient}
  \end{align}
  where $\Delta$ is the $\Delta$-component of $P$.
\end{definition}

These information measures respect degradation, as summarized in the next lemma.
\begin{lemma} \label{lem:fms-deg-info-measure}
  Let $P$ and $Q$ be two $q$-FMS channels with $P\le_{\deg} Q$. Then
  $
    P_e(P) \ge P_e(Q), C(P) \le C(Q), C_{\chi^2}(P) \le C_{\chi^2}(Q), C_{\SKL}(P) \le C_{\SKL}(Q).
  $
  If $q=2$ then we also have
  $
    Z(P) \ge Z(Q).
  $
\end{lemma}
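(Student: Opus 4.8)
The plan is to reduce all five inequalities to a single monotonicity principle on the probability simplex. By Proposition~\ref{prop:fms-mixture}, each quantity in Definition~\ref{defn:fms-info-measure} has the form $\bE[\phi(\pi_P)]$ (up to an affine reparametrization with $q$-dependent but $P$-independent constants) for a suitable function $\phi$ on $\cP([q])/\Aut([q])$: namely $\phi = H$ for $C$ (with $C(P) = \log q - \bE[H(\pi_P)]$), $\phi(\pi) = \sum_i \pi_i^2$ for $C_{\chi^2}$, $\phi(\pi) = 1 - \max_i \pi_i$ for $P_e$, $\phi(\pi) = \sum_i(\pi_i - \tfrac1q)\log\pi_i$ for $C_{\SKL}$, and $\phi(\pi) = 2\sqrt{\pi_1\pi_2}$ for $Z$ (case $q=2$). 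The claim to establish is: \emph{if $\phi$ is symmetric and concave on $\cP([q])$, then $P \le_{\deg} Q$ implies $\bE[\phi(\pi_P)] \ge \bE[\phi(\pi_Q)]$.} Granting this, $C$ follows since $H$ is symmetric concave; $C_{\chi^2}$ and $C_{\SKL}$ follow from the (routine but worth-checking) fact that $\sum_i \pi_i^2$ and $\sum_i(\pi_i-\tfrac1q)\log\pi_i = -H(\pi) - \tfrac1q\sum_i\log\pi_i$ are symmetric \emph{convex}, so $-\phi$ is symmetric concave and the inequality flips in the right direction; $P_e$ follows since $1-\max_i\pi_i$ is symmetric concave; and $Z$ follows since $x\mapsto 2\sqrt{x(1-x)}$ is concave on $[0,1]$.

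For the monotonicity principle I would invoke Proposition~\ref{prop:fms-deg-couple}: choose a coupling of $(\pi_P,\pi_Q)$ with $\pi \le_m \bE[\pi_Q \mid \pi_P = \pi]$ for all $\pi$, using non-increasing representatives so the conditional expectation is a well-defined point of $\cP([q])/\Aut([q])$. Then
\begin{align}
  \bE[\phi(\pi_Q)] = \bE\big[\,\bE[\phi(\pi_Q)\mid \pi_P]\,\big] \le \bE\big[\phi(\bE[\pi_Q\mid \pi_P])\big] \le \bE[\phi(\pi_P)],
\end{align}
where the first inequality is conditional Jensen (concavity of $\phi$) and the second uses that a symmetric concave function is Schur-concave (Hardy--Littlewood--P\'olya) together with $\pi_P \le_m \bE[\pi_Q\mid\pi_P]$ from the coupling. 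This proves the principle and hence the lemma.

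The points needing genuine care, rather than being automatic, are: (i) concavity of the $C_{\SKL}$ integrand and the behaviour of $\phi$ near the boundary $\{\pi_i = 0\}$ (where $\log$ blows down and $\sqrt{\cdot}$ has infinite slope) — concavity holds as an extended-real-valued statement and Jensen still applies, but this should be remarked; (ii) checking that $\phi$ descends to a well-defined function on the quotient $\cP([q])/\Aut([q])$ (immediate from symmetry) and that Jensen and Schur-concavity are used consistently via non-increasing representatives; and (iii) sign bookkeeping, since $C_{\chi^2}$, $P_e$, and the $C_{\SKL}$ integrand carry the ``opposite'' convexity. I expect none of these to be a serious obstacle; indeed the lemma is essentially a data-processing statement and could alternatively be obtained by recognizing $C$, $C_{\chi^2}$, $C_{\SKL}$ as mutual informations (ordinary, $\chi^2$, symmetric-KL) with uniform input — hence monotone under the degrading channel $R$ — and $P_e$ as a Bayes error, but the uniform concavity argument above has the advantage of handling $Z$ in the same stroke.
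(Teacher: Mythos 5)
Your proof is correct, but it takes a genuinely different route from the paper's, which disposes of the lemma in a single line by citing the data-processing inequality for $f$-divergences: under uniform input, $C$, $C_{\chi^2}$, and $C_{\SKL}$ are the KL, $\chi^2$, and symmetric-KL informations respectively, $P_e$ is the Bayes error, and $Z$ is the Bhattacharyya (Hellinger) affinity between $P(\cdot\mid+)$ and $P(\cdot\mid-)$, and every one of these is monotone under the post-processing kernel $R$ that witnesses $P = R\circ Q$. Your argument instead routes through the coupling characterization of degradation (Prop.~\ref{prop:fms-deg-couple}), conditional Jensen, and Schur-concavity, yielding a single principle — ``symmetric concave on $\cP([q])$ implies $\bE\phi$ decreases under degradation'' — that covers all five quantities once the signs are sorted. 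This is more elementary and self-contained, makes the majorization mechanism explicit (the same mechanism the paper leans on inside the proof of Prop.~\ref{prop:phi-contraction-imply-l2}), and extends to any new potential $\phi$; the price is the sign bookkeeping and the extended-real-valued boundary care you correctly flag for the $C_{\SKL}$ integrand. One small correction to your closing remark: the $f$-divergence route does handle $Z$ with no extra work, since $Z(P) = 1 - \tfrac12 H^2\bigl(P(\cdot\mid+),P(\cdot\mid-)\bigr)$ and Hellinger is an $f$-divergence, so the uniform-concavity argument is not actually needed to reach $Z$.
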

\begin{proof}
  By definition of degradation, and data processing inequality for $f$-divergences.
\end{proof}

\section{Uniqueness and boundary irrelevance results} \label{sec:bi}
In this section we prove uniqueness of BP fixed point and boundary irrelevance results for the Potts model for a wide range of parameters.
We consider the $q$-ary Potts model with broadcasting channel $P_\lambda$ on a regular tree or a Poisson tree with expected offspring $d$.

We state two results, one for the low SNR regime and one for the high SNR regime. We define the following constants used in the results.
\begin{definition}
  For $q\in \bZ_{\ge 2}$, $\lambda\in \left[-\frac 1{q-1},1\right]$, $d\ge 0$, we define
  \begin{align}
    C^L(q,\lambda) &:= \sup_{\substack{\pi \in \cP([q]) \\ v \in \mathbbm{1}^\perp \subseteq \bR^q}} \frac{f^L\left(\lambda \pi + \frac {1-\lambda} q, v\right)}{f^L(\pi, v)}, \label{eqn:thm:bi-low-snr-defn-C}\\
    \text{where}~f^L(\pi, v) &:= \left\langle \pi^{-1} + \frac 1q \pi^{-2}, v^2\right\rangle, \label{eqn:thm:bi-low-snr-defn-f}\\
  % \end{align}
  % \begin{align}
    C^H(q,\lambda) &:= \sup_{\substack{\pi \in \cP([q]) \\ v \in \mathbbm{1}^\perp \subseteq \bR^q}} \frac{f^H\left(\lambda \pi + \frac {1-\lambda} q, v\right)}{f^H(\pi, v)}, \label{eqn:thm:bi-high-snr-defn-C}\\
    \text{where}~f^H(\pi, v) &:= \|\pi^{1/4}\|_2^2 \|\pi^{-3/4} v\|_2^2 - \left\langle \pi^{1/4}, \pi^{-3/4}v \right\rangle^2, \label{eqn:thm:bi-high-snr-defn-f}\\
    c^H(q,\lambda,d) &:= \left(\frac 2q + \frac{q-2}q \cdot \frac{d\lambda^2-1}{d\lambda-1}\right)^{-1}.\label{eqn:thm:bi-high-snr-defn-c}
  \end{align}
\end{definition}
We have the following bounds on these constants: $C^L(q,\lambda) \le q^2$ (Prop.~\ref{prop:bi-low-snr-C-bound}), $C^H(q,\lambda) \le q^{5/2}$ (Prop.~\ref{prop:bi-high-snr-C-bound}), $c^H(q,\lambda,d) \ge 1$ (obvious).

\begin{theorem}[Low SNR] \label{thm:bi-low-snr}
  If \begin{align}
    d\lambda^2 C^L(q,\lambda) < 1, \label{eqn:thm:bi-low-snr-cond}
  \end{align}
  where $C^L$ is defined in~\eqref{eqn:thm:bi-low-snr-defn-C}, then boundary irrelevance and stability of BP fixed point hold.
\end{theorem}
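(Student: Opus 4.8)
The plan is to implement the degradation/potential-function method of \cite{abbe2021stochastic}, now phrased in the language of $q$-FMS channels developed in Section~\ref{sec:fms}. The target statement has two parts — boundary irrelevance (uniqueness of the $\BP_W$ fixed point) and global stability of the $\BP$ fixed point — but both follow from one contraction estimate, so I would prove that estimate once and then deduce both. Concretely, suppose $P \le_{\deg} Q$ are two $q$-FMS channels (for stability one takes $P$ arbitrary and $Q = \Id$ after first degrading both to a common channel; for boundary irrelevance one works with $\BP_W$ in place of $\BP$ and uses that $W$-convolution only helps contraction). Using Prop.~\ref{prop:fms-deg-couple} fix a coupling of the $\pi$-components with $\pi_P \le_m \bE[\pi_Q \mid \pi_P]$. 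The goal is to exhibit a functional $\Phi^L$ on $q$-FMS channels such that (i) $\Phi^L(Q) - \Phi^L(P) \ge 0$ with equality iff $P = Q$, and (ii) $\Phi^L(\BP(Q)) - \Phi^L(\BP(P)) \le d\lambda^2 C^L(q,\lambda)\,\bigl(\Phi^L(Q) - \Phi^L(P)\bigr)$, so that under \eqref{eqn:thm:bi-low-snr-cond} the gap decays geometrically to $0$, forcing $\BP^\infty(P) = \BP^\infty(Q)$.

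The natural candidate for $\Phi^L$ is a second-moment / $\chi^2$-type energy whose second-order (Hessian) behaviour along the simplex is governed exactly by the quadratic form $f^L(\pi,v) = \langle \pi^{-1} + \tfrac1q \pi^{-2}, v^2\rangle$ appearing in \eqref{eqn:thm:bi-low-snr-defn-f}; morally $\Phi^L(P) = \bE_{\pi_P}[g(\pi)]$ for a convex $g$ with $\nabla^2 g(\pi)$ acting on $\mathbbm 1^\perp$ like $f^L$. The steps I would carry out: (1) reduce the difference $\Phi^L(Q)-\Phi^L(P)$ along the majorization coupling to an integral of $f^L$ over infinitesimal "Robin Hood" transfers, using the Schur-convexity / integral representation of majorization — this is where $f^L$ enters as the local cost of a mass transfer in direction $v \in \mathbbm 1^\perp$; (2) track how one step of $\BP$ acts on this difference. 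Here $\BP(M) = \bE_b (M\circ P_\lambda)^{\star b}$ decomposes into (a) post-composition with $P_\lambda$, which by the remark after Prop.~\ref{prop:fms-mixture} pushes $\pi \mapsto \lambda\pi + \tfrac{1-\lambda}q$ and therefore scales the local cost $f^L(\pi,v)$ by at most $\sup_{\pi,v} f^L(\lambda\pi+\tfrac{1-\lambda}q, v)/f^L(\pi,v) = C^L(q,\lambda)$ — this is precisely the definition \eqref{eqn:thm:bi-low-snr-defn-C}; (b) the $\star$-convolution and the Poisson (or fixed) branching average, which I expect to be \emph{non-expansive} for this quadratic energy (tensorization of $\chi^2$-type quantities), contributing the factor $d$ (expected number of offspring, i.e. $\bE b$) and nothing more. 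Combining (a) and (b) gives the contraction constant $d\lambda^2 C^L(q,\lambda)$; the extra $\lambda^2$ rather than $\lambda$ comes from the fact that the relevant direction $v$ lives in $\mathbbm 1^\perp$ and $P_\lambda$ contracts $\mathbbm 1^\perp$ by $\lambda$, so the quadratic cost sees $\lambda^2$.

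The main obstacle I anticipate is step (2b): showing that $\star$-convolution together with the branching average does not expand the $f^L$-energy gap, i.e. a genuine subadditivity statement for the chosen potential. For the $q=2$ Ising case this is the subadditivity of $\chi^2$-capacity under $\star$-convolution \cite{abbe2019subadditivity}, but the paper itself warns (in the "Our technique" paragraph) that $\chi^2$-capacity is \emph{not} subadditive for $q \ge 3$ in general. The resolution must be that one does not need subadditivity of $C_{\chi^2}$ itself but only of the \emph{difference} $\Phi^L(Q) - \Phi^L(P)$ along a majorization coupling — a second-order/linearized statement — and that the specific form $\pi^{-1} + \tfrac1q\pi^{-2}$ in $f^L$ (as opposed to a plain $\chi^2$ Hessian $\pi^{-1}$) is chosen precisely to make this linearized subadditivity go through; verifying that algebraic miracle, i.e. that the $\star_\tau$ operation \eqref{eqn:star-formula} contracts the $f^L$-quadratic form after averaging over $\tau$ and over the branching number, is the technical heart. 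Once the contraction is in hand, boundary irrelevance follows because $\BP_W$ only adds a further degradation step (convolving with $W$ can only shrink $\Phi^L(Q)-\Phi^L(P)$, by Lemma~\ref{lem:fms-deg-info-measure}-type monotonicity), and stability follows by taking $Q=\Id$ and noting $\Phi^L$ separates channels, so that $\Phi^L(\BP^k(P)) - \Phi^L(\BP^k(\Id)) \to 0$ forces the weak limits to coincide; weak convergence of $\BP^k(\Id)$ itself is the standard monotone (degradation-decreasing) limit argument.
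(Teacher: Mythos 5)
Your skeleton matches the paper's: the degradation/potential method, a potential whose Hessian on $\mathbbm{1}^\perp$ is exactly $f^L$, the factor $\lambda^2 C^L(q,\lambda)$ arising from the chain rule under $\pi\mapsto\lambda\pi+\frac{1-\lambda}{q}$, the factor $d$ from the branching average, and the final deduction via strong convexity plus the monotone (degradation-decreasing) limit. You have also correctly located the load-bearing step. But that step — your (2b), which you flag as the ``technical heart'' and leave as an anticipated ``algebraic miracle'' — is a genuine gap, and your guessed resolution (a linearized subadditivity of the $f^L$-energy gap along the majorization coupling) is not what is needed and is not something you verify. The function with Hessian $\diag\left(\pi^{-1}+\frac1q\pi^{-2}\right)$ is not a $\chi^2$-type energy; it is the SKL capacity $\phi^L(\pi)=\sum_{i\in[q]}\left(\pi_i-\frac1q\right)\log\pi_i=C_{\SKL}(\FSC_\pi)$, and the whole point of this choice is that $C_{\SKL}$ is \emph{exactly additive} under $\star$-convolution (Lemma~\ref{lem:phi-L-additive-under-star}, due to \cite{kulske2009symmetric}). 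Your worry about non-subadditivity of $\chi^2$-capacity for $q\ge 3$ is therefore a red herring here: no subadditivity statement, linearized or otherwise, has to be proved for the convolution step.

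With exact additivity in hand the argument collapses to two lines: $\Phi^L(\BP_W(M))=d\,\Phi^L(M\circ P_\lambda)+\Phi^L(W)$, so the $W$ term cancels in the difference $\Phi^L(M_k)-\Phi^L(\wt M_k)$ (it does not ``shrink'' the gap as you suggest — it leaves it unchanged), and the contraction by $c=d\lambda^2C^L(q,\lambda)$ reduces to showing that $d\,\phi^L\left(\lambda\pi+\frac{1-\lambda}{q}\right)-c\,\phi^L(\pi)$ is concave along $\mathbbm{1}^\perp$, which is immediate from the definition \eqref{eqn:thm:bi-low-snr-defn-C} since $v^\top\nabla^2\phi^L(\pi)v=f^L(\pi,v)$; concavity plus permutation symmetry gives Schur-concavity, and together with the coupling $\wt\pi\le_m\bE[\pi\mid\wt\pi]$ of Prop.~\ref{prop:fms-deg-couple} and Jensen this yields the one-step inequality (your ``Robin Hood'' integral representation of majorization is an acceptable substitute for this last piece). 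So the missing idea is the identification of the potential as SKL capacity together with its exact additivity; without it, your step (2b) is an unproved conjecture and the proof is incomplete.
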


\begin{theorem}[High SNR] \label{thm:bi-high-snr}
  If $d\lambda^2>1$ and
  \begin{align}
    d\lambda^2 \exp\left(-c^H(q,\lambda,d)\cdot \frac{d\lambda^2-1}2\right) C^H(q,\lambda) < 1, \label{eqn:thm:bi-high-snr-cond}
  \end{align}
  where $c^H$ is defined in~\eqref{eqn:thm:bi-high-snr-defn-c}, $C^H$ is defined in~\eqref{eqn:thm:bi-high-snr-defn-C},
  then boundary irrelevance and stability of BP fixed point hold.

  Let $W$ be a $q$-FMS channel. If $d\lambda^2>1$ and
  \begin{align}
    d\lambda^2 \exp\left(-c^H(q,\lambda,d)\cdot \frac{d\lambda^2-1}2\right) C^H(q,\lambda) Z(W^R) < 1, \label{eqn:thm:bi-w-high-snr-cond}
  \end{align}
  where $W^R$ denotes the restriction of $W$ to a BMS channel, and $Z$ denotes the Bhattacharyya coefficient,
  then boundary irrelevance holds with repsect to $W$.
\end{theorem}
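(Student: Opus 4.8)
The plan is to prove Theorem~\ref{thm:bi-high-snr} via the degradation/potential-function method sketched in the introduction, adapted to the high-SNR regime. Given a non-trivial $q$-FMS channel $P$, we always have the degradation $P \ge_{\deg} \text{(trivial channel)}$ trivially, but the real content is: take any two $q$-FMS channels $M, \wt M$ with $\wt M \le_{\deg} M$, and show that after applying $\BP$ (or $\BP_W$) enough times, the ``gap'' between them — measured by a well-chosen potential $\Phi$ — shrinks to $0$; combined with a rigidity statement ($\Phi(M) = \Phi(\wt M) \Rightarrow M = \wt M$ on the relevant closure), this forces $\BP^\infty(M) = \BP^\infty(\wt M)$, hence in particular $\BP^\infty(P) = \BP^\infty(\Id)$ and likewise for $\BP_W$. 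Since stability of the fixed point is exactly the statement that $\BP^k(P)$ and $\BP^k(\Id)$ converge to the same weak limit, and boundary irrelevance is equivalent (per the excerpt) to $\BP_W$ having a unique fixed point, both conclusions follow once the contraction is in hand.

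\textbf{Choice of potential and the contraction estimate.} The key step is to identify the potential. In the high-SNR regime the natural object is a second-order / linearized functional: I would take $\Phi$ built from the quadratic form $f^H(\pi, v)$ appearing in~\eqref{eqn:thm:bi-high-snr-defn-f}, integrated against the $\pi$-distribution, essentially tracking a $\chi^2$-type or Hellinger-type discrepancy between the two channels along a degradation coupling (Prop.~\ref{prop:fms-deg-couple} gives the coupling; majorization lets us write $\pi \le_m \bE[\pi_Q \mid \pi_P = \pi]$ and hence control $f^H$ by convexity). Applying one step of $\BP$ does three things: (i) composition with $P_\lambda$ contracts the $\pi$-component toward the uniform point via $\pi \mapsto \lambda\pi + \frac{1-\lambda}{q}$, and the ratio $f^H(\lambda\pi + \frac{1-\lambda}{q}, v)/f^H(\pi,v)$ is by definition at most $C^H(q,\lambda)$; (ii) $\star$-convolution with branching number $b$ introduces the multiplicative factor $d\lambda^2$ (the Kesten-Stigum eigenvalue, from $\bE b = d$ and the linearization of $\star$); (iii) crucially, the \emph{a priori} information already accumulated — since $d\lambda^2 > 1$ means the limit channel is non-trivial and concentrated — gives a strict improvement captured by the exponential factor $\exp(-c^H(q,\lambda,d)\frac{d\lambda^2-1}{2})$. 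Multiplying, one step contracts the potential by $d\lambda^2 \exp(-c^H \frac{d\lambda^2-1}{2}) C^H(q,\lambda) < 1$, which is precisely hypothesis~\eqref{eqn:thm:bi-high-snr-cond}. For $\BP_W$, the postcomposition with $W$ contracts the restricted BMS channel's $Z$-value (Lemma~\ref{lem:fms-deg-info-measure}: $Z$ respects degradation and $Z(W^R) < 1$ for non-trivial $W$), inserting the extra factor $Z(W^R)$ and giving~\eqref{eqn:thm:bi-w-high-snr-cond}.

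\textbf{The exponential-gain step is the main obstacle.} Steps (i)–(ii) are essentially spectral/linear-algebraic bookkeeping; the serious work is (iii): quantifying how much the already-learned information forces the $\pi$-distribution of $\BP^k(\Id)$ to concentrate away from uniform once $d\lambda^2 > 1$, and converting that concentration into the claimed exponential contraction improvement. This requires a lower bound on something like the SKL or $\chi^2$ capacity of the iterates — using subadditivity of SKL capacity under $\star$-convolution (\cite{kulske2009symmetric}, cited in the excerpt) to propagate a lower bound through the recursion, then translating a capacity lower bound into the factor $c^H(q,\lambda,d) = (\frac{2}{q} + \frac{q-2}{q}\cdot\frac{d\lambda^2-1}{d\lambda-1})^{-1}$ via the restriction $P \mapsto P^R$ and known BMS estimates (the appearance of the restricted channel and of $\frac{d\lambda^2-1}{d\lambda-1}$ strongly suggests a reduction to the binary case after restriction). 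I would first establish the contraction for $M = \Id$ (or more generally full-rank $M$) where the concentration is cleanest, then bootstrap to general non-trivial $P$ by noting that after one $\BP$ step any non-trivial channel already dominates a non-trivial FSC and the iterates enter the regime where the estimate applies. Finally, the rigidity step ($\Phi(M) = \Phi(\wt M) \Rightarrow M = \wt M$) follows because $f^H$ is a genuine (non-degenerate on $\mathbbm{1}^\perp$) quadratic form, so equality in the convexity/majorization bound forces the degradation coupling to be trivial; I would defer the routine verification of this, and of the weak-convergence/compactness arguments needed to pass to the limit, to Section~\ref{sec:proof-bi}.
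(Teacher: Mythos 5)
You have the right architecture --- degradation coupling, a Hellinger/Bhattacharyya-type potential whose Hessian is (up to a constant) $f^H$, a per-edge contraction factor $\lambda^2 C^H(q,\lambda)$ from composition with $P_\lambda$, a factor $d$ from the children, an extra factor $Z(W^R)$ from the survey, and strong concavity to convert $\Phi(M_k)-\Phi(\wt M_k)\to 0$ into $L^2$-closeness of the $\pi$-components --- and this matches the paper's proof (Section~\ref{sec:proof-bi:high-snr}) in all of those respects. Two things you leave implicit are load-bearing, though: the potential is exactly $\phi^H(\pi)=Z(\FSC_\pi^R)$, and the reason the telescoping over the $b$ children works, and the reason the per-edge gain compounds into $\exp(-c^H(d\lambda^2-1)/2)$, is that $\Phi^H$ is \emph{multiplicative} under $\star$-convolution (Lemma~\ref{lem:phi-H-mult-under-star}); without multiplicativity the interpolation between $M_k$ and $\wt M_k$ one child at a time does not factor.

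The genuine gap is in your step (iii), which you correctly flag as the crux but propose to attack with a tool that points the wrong way. Subadditivity of SKL (or $\chi^2$) capacity under $\star$-convolution yields \emph{upper} bounds on the information of the iterates; what is needed here is a \emph{lower} bound, uniform in $k$, on the information retained by $M_k\circ P_\lambda$ (the sequence started from $\Id$, or from any non-trivial $U$), in order to bound $\Phi^H(M_k\circ P_\lambda)$ strictly below $1$. The paper obtains this from a second-moment (count-reconstruction / weighted-majority) computation, Prop.~\ref{prop:bot-majority}: restrict attention to two colors, form the statistic $S_k=\sum_{v\in L_k} e_{\nu_v}$, compute $\bE^{\pm}S_k$ and $\Var^{\pm}(S_k)$ exactly through the recursion, and apply the variational characterization of $\chi^2$-divergence to get $C_{\chi^2}((M_k\circ P_\lambda)^R)\ge (1+o(1))\,c^H(q,\lambda,d)\cdot\frac{d\lambda^2-1}{d-\mathbbm{1}_R}$. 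Lemma~\ref{lem:h2-chi2-bound} ($Z\le\sqrt{1-C_{\chi^2}}$) then converts this lower bound into the upper bound $\Phi^H(M_k\circ P_\lambda)\le\bigl(1-c^H\cdot\frac{d\lambda^2-1}{d-\mathbbm{1}_R}+\epsilon\bigr)_+^{1/2}$, and raising to the power $b-1$ and averaging over the offspring distribution produces the exponential factor in \eqref{eqn:thm:bi-high-snr-cond}. Your instinct that the restricted BMS channel and the quantity $\frac{d\lambda^2-1}{d\lambda-1}$ signal a reduction to a binary statistic is correct, but the engine is the Kesten--Stigum second-moment bound, not capacity subadditivity; as written, your plan for (iii) would not close.
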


\begin{proof}[Proof of Theorem~\ref{thm:bi-imprecise} given Theorem~\ref{thm:bi-low-snr} and~\ref{thm:bi-high-snr}]
  We prove the low SNR case and the high SNR case separately.

  \textbf{Low SNR:}
  By Prop.~\ref{prop:bi-low-snr-C-bound}, $C^L(q,\lambda) \le q^2$. If $d \lambda^2 < q^{-2}$, then~\eqref{eqn:thm:bi-low-snr-cond} holds and Theorem~\ref{thm:bi-low-snr} applies.

  \textbf{High SNR:}
  We prove that~\eqref{eqn:thm:bi-high-snr-cond} holds whenever $d\lambda^2 > 1 + 56 \max\{\lambda, q^{-1}\}\log q$.

  By Prop.~\ref{prop:bi-high-snr-C-bound}, $C^H(q,\lambda) \le q^{5/2}$.
  For $d\lambda^2>1$, we have
  \begin{align}
    c^H(q,\lambda,d)\ge \left(\frac 2q + \frac {q-2}q \cdot \max\{\lambda,0\}\right)^{-1} \ge \left(\frac 2q + \max\{\lambda,0\}\right)^{-1} \ge \frac 14 \max\{\lambda,q^{-1}\}^{-1}.
  \end{align}
  Therefore
  \begin{align}
    d\lambda^2 \exp(-c^H(q,\lambda,d) \cdot \frac{d\lambda^2-1}2) C^H(q,\lambda)
    \le d\lambda^2 \exp\left(-\frac{d\lambda^2-1}{8 \max\{\lambda,q^{-1}\}}\right) q^{5/2} =: g_{q,\lambda}(d).
  \end{align}
  Computing $g'_{q,\lambda}(d)$, we see that $g_{q,\lambda}(d)$ is monotone decreasing in $d$ when $d\lambda^2 > 8\max\{\lambda,q^{-1}\}$.
  Therefore it suffices to prove $g_{q,\lambda}(d_0)<1$ where $d_0\lambda^2 = 1+56\max\{\lambda,q^{-1}\}\log q$.
  We have
  \begin{align}
    g_{q,\lambda}(d_0) = (1+56 \max\{\lambda,q^{-1}\}\log q) \exp(-7 \log q) q^{5/2} \le (1+56 \log q) q^{-9/2}.
  \end{align}
  The last expression is $<1$ for all $q\ge 3$. This finishes the proof.
\end{proof}

\subsection{The degradation method}
% Let $M_k$ denote the FMS channel $\sigma_\rho \to (\sigma_{L_k}, \omega_{T_k})$, and $\wt M_k$ denote the FMS channel $\sigma_\rho \to \omega_{T_k}$ (both with the obvious symmetries). Then $\wt M_k \le_{\deg} M_k$ by forgetting $\sigma_{L_k}$.
% Both sequences $\{M_k\}_{k\ge 0}$ and $\{\wt M_k\}_{k\ge 0}$ satisfy the Belief Propagation recursion, i.e.,
% \begin{gather}
%   M_{k+1} = \BP(M_k), \qquad \wt M_{k+1} = \BP(\wt M_k),\\
%   \BP(M) := \bE_b [(M_k \circ P_\lambda)^{\star b}\star W],
% \end{gather}
% where $b$ follows the branching number distribution (constant if working with regular trees), and $W$ is the survey FMS channel. They start with different initial values $M_0, \wt M_0$ (both FMS channels) where $\wt M_0 \le_{\deg} M_0$.
% We will make one of two assumptions: (1) survey FMS channel is non-trivial, or (2) $\wt M_0$ is non-trivial. The first assumption corresponds to boundary irrelevance, and the second assumption corresponds to uniqueness of BP fixed point.
Let $(M_k)_{k\ge 0}$ and $(\wt M_k)_{k\ge 0}$ be two sequences of $q$-FMS channels satisfying the belief propagation recursion, i.e.,
\begin{gather}
  M_{k+1} = \BP(M_k), \qquad \wt M_{k+1} = \BP(\wt M_k),\\
  \BP(M) := \bE_b [(M_k \circ P_\lambda)^{\star b}\star W],
\end{gather}
where $b$ follows the branching number distribution (constant if working with regular trees), and $W$ is the survey FMS channel (trivial if there is no survey).

For the boundary irrelevance problem, we take $\wt M_0 = 0$, $M_0 = \Id$.
For stability of BP fixed point, we take $M_0=\Id$ and $\wt M_0$ be a given non-trivial FMS channel.
Our goal is to show the the limit $\lim_{k\to \infty} M_k$ and $\lim_{k\to \infty} \wt M_k$ both exist in the sense of weak convergence (Section~\ref{sec:chan-lim}) and are equal.

From now on, we assume that $M_0=\Id$, and either (1) $W$ is non-trivial and $\wt M_0=0$, or (2) $W=0$ and $\wt M_0$ is non-trivial.
Note that in both cases, the initial channels satisfy $\wt M_0 \le_{\deg} M_0$.
Because the BP operator preserves degradation preorder, we have $\wt M_k \le_{\deg} M_k$ for all $k\ge 0$.
So the two channel sequences are naturally related to each other by degradation.

Because $M_0=\Id$, we have $M_k \ge_{\deg} M_{k+1}$ for all $k\ge 0$.
Therefore by Lemma~\ref{lem:deg-down-seq-limit}, $M_\infty := \lim_{k\to \infty} M_k$ exists.
For the boundary irrelevance problem, we also have $M_k \le_{\deg} M_{k+1}$ for all $k\ge 0$, and by Lemma~\ref{lem:deg-up-seq-limit}, $\wt M_\infty := \lim_{k\to \infty} \wt M_k$ exists.
However, for the stability of BP fixed point problem, it is a priori unclear whether the limit $\lim_{k\to \infty} \wt M_k$ exists.

We prove the limit $\lim_{k\to \infty} \wt M_k$ exists and is equal to $M_\infty$ by generalizing the degradation method from \cite{abbe2021stochastic}.
Let $\phi: \cP([q]) \to \bR$ be a strongly convex function invariant under $\Aut([q])$ action.
Extend it to a function $\Phi: \{\text{FMS channels}\} \to \bR$ as $\Phi(P) = \bE \phi(\pi_P)$.
By degradation, we have $\Phi(M_k) \ge \Phi(\wt M_k)$ for all $k\ge 0$.
The following proposition shows that it suffices to prove contraction of potential function $\Phi$.
\begin{proposition} \label{prop:phi-contraction-imply-l2}
  Assume that $\phi: \cP([q]) \to \bR$ is $\alpha$-strongly convex for some $\alpha>0$, and that
  \begin{align}
    \lim_{k\to \infty} (\Phi(M_k)-\Phi(\wt M_k))=0.
  \end{align}
  Then under the canonical coupling, we have
  \begin{align}
    \lim_{k\to \infty} \bE \|\pi_k-\wt \pi_k\|_2^2 =0,
  \end{align}
  where $\pi_k$ (resp.~$\wt \pi_k$) is the $\pi$-component of $M_k$ (resp.~$\wt M_k$).
  In particular, if $M_0=\Id$, then both limits $\lim_{k\to \infty} M_k$ and $\lim_{k\to \infty} \wt M_k$ exist in the sense of weak convergence, and the two limits are equal.
\end{proposition}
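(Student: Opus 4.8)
The plan is to use the $\alpha$-strong convexity of $\phi$ to turn the potential gap $\Phi(M_k)-\Phi(\wt M_k)$ into the quantitative bound $\tfrac\alpha2\bE\|\pi_k-\wt\pi_k\|_2^2 \le \Phi(M_k)-\Phi(\wt M_k)$, so that the assumed contraction of the gap forces $\bE\|\pi_k-\wt\pi_k\|_2^2\to 0$, and then to bootstrap this $L^2$-closeness of the $\pi$-components into weak convergence of the channels. It is essential that one works with the \emph{canonical} coupling: an arbitrary coupling witnessing $\wt M_k\le_{\deg}M_k$ (as produced by Prop.~\ref{prop:fms-deg-couple}) is free to apply a spurious relabelling in $\Aut([q])$ and need not make $\|\pi_k-\wt\pi_k\|_2$ small, so only the label-aligned recursive coupling will do.

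First I would isolate the one property of the canonical coupling the argument needs. Realize $(M_k,\wt M_k)$ simultaneously on a common broadcasting tree, using the same tree, the same configuration $\sigma$, and (in the survey case) the same survey observations, and couple the depth-$k$ boundary observations so that the data underlying $\wt M_k$ is a (possibly randomized) function of the data underlying $M_k$; this is possible since $M_0=\Id\ge_{\deg}\wt M_0$ and $W$ is either trivial or shared between the two sides. Writing $\cF_k$ and $\cG_k$ for the $\sigma$-algebras generated by these two observation sets, so that $\cG_k$ is a sub-$\sigma$-algebra of $\cF_k$ up to independent noise, the $\pi$-components are the genuine root posteriors $\pi_k=\bP[\sigma_\rho=\cdot\mid\cF_k]$ and $\wt\pi_k=\bP[\sigma_\rho=\cdot\mid\cG_k]$, viewed as actual elements of $\cP([q])$ under a common labelling (so that $\pi_k-\wt\pi_k$ is meaningful), and the tower property gives the martingale identity
\begin{align}
  \bE[\pi_k\mid\wt\pi_k]=\wt\pi_k .
\end{align}
The point requiring care — and what I expect to be the main, though essentially bookkeeping, obstacle — is verifying that this coupling can be built consistently across all $k$, i.e.\ that the nesting $\cG_k\subseteq\cF_k$ (up to independent noise) is preserved when passing from level $k$ to level $k+1$ through $\BP(M)=\bE_b[(M\circ P_\lambda)^{\star b}\star W]$; this means tracking the coupling through the $\star$-convolution and the Poisson branching.

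Given the martingale identity, the core estimate is immediate. Since $\phi$ is $\alpha$-strongly convex, $\psi:=\phi-\tfrac\alpha2\|\cdot\|_2^2$ is convex on $\cP([q])$; applying the conditional Jensen inequality to $\psi$ with $\bE[\pi_k\mid\wt\pi_k]=\wt\pi_k$ yields $\bE[\phi(\pi_k)\mid\wt\pi_k]\ge\phi(\wt\pi_k)+\tfrac\alpha2\,\bE[\|\pi_k-\wt\pi_k\|_2^2\mid\wt\pi_k]$, and taking expectations,
\begin{align}
  \Phi(M_k)-\Phi(\wt M_k)=\bE\phi(\pi_k)-\bE\phi(\wt\pi_k)\ \ge\ \frac\alpha2\,\bE\|\pi_k-\wt\pi_k\|_2^2 .
\end{align}
Combined with the hypothesis $\Phi(M_k)-\Phi(\wt M_k)\to0$, this gives $\bE\|\pi_k-\wt\pi_k\|_2^2\to0$.

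For the final assertion: when $M_0=\Id$ the sequence $(M_k)$ is non-increasing in degradation, so $M_\infty:=\lim_k M_k$ exists by Lemma~\ref{lem:deg-down-seq-limit}, and hence the $\pi$-distribution of $M_k$ converges weakly, i.e.\ the law of $\pi_k$ (after quotienting by $\Aut([q])$) converges to that of $\pi_\infty$. Since $\pi_k-\wt\pi_k\to0$ in $L^2$, hence in probability, and $\cP([q])$ is compact, a Slutsky-type argument shows $\wt\pi_k$ has the same limit in distribution; thus the $\pi$-distribution of $\wt M_k$ converges weakly to that of $M_\infty$, i.e.\ $\wt M_k\to M_\infty$ weakly. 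Hence both limits exist and coincide.
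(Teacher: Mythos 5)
Your proof is correct and arrives at the same inequality $\Phi(M_k)-\Phi(\wt M_k)\ge\tfrac\alpha2\,\bE\|\pi_k-\wt\pi_k\|_2^2$, but the route through the cross term is genuinely different. The paper works with the \emph{sorted} $\pi$-components in $\cP([q])/\Aut([q])$, expands via the first-order strong-convexity bound $\phi(\pi_k)\ge\phi(\wt\pi_k)+\langle\nabla\phi(\wt\pi_k),\pi_k-\wt\pi_k\rangle+\tfrac\alpha2\|\pi_k-\wt\pi_k\|_2^2$, and then dispatches the linear term $\bE\langle\nabla\phi(\wt\pi_k),\bE[\pi_k\mid\wt\pi_k]-\wt\pi_k\rangle\ge0$ via the majorization $\wt\pi_k\le_m\bE[\pi_k\mid\wt\pi_k]$ furnished by Prop.~\ref{prop:fms-deg-couple} together with Schur-convexity of $\phi$. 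You instead work with the \emph{unsorted} posteriors under a common labelling on the shared tree, where the degradation structure gives the stronger martingale identity $\bE[\pi_k\mid\wt\pi_k]=\wt\pi_k$, so the cross term vanishes identically; the inequality then drops out of conditional Jensen applied to the convex function $\psi=\phi-\tfrac\alpha2\|\cdot\|_2^2$. Your version is slightly more economical: it bypasses the majorization/Schur-convexity argument (and with it the need for Prop.~\ref{prop:fms-deg-couple}), avoids any appeal to $\nabla\phi$, and in fact the paper's majorization inequality is exactly what your martingale equality becomes after projecting to the orbit space. The consistency worry you flag about passing through $\BP$ is not a real obstacle: one simply realizes the two channels once and for all on the same infinite tree with shared $\sigma$, shared $W$-observations, and (in the stability case) independent leaf-channel noise, and the nesting $\cG_k\subseteq\cF_k'$ is automatic for every $k$; there is nothing to rebuild level by level. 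For the final step, note that since sorting is $1$-Lipschitz in $\ell^2$, the $L^2$-closeness of the unsorted posteriors transfers to the sorted $\pi$-components, which is what is needed to conclude $\wt M_k\to M_\infty$ weakly.
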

\begin{proof}
  \begin{align}
    \Phi(M_k) - \Phi(\wt M_k) &= \bE_{\wt \pi_k} \bE[ \phi(\pi_k) - \phi(\wt \pi_k) | \wt \pi_k] \\
    &\ge \bE_{\wt \pi_k} \bE[\langle \nabla \phi(\wt \pi_k), \pi_k - \wt \pi_k\rangle + \frac \alpha 2 \|\pi_k - \wt \pi_k\|_2^2 | \wt \pi_k] \nonumber \\
    &\ge \bE_{\wt \pi_k} \langle \nabla \phi(\wt \pi_k), \bE[\pi_k | \wt \pi_k]- \wt \pi_k\rangle + \frac \alpha 2 \bE \|\pi_k - \wt \pi_k\|_2^2 \nonumber \\
    & \ge \frac \alpha 2 \bE \|\pi_k - \wt \pi_k\|_2^2, \nonumber
  \end{align}
  where the second step is by $\alpha$-strongly convexity,
  and the third step is because $\wt \pi_k\le_m \bE[\pi_k | \wt \pi_k]$ and $\phi$ is convex (thus Schur-convex).
  Taking the limit $k\to \infty$, we see that the Wasserstein $W_2$ distance between the $\pi$-distributions of $M_k$ and $\wt M_k$ goes to $0$.
  Because $\lim_{k\to \infty} M_k$ converges weakly to a limit $M_\infty$, $\lim_{k\to \infty} \wt M_k$ also converges to the same limit.
\end{proof}
\begin{proposition} \label{prop:contraction-imply-bi}
  Assume that $\phi: \cP([q]) \to \bR$ is $\alpha$-strongly convex for some $\alpha>0$, and that
  \begin{align}
    \lim_{k\to \infty} (\Phi(M_k)-\Phi(\wt M_k))=0.
  \end{align}
  whenever $M_0=\Id$ and (1) $W$ is non-trivial and $\wt M_0=0$, or (2) $W=0$ and $\wt M_0$ is non-trivial.
  Then boundary irrelevance and stability of BP fixed point hold.
\end{proposition}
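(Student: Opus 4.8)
The plan is to deduce Proposition~\ref{prop:contraction-imply-bi} as a direct consequence of Proposition~\ref{prop:phi-contraction-imply-l2}, invoked separately in the two scenarios described above, together with the degradation bookkeeping already set up and the equivalence between boundary irrelevance with respect to a survey $W$ and uniqueness of the fixed point of $\BP_W$. Concretely, for stability of the BP fixed point I would fix an arbitrary non-trivial $q$-FMS channel $P$ and run scenario (2): take $W=0$, $M_0=\Id$, $\wt M_0=P$, so that $M_k=\BP^k(\Id)$ and $\wt M_k=\BP^k(P)$ with $\wt M_0\le_{\deg}M_0$. The hypothesis gives $\Phi(M_k)-\Phi(\wt M_k)\to 0$, and since $M_0=\Id$, Proposition~\ref{prop:phi-contraction-imply-l2} immediately yields that $\lim_k\BP^k(\Id)$ and $\lim_k\BP^k(P)$ both exist in the weak topology and coincide --- which is precisely the stability claim.

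For boundary irrelevance I would fix an arbitrary non-trivial $q$-FMS survey channel $W$ and show that $\BP_W$ has a unique fixed point (which, by the stated equivalence, gives $\lim_k I(\sigma_\rho;\sigma_{L_k}\mid T_k,\omega_{T_k})=0$ for that $W$, hence in particular for all $\EC_\epsilon$). Run scenario (1): $M_0=\Id$, $\wt M_0=0$. As recorded above, $M_k=\BP_W^k(\Id)$ is $\le_{\deg}$-decreasing, so $M_\infty:=\lim_k M_k$ exists by Lemma~\ref{lem:deg-down-seq-limit}, and by continuity of $\BP_W$ under weak convergence (Section~\ref{sec:chan-lim}) it is a fixed point; and $\wt M_k=\BP_W^k(0)$ is $\le_{\deg}$-increasing with limit $\wt M_\infty$ by Lemma~\ref{lem:deg-up-seq-limit}. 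The hypothesis $\Phi(M_k)-\Phi(\wt M_k)\to0$ together with Proposition~\ref{prop:phi-contraction-imply-l2} forces $\wt M_\infty=M_\infty$. To finish, take any fixed point $F$ of $\BP_W$; since $0\le_{\deg}F\le_{\deg}\Id$ and $\BP_W$ preserves the degradation preorder, $\wt M_k\le_{\deg}F\le_{\deg}M_k$ for all $k$, and passing to the limit gives $M_\infty\le_{\deg}F\le_{\deg}M_\infty$, i.e.\ $F$ is equivalent to $M_\infty$; by the uniqueness of the $\pi$-distribution under equivalence (Proposition~\ref{prop:fms-mixture}) this pins $F$ down, so the fixed point is unique.

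There is no deep obstacle here: the proposition is essentially a corollary of Proposition~\ref{prop:phi-contraction-imply-l2}, and the only non-routine ingredients are (a) continuity of $\BP_W$ with respect to weak convergence of channels, so that $M_\infty$ is genuinely a fixed point (handled in Section~\ref{sec:chan-lim}), and (b) the fact that equivalent FMS channels share a $\pi$-distribution, so that the squeeze $M_\infty\le_{\deg}F\le_{\deg}M_\infty$ actually identifies $F$. The one point to be careful about is the translation between the operational definition of boundary irrelevance ($I(\sigma_\rho;\sigma_{L_k}\mid T_k,\omega_{T_k})\to0$) and uniqueness of the $\BP_W$ fixed point; alternatively one can argue directly, writing $N_k:=\BP_W^k(W)$ for the channel $\sigma_\rho\mapsto\omega_{T_k}$ carrying only the surveys, noting $\wt M_k\le_{\deg}N_k\le_{\deg}M_k$, and using monotonicity of capacity under degradation (Lemma~\ref{lem:fms-deg-info-measure}) to get $0\le C(M_k)-C(N_k)\le C(M_k)-C(\wt M_k)\to0$, which controls the mutual-information gap in question. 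The genuine work --- verifying the contraction hypothesis $\Phi(M_k)-\Phi(\wt M_k)\to0$ for a suitable strongly convex $\phi$ --- is exactly what the subsequent theorems supply and is assumed here.
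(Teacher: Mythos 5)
Your proposal is correct, and the ``direct'' argument you sketch at the end (compare $C(M_k)$, $C(N_k)$, $C(\wt M_k)$ via degradation and weak continuity of capacity, then use the chain rule to identify $C(M_k)-C(N_k)$ with $I(\sigma_\rho;\sigma_{L_k}\mid T_k,\omega_{T_k})$) is essentially the paper's proof of the boundary-irrelevance half, while your stability argument (apply Prop.~\ref{prop:phi-contraction-imply-l2} with $\wt M_0=P$) is what the paper does as well --- if anything more cleanly, since the paper's text phrases the stability step as fixed-point uniqueness even though the actual conclusion needed is global convergence of $\BP^k(P)$ to $\BP^\infty(\Id)$.

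Your \emph{primary} route for boundary irrelevance, via uniqueness of the $\BP_W$ fixed point, is a genuine alternative but leans on two facts that the paper does not actually establish: (a) that $\BP_W$ is continuous with respect to weak convergence of $\pi$-distributions, which you cite from Section~\ref{sec:chan-lim} although that section only constructs limits of degradation-monotone sequences and says nothing about continuity of the BP operator; and (b) that $\le_{\deg}$ is closed under weak limits, which you invoke when ``passing to the limit'' in $\wt M_k\le_{\deg}F\le_{\deg}M_k$. Both are plausible and can be proved from the coupling characterization (Prop.~\ref{prop:fms-deg-couple}) plus tightness, but as written they are gaps. Moreover, the sentence in Section~\ref{sec:bi} asserting ``boundary irrelevance is equivalent to uniqueness of the $\BP_W$ fixed point'' is itself stated without proof, so relying on it risks circularity. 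Your fallback direct argument avoids all of this and matches the paper; I would lead with it rather than relegate it to an aside.
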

\begin{proof}
  \textbf{Boundary irrelevance:}
  Let $\wt M_0 = 0$, $M_0 = \Id$.
  By Prop.~\ref{prop:phi-contraction-imply-l2}, we have
  \begin{align}
    \lim_{k\to \infty} M_k = \lim_{k\to \infty} \wt M_k.
  \end{align}
  In particular,
  \begin{align}
    \lim_{k\to \infty} C(M_k) = \lim_{k\to \infty} C(\wt M_k)
  \end{align}
  where $C$ denote capacity (Definition~\ref{defn:fms-info-measure}).
  Note that
  \begin{align}
    \lim_{k\to \infty} C(M_k) &= \lim_{k\to\infty} I(\sigma_\rho; \sigma_{L_k}, \omega_{T_k} |T_k),\\
    \lim_{k\to \infty} C(\wt M_k) &= \lim_{k\to\infty} I(\sigma_\rho; \omega_{T_k} |T_k).
  \end{align}
  So this proves boundary irrelevance.

  \textbf{Stability of BP fixed point:}
  Suppose there is a non-trivial fixed point FMS channel $U$.
  Let $\wt M_0 = U$, $M_0 = \Id$.
  By Prop.~\ref{prop:phi-contraction-imply-l2}, we have
  \begin{align}
    \lim_{k\to \infty} M_k = \lim_{k\to \infty} \wt M_k.
  \end{align}
  Because $U$ is a fixed point, LHS is equal to $U$.
  On the other hand, RHS does not depend on $U$.
  Therefore there is a unique non-trivial FMS fixed point.
\end{proof}

% By strictly convexity, if
% \begin{align}
% 	\lim_{k\to \infty} \Phi(M_k) = \lim_{k\to \infty} \Phi(\wt M_k),
% \end{align}
% then $\{M_k\}_{k\ge 0}$ and $\{\wt M_k\}_{k\ge 0}$ converge in distribution to the same FMS channel.
% Therefore it suffices to prove contraction of $\Phi(M_k) - \Phi(\wt M_k)$ under BP recursion.

We remark that it might be possible to extend the degradation method to asymmetric models. See Section~\ref{sec:discuss} for more discussions.

\subsection{Low SNR} \label{sec:low-snr}
% \yuzhou{Check original uniqueness proof}
For the low SNR case, we use SKL capacity as the potential function. We define
\begin{align}
	\phi^L(\pi) = C_{\SKL}(\FSC_\pi) = \sum_{i\in [q]} (\pi_i - \frac 1q) \log \pi_i.
\end{align}
It is useful for our purpose because it is strongly convex (Lemma \ref{lem:phi-L-convex}) and additive under $\star$-convolution (Lemma \ref{lem:phi-L-additive-under-star}). Using this properties we show that desired contraction holds (Prop.~\ref{prop:phi-L-contraction}). We defer the proof of Theorem~\ref{thm:bi-low-snr} to Section~\ref{sec:proof-bi:low-snr}.

\subsection{High SNR} \label{sec:high-snr}
For the high SNR case, we use Bhattacharyya coefficient as the potential function.
We define
\begin{align}
	\phi^H(\pi) = Z(\FSC_\pi^R) = \frac 1{q-1} \left(\left(\sum_{i\in [q]} \sqrt {\pi_i}\right)^2 -1\right).
\end{align}
It is useful for our purpose because it is strongly concave (Lemma \ref{lem:phi-H-concave}) and multiplicative under $\star$-convolution (Lemma \ref{lem:phi-H-mult-under-star}). Using this properties we show that desired contraction holds (Prop.~\ref{prop:phi-H-contraction}). We defer the proof of Theorem~\ref{thm:bi-high-snr} to Section~\ref{sec:proof-bi:high-snr}.

\section{Boundary irrelevance does not always hold} \label{sec:non-bi}
In this section we prove that boundary irrelevance does not hold for the Potts model between the reconstruction threshold and the Kesten-Stigum threshold.
\begin{proposition} \label{prop:no-robust-recon}
  In the setting of Theorem~\ref{thm:non-bi}, for all $\delta>0$, there exists $\epsilon>0$ such that for any FMS survey channel $W$ with $C_{\chi^2}(W) \le \epsilon$, we have
  \begin{align}
    \lim_{k\to \infty} I_{\chi^2}(\sigma_\rho; \omega_{T_k} | T_k) \le \delta.
  \end{align}
\end{proposition}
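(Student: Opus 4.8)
The plan is to show that when $d\lambda^2<1$, the $\chi^2$-capacity of the BP-iterated channels $\BP_W^k(0)$ stays small, uniformly in $k$, provided the survey $W$ is weak enough. Write $N_k$ for the $k$-th iterate $\BP_W^k(0)$ (with $N_0=0$), so that $\lim_{k\to\infty} I_{\chi^2}(\sigma_\rho;\omega_{T_k}\mid T_k) = \lim_{k\to\infty} C_{\chi^2}(N_k)$; the goal is to bound this limit by $\delta$. The building block is a recursive inequality of the form
\begin{align}
  C_{\chi^2}(N_{k+1}) \le \psi\big(C_{\chi^2}(N_k)\big) + C_{\chi^2}(W),
\end{align}
where $\psi$ is a function with $\psi(0)=0$, $\psi'(0)= d\lambda^2 < 1$, and $\psi$ is increasing and (locally) convex near $0$. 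Given such an inequality, a standard fixed-point argument shows: for any $\delta>0$, choosing $\epsilon$ small enough that the map $x\mapsto \psi(x)+\epsilon$ has a fixed point below $\delta$ (possible since $\psi'(0)<1$), the sequence $C_{\chi^2}(N_k)$ stays below that fixed point for all $k$ by induction (it starts at $0$), hence the limit is $\le\delta$. This immediately gives Proposition~\ref{prop:no-robust-recon}, and then Theorem~\ref{thm:non-bi} follows by contradiction: if boundary irrelevance held with respect to arbitrarily weak $W$, then the limit channel would coincide with the survey-free limit $\BP^\infty(0)$ which, since reconstruction is possible, is non-trivial and has strictly positive $\chi^2$-capacity — contradicting the fact that it can be made $\le\delta$ for any $\delta$.

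The heart of the argument is establishing the recursive inequality, i.e., controlling $C_{\chi^2}(\BP_W(N))$ in terms of $C_{\chi^2}(N)$. First, $\chi^2$-capacity is multiplicative under composition with $P_\lambda$ up to the factor $\lambda^2$: one checks from the pushforward description ($f(\pi)=\lambda\pi+\frac{1-\lambda}{q}$) that $C_{\chi^2}(N\circ P_\lambda) = \lambda^2 C_{\chi^2}(N)$, since the $\chi^2$-capacity is $\bE[q\|\pi\|_2^2-1] = q\,\bE\|\pi-\tfrac1q\mathbbm 1\|_2^2$ and $f$ scales the centered vector by $\lambda$. Next I need to control $C_{\chi^2}$ under $\star$-convolution and under the branching expectation $\bE_b(\cdot)^{\star b}$. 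Here is the main obstacle: as the authors emphasize, $\chi^2$-capacity is \emph{not} subadditive under $\star$-convolution for $q\ge 3$ in general. The resolution is the \emph{local} subadditivity result (Lemma~\ref{lem:subadd}, promised in Section~\ref{sec:proof-non-bi}): for channels with small enough $\chi^2$-capacity, $C_{\chi^2}(P\star Q) \le C_{\chi^2}(P)+C_{\chi^2}(Q)+ o(\cdot)$, with an error that is quadratically small in the capacities. Combining multiplicativity under $P_\lambda$, local subadditivity under $\star$-convolution (applied $b$ times and then averaged over $b\sim\Pois(d)$, using $\bE_b\, b = d$ for the leading term), and finally one more $\star W$ that adds $C_{\chi^2}(W)$, yields exactly $C_{\chi^2}(N_{k+1}) \le d\lambda^2\, C_{\chi^2}(N_k) + (\text{quadratic error}) + C_{\chi^2}(W)$; the quadratic error is what makes $\psi$ locally convex with $\psi'(0)=d\lambda^2$.

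I expect two technical points to need care. One is that local subadditivity requires the channels being convolved to be uniformly weak, so the induction must be set up so that $C_{\chi^2}(N_k)$ never escapes the regime where Lemma~\ref{lem:subadd} applies — this is automatic as long as the chosen fixed point of $x\mapsto\psi(x)+\epsilon$ lies inside that regime, which one arranges by shrinking $\delta$ (and correspondingly $\epsilon$) first. The other is handling the Poisson (or general branching) average: one needs $\bE_b[(1+\text{small})^b]$ to stay controlled, which works because the relevant quantities are $O(\lambda^2 C_{\chi^2}(N_k))$ and $d$ is fixed, so $\bE_b$ of the per-edge contributions gives the clean factor $d\lambda^2$ plus a correction of the same quadratic order. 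With these in hand the fixed-point argument closes, proving Proposition~\ref{prop:no-robust-recon} and hence Theorem~\ref{thm:non-bi}.
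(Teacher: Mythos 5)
Your proposal follows essentially the same approach as the paper's proof: keep $C_{\chi^2}(\BP_W^k(0))$ small uniformly in $k$ by a fixed-point induction driven by the $\lambda^2$-contraction under composition with $P_\lambda$, the local subadditivity of Lemma~\ref{lem:subadd}, and the assumption $d\lambda^2<1$, then pass to the limit. Two small points worth clarifying. First, Lemma~\ref{lem:subadd} only requires \emph{one} of the two channels being $\star$-convolved to have small $\chi^2$-capacity, and this asymmetry is essential: when iterating the lemma $b$ times to bound $C_{\chi^2}\bigl((N\circ P_\lambda)^{\star b}\star W\bigr)$, the accumulated channel $(N\circ P_\lambda)^{\star j}\star W$ need not stay weak for $j$ large, but the freshly appended factor $N\circ P_\lambda$ does, and that is enough; your phrase ``requires the channels being convolved to be uniformly weak'' reads as if both must be weak, which would make the induction over $b$ break. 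The condition you actually need to propagate is $C_{\chi^2}(N_k)\le\epsilon_1$, and you identify that correctly. Second, for the Poisson branching average your sketch ``$\bE_b[(1+\text{small})^b]$ stays controlled since $d$ is fixed'' can be made precise via $\bE_{b\sim\Pois(d)}\bigl[b(1+s)^b\bigr]=d(1+s)e^{ds}$ and $\bE_{b\sim\Pois(d)}\bigl[(1+s)^b\bigr]=e^{ds}$, which give the factor $d\lambda^2$ up to a $1+o(1)$ correction as $\epsilon_1\to 0$; the paper instead truncates at $b\asymp\epsilon_1^{-1/5}$ and handles the tail using the universal bound $C_{\chi^2}\le q-1$ together with the Poisson tail probability, but the two routes are interchangeable.
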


\begin{proof}[Proof of Theorem~\ref{thm:non-bi} given Prop.~\ref{prop:no-robust-recon}]
  In the reconstruction regime,
  \begin{align}
    \lim_{k\to \infty} I(\sigma_\rho; \sigma_{L_k}, \omega_{T_k} | T_k) \ge \lim_{k\to \infty} I(\sigma_\rho; \sigma_{L_k} | T_k) > 0.
  \end{align}

  Take $\delta>0$ such that $\delta \log 2 < \lim_{k\to \infty} I(\sigma_\rho; \sigma_{L_k} | T_k)$.
  Because $I\le I_{\chi^2} \log 2$, and by Prop.~\ref{prop:no-robust-recon}, for weak enough survey channel $W$ we have
  \begin{align}
    \lim_{k\to \infty} I(\sigma_\rho; \omega_{T_k} | T_k) \le \lim_{k\to \infty} I_{\chi^2}(\sigma_\rho; \omega_{T_k} | T_k) \log 2 \le \delta \log 2 < \lim_{k\to \infty} I(\sigma_\rho; \sigma_{L_k}, \omega_{T_k} | T_k).
  \end{align}
  Therefore
  \begin{align}
    \lim_{k\to \infty} I(\sigma_\rho; \sigma_{L_k} | T_k, \omega_{T_k})
    &= \lim_{k\to \infty} (I(\sigma_\rho; \sigma_{L_k}, \omega_{T_k} | T_k) - I(\sigma_\rho; \omega_{T_k} | T_k)) \\
    &= \lim_{k\to \infty} I(\sigma_\rho; \sigma_{L_k}, \omega_{T_k} | T_k) - \lim_{k\to \infty} I(\sigma_\rho; \omega_{T_k} | T_k)
    > 0. \nonumber
  \end{align}
\end{proof}

Proof of Prop.~\ref{prop:no-robust-recon} is deferred to Section~\ref{sec:proof-non-bi:no-robust-recon}.
The proof uses contraction and local subadditivity properties of the information measure $C_{\chi^2}$.

\begin{lemma}[Contraction] \label{lem:contraction}
  For any FMS channel $P$, we have
  \begin{align}
    C_{\chi^2}(P\circ P_\lambda) \le \lambda^2 C_{\chi^2}(P).
  \end{align}
\end{lemma}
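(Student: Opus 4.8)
The plan is to reduce the statement to a pointwise inequality on the probability simplex using the mixture representation of FMS channels. By Prop.~\ref{prop:fms-mixture}, $P$ is equivalent to a mixture of $\FSC_\pi$ with $\pi$ drawn from the $\pi$-distribution $P_\pi$, and by the computation recorded just before Definition~\ref{defn:fms-info-measure}, the $\pi$-distribution of $P \circ P_\lambda$ is the pushforward of $P_\pi$ under the affine map $f(\pi) = \lambda \pi + \frac{1-\lambda}{q}\mathbbm{1}$. Since $C_{\chi^2}$ depends only on the $\pi$-distribution and is linear in it (it is an expectation of $g(\pi) := q\sum_{i} \pi_i^2 - 1$ over $\pi \sim P_\pi$), it suffices to show the pointwise bound
\begin{align}
  g\!\left(\lambda \pi + \tfrac{1-\lambda}{q}\mathbbm{1}\right) \le \lambda^2 \, g(\pi) \qquad \text{for all } \pi \in \cP([q]).
\end{align}
Taking expectation over $\pi \sim P_\pi$ then yields the claim.

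The pointwise inequality is a direct calculation. Write $u := \pi - \frac 1q \mathbbm{1} \in \mathbbm{1}^\perp$, so that $\sum_i \pi_i^2 = \frac 1q + \|u\|_2^2$ and hence $g(\pi) = q\|u\|_2^2$. The affine map sends $\pi \mapsto \frac 1q \mathbbm{1} + \lambda u$, i.e.\ it scales the mean-zero part $u$ by exactly $\lambda$; therefore $g(\lambda\pi + \frac{1-\lambda}{q}\mathbbm{1}) = q\|\lambda u\|_2^2 = \lambda^2 q\|u\|_2^2 = \lambda^2 g(\pi)$. In fact equality holds pointwise, so the lemma's inequality holds with equality, which is consistent with the interpretation of $C_{\chi^2}$ as an $\chi^2$-type quantity scaling quadratically under the Potts channel.

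The only point requiring care — and the step I expect to be the true (if minor) obstacle — is justifying that $C_{\chi^2}(P\circ P_\lambda)$ is computed from the pushforward $\pi$-distribution, i.e.\ that channel equivalence (not just equality) is enough. This is handled by Prop.~\ref{prop:fms-mixture}, which states the $\pi$-distribution is uniquely determined by the channel up to equivalence, together with Lemma~\ref{lem:fms-deg-info-measure} (or simply the observation that $C_{\chi^2}$ is defined purely in terms of the $\pi$-component). Once that identification is in place, the rest is the one-line orthogonal-decomposition computation above, and no inequality beyond $\|\lambda u\|_2 = |\lambda|\,\|u\|_2$ is needed.
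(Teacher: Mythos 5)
Your proof is correct, but it takes a different route from the paper. The paper disposes of this lemma in one line by viewing $C_{\chi^2}(P\circ P_\lambda)$ as the $\chi^2$-information across the Markov chain $Y - X' - X$ (using that $P_\lambda$ is reversible with respect to $\Unif([q])$) and invoking the strong data processing inequality with contraction coefficient $\eta_{\chi^2}(\Unif([q]),P_\lambda)=\lambda^2$. You instead work directly on the $\pi$-components: using the fact (recorded in Section~\ref{sec:fms}) that composition with $P_\lambda$ pushes the $\pi$-distribution forward along $\pi\mapsto \lambda\pi+\frac{1-\lambda}{q}\mathbbm{1}$, and that $g(\pi)=q\sum_i\pi_i^2-1=q\|\pi-\tfrac1q\mathbbm{1}\|_2^2$ is exactly quadratic in the mean-zero part, you get $g(\lambda\pi+\frac{1-\lambda}{q}\mathbbm{1})=\lambda^2 g(\pi)$ pointwise. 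Your observation that $\pi\mapsto\lambda\pi+\frac{1-\lambda}{q}\mathbbm{1}$ fixes $\frac1q\mathbbm{1}$ and scales the orthogonal complement by $\lambda$ is correct, and the function $g$ and the map $f$ are both $\Aut([q])$-equivariant, so everything descends to $\cP([q])/\Aut([q])$ and the argument is sound. The paper's SDPI citation is shorter but only yields the inequality as stated; your computation is more elementary (no appeal to contraction coefficients) and in fact establishes the stronger conclusion that the lemma holds with equality, $C_{\chi^2}(P\circ P_\lambda)=\lambda^2 C_{\chi^2}(P)$.
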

\begin{proof}
  By reversibility and $\chi^2$-contraction coefficient: $\eta_{\chi^2}(\Unif([q]), P_\lambda) = \lambda^2$.
\end{proof}

\begin{lemma}[Local subadditivity] \label{lem:subadd}
  Fix $q\in \bZ_{\ge 2}$. For any $\epsilon>0$ and $q$-FMS channels $P$, $Q$ with $C_{\chi^2}(P) \le \epsilon$, we have
  \begin{align}
    C_{\chi^2}(P\star Q) \le (1+O_q(\epsilon^{1/5})) (C_{\chi^2}(P) + C_{\chi^2}(Q)),
  \end{align}
  where $O_q$ hides a constant depending on $q$.
  % There exists a constant $c=c(q)>0$ such that for any $\delta>0$ and $0<\epsilon \le c \delta^6$, the following holds: if $P$ and $Q$ are two FMS channels with $C_{\chi^2}(P) \le \epsilon$, $C_{\chi^2}(Q) \le \epsilon$, then
  % \begin{align}
  %   C_{\chi^2}(P\star Q) \le (1+\delta) (C_{\chi^2}(P) + C_{\chi^2}(Q)).
  % \end{align}
\end{lemma}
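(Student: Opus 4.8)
The plan is to work with the $\pi$-distribution representation of FMS channels from Prop.~\ref{prop:fms-mixture}, writing $C_{\chi^2}(P) = \bE[q\|\pi\|_2^2 - 1] = q\,\bE\|\delta\|_2^2$ where $\delta := \pi - \frac 1q\mathbbm 1 \in \mathbbm 1^\perp$, and similarly $C_{\chi^2}(Q) = q\,\bE\|\delta'\|_2^2$ for the independent $\pi'$-component $\pi' = \frac 1q\mathbbm 1 + \delta'$. From formula~\eqref{eqn:star-formula}, the $\star$-convolution mixes $\pi$ and $\pi'$ through a random permutation $\tau$ chosen with the appropriate tilted weight, producing the combined component $\pi\star_\tau\pi'$ with entries $\pi_i\pi'_{\tau(i)}/\sum_j \pi_j\pi'_{\tau(j)}$. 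The first step is to Taylor-expand $q\|\pi\star_\tau\pi'\|_2^2 - 1$ in the small perturbations $\delta,\delta'$. Since $C_{\chi^2}(P)\le\epsilon$ forces $\pi$ to be close to uniform (concentration: $\bE\|\delta\|_2^2 \le \epsilon/q$), one gets, to leading order, that $C_{\chi^2}(P\star Q)$ equals $C_{\chi^2}(P) + C_{\chi^2}(Q)$ plus cross terms and higher-order terms. The exact exponent $\epsilon^{1/5}$ will come from balancing the Taylor remainder against a truncation argument (the $\pi$-component is small \emph{on average} but not pointwise, so one splits on the event $\|\delta\|_2 \le \epsilon^{\alpha}$ for a suitable $\alpha$ and controls the complementary event by Markov).

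The key steps, in order: (1) reduce to the perturbative identity for a single pair $(\pi,\pi')$ and a fixed $\tau$, expanding $\pi\star_\tau\pi'$ around $\frac1q\mathbbm1$; show $q\|\pi\star_\tau\pi'\|_2^2 - 1 = q\|\delta\|_2^2 + q\|\delta'\|_2^2 + (\text{cross and cubic-and-higher terms})$, where the cross terms involve $\langle\delta,\tau^{-1}\delta'\rangle$-type quantities and the remainder is $O(\|\delta\|_2\|\delta'\|_2(\|\delta\|_2+\|\delta'\|_2)) + O((\|\delta\|_2+\|\delta'\|_2)^4)$ or similar; (2) take expectation over $\tau$ with its tilted law — here the tilting weight $\frac1{(q-1)!}\sum_i\pi_i\pi'_{\tau(i)}$ differs from the uniform-on-$\Aut([q])$ weight by $O(\|\delta\|_2\|\delta'\|_2)$, and under the \emph{uniform} law $\bE_\tau\langle\delta,\tau^{-1}\delta'\rangle = \langle\delta,\mathbbm1\rangle\langle\mathbbm1,\delta'\rangle/q = 0$ since $\delta,\delta'\in\mathbbm1^\perp$, so the cross terms vanish to leading order; (3) take expectation over the independent $\pi,\pi'$, using $\bE\|\delta\|_2^2 = C_{\chi^2}(P)/q \le \epsilon/q$ and independence to bound the surviving error terms by $O_q(\epsilon^{1/2})$ times $(C_{\chi^2}(P)+C_{\chi^2}(Q))$ — or, after the truncation in (4), by $O_q(\epsilon^{1/5})$ times the same; (4) handle the non-perturbative tail: on the event that $\|\delta\|_2$ or $\|\delta'\|_2$ exceeds a threshold $t$, bound the contribution of that term to $C_{\chi^2}(P\star Q)$ crudely (it is always at most $q - 1$ per realization, and the conditional $\chi^2$-capacity only adds up) and use $\bP(\|\delta\|_2 > t)\le \epsilon/(qt^2)$; optimize $t$ as a power of $\epsilon$ to get the stated $\epsilon^{1/5}$ rate.

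The main obstacle I expect is step (2)–(4) together: the reweighting by the $\tau$-dependent tilt couples the permutation to the perturbations, so the cross terms do not exactly cancel, and one must show the residual cross contribution is genuinely lower-order rather than comparable to $C_{\chi^2}(P)+C_{\chi^2}(Q)$. This requires carefully tracking that every surviving term carries \emph{either} an extra factor of $\|\delta\|_2$ (beyond the two needed to form $C_{\chi^2}(P)$) \emph{or} an extra factor of $\|\delta'\|_2$, so that after Cauchy–Schwarz and using smallness of the $P$-side one extracts a factor of $\epsilon^{\text{(power)}}$; the truncation in (4) is what lets us upgrade the crude pointwise bounds (where $\|\delta\|_2$ is only $O(1)$, not $O(\sqrt\epsilon)$) into the clean statement. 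The denominator $\sum_j\pi_j\pi'_{\tau(j)} = \frac1q + \langle\delta,\tau^{-1}\delta'\rangle$ also needs to be bounded away from $0$, which again holds on the truncated event and is handled crudely off it. None of the individual estimates is deep; the work is in organizing the expansion so the bookkeeping closes with an explicit power of $\epsilon$.
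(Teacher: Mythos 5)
Your scaffolding matches the paper's well: reduce to FSCs, expand the $\star$-convolution formula~\eqref{eqn:star-formula}, note that the linear cross term $\langle\delta,\tau^{-1}\delta'\rangle$ has zero mean under uniform $\tau$, note the tilting weight is $\frac1{q!}\bigl(1+q\langle\delta,\tau^{-1}\delta'\rangle\bigr)$, and extract $\epsilon^{1/5}$ from a Markov truncation on the $P$-side. But step~(1) is a \emph{bivariate} Taylor expansion around the uniform point in both $\delta$ and $\delta'$, with a remainder that you write as $O\bigl(\|\delta\|_2\|\delta'\|_2(\|\delta\|_2+\|\delta'\|_2)\bigr)+O\bigl((\|\delta\|_2+\|\delta'\|_2)^4\bigr)$. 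This requires both deviations to be small, and the lemma places no smallness assumption on $Q$. After you reduce $Q$ to a fixed FSC, $\|\delta'\|_2=\sqrt{C_{\chi^2}(\FSC_{\pi'})/q}$ is a \emph{constant} that can be $\Theta(1)$, so the pure $\|\delta'\|_2^4$ piece of your remainder is $\Theta(1)$, not $O_q(\epsilon^{1/5})\bigl(C_{\chi^2}(P)+C_{\chi^2}(Q)\bigr)$. Your step~(4) truncation cannot rescue this: the Markov bound you state, $\bP(\|\delta\|_2>t)\le\epsilon/(qt^2)$, controls only the $P$-side, while $\bP(\|\delta'\|_2>t)\le C_{\chi^2}(Q)/(qt^2)$ is not small in general and can even be one. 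On the event $\|\delta'\|_2>t$ your crude bound contributes $\Theta(1)$, which cannot be absorbed.

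The paper resolves this with a \emph{one-sided} FSC-level lemma (Lemma~\ref{lem:subadd-fsc}): assume only $C_{\chi^2}(\FSC_{\pi'})\le\epsilon$, with $\pi$ completely arbitrary, and get the bound with rate $\epsilon^{1/2}$. The expansion is done only in the deviations $\epsilon_i:=q\pi'_i-1$, which are \emph{pointwise} $O_q(\sqrt\epsilon)$ because $\sum_i\epsilon_i^2=q\epsilon$; the $\pi$-dependence of all coefficients is kept exact and only used through bounds like $A=\sum_i\pi_i^2\le1$. The key estimate that makes the one-sided expansion uniformly valid is $|x|=\bigl|\sum_i\pi_i\epsilon_{\tau(i)}\bigr|\le\max_i|\epsilon_i|\le\sqrt{q\epsilon}$, which uses $\sum_i\pi_i=1$ and nothing about $\pi$ being near uniform, so the $1/(1+x)$ remainder is $O_q(\epsilon^{3/2})$ regardless of $\pi$. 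With this asymmetric ingredient the reduction splits on whether $C_{\chi^2}(Q)\le\epsilon^{2/5}$ (apply the FSC lemma with $\pi_Q$ as the small side) or $>\epsilon^{2/5}$ (truncate on the $P$-side via Markov to get tail probability $\le\epsilon^{3/5}$, apply the FSC lemma with $\pi_P$ as the small side on the good event, and absorb the crude tail $O_q(\epsilon^{3/5})\le O_q(\epsilon^{1/5})C_{\chi^2}(Q)$). The missing idea, in short, is the asymmetric expansion with uniformly bounded coefficients; replace your symmetric step~(1) by that, and drop the truncation on $\|\delta'\|_2$.
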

Proof is deferred to Section \ref{sec:proof-non-bi:subadd}.

\ifdefined\isarxiv
\section*{Acknowledgments}
We thank Emmanuel Abbe, Elisabetta Cornacchia, Jingbo Liu, Youngtak Sohn for helpful discussions.
We thank the anonymous reviewers for helpful comments.

Research was sponsored by the United States Air Force Research Laboratory and the United States Air Force Artificial Intelligence Accelerator and was accomplished under Cooperative Agreement Number FA8750-19-2-1000. The views and conclusions contained in this document are those of the authors and should not be interpreted as representing the official policies, either expressed or implied, of the United States Air Force or the U.S. Government. The U.S. Government is authorized to reproduce and distribute reprints for Government purposes notwithstanding any copyright notation herein.
\else
\acks{We thank Emmanuel Abbe, Elisabetta Cornacchia, Jingbo Liu, Youngtak Sohn for helpful discussions.
We thank the anonymous reviewers for helpful comments.

Research was sponsored by the United States Air Force Research Laboratory and the United States Air Force Artificial Intelligence Accelerator and was accomplished under Cooperative Agreement Number FA8750-19-2-1000. The views and conclusions contained in this document are those of the authors and should not be interpreted as representing the official policies, either expressed or implied, of the United States Air Force or the U.S. Government. The U.S. Government is authorized to reproduce and distribute reprints for Government purposes notwithstanding any copyright notation herein.}
\fi

\ifdefined\isarxiv
\bibliographystyle{alpha}
\fi
\bibliography{ref}

\appendix

\section{Limit of channels} \label{sec:chan-lim}
In this section we build the foundation for discussing limits of information channels.
We view a channel $P:\cX\to \cY$ as a distribution of posterior distributions under uniform prior, i.e., the distribution of $P_{X|Y}$ where $P_X=\Unif(\cX)$, $Y\sim P(\cdot | X)$.
Let $\mu$ denote the posterior distribution variable and $P_\mu\in \cP(\cP(\cX))$ be its distribution (called $P$'s posterior distribution's distrbution).
Note that $P_\mu$ is invariant under channel equivalence.

We often work with sequences $(P_k)_{k\ge 0}$ of channels with the same input alphabet $\cX$.
Let $P_{\pi,k}$ denote the distrbution of posterior distributions of $P_k$ under uniform prior.
Let $P_\infty$ be a channel with input alphabet $\cX$ and posterior distribution's distrbution $P_{\pi,\infty}$.
We say $(P_k)_{k\ge 0}$ converges weakly to $P_\infty$ if $(P_{\pi,k})_{k\ge 0}$ converges weakly to $P_{\pi,\infty}$ as distributions on $\cP(\cX)$.

In general, given such a sequence, a limit does not necessarily exist. Nevertheless, when the channels are related to each other via degradation, a limit channel exists.
\begin{lemma} \label{lem:deg-down-seq-limit}
  Let $(P_k : \cX \to \cY_k)_{k\ge 0}$ be a sequence of channels with the same finite input alphabet.
  If $P_k\ge_{\deg} P_{k+1}$ for all $k$, then $(P_k)_{k\ge 0}$ converges weakly to some channel $P_\infty$.
\end{lemma}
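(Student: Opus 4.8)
The plan is to identify the sequence of channels with a sequence of probability measures on the compact metric space $\cP(\cX)$ (the simplex, which is compact since $\cX$ is finite) and to exploit the fact that degradation gives a monotone structure that forces convergence. Concretely, for each $k$ let $P_{\mu,k}\in\cP(\cP(\cX))$ be the posterior-distribution's-distribution of $P_k$ under the uniform prior, as defined just before the lemma. Since $\cP(\cX)$ is compact, the space $\cP(\cP(\cX))$ is compact in the topology of weak convergence (by Prokhorov's theorem), so the sequence $(P_{\mu,k})_{k\ge 0}$ has at least one weak subsequential limit. The real work is to show that the limit is \emph{unique}, i.e.\ that the whole sequence converges, and for this I would use the degradation relation $P_k\ge_{\deg}P_{k+1}$.

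The key mechanism is that degradation is equivalent to a stochastic ordering on the posterior-distribution's-distributions: $P_{k+1}\le_{\deg}P_k$ means precisely that there is a coupling of $\mu_{k}\sim P_{\mu,k}$ and $\mu_{k+1}\sim P_{\mu,k+1}$ with $\bE[\mu_k\mid \mu_{k+1}]=\mu_{k+1}$, i.e.\ $(\mu_k)$ run backwards is a martingale with respect to the natural filtration (this is the standard characterization of degradation; cf.\ the coupling picture in Prop.~\ref{prop:fms-deg-couple}, restricted here to the case of honest channels rather than the quotient). Equivalently, for every continuous convex function $\psi:\cP(\cX)\to\bR$ the quantity $\bE_{\mu\sim P_{\mu,k}}[\psi(\mu)]$ is non-increasing in $k$, and bounded below (by $\psi$ evaluated at the barycenter, via Jensen), hence convergent. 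I would then argue that convergence of $\bE[\psi(\mu_k)]$ for all continuous convex $\psi$ pins down the weak limit: the differences of convex functions are dense (in sup norm on the compact set $\cP(\cX)$) in $C(\cP(\cX))$ — indeed smooth functions are differences of convex functions, e.g.\ $\psi = (\psi + \Lambda\|x\|^2) - \Lambda\|x\|^2$ for $\Lambda$ large — so convergence of $\int \psi\, dP_{\mu,k}$ along the convex cone implies convergence of $\int f\, dP_{\mu,k}$ for every $f\in C(\cP(\cX))$. This shows all subsequential limits agree, hence $(P_{\mu,k})$ converges weakly to some $P_{\mu,\infty}$, which by definition is the posterior-distribution's-distribution of some channel $P_\infty$; that is the claimed weak convergence $P_k\to P_\infty$.

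I would carry out the steps in this order: (1) recall the equivalence between $P_{k+1}\le_{\deg}P_k$ and the backward-martingale/majorization coupling of $P_{\mu,k}$'s; (2) deduce monotonicity of $k\mapsto\int\psi\,dP_{\mu,k}$ for every continuous convex $\psi$, and boundedness below, hence existence of $\lim_k\int\psi\,dP_{\mu,k}$; (3) invoke compactness of $\cP(\cP(\cX))$ to get subsequential limits, and use density of differences of convex functions in $C(\cP(\cX))$ to conclude all subsequential limits coincide; (4) define $P_\infty$ via the limiting measure $P_{\mu,\infty}$ and conclude. The main obstacle is step (3): one must be careful that the convex functions in question are enough to separate measures on $\cP(\cX)$ and that the approximation is uniform on the \emph{compact} simplex (so that weak convergence is genuinely controlled); this is where finiteness of $\cX$ is essential, since it makes $\cP(\cX)$ compact and lets the Stone–Weierstrass-type density argument for $C(\cP(\cX))$ go through. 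A secondary technical point is to make sure the coupling in step (1) can be iterated consistently across all $k$ (i.e.\ to build a single backward martingale $(\mu_k)_{k\ge0}$ rather than just pairwise couplings), which follows from the Ionescu–Tulcea / Kolmogorov extension theorem applied to the transition kernels witnessing each degradation.
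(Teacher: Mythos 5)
Your proposal is correct, but it takes a genuinely different route from the paper's proof. The paper builds an infinite Markov chain $X - Y_0 - Y_1 - \cdots$ out of the degradation kernels, observes that the posteriors $\mu_k = P_{X|Y_k}$ form a reverse martingale with respect to the reverse filtration $\cF_k = \sigma(Y_i : i\ge k)$, and invokes the reverse martingale convergence theorem to get \emph{almost sure} convergence of $\mu_k$, from which weak convergence of the laws follows immediately. Your argument instead works entirely at the level of the measures $P_{\mu,k}\in\cP(\cP(\cX))$: compactness supplies subsequential limits, monotonicity of $k\mapsto\int\psi\,dP_{\mu,k}$ for convex $\psi$ (via the Blackwell-type coupling characterization of degradation and Jensen) forces all those subsequential limits to integrate every convex function identically, and density of differences of convex functions in $C(\cP(\cX))$ then pins down a unique weak limit. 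Your route buys a cleaner, more self-contained functional-analytic argument that avoids the Ionescu--Tulcea construction of the joint chain (which, incidentally, you flag as a "secondary technical point" but do not actually need, since your monotonicity step only uses pairwise couplings); the paper's martingale route buys the stronger conclusion of almost sure convergence under the canonical coupling, which is implicitly reused elsewhere (e.g.\ in Prop.~\ref{prop:phi-contraction-imply-l2}). Both are valid; the paper's is the shorter one to write but requires importing martingale machinery, while yours is longer to spell out but more elementary in its probabilistic prerequisites.
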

\begin{proof}
  By definition of degradation, there exists channel $R_k: \cY_k \to \cY_{k+1}$ such that $P_{k+1} = R_k \circ P_k$.
  This gives rise to an infinite Markov chain
  \begin{align}
    X - Y_0 - Y_1 - Y_2 - \cdots.
  \end{align}
  Let $\mu_k$ denote the posterior distribution variable $P_{X|Y_k}$.
  Then we have
  \begin{align}
    \bE[\mu_{k-1} | Y_k] = \mu_k.
  \end{align}
  Let $\cF_k$ denote the $\sigma$-algebra generated by $(Y_i)_{i\ge k}$.
  Then $(\cF_k)_{k\ge 0}$ is a reverse filtration and $(\mu_k)_{k\ge 0}$ is a reverse martingale with respect to $(\cF_k)_{k\ge 0}$.
  By reverse martingale convergence theorem (e.g.,~\cite[Theorem 4.7.1]{durrett2019probability}),
  $\lim_{k\to \infty} \mu_k$ converges almost surely.
  Define $\mu_\infty := \lim_{k\to \infty} \mu_k$.
  Let $P_\infty$ be a channel with input alphabet $\cX$ whose posterior distribution's distribution is $\mu_\infty$.
  Then $(P_k)_{k\ge 0}$ converges weakly to $P_\infty$.
\end{proof}

\begin{lemma} \label{lem:deg-up-seq-limit}
  Let $(P_k : \cX \to \cY_k)_{k\ge 0}$ be a sequence of channels with the same finite input alphabet.
  If $P_k\le_{\deg} P_{k+1}$ for all $k$, then $(P_k)_{k\ge 0}$ converges weakly to some channel $P_\infty$.
\end{lemma}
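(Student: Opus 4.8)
The plan is to reduce to Lemma~\ref{lem:deg-down-seq-limit} by a truncation-and-diagonalization argument, since here the martingale runs the "wrong way" (each $P_k$ is a degradation of the later $P_{k+1}$, so the posterior-distribution variables form a \emph{forward} martingale, which need not converge without extra integrability beyond what boundedness on $\cP(\cX)$ already gives — in fact it does, but the cleanest route avoids re-deriving Doob). First I would record that for each fixed $k$, since $P_j \ge_{\deg} P_k$ for all $j \ge k$, the down-sequence $(P_j)_{j \ge k}$ does \emph{not} apply directly; instead observe that degradation going \emph{up} means we have channels $R_k : \cY_{k+1} \to \cY_k$ with $P_k = R_k \circ P_{k+1}$, giving a Markov chain $X - Y_k - Y_{k-1} - \cdots - Y_0$ for each finite horizon, i.e. an \emph{inverse} system. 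Let $\mu_k = P_{X \mid Y_k} \in \cP(\cX)$ be the posterior-distribution variable under uniform prior; then $\mu_k = \bE[\mu_{k+1} \mid Y_k]$, so $(\mu_k)_{k \ge 0}$ is a forward martingale with respect to the filtration $\cG_k = \sigma(Y_0, \dots, Y_k)$ (equivalently $\sigma(Y_k)$, since the chain is Markov). This martingale is bounded: each $\mu_k$ takes values in the compact simplex $\cP(\cX)$, so $\|\mu_k\|_\infty \le 1$ coordinatewise and hence $(\mu_k)$ is $L^p$-bounded for every $p$.

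The key step is then a direct application of the (forward) martingale convergence theorem (e.g.,~\cite[Theorem 4.2.11]{durrett2019probability}): a uniformly bounded martingale converges almost surely and in $L^1$. This yields $\mu_\infty := \lim_{k\to\infty}\mu_k$ existing a.s. as a $\cP(\cX)$-valued random variable. Define $P_\infty$ to be any channel with input alphabet $\cX$ whose posterior-distribution's-distribution equals the law of $\mu_\infty$; such a channel exists (take $\cY_\infty = \cP(\cX)$ with the obvious kernel). Almost-sure convergence of $\mu_k$ to $\mu_\infty$, together with the fact that all variables live in the compact metric space $\cP(\cX)$, gives convergence in distribution of $\mu_k$ to $\mu_\infty$, which is precisely weak convergence of $(P_k)$ to $P_\infty$ in the sense defined just before Lemma~\ref{lem:deg-down-seq-limit}.

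The one subtlety — and the main thing to get right — is consistency of the Markov-chain construction: the relations $P_k = R_k \circ P_{k+1}$ give, for each finite $N$, a joint law of $(X, Y_0, \dots, Y_N)$, and these are consistent as $N$ grows (the law of $(X, Y_0, \dots, Y_N)$ is the marginal of the law of $(X, Y_0, \dots, Y_{N+1})$), so by the Kolmogorov extension theorem there is a single probability space carrying $(X, (Y_k)_{k \ge 0})$ with all the stated conditional laws. With that in hand the martingale property $\bE[\mu_{k+1}\mid \cG_k] = \mu_k$ is immediate from the tower property and the Markov structure $Y_{k} \perp Y_{k+1} \mid Y_k$ is automatic. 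I expect essentially no obstacle beyond bookkeeping; the boundedness that makes the forward martingale converge is exactly what fails in the general (non-degradation-ordered) case and is what the hypothesis buys us, so it is worth stating explicitly that it is $L^1$-boundedness via the compact simplex that closes the argument.
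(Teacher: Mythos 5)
Your proof is correct and follows essentially the same route as the paper: the degradation relations $P_k = R_k\circ P_{k+1}$ chain into an infinite Markov chain $X - Y_0 - Y_1 - \cdots$, the posterior variables $\mu_k = P_{X\mid Y_k}$ form a uniformly bounded forward martingale with $\bE[\mu_{k+1}\mid Y_k]=\mu_k$, and Doob's martingale convergence theorem yields an almost sure limit $\mu_\infty$ whose law defines $P_\infty$. The only differences are cosmetic: your explicit Kolmogorov-extension remark makes precise a step the paper leaves implicit, and your opening suggestion of reducing to Lemma~\ref{lem:deg-down-seq-limit} via truncation is rightly abandoned in favor of the direct argument.
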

\begin{proof}
  By definition of degradation, there exists channel $R_k: \cY_k \to \cY_{k-1}$ such that $P_{k-1} = R_k \circ P_k$.
  This gives rise to an infinite Markov chain
  \begin{align}
    X - Y_0 - Y_1 - Y_2 - \cdots.
  \end{align}
  Let $\mu_k$ denote the posterior distribution variable $P_{X|Y_k}$.
  Then we have
  \begin{align}
    \bE[\mu_{k+1} | Y_k] = \mu_k.
  \end{align}
  Let $\cF_k$ denote the $\sigma$-algebra generated by $(Y_i)_{i\le k}$.
  Then $(\cF_k)_{k\ge 0}$ is a filtration and $(\mu_k)_{k\ge 0}$ is a martingale with respect to $(\cF_k)_{k\ge 0}$.
  Note that the variables $\mu_k$ take values in $\cP(\cX)$, so are uniformly bounded.
  By martingale convergence theorem (e.g.,~\cite[Theorem 4.2.11]{durrett2019probability}),
  $\lim_{k\to \infty} \mu_k$ converges almost surely.
  Define $\mu_\infty := \lim_{k\to \infty} \mu_k$.
  Let $P_\infty$ be a channel with input alphabet $\cX$ whose posterior distribution's distribution is $\mu_\infty$.
  Then $(P_k)_{k\ge 0}$ converges weakly to $P_\infty$.
\end{proof}

By symmetry, in Lemma~\ref{lem:deg-down-seq-limit} and Lemma~\ref{lem:deg-up-seq-limit}, if the sequence $(P_k)_{k\ge 0}$ consists of FMS channels, then the limit $P_\infty$ is an FMS channel.

\section{Proofs in Section~\ref{sec:fms}} \label{sec:proof-fms}
\begin{proof}[Proof of Prop.~\ref{prop:fms-mixture}]
  \textbf{Existence:}
  The proof strategy is to partition $\cY$ into $\Aut(\cX)$-orbits and show that the channel $P$ restricted to each orbit is equivalent to an FSC.

  \textbf{Step 1.} We first prove that we can replace $P$ with an equivalent FMS channel whose $\Aut(\cX)$ action is free, so that in later steps each orbit is easier to handle.
  Define channel $\wt P: \cX \to \cY \times \wt \cY$,
  where $\wt \cY = \Aut(\cX)$, sending $X$ to $(Y, \wt Y)$ where $Y\sim P(\cdot | X)$ and $\wt Y \sim \Unif(\Aut(\cX))$ is independent of $X$.
  We give $\wt P$ an FMS structure where $\Aut(\cX)$ acts on $\wt \cY$ by left multiplication.
  It is easy to see that $P$ is equivalent to $\wt P$.
  Therefore we can replace $P$ with $\wt P$ and wlog assume that $\Aut(\cX)$ action is free.

  \textbf{Step 2.}
  Let $\cO = \cY / \Aut(\cX)$ be the space of orbits of the $\Aut(\cX)$ action on $\cY$.
  For an orbit $o\in \cO$, for any two elements $y_1,y_2\in o$, the posterior distributions $\pi_1 = P_{X|Y=y_1}$ and $\pi_2 = P_{X|Y=y_2}$ (with uniform priors) differ by a permutation, by the assumption that $P$ is FMS.
  In particular, $\pi_1$ and $\pi_2$ map to the same element in $\cP(\cX)/\Aut(\cX)$.
  Therefore we can uniquely assign an element $\pi_o\in \cP(\cX)/\Aut(\cX)$ for any $o\in \cO$.

  Note that by symmetry, the distribution of $o$ does not depend on the input distribution. Let $P_o \in \cP(\cO)$ be this distribution.
  Then $P$ is equivalent to the channel $X\to (o, Z)$ where $o\sim P_o$ is independent of $X$, and $Z\sim \FSC_{\pi_o}(\cdot | X)$.
  (Because $\Aut(\cX)$ action on $\cY$ is free, this equivalence is in fact just renaming the output space.)

  \textbf{Step 3.}
  Finally we prove that the FMS channel $X\to (o, Z)$ is equivalent to $X\to (\pi_o,Z)$.
  One side is easy: given $(o,Z)$, we can generate $(\pi_o,Z)$.
  For the other side, given $(\pi,Z)$, we can generate $o' \sim P_{o | \pi_o = \pi}$.
  Then $(o',Z)$ has the same distribution as $(o,Z)$, conditioned on any input distribution.
  This finishes the existence proof.

  \textbf{Uniqueness:}
  For any FMS channel $X\xrightarrow{Q}Y$, we can associate it with a distribution $Q_\pi$ on $\cP(\cX)/\Aut(\cX)$, defined as the distribution of the posterior distribution $Q_{X|Y}$, where $Y\sim Q_{Y|X} \circ \Unif(\cX)$ is generated with uniform prior distribution. (By definition $Q_\pi$ is a distribution on $\cP(\cX)$. However, by symmetry property of FMS, $Q_\pi$ is invariant under $\Aut(\cX)$ action.)
  It is easy to see that $Q_\pi$ distribution is preserved under equivalence between FMS channels.
  Furthermore, for an FMS channel of form $X\to (\pi,Z)$ as described in the proposition statement, this distribution of posterior distribution is equal to $P_\pi$.
  Therefore $P_\pi$ is uniquely deteremined by $P$.
\end{proof}

\begin{proof}[Proof of Prop.~\ref{prop:fms-deg-couple}]
  \textbf{Degradation $\Rightarrow$ Coupling:}
  Say $P$ maps $X$ to $Y$, and $Q$ maps $X$ to $Z$.
  Let $\pi'_P \in \cP(\cX)$ be the posterior distribution of input $X$ given output $Y$, where $Y\sim P_{Y|X} \circ \Unif(\cX)$ is generated with uniform prior distribution.
  Similarly define $\pi'_Q$.
  Then $\pi_P$ (resp.~$\pi_Q$) is the orbit of $\pi'_P$ (resp.~$\pi'_Q$) under permutation.

  Degradation relationship $P = R\circ Q$ induces a coupling on the posterior distributions $\pi'_P$ and $\pi'_Q$. One can check that this coupling is invariant under $\Aut(\cX)$ action and satisfies
  \begin{align}
    \pi' = \bE[\pi'_Q | \pi'_P = \pi'] \qquad \forall \pi' \in \cP(\cX).
  \end{align}
  For any $\pi' \in \cP(\cX)$, let $p(\pi')\in \cP(\cX)/\Aut(\cX)$ denotes its projection.
  Then we have
  \begin{align}
    p(\pi') \le_m \bE[p(\pi'_Q) | \pi'_P = \pi'].
  \end{align}
  Taking expectation over the orbit, we get
  \begin{align}
    \pi \le_m \bE[\pi_Q | \pi_P = \pi] \qquad \forall \pi \in \cP(\cX)/\Aut(\cX).
  \end{align}

  \textbf{Coupling $\Rightarrow$ Degradation:}
  \textbf{Step 1.} We prove that for $\pi,\pi'\in \cP(\cX)/\Aut(\cX)$,
  if $\pi \le_m \pi'$, then $\FSC_\pi \le_{\deg} \FSC_{\pi'}$.
  Because $\pi\le_m \pi'$, there exists $a\in \cP(\Aut(\cX))$ such that (see e.g., \cite[2.20]{hardy1934inequalities})
  \begin{align}
    \pi_i = \sum_{\sigma \in \Aut(\cX)} a_\sigma \pi'_{\sigma^{-1}(i)} \qquad \forall i\in \cX.
  \end{align}

  For $\rho\in \Aut(\cX)$, we have
  \begin{align}
    \FSC_\pi(\rho | i) = \frac 1{(q-1)!} \pi_{\rho^{-1}(i)}
    = \sum_{\sigma\in \Aut(\cX)} a_\sigma \frac 1{(q-1)!} \pi'_{\sigma^{-1}\rho^{-1}(i)}
    = \sum_{\sigma\in \Aut(\cX)} a_\sigma \FSC_{\pi'}(\rho \sigma | i).
  \end{align}
  Therefore we can let $R$ map $\rho\sigma$ to $\rho$ with probability $a_\sigma$, for all $\sigma\in \Aut(\cX)$.
  This gives the desired degradation map $R$.

  \textbf{Step 2.}
  We use the FSC mixture representation (Prop.~\ref{prop:fms-mixture}).
  Suppose $P$ maps $X$ to $(\pi_P, Z_P)$, and $Q$ maps $X$ to $(\pi_Q, Z_Q)$.
  If
  \begin{align}
    \pi = \bE[\pi_Q | \pi_P = \pi] \qquad \forall \pi \in \cP(\cX)/\Aut(\cX),
  \end{align}
  then we can construct $R$ by mapping $\pi_Q$ to coupled $\pi_P$ (randomly), and keeping the $Z$ component.

  Now define an FMS channel $\wt P$ whose $\pi$-component is $f(\pi_P)$, where
  \begin{align}
    f(\pi) := \bE[\pi_Q | \pi_P = \pi].
  \end{align}
  Then by Step 1, $P\le_{\deg} \wt P$. By Step 2, $\wt P\le_{\deg} Q$.
  Therefore $P\le_{\deg} Q$.
\end{proof}

\section{Proofs in Section~\ref{sec:bi}} \label{sec:proof-bi}

\subsection{Proofs for low SNR case} \label{sec:proof-bi:low-snr}

We state a few properties of the function $\phi^L$.
\begin{lemma} \label{lem:phi-L-convex}
	$\phi^L$ is $1$-strongly convex on $\cP([q])$.
\end{lemma}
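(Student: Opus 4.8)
The plan is to compute the Hessian of $\phi^L(\pi) = \sum_{i\in[q]} (\pi_i - \frac 1q)\log \pi_i$ on the simplex $\cP([q])$ and show that it dominates the identity on the tangent space $\mathbbm{1}^\perp$. First I would expand: $\phi^L(\pi) = \sum_i \pi_i \log \pi_i - \frac 1q \sum_i \log \pi_i$. The first term is the (negative) entropy, whose Hessian is $\diag(1/\pi_i)$; the second term contributes Hessian $\diag\big(\frac{1}{q\pi_i^2}\big)$ (with a plus sign, since $-\frac1q\log\pi_i$ is convex). Hence
\begin{align}
  \nabla^2 \phi^L(\pi) = \diag\left(\frac 1{\pi_i} + \frac 1{q\pi_i^2}\right)_{i\in[q]}.
\end{align}
Since every $\pi_i \in (0,1]$, we have $\frac 1{\pi_i} \ge 1$ for all $i$, so $\nabla^2\phi^L(\pi) \succeq \Id$ as a quadratic form on all of $\bR^q$, and in particular $\langle v, \nabla^2\phi^L(\pi) v\rangle \ge \|v\|_2^2$ for every $v$ (tangent or not). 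This gives $1$-strong convexity of $\phi^L$ on $\cP([q])$. I should also note that $\phi^L$ equals $C_{\SKL}(\FSC_\pi)$ by the formula in Definition~\ref{defn:fms-info-measure}, which is the reason this particular function is relevant; but the convexity claim itself is the elementary Hessian computation above, and one can observe moreover that $f^L(\pi,v) = \langle \pi^{-1} + \frac1q\pi^{-2}, v^2\rangle$ from~\eqref{eqn:thm:bi-low-snr-defn-f} is exactly $\langle v, \nabla^2\phi^L(\pi)v\rangle$, tying this lemma to the constant $C^L(q,\lambda)$.

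One subtlety to address: $\cP([q])$ has empty interior in $\bR^q$, so "strongly convex" must be interpreted via the restriction to the affine hull, i.e. for directions $v\in\mathbbm{1}^\perp$; but since the bound $\nabla^2\phi^L \succeq \Id$ holds on all of $\bR^q$, it holds a fortiori on the subspace, and no extra work is needed there. A second point: $\phi^L$ blows up ($\log\pi_i \to -\infty$) on the boundary where some $\pi_i = 0$, but $\pi_i\log\pi_i \to 0$ while $-\frac1q\log\pi_i\to+\infty$, so $\phi^L\to+\infty$ on the boundary; this is consistent with (and does not obstruct) strong convexity on the open simplex, and the Hessian bound extends by continuity/limits wherever finite.

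I do not anticipate a serious obstacle here — the lemma is a one-line Hessian estimate once the function is written out. The only thing to be careful about is bookkeeping of signs in the two pieces of $\phi^L$ and making explicit that the constant is exactly $1$ because $\min_i \frac1{\pi_i} = 1$ is attained in the limit $\pi\to$ a vertex (so $1$ is the best possible strong-convexity constant, matching the statement). This same Hessian formula will be reused when bounding $C^L(q,\lambda)\le q^2$ in Prop.~\ref{prop:bi-low-snr-C-bound}, so it is worth recording cleanly.
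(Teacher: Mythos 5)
Your proof is correct and is essentially the same as the paper's: both compute $\nabla^2\phi^L(\pi) = \diag\big(\pi^{-1} + \tfrac1q\pi^{-2}\big) \succeq I$ since $\pi_i \le 1$. (One small aside: your parenthetical claim that $1$ is the \emph{optimal} strong-convexity constant is not quite right, since the smallest diagonal entry tends to $1 + 1/q$ as $\pi_i \to 1$; but the lemma only asserts $1$-strong convexity, so this does not affect anything.)
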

\begin{proof}
	\begin{align}
		\nabla^2 \phi^L(\pi) = \diag\left(\pi^{-1} + \frac 1q \pi^{-2}\right) \succeq I.
	\end{align}
\end{proof}

\begin{lemma} \label{lem:phi-L-additive-under-star}
	$\Phi^L(\cdot) = C_{\SKL}(\cdot)$ is additive under $\star$-convolution.
\end{lemma}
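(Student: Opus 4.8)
The plan is to reduce the statement to the tensorization property of Kullback--Leibler divergence. The key observation I would establish first is the identity, valid for an arbitrary channel $R$ with input alphabet $[q]$,
\begin{align}
  C_{\SKL}(R) = \frac 1{q^2}\sum_{i,j\in[q]} \KL\bigl(R(\cdot|i)\,\|\,R(\cdot|j)\bigr). \label{eqn:skl-pairwise}
\end{align}
To prove \eqref{eqn:skl-pairwise} I would run $R$ with uniform input $X\sim\Unif([q])$, let $Y$ be the output and $\mu=R_{X|Y}\in\cP([q])$ the posterior, so that $\mathrm{d}R(\cdot|i)/\mathrm{d}R_Y = q\mu_i$. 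Then $\KL(R(\cdot|i)\|R(\cdot|j)) = \bE_{Y\sim R(\cdot|i)}\log(\mu_i(Y)/\mu_j(Y))$; averaging over $i,j$ independently uniform, using the change of measure $\bE_{Y\sim R(\cdot|i)}[g(Y)] = \bE_{Y\sim R_Y}[q\mu_i(Y)g(Y)]$ and $\frac1q\sum_i R(\cdot|i) = R_Y$, the double sum collapses to $\bE_\mu[\sum_i\mu_i\log\mu_i - \frac1q\sum_i\log\mu_i] = \bE_\mu\sum_i(\mu_i-\frac1q)\log\mu_i$. By Prop.~\ref{prop:fms-mixture} the $\pi$-distribution of an FMS channel is the law of its posterior under uniform input, and the summand $\sum_i(\mu_i-\frac1q)\log\mu_i$ is permutation-invariant, so this last expression is exactly $C_{\SKL}(R)$.

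Given \eqref{eqn:skl-pairwise} the lemma is immediate: by definition of $\star$-convolution, $(P\star Q)(\cdot|i) = P(\cdot|i)\otimes Q(\cdot|i)$, so tensorization of KL gives $\KL((P\star Q)(\cdot|i)\|(P\star Q)(\cdot|j)) = \KL(P(\cdot|i)\|P(\cdot|j)) + \KL(Q(\cdot|i)\|Q(\cdot|j))$ for every $i,j$; summing over $i,j$ and dividing by $q^2$ yields $C_{\SKL}(P\star Q) = C_{\SKL}(P) + C_{\SKL}(Q)$.

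An alternative route that stays entirely inside the paper's framework avoids \eqref{eqn:skl-pairwise}: since $C_{\SKL}(\cdot) = \bE_\pi\phi^L(\pi)$ is linear in the $\pi$-distribution and $\star$-convolution acts on $\pi$-distributions by the pushforward in \eqref{eqn:star-formula}, it suffices to verify $C_{\SKL}(\FSC_\pi\star\FSC_{\pi'}) = \phi^L(\pi)+\phi^L(\pi')$ for all $\pi,\pi'\in\cP([q])$. Expanding $\phi^L(\pi\star_\tau\pi')$ via $\log(\pi\star_\tau\pi')_k = \log\pi_k + \log\pi'_{\tau(k)} - \log S_\tau$ with $S_\tau = \sum_j\pi_j\pi'_{\tau(j)}$, and averaging against the weights $\frac1{(q-1)!}S_\tau$ from \eqref{eqn:star-formula}, the $\log S_\tau$ contributions cancel and the symmetrization over $\tau\in\Aut([q])$ (each value of $\tau(k)$ attained $(q-1)!$ times) reassembles $\phi^L(\pi)+\phi^L(\pi')$.

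I do not expect a genuine obstacle: once \eqref{eqn:skl-pairwise} is in hand, additivity is a one-line consequence of KL tensorization, and all the work is in that identity, which is a routine posterior/change-of-measure computation. The only point requiring a little care is bookkeeping with infinite values — when a posterior reaches the boundary of the simplex (e.g.\ $R=\Id$) some pairwise KL terms, hence $C_{\SKL}$, equal $+\infty$, but both \eqref{eqn:skl-pairwise} and the additivity persist in $[0,+\infty]$ — and, for channels with infinite output alphabet, replacing densities by Radon--Nikodym derivatives throughout, which changes nothing.
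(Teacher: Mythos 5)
Your proof is correct, and both of your routes coincide with what the paper does: your ``alternative route'' is exactly the paper's written-out proof (reduce to FSCs by linearity of $\Phi^L$ in the $\pi$-distribution, then expand $\phi^L(\pi\star_\tau\pi')$ against the weights in \eqref{eqn:star-formula} so that the $\log S_\tau$ terms cancel), while your primary route via the pairwise-KL representation of $C_{\SKL}$ and KL tensorization is precisely the one-line argument the paper invokes by citing K\"ulske's additivity of SKL divergence under $\star$-convolution. Your bookkeeping of the $+\infty$ case and of the posterior/change-of-measure identity is also sound.
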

\begin{proof}
  The statement follows from additivity of SKL divergence under $\star$-convolution \cite{kulske2009symmetric}.
  For completeness, we present a direct proof using~\eqref{eqn:star-formula}.

  By FSC mixture decomposition (Prop.~\ref{prop:fms-mixture}), it suffices to prove that
  \begin{align}
    \Phi^L(\FSC_\pi \star \FSC_{\pi'}) = \Phi^L(\FSC_\pi) + \Phi^L(\FSC_{\pi'}).
  \end{align}
  We have
  \begin{align*}
    &~\Phi^L(\FSC_\pi \star \FSC_{\pi'}) \\
    =&~ \sum_{\tau \in \Aut([q])} \left(\frac 1{(q-1)!}\sum_{i\in [q]} \pi_i \pi'_{\tau(i)}\right) \phi^L(\pi \star_\tau \pi') \\
    =&~ \sum_{\tau \in \Aut([q])} \left(\frac 1{(q-1)!}\sum_{i\in [q]} \pi_i \pi'_{\tau(i)}\right)
    \sum_{j\in [q]} \left((\pi \star_\tau \pi')_j - \frac 1q\right)\log (\pi \star_\tau \pi')_j\\
    =&~ \sum_{\tau \in \Aut([q])} \left(\frac 1{(q-1)!}\sum_{i\in [q]} \pi_i \pi'_{\tau(i)}\right)
    \sum_{j\in [q]} \left(\frac {\pi_j \pi'_{\tau(j)}}{\sum_{k\in [q]} \pi_k \pi'_{\tau(k)}} - \frac 1q\right)\log \frac {\pi_j \pi'_{\tau(j)}}{\sum_{k\in [q]} \pi_k \pi'_{\tau(k)}}\\
    =&~ \sum_{\tau \in \Aut([q])} \frac 1{(q-1)!}
    \sum_{j\in [q]} \left(\pi_j \pi'_{\tau(j)} - \frac 1q \sum_{i\in [q]} \pi_i \pi'_{\tau(i)} \right)\log \frac {\pi_j \pi'_{\tau(j)}}{\sum_{k\in [q]} \pi_k \pi'_{\tau(k)}} \\
    =&~ \sum_{\tau \in \Aut([q])} \frac 1{(q-1)!}
    \sum_{j\in [q]} \left(\pi_j \pi'_{\tau(j)} - \frac 1q \sum_{i\in [q]} \pi_i \pi'_{\tau(i)} \right)\log (\pi_j \pi'_{\tau(j)}) \\
    =&~ \sum_{j\in [q]} (\pi_j-\frac 1q)\log \pi_j + \sum_{j\in [q]} (\pi'_j-\frac 1q)\log \pi'_j \\
    =&~ \Phi^L(\FSC_\pi) + \Phi^L(\FSC_{\pi'}).
  \end{align*}
\end{proof}

Condition~\eqref{eqn:thm:bi-low-snr-cond} implies the desired contraction.
\begin{proposition} \label{prop:phi-L-contraction}
	If \eqref{eqn:thm:bi-low-snr-cond} holds, then
	$
		\lim_{k\to \infty} \left(\Phi^L(M_{k}) - \Phi^L(\wt M_{k})\right) = 0.
	$
\end{proposition}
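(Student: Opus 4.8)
Recall the setup: $(M_k)$ and $(\wt M_k)$ are $q$-FMS channels satisfying the BP recursion $M_{k+1}=\BP(M_k)$, $\wt M_{k+1}=\BP(\wt M_k)$ with $\BP(M)=\bE_b\left[(M\circ P_\lambda)^{\star b}\star W\right]$, and $\wt M_k\le_{\deg} M_k$ for all $k$ (Section~\ref{sec:proof-bi}, before Prop.~\ref{prop:phi-L-contraction}), with $M_0=\Id$. Since $\phi^L$ is $1$-strongly convex (Lemma~\ref{lem:phi-L-convex}), the computation inside the proof of Prop.~\ref{prop:phi-contraction-imply-l2} shows, applied one step at a time, that under the canonical coupling of $\pi_k$ (the $\pi$-component of $M_k$) and $\wt\pi_k$,
\begin{align}
  \Phi^L(M_k)-\Phi^L(\wt M_k) \ge \tfrac12\,\bE\|\pi_k-\wt\pi_k\|_2^2 \ge 0,
\end{align}
so the sequence $a_k:=\Phi^L(M_k)-\Phi^L(\wt M_k)\ge 0$ is well-defined. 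The plan is to show $a_k\to 0$ by showing $a_{k+1}\le d\lambda^2 C^L(q,\lambda)\,a_k$; since $d\lambda^2C^L(q,\lambda)<1$ by~\eqref{eqn:thm:bi-low-snr-cond}, this forces $a_k\to 0$ geometrically.

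First I would track $a_k$ through one application of $\BP$, peeling off the three operations in $\BP$ in turn. By Lemma~\ref{lem:phi-L-additive-under-star}, $\Phi^L$ is additive under $\star$-convolution, so $\Phi^L((M\circ P_\lambda)^{\star b}\star W)=b\,\Phi^L(M\circ P_\lambda)+\Phi^L(W)$, and after taking $\bE_b$ and differencing $M_k$ against $\wt M_k$ (the $\Phi^L(W)$ and $b$-distribution pieces cancel), one gets $a_{k+1}=\bE[b]\,\bigl(\Phi^L(M_k\circ P_\lambda)-\Phi^L(\wt M_k\circ P_\lambda)\bigr)=d\bigl(\Phi^L(M_k\circ P_\lambda)-\Phi^L(\wt M_k\circ P_\lambda)\bigr)$. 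So it remains to prove the one-step contraction under composition with $P_\lambda$:
\begin{align}
  \Phi^L(M\circ P_\lambda)-\Phi^L(\wt M\circ P_\lambda) \le \lambda^2 C^L(q,\lambda)\bigl(\Phi^L(M)-\Phi^L(\wt M)\bigr)
\end{align}
whenever $\wt M\le_{\deg}M$. Recall the $\pi$-distribution of $M\circ P_\lambda$ is the pushforward of that of $M$ under $\pi\mapsto f(\pi):=\lambda\pi+\frac{1-\lambda}q$.

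For this inequality I would use the degradation coupling from Prop.~\ref{prop:fms-deg-couple}: under the canonical coupling, $\wt\pi\le_m\bE[\pi\mid\wt\pi]$. Write the difference $\Phi^L(M\circ P_\lambda)-\Phi^L(\wt M\circ P_\lambda)=\bE\bigl[\phi^L(f(\pi))-\phi^L(f(\wt\pi))\bigr]$ and Taylor-expand along the segment from $\wt\pi$ to $\pi$ (both in $\cP([q])$, so the displacement lies in $\mathbbm{1}^\perp$). The first-order term $\bE\langle\nabla\phi^L(f(\wt\pi)),f(\pi)-f(\wt\pi)\rangle$ should be handled exactly as in Prop.~\ref{prop:phi-contraction-imply-l2}: conditioning on $\wt\pi$, it becomes $\langle\nabla\phi^L(f(\wt\pi)),f(\bE[\pi\mid\wt\pi])-f(\wt\pi)\rangle$ (using linearity of $f$), which is $\le 0$ because $\phi^L\circ f$ is Schur-convex and $\wt\pi\le_m\bE[\pi\mid\wt\pi]$ — alternatively one does this cleanly by comparing second-order terms directly rather than splitting. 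Then I would bound both sides by second-order (integral-form Taylor) expressions: the numerator difference is $\lesssim\int_0^1(1-t)\,v^\top\nabla^2(\phi^L\circ f)\,v\,dt$ and the denominator difference is $\gtrsim\int_0^1(1-t)\,v^\top\nabla^2\phi^L\,v\,dt$, where $v=\pi-\wt\pi\in\mathbbm{1}^\perp$. Since $\nabla^2(\phi^L\circ f)(\pi)=\lambda^2\,(\nabla^2\phi^L)(f(\pi))$ by the chain rule (as $f$ is affine with linear part $\lambda\,\Id$), and since by Lemma~\ref{lem:phi-L-convex}'s computation $v^\top\nabla^2\phi^L(\pi)v=\langle\pi^{-1}+\tfrac1q\pi^{-2},v^2\rangle=f^L(\pi,v)$ — exactly the quantity in~\eqref{eqn:thm:bi-low-snr-defn-f} — the ratio of integrands at the point $f(\pi_t)$ (with $\pi_t$ on the segment) is $\lambda^2\, f^L(f(\pi_t),v)/f^L(\pi_t,v)\le\lambda^2 C^L(q,\lambda)$ by the definition~\eqref{eqn:thm:bi-low-snr-defn-C} of $C^L$ (noting $f(\pi_t)\in\cP([q])$ and $v\in\mathbbm{1}^\perp$). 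Integrating, the one-step contraction follows.

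The main obstacle I anticipate is making the coupling/conditioning argument rigorous for the cross term so that the comparison is genuinely pointwise between the two quadratic forms rather than needing the Schur-convexity detour on top of a Taylor remainder; the cleanest route is probably to note that for fixed $\wt\pi$ one may further couple so that $\pi$ is obtained from $\bE[\pi\mid\wt\pi]$ by a martingale step and handle the two displacements (from $\wt\pi$ to $\bE[\pi\mid\wt\pi]$, driven by majorization; and from $\bE[\pi\mid\wt\pi]$ to $\pi$, driven by convexity) separately. A secondary technical point is justifying that $f(\pi)$ stays in the open simplex so that $\nabla^2\phi^L$ and $\nabla^2(\phi^L\circ f)$ are finite along the segment — this holds since $f(\pi)_i\ge\frac{1-\lambda}q>0$ for $\lambda<1$, and the boundary case $\lambda=1$ is trivial ($P_1=\Id$). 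Finally, once $a_{k+1}\le d\lambda^2C^L(q,\lambda)\,a_k$ is established, $\lim_k a_k=0$ is immediate from~\eqref{eqn:thm:bi-low-snr-cond}, completing the proof.
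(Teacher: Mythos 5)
Your overall structure is right and matches the paper: reduce via $\star$-convolution additivity (Lemma~\ref{lem:phi-L-additive-under-star}) to a one-step inequality under composition with $P_\lambda$, establish geometric contraction at rate $c = d\lambda^2 C^L(q,\lambda) < 1$, and conclude. You also correctly identify $f^L(\pi,v) = v^\top\nabla^2\phi^L(\pi)v$ and the role of $C^L$ as a Hessian-ratio constant, and your chain-rule observation $\nabla^2(\phi^L\circ f)(\pi) = \lambda^2(\nabla^2\phi^L)(f(\pi))$ is exactly what the paper uses.

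However, the way you execute the one-step step has a sign error that breaks the argument. To upper-bound the numerator $\bE[\phi^L(f(\pi))-\phi^L(f(\wt\pi))]$ by its second-order Taylor remainder alone, you need the first-order term $\bE\langle\nabla(\phi^L\circ f)(\wt\pi),\bE[\pi\mid\wt\pi]-\wt\pi\rangle$ to be $\le 0$, and you assert this follows from Schur-convexity of $\phi^L\circ f$. But Schur-convexity together with $\wt\pi\le_m\bE[\pi\mid\wt\pi]$ (with sorted representatives, as in Prop.~\ref{prop:fms-deg-couple}) gives the \emph{opposite} sign, $\ge 0$; this is precisely the fact used in Prop.~\ref{prop:phi-contraction-imply-l2} to drop the first-order term from a \emph{lower} bound. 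So the numerator's first-order contribution pushes the wrong way, and comparing the two second-order integrands separately does not close the argument. The paper avoids this by never splitting: it observes that the single function $\psi(\pi) = d\,\phi^L\bigl(\lambda\pi+\tfrac{1-\lambda}{q}\bigr) - c\,\phi^L(\pi)$ with $c=d\lambda^2 C^L(q,\lambda)$ is concave on $\cP([q])$, since $v^\top\nabla^2\psi(\pi)v = d\lambda^2 f^L(f(\pi),v) - c\,f^L(\pi,v)\le 0$ for $v\in\mathbbm{1}^\perp$ — which is exactly your Hessian-ratio computation. Concavity plus $\Aut([q])$-invariance makes $\psi$ Schur-concave, and then $\bE\psi(\pi)\le\bE\psi(\wt\pi)$ follows from Jensen (for the martingale step $\pi\to\bE[\pi\mid\wt\pi]$) and Schur-concavity (for the majorization step $\bE[\pi\mid\wt\pi]\to\wt\pi$), applied to $\psi$ as a whole; the first-order term $\langle\nabla\psi(\wt\pi),\bE[\pi\mid\wt\pi]-\wt\pi\rangle$ then has the correct sign $\le 0$. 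Your ``two displacements'' remark is aiming at exactly this, but must be applied to the combined $\psi$, not to $\phi^L\circ f$ and $\phi^L$ separately; once rephrased that way your proof becomes the paper's.
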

\begin{proof}
  Using BP equation and Lemma~\ref{lem:phi-L-additive-under-star}, we get
	\begin{align}
		\Phi^L(M_{k+1}) = \bE_b \left[b \Phi^L(M_k\circ P_\lambda) + \Phi^L(W)\right] = d \Phi^L(M_k\circ P_\lambda) + \Phi^L(W),
	\end{align}
	and the same holds with $M$ replaced with $\wt M$.

	To prove that
	\begin{align}
		\Phi^L(M_{k+1}) - \Phi^L(\wt M_{k+1}) \le c \left(\Phi^L(M_{k}) - \Phi^L(\wt M_{k})\right),
	\end{align}
	for some $c<1$, it suffices to prove that
	\begin{align}
		d \Phi^L(\FSC_\pi \circ P_\lambda) - c \Phi^L(\FSC_\pi)
		= d \phi^L\left(\lambda \pi + \frac{1-\lambda}q\right) - c \phi^L(\pi)
	\end{align}
	is concave in $\pi$.

  Let $c = d \lambda^2 C^L(q,\lambda)$.
  Then for all $v\in \mathbbm{1}^\perp \in \bR^q$, we have
	\begin{align}
		v^\top \nabla^2 \left(d \phi^L\left(\lambda \pi + \frac{1-\lambda}q\right) - c \phi^L(\pi)\right) v
    = d \lambda^2 f^L\left(\lambda \pi + \frac{1-\lambda}q, v\right) - c f^L(\pi, v)
    \le 0
	\end{align}
  where the first step is because $v^\top \nabla^2 \phi^L(\pi) v = f^L(\pi, v)$ and
  the second step is by definition of $C^L$.
	Therefore contraction holds.
\end{proof}

Theorem~\ref{thm:bi-low-snr} follows from combining everything.
\begin{proof}[Proof of Theorem~\ref{thm:bi-low-snr}]
  By combining Prop.~\ref{prop:phi-L-contraction}, Lemma \ref{lem:phi-L-convex}, and Prop.~\ref{prop:contraction-imply-bi}.
\end{proof}

\subsection{Proofs for high SNR case} \label{sec:proof-bi:high-snr}

We state a few properties of the function $\phi^H$.
\begin{lemma} \label{lem:phi-H-concave}
	$\phi^H$ is $\alpha$-strongly concave on $\cP([q])$ for some $\alpha>0$.
\end{lemma}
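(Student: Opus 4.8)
The claim is that $\phi^H(\pi) = \frac{1}{q-1}\left(\left(\sum_{i\in[q]}\sqrt{\pi_i}\right)^2 - 1\right)$ is $\alpha$-strongly concave on the simplex $\cP([q])$ for some $\alpha > 0$. The plan is to compute the Hessian of $\phi^H$ and show that, restricted to the tangent space $\mathbbm{1}^\perp$ of the simplex, it satisfies $\nabla^2\phi^H(\pi) \preceq -\alpha I$ uniformly over $\pi$. Writing $u_i = \sqrt{\pi_i}$, we have $\sum_i \sqrt{\pi_i} = \|u\|_1$ and $\phi^H = \frac{1}{q-1}(\|u\|_1^2 - 1)$. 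First I would differentiate: $\partial_{\pi_i}\sqrt{\pi_i} = \frac{1}{2\sqrt{\pi_i}}$, so $\partial_{\pi_i}\phi^H = \frac{1}{q-1}\cdot 2\|u\|_1\cdot\frac{1}{2\sqrt{\pi_i}} = \frac{\|u\|_1}{(q-1)\sqrt{\pi_i}}$. Differentiating again gives the Hessian entries
\begin{align}
  \partial_{\pi_i}\partial_{\pi_j}\phi^H = \frac{1}{q-1}\left(\frac{1}{2\sqrt{\pi_i}\cdot 2\sqrt{\pi_j}} - \|u\|_1\cdot\frac{\mathbbm{1}\{i=j\}}{2\pi_i^{3/2}}\right) = \frac{1}{q-1}\left(\frac{1}{4\sqrt{\pi_i\pi_j}} - \frac{\|u\|_1}{2}\cdot\frac{\mathbbm{1}\{i=j\}}{\pi_i^{3/2}}\right).
\end{align}
So $\nabla^2\phi^H(\pi) = \frac{1}{4(q-1)}\, u^{-1}(u^{-1})^\top - \frac{\|u\|_1}{2(q-1)}\diag(\pi^{-3/2})$, where $u^{-1} = (1/\sqrt{\pi_i})_i$.

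**Reducing to the tangent space.** For strong concavity on $\cP([q])$ I need $v^\top \nabla^2\phi^H(\pi)\, v \le -\alpha\|v\|_2^2$ for all $v\in\mathbbm{1}^\perp$. The rank-one term contributes $\frac{1}{4(q-1)}\langle u^{-1},v\rangle^2 \ge 0$, so that term fights against concavity; the diagonal term contributes $-\frac{\|u\|_1}{2(q-1)}\sum_i v_i^2/\pi_i^{3/2}$, which is strongly negative. The key estimate I would aim for: show that for $v\in\mathbbm{1}^\perp$,
\begin{align}
  \langle u^{-1},v\rangle^2 \le c\cdot \|u\|_1 \sum_i \frac{v_i^2}{\pi_i^{3/2}}
\end{align}
for a constant $c = c(q) < 2$, so that the rank-one term is strictly dominated. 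A natural route is Cauchy–Schwarz with a clever weight: write $\langle u^{-1},v\rangle = \sum_i \pi_i^{1/4}\cdot \pi_i^{-3/4}v_i$, so by Cauchy–Schwarz $\langle u^{-1},v\rangle^2 \le \left(\sum_i\sqrt{\pi_i}\right)\left(\sum_i v_i^2/\pi_i^{3/2}\right) = \|u\|_1\sum_i v_i^2/\pi_i^{3/2}$. This gives exactly $c = 1$, but that only yields $v^\top\nabla^2\phi^H v \le \left(\frac{1}{4(q-1)} - \frac{1}{2(q-1)}\right)\|u\|_1\sum_i v_i^2/\pi_i^{3/2} = -\frac{1}{4(q-1)}\|u\|_1\sum_i v_i^2/\pi_i^{3/2}$. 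Then I use $\|u\|_1 \ge 1$ (since $\sum\sqrt{\pi_i}\ge\sum\pi_i = 1$) and $\sum_i v_i^2/\pi_i^{3/2} \ge \sum_i v_i^2 = \|v\|_2^2$ (since $\pi_i \le 1$), giving $v^\top\nabla^2\phi^H(\pi)v \le -\frac{1}{4(q-1)}\|v\|_2^2$. Thus $\alpha = \frac{1}{4(q-1)}$ works. Notice this computation does not actually require the constraint $v\in\mathbbm{1}^\perp$ — it holds for all $v$ — so $\phi^H$ is in fact strongly concave on all of $\bR^q_{>0}\cap\{$where defined$\}$ restricted to the simplex; a remark to this effect might be worth including, or simply $\mathbbm{1}^\perp$ suffices as stated.

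**Boundary and regularity issues.** The main subtlety — and what I expect to be the only real obstacle — is behavior near $\partial\cP([q])$ where some $\pi_i\to 0$: there $\phi^H$ has infinite gradient, and the Hessian blows up. But the blow-up is in the *favorable* direction (the $-\diag(\pi^{-3/2})$ term dominates even more strongly), so strong concavity is preserved; one should just be careful to interpret "$\alpha$-strongly concave on $\cP([q])$" as meaning $\phi^H(\lambda\pi + (1-\lambda)\pi') \ge \lambda\phi^H(\pi) + (1-\lambda)\phi^H(\pi') + \frac{\alpha}{2}\lambda(1-\lambda)\|\pi-\pi'\|_2^2$, which follows from the interior Hessian bound by continuity of $\phi^H$ up to the boundary (it is continuous on the closed simplex since $\sqrt{\cdot}$ is). So I would: (i) establish the Hessian bound $\nabla^2\phi^H(\pi)\preceq -\frac{1}{4(q-1)}I$ on the relative interior; (ii) integrate along segments to get the finite-difference strong concavity inequality on the interior; (iii) extend to the closed simplex by continuity. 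This gives the lemma with explicit $\alpha = \frac{1}{4(q-1)}$ (any slightly smaller constant also works and may be cleaner to state). I'd present (i) as the substantive step and treat (ii)–(iii) as routine.
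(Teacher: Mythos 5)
There is a genuine gap, and it originates in an arithmetic slip in the Hessian. Writing $S=\sum_i\sqrt{\pi_i}$, the first derivative is $\partial_{\pi_i}\phi^H=\frac{S}{(q-1)\sqrt{\pi_i}}$ (you have this right), but then
\begin{align}
\partial_{\pi_j}\partial_{\pi_i}\phi^H=\frac{1}{q-1}\left(\frac{1}{2\sqrt{\pi_j}}\cdot\frac{1}{\sqrt{\pi_i}}-\frac{S}{2}\,\pi_i^{-3/2}\mathbbm{1}\{i=j\}\right),
\end{align}
so the rank-one term carries coefficient $\frac{1}{2(q-1)}$, not $\frac{1}{4(q-1)}$: the factor $2$ from the chain rule on $S^2$ already cancelled one of the $\frac12$'s, and your expression $\frac{1}{2\sqrt{\pi_i}\cdot 2\sqrt{\pi_j}}$ double-counts it. With the correct Hessian, $v^\top\nabla^2\phi^H(\pi)v=\frac{1}{2(q-1)}\bigl(\langle\pi^{-1/2},v\rangle^2-S\langle\pi^{-3/2},v^2\rangle\bigr)$, and your plain Cauchy--Schwarz bound $\langle\pi^{1/4},\pi^{-3/4}v\rangle^2\le S\langle\pi^{-3/2},v^2\rangle$ gives exactly $v^\top\nabla^2\phi^H v\le 0$ --- concavity, but no strong concavity. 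The spurious factor of $2$ is what made your argument appear to close.

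Your remark that the computation ``does not actually require the constraint $v\in\mathbbm{1}^\perp$'' is the tell: $S^2$ is positively homogeneous of degree $1$ in $\pi$, so its Hessian annihilates the radial direction $v=\pi$ (which is exactly the equality case of your Cauchy--Schwarz), and no uniform bound $\nabla^2\phi^H\preceq-\alpha I$ on all of $\bR^q$ can hold. The orthogonality $v\perp\mathbbm{1}$ is essential and is precisely where the paper does the work: since $\pi^{-3/4}v$ is orthogonal to $\pi^{3/4}$, the paper maximizes $\langle\pi^{1/4},u\rangle^2/\|u\|_2^2$ over the hyperplane $\langle\pi^{3/4},u\rangle=0$ by explicitly projecting $\pi^{1/4}$ onto it, obtaining the quantitative improvement
\begin{align}
\langle\pi^{-1/2},v\rangle^2\le\left(1-\tfrac{1}{\sqrt q}\right)S\,\langle\pi^{-3/2},v^2\rangle\qquad\forall v\in\mathbbm{1}^\perp,
\end{align}
which is what yields $v^\top\nabla^2\phi^H(\pi)v\le-\frac{1}{2(q-1)\sqrt q}\|v\|_2^2$. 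Your remaining steps (using $S\ge1$ and $\pi_i^{-3/2}\ge1$ to pass to $\|v\|_2^2$, and the boundary/continuity discussion) are fine and match the paper, but the proof does not stand without the strengthened Cauchy--Schwarz on $\mathbbm{1}^\perp$.
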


\begin{proof}
	For any $\pi \in \cP([q])$ we have
	\begin{align}
		\nabla^2 \phi^H(\pi) = \frac 1{q-1} \left(\frac 12 \left(\pi^{-1/2}\right) \left(\pi^{-1/2}\right)^\top - \frac 12 \left(\sum_{i\in [q]} \pi^{1/2}_i\right)\diag\left(\pi^{-3/2}\right)\right).
	\end{align}
	So for any $v\in \mathbbm{1}^\perp \subseteq \bR^q$,
  \begin{align}
		v^\top \nabla^2 \phi^H(\pi) v &= \frac 1{2(q-1)} \left(\left\langle \pi^{-1/2}, v\right\rangle^2 - \left(\sum_{i\in [q]} \pi^{1/2}_i\right) \left\langle \pi^{-3/2}, v^2\right\rangle\right).
  \end{align}

	Let us prove that
	\begin{align}
		\frac{\left\langle \pi^{-1/2}, v\right\rangle^2}{\left(\sum_{i\in [q]} \pi^{1/2}_i\right) \left\langle \pi^{-3/2}, v^2\right\rangle} \le 1-\frac{1}{\sqrt q}. \label{eqn:h2-hess-ratio-step-1}
	\end{align}
  Performing change of variable $u = \pi^{-3/4} v$, LHS of \eqref{eqn:h2-hess-ratio-step-1} becomes
	\begin{align}
		\frac{\left\langle \pi^{1/4}, u\right\rangle^2}{\left(\sum_{i\in [q]} \pi^{1/2}_i\right) \|u\|_2^2}.
		\label{eqn:h2-hess-ratio-step-1-cov}
	\end{align}
	We would like to maximize this expression over the hyperplane $\left\langle \pi^{3/4}, u\right\rangle=0$.
	By geometric interpretation, maximum value is achieved at projection of $\pi^{1/4}$ onto the hyperplane, i.e.,
	\begin{align}
		u = \pi^{1/4} - \frac{\left\langle \pi^{1/4}, \pi^{3/4}\right\rangle}{\|\pi^{3/4}\|_2^2} \pi^{3/4} = \pi^{1/4} - \frac{\pi^{3/4}}{\|\pi^{3/4}\|_2^2},
	\end{align}
	at which \eqref{eqn:h2-hess-ratio-step-1-cov} achieves value
	\begin{align}
		&~ \frac{\left( \sum_{i\in [q]} \pi_i^{1/2} - \frac 1{\sum_{i\in [q]} \pi_i^{3/2}}\right)^2}{\left(\sum_{i\in [q]} \pi_i^{1/2}\right) \left( \sum_{i\in [q]} \pi_i^{1/2} - \frac 1{\sum_{i\in [q]} \pi_i^{3/2}}\right)} \\
		=&~ \frac{\sum_{i\in [q]} \pi_i^{1/2} - \frac 1{\sum_{i\in [q]} \pi_i^{3/2}}}{\sum_{i\in [q]} \pi_i^{1/2}} \nonumber \\
		=&~ 1 - \frac 1{\left(\sum_{i\in [q]} \pi_i^{1/2}\right) \left(\sum_{i\in [q]} \pi_i^{3/2}\right)} \nonumber \\
		\le &~ 1-\frac 1{\sqrt q} \nonumber
	\end{align}
	where the last step is because
	\begin{align}
		\sum_{i\in [q]} \pi_i^{1/2} \le \sqrt q, \qquad \sum_{i\in [q]} \pi_i^{3/2} \le 1.
	\end{align}
  This finishes the proof of~\eqref{eqn:h2-hess-ratio-step-1}.

  Therefore
  \begin{align}
    v^\top \nabla^2 \phi^H(\pi) v &= \frac 1{2(q-1)} \left(\left\langle \pi^{-1/2}, v\right\rangle^2 - \left(\sum_{i\in [q]} \pi^{1/2}_i\right) \left\langle \pi^{-3/2}, v^2\right\rangle\right) \\
		&\le  -\frac 1{2(q-1)\sqrt q} \left(\sum_{i\in [q]} \pi^{1/2}_i\right) \left\langle \pi^{-3/2}, v^2\right\rangle\nonumber \\
		&\le -\frac 1{2(q-1)\sqrt q} \|v\|_2^2 \nonumber
	\end{align}
	where the second step is by~\eqref{eqn:h2-hess-ratio-step-1}, and the third step is because $\sum_{i\in [q]} \pi_i^{1/2} \ge 1$ and $\pi^{-3/2} \ge 1$.
\end{proof}

\begin{lemma} \label{lem:phi-H-mult-under-star}
	$\Phi^H(\cdot) = Z(\cdot)$ is multiplicative under $\star$-convolution.
\end{lemma}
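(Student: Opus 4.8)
The plan is to reduce, via the FSC mixture decomposition (Prop.~\ref{prop:fms-mixture}) and the fact that both sides depend only on $\pi$-distributions, to the case of two pure FSCs, and then verify the identity by an explicit computation with the $\star$-convolution formula~\eqref{eqn:star-formula}, in the same spirit as the proof of Lemma~\ref{lem:phi-L-additive-under-star}. Concretely, $P$ is equivalent to a mixture of $\FSC_\pi$ with $\pi\sim P_\pi$ and $\Phi^H(P)=\bE_{\pi\sim P_\pi}\phi^H(\pi)$, while the $\pi$-distribution of $P\star Q$ averages the point masses $\mathbbm{1}_{\pi\star_\tau\pi'}$ against the weights in~\eqref{eqn:star-formula}; hence it suffices to prove
\begin{align}
  \Phi^H(\FSC_\pi\star\FSC_{\pi'})=\phi^H(\pi)\,\phi^H(\pi').
\end{align}

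First I would substitute $(\pi\star_\tau\pi')_j=\pi_j\pi'_{\tau(j)}/N_\tau$, with $N_\tau:=\sum_k\pi_k\pi'_{\tau(k)}$, into the definition of $\phi^H$, obtaining $\phi^H(\pi\star_\tau\pi')=\frac1{q-1}\bigl(S_\tau^2/N_\tau-1\bigr)$ where $S_\tau:=\sum_j\sqrt{\pi_j\pi'_{\tau(j)}}$. Plugging this into~\eqref{eqn:star-formula}, the weights $N_\tau$ cancel and
\begin{align}
  \Phi^H(\FSC_\pi\star\FSC_{\pi'})=\frac1{(q-1)!\,(q-1)}\sum_{\tau\in\Aut([q])}\bigl(S_\tau^2-N_\tau\bigr).
\end{align}
Then I would evaluate the two permutation sums. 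Trivially $\sum_\tau N_\tau=(q-1)!$ since for each fixed $k$ the value $\tau(k)$ hits every index $(q-1)!$ times. For $\sum_\tau S_\tau^2=\sum_{j,j'}\sqrt{\pi_j\pi_{j'}}\sum_\tau\sqrt{\pi'_{\tau(j)}\pi'_{\tau(j')}}$, I would split into diagonal terms $j=j'$, which contribute $(q-1)!\sum_j\pi_j=(q-1)!$, and off-diagonal terms, where $(\tau(j),\tau(j'))$ runs over each ordered pair of distinct indices exactly $(q-2)!$ times, contributing $(q-2)!\,(A-1)(B-1)$ with $A:=\bigl(\sum_i\sqrt{\pi_i}\bigr)^2$ and $B:=\bigl(\sum_i\sqrt{\pi'_i}\bigr)^2$. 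Therefore $\sum_\tau(S_\tau^2-N_\tau)=(q-2)!\,(A-1)(B-1)$, and dividing gives $\frac{(A-1)(B-1)}{(q-1)^2}=\phi^H(\pi)\phi^H(\pi')$, as desired.

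A cleaner conceptual alternative is to observe that $\Phi^H(P)=Z(P^R)$: restricting the FSC mixture to two input letters turns $P$ into a mixture of the BMS channels $\FSC_\pi^R$, the Bhattacharyya coefficient $Z$ is affine under mixtures, and $\phi^H(\pi)=Z(\FSC_\pi^R)$ by definition; since restriction to two input letters commutes with $\star$-convolution, $(P\star Q)^R=P^R\star Q^R$, and the classical multiplicativity of $Z$ for BMS channels then yields $\Phi^H(P\star Q)=Z(P^R\star Q^R)=Z(P^R)Z(Q^R)=\Phi^H(P)\Phi^H(Q)$. There is no real obstacle here; the only care needed is bookkeeping — in the direct route, checking that the permutation sums collapse exactly as claimed (the natural place for an off-by-a-factorial slip), or in the conceptual route, verifying that restriction commutes with both mixtures and $\star$-convolution.
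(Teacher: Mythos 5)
Your direct computation is essentially the same argument the paper writes out: reduce to FSCs via Prop.~\ref{prop:fms-mixture}, substitute the $\star$-convolution formula so that the normalizations $N_\tau$ cancel, and do a permutation count. The only cosmetic difference is the order of expansion: you split $\sum_\tau(S_\tau^2 - N_\tau)$ into the two sums separately, while the paper first rewrites $S_\tau^2 - N_\tau = \sum_{j\ne k}\sqrt{\pi_j\pi'_{\tau(j)}\pi_k\pi'_{\tau(k)}}$ and then sums over $\tau$; the combinatorial fact used (each ordered pair of distinct indices is hit $(q-2)!$ times) is identical. The bookkeeping is correct in both cases.

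Your ``cleaner conceptual alternative'' via $\Phi^H(P)=Z(P^R)$ is precisely the one-line justification the paper cites without elaboration (``tensorization property of Hellinger distance''), since multiplicativity of the Bhattacharyya coefficient under $\star$ for BMS channels is exactly Hellinger tensorization. You fill in the two small verifications the paper leaves implicit --- that $P\mapsto P^R$ commutes with mixtures (so $Z(P^R)=\bE_{\pi}Z(\FSC_\pi^R)=\Phi^H(P)$) and with $\star$-convolution --- both of which are routine. So you have supplied both the paper's stated conceptual justification and its explicit backup computation, correctly in each case.

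One remark for reading the paper's own proof: its penultimate display carries a stray $\tau$ inside the second factor and its final line has a $+$ where a $\cdot$ is meant (a copy-paste from the low-SNR additive case); the intended chain agrees with your calculation.
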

\begin{proof}
  The statement follows from tensorization property of Hellinger distance (e.g.,~\cite{polyanskiy2023information}).
  For completeness, we present a direct proof using~\eqref{eqn:star-formula}.

  By FSC mixture decomposition (Prop.~\ref{prop:fms-mixture}), it suffices to prove that
  \begin{align}
    \Phi^H(\FSC_\pi \star \FSC_{\pi'}) = \Phi^H(\FSC_\pi) + \Phi^H(\FSC_{\pi'}).
  \end{align}
  We have
  \begin{align*}
    &~\Phi^H(\FSC_\pi \star \FSC_{\pi'}) \\
    =&~ \sum_{\tau \in \Aut([q])} \left(\frac 1{(q-1)!}\sum_{i\in [q]} \pi_i \pi'_{\tau(i)}\right) \phi^H(\pi \star_\tau \pi') \\
    =&~ \sum_{\tau \in \Aut([q])} \left(\frac 1{(q-1)!}\sum_{i\in [q]} \pi_i \pi'_{\tau(i)}\right)
    \frac 1{q-1} \left(\left(\sum_{j\in [q]} \sqrt{(\pi \star_\tau \pi')_j}\right)^2-1\right)\\
    =&~ \sum_{\tau \in \Aut([q])} \left(\frac 1{(q-1)!}\sum_{i\in [q]} \pi_i \pi'_{\tau(i)}\right)
    \frac 1{q-1} \left(\left(\sum_{j\in [q]} \sqrt{\frac {\pi_j \pi'_{\tau(j)}}{\sum_{k\in [q]} \pi_k \pi'_{\tau(k)}}}\right)^2-1\right)\\
    =&~ \sum_{\tau \in \Aut([q])} \frac 1{(q-1)!}
    \cdot \frac 1{q-1} \left(\left(\sum_{j\in [q]} \sqrt{\pi_j \pi'_{\tau(j)}}\right)^2-\sum_{i\in [q]} \pi_i \pi'_{\tau(i)}\right)\\
    =&~ \sum_{\tau \in \Aut([q])} \frac 1{(q-1)!}
    \cdot \frac 1{q-1} \sum_{j\ne k\in [q]} \sqrt{\pi_j \pi'_{\tau(j)} \pi_k \pi'_{\tau(k)}}\\
    =&~ \frac 1{(q-1)^2} \left(\sum_{j\ne k\in [q]} \sqrt{\pi_j \pi_k}\right)\left(\sum_{j'\ne k'\in [q]}\sqrt{\pi'_{\tau(j')} \pi'_{\tau(k')}}\right)\\
    =&~ \Phi^H(\FSC_\pi) \Phi^H(\FSC_{\pi'}).
  \end{align*}
\end{proof}

\begin{lemma}\label{lem:h2-chi2-bound}
	For any BMS channel $P$, we have
	\begin{align}
		Z(P) \le \sqrt{1-C_{\chi^2}(P)}.
	\end{align}
\end{lemma}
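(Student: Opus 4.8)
The plan is to reduce the inequality to a one-line application of Jensen's inequality after expressing both quantities in terms of the $\Delta$-distribution of $P$. First I would recall that a BMS channel $P$ is equivalent to a mixture of $\BSC$'s (the $q=2$ case of Prop.~\ref{prop:fms-mixture}), so its posterior-distribution variable is $\pi=(1-\Delta,\Delta)$ with $\Delta\in[0,\tfrac12]$ distributed according to the $\Delta$-distribution $P_\Delta$, which is an invariant of $P$ under channel equivalence. Then I would compute both information measures directly from Definition~\ref{defn:fms-info-measure} specialized to $q=2$: for a fixed $\Delta$ one has $\sum_i \pi_i^2=(1-\Delta)^2+\Delta^2=1-2\Delta(1-\Delta)$, hence $q\sum_i\pi_i^2-1=1-4\Delta(1-\Delta)$, so that
\[
  C_{\chi^2}(P)=\bE\big[1-4\Delta(1-\Delta)\big]=1-4\,\bE[\Delta(1-\Delta)],
\]
while by definition $Z(P)=\bE\big[2\sqrt{\Delta(1-\Delta)}\big]$.

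With these two formulas in hand, $1-C_{\chi^2}(P)=4\,\bE[\Delta(1-\Delta)]$, so the claimed bound $Z(P)\le\sqrt{1-C_{\chi^2}(P)}$ becomes exactly
\[
  \bE\big[2\sqrt{\Delta(1-\Delta)}\big]\le 2\sqrt{\bE[\Delta(1-\Delta)]},
\]
that is, $\bE[\sqrt{Y}]\le\sqrt{\bE[Y]}$ for the nonnegative random variable $Y=\Delta(1-\Delta)$. This is immediate from Jensen's inequality using concavity of $t\mapsto\sqrt{t}$ on $[0,\infty)$, which finishes the proof.

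There is essentially no obstacle here; the only things to be careful about are bookkeeping — verifying that $Z$ and $C_{\chi^2}$ are both evaluated against the same $\Delta$-distribution, and that the $q=2$ specialization of $C_{\chi^2}$ is computed correctly. An equivalent route is to note the standard identities $Z(\BSC_\delta)=2\sqrt{\delta(1-\delta)}$ and $C_{\chi^2}(\BSC_\delta)=(1-2\delta)^2$ on the building blocks and then average over the mixture, but the direct computation above is self-contained.
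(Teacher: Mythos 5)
Your proof is correct and matches the paper's argument: both reduce to $Z(P)=\bE[2\sqrt{\Delta(1-\Delta)}]$ and $1-C_{\chi^2}(P)=\bE[4\Delta(1-\Delta)]=1-\bE[(1-2\Delta)^2]$, then apply Jensen's inequality via concavity of the square root. The only cosmetic difference is that you expand $(1-2\Delta)^2$ explicitly, whereas the paper leaves it in that form.
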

\begin{proof}
	Let $\Delta$ be the $\Delta$-component of $P$. Then
	\begin{align}
		Z(P) = \bE [2\sqrt{\Delta(1-\Delta)}] \le \sqrt{1-\bE[(1-2\Delta)^2]} = \sqrt{1-C_{\chi^2}(P)}.
	\end{align}
	The inequality step is by concavity of $\sqrt{\cdot}$.
\end{proof}

Condition~\eqref{eqn:thm:bi-high-snr-cond} implies the desired contraction.
\begin{proposition} \label{prop:phi-H-contraction}
  If \eqref{eqn:thm:bi-high-snr-cond} or \eqref{eqn:thm:bi-w-high-snr-cond} holds, then
	$
		\lim_{k\to \infty} \left(\Phi^H(M_{k}) - \Phi^H(\wt M_{k})\right) = 0.
  $
\end{proposition}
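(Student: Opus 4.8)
The plan is to mirror the low-SNR argument (Prop.~\ref{prop:phi-L-contraction}), but with the potential $\Phi^H$, which is \emph{multiplicative} rather than additive under $\star$-convolution, so the resulting scalar recursion is genuinely nonlinear. Both sequences satisfy the belief propagation recursion with $M_0=\Id\ge_{\deg}\wt M_0$, so Prop.~\ref{prop:fms-deg-couple} provides the canonical coupling of the $\pi$-components $\pi_k,\wt\pi_k$ with $\wt\pi_k\le_m\bE[\pi_k\mid\wt\pi_k]$, and by Lemma~\ref{lem:fms-deg-info-measure} the gap $e_k:=\Phi^H(\wt M_k)-\Phi^H(M_k)$ is $\ge0$. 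Writing $y_k:=\Phi^H(M_k)$, $z_k:=\Phi^H(M_k\circ P_\lambda)$, and $\wt y_k,\wt z_k$ analogously, and using multiplicativity of $\Phi^H$ under $\star$ (Lemma~\ref{lem:phi-H-mult-under-star}) together with the fact that, since the branch count is observed, $\Phi^H\bigl(\bE_b[Q^{\star b}]\bigr)=\bE_b[\Phi^H(Q)^b]$, the recursion collapses to
\[
  y_{k+1}=Z(W^R)\,g(z_k),\qquad \wt y_{k+1}=Z(W^R)\,g(\wt z_k),
\]
where $g$ is the branching generating function ($g(x)=x^d$ for the $d$-regular tree, $g(x)=e^{d(x-1)}$ for the $\Pois(d)$ tree), increasing and convex with $g'(1)=d$, and $Z(W^R)=\Phi^H(W)$ (taken to be $1$ when there is no survey).

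Next I would prove the one-step bound $\wt z_k-z_k\le\lambda^2 C^H(q,\lambda)\,e_k$. Since the $\pi$-distribution of $M\circ P_\lambda$ is the pushforward of that of $M$ under the affine map $f(\pi)=\lambda\pi+\tfrac{1-\lambda}q$ (Section~\ref{sec:fms}), under the canonical coupling this is equivalent to $\bE[\xi(\wt\pi_k)]\le\bE[\xi(\pi_k)]$ for $\xi:=\phi^H\circ f-\lambda^2 C^H(q,\lambda)\,\phi^H$. The Hessian identity extracted from the proof of Lemma~\ref{lem:phi-H-concave}, $-v^\top\nabla^2\phi^H(\pi)v=\tfrac1{2(q-1)}f^H(\pi,v)$ for $v\in\mathbbm{1}^\perp$, shows that the definition~\eqref{eqn:thm:bi-high-snr-defn-C} of $C^H(q,\lambda)$ is exactly the statement $\nabla^2\xi\succeq0$ on $\mathbbm{1}^\perp$; hence $\xi$ is convex on $\cP([q])$, therefore Schur-convex, and Jensen together with Schur-convexity gives $\bE[\xi(\pi_k)\mid\wt\pi_k]\ge\xi(\bE[\pi_k\mid\wt\pi_k])\ge\xi(\wt\pi_k)$, which is the claim. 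Since $g$ is convex, $g(\wt z_k)-g(z_k)=g(\wt z_k)\bigl(1-g(z_k)/g(\wt z_k)\bigr)\le g(\wt z_k)\,d\,(\wt z_k-z_k)$ in the Poisson case (the regular case is the analogous mean-value bound), and combining,
\[
  e_{k+1}\ \le\ Z(W^R)\,g(\wt z_k)\cdot d\lambda^2 C^H(q,\lambda)\,e_k\ =\ \wt y_{k+1}\cdot d\lambda^2 C^H(q,\lambda)\,e_k.
\]

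The crude bound $\wt y_{k+1}\le Z(W^R)$ only gives the factor $d\lambda^2 C^H(q,\lambda)Z(W^R)$, which need not be $<1$; the conditions~\eqref{eqn:thm:bi-high-snr-cond}/\eqref{eqn:thm:bi-w-high-snr-cond} carry the extra factor $\exp\!\bigl(-c^H(q,\lambda,d)\tfrac{d\lambda^2-1}2\bigr)$, so the real content is the uniform bound $\wt y_{k+1}\le Z(W^R)\exp\!\bigl(-c^H(q,\lambda,d)\tfrac{d\lambda^2-1}2\bigr)$ for all large $k$ (and likewise for $y_{k+1}$). As $\wt y_{k+1}=Z(W^R)g(\wt z_k)$, this is a uniform lower bound $1-\wt z_k\ge\tfrac1d\,c^H(q,\lambda,d)\tfrac{d\lambda^2-1}2$ (with $d$ replaced by $d-1$ for the regular tree); feeding in $\Phi^H(P)=Z(P^R)\le\sqrt{1-C_{\chi^2}(P^R)}$ (Lemma~\ref{lem:h2-chi2-bound}), the inequality $1-\sqrt{1-x}\ge x/2$, the comparison of $1-\phi^H(\pi)$ with $C_{\chi^2}(\FSC_\pi)$ (comparable up to absolute constants for near-uniform $\pi$), and the identity $C_{\chi^2}(P\circ P_\lambda)=\lambda^2C_{\chi^2}(P)$ for FMS channels (sharpening Lemma~\ref{lem:contraction}), this reduces to a uniform lower bound $C_{\chi^2}(\wt M_k)\gtrsim c^H(q,\lambda,d)(d\lambda^2-1)$ for all large $k$. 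This is the quantitative Kesten--Stigum fact: above $d\lambda^2>1$, the leaf-census statistic is an estimator of the root that works uniformly over \emph{all} nontrivial boundary observations (it is linear, so noise on the leaves only rescales it), and its performance---governed by the linear gain $d\lambda^2$ and a second-moment term whose leading coefficient is $\tfrac2q+\tfrac{q-2}q\tfrac{d\lambda^2-1}{d\lambda-1}$, the reciprocal of $c^H$ in~\eqref{eqn:thm:bi-high-snr-defn-c}---yields $\chi^2$-information $\asymp c^H(q,\lambda,d)(d\lambda^2-1)$, and $\wt M_k$ eventually dominates (in degradation) such a census channel. I expect this step to be the main obstacle: it is where $d\lambda^2>1$ enters, it must hold uniformly over the boundary (whence the appeal to a linear estimator rather than to $\Phi^H$-dynamics), and one must also arrange that the per-step factor $\wt y_{k+1}\,d\lambda^2 C^H(q,\lambda)$ really falls below $1$---the initial gap is $e_0=1$, so one runs the crude bound for a bounded number of steps (during which $\wt z_k$ monotonically descends to its limit in the survey case, and $C_{\chi^2}(\wt M_k)$ grows geometrically to the fixed-point level by near-additivity of $C_{\chi^2}$ under $\star$ in the no-survey case) and only then invokes the improved factor.

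Finally, for all $k$ past a finite threshold one has $e_{k+1}\le\kappa\,e_k$ with $\kappa=d\lambda^2\exp\!\bigl(-c^H(q,\lambda,d)\tfrac{d\lambda^2-1}2\bigr)C^H(q,\lambda)\,Z(W^R)<1$ by~\eqref{eqn:thm:bi-w-high-snr-cond} (respectively by~\eqref{eqn:thm:bi-high-snr-cond}, with $Z(W^R)=1$), while $e_k$ stays finite before the threshold; hence $e_k\to0$, i.e.\ $\lim_{k\to\infty}\bigl(\Phi^H(M_k)-\Phi^H(\wt M_k)\bigr)=0$. Since $\phi^H$ is strongly concave (Lemma~\ref{lem:phi-H-concave}), the strongly convex potential $-\phi^H$ then feeds into Prop.~\ref{prop:phi-contraction-imply-l2} and Prop.~\ref{prop:contraction-imply-bi} to deliver boundary irrelevance and stability of the BP fixed point.
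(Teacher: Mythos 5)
Your proposal follows essentially the same route as the paper's proof: the Hessian condition defining $C^H(q,\lambda)$ yields the one-step bound $\wt z_k - z_k \le \lambda^2 C^H(q,\lambda)\,e_k$ exactly as in the paper, the multiplicative structure of $\Phi^H$ under $\star$ reduces the recursion to the branching generating function (the paper telescopes over the $b$ factors one at a time instead of invoking convexity of $g$, but this yields the same estimate), and the uniform bound $1-\wt z_k \gtrsim c^H(q,\lambda,d)\frac{d\lambda^2-1}{2d}$ is supplied by precisely the linear census/second-moment argument you describe, which is the paper's Prop.~\ref{prop:bot-majority} combined with Lemma~\ref{lem:h2-chi2-bound}. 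The one caution is that this last bound should be established directly for the restricted BMS channel $(\wt M_k\circ P_\lambda)^R$, as the paper does, rather than via a lower bound on the $q$-ary $C_{\chi^2}(\wt M_k)$ followed by a comparison ``up to absolute constants,'' since the exact constant $c^H$ must survive to match condition~\eqref{eqn:thm:bi-high-snr-cond}.
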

\begin{proof}
	We treat the regular tree case and the Poisson tree case (almost) uniformly.
  For simplicity, in this proof, we use the following notation.
	Let $\mathbbm{1}_R$ be $1$ if we are working with regular trees, and $0$ otherwise.
	Let $\mathbbm{1}_P$ be $1$ if we are working with Poisson trees, and $0$ otherwise.

  Using BP equation and Lemma \ref{lem:phi-H-mult-under-star}, we have
	\begin{align}
		\Phi^H(M_{k+1}) = \bE_b\left[ \left(\Phi^H(M_k \circ P_\lambda)\right)^b \Phi^H(W)\right]
	\end{align}
	and the same holds with $M$ replaced with $\wt M$.

	For $i\ge 0$, define
	\begin{align}
		\Phi_i = \bE_b\left[ \prod_{j\in [b]} \left(\left(\Phi^H(\wt M_k \circ P_\lambda)\right)^{\mathbbm{1}\{j\le i\}} \left(\Phi^H(M_k \circ P_\lambda)\right)^{\mathbbm{1}\{j > i\}} \right) \Phi^H(W)\right].
	\end{align}

	Fix $i\ge 1$. Let us prove that
	\begin{align}
		\Phi_i - \Phi_{i-1} \le c_i \left( \Phi(\wt M_k) - \Phi(M_k)\right)
	\end{align}
	for some constant $c_i$ to be determined later.

	Note that
	\begin{align} \label{eqn:prop-phi-H-contraction-master}
		\Phi_i - \Phi_{i-1}
		=&~ \left(\Phi^H(\wt M_k \circ P_\lambda) - \Phi^H(M_k \circ P_\lambda) \right)
		\\ &~\cdot \bE_b \left[\mathbbm{1}\{b\ge i\}\left(\Phi^H(\wt M_k \circ P_\lambda)\right)^{i-1} \left(\Phi^H(M_k \circ P_\lambda)\right)^{b-i}\right] \Phi^H(W). \nonumber
	\end{align}

	% Define
	% \begin{align}
	% 	f(\pi, v) = -v^\top \nabla^2 \phi^H(\pi) v \label{eqn:h2-hess-form}
	% \end{align}
  % (which is non-negative by Lemma \ref{lem:phi-H-concave})
	% and
	% \begin{align}
	% 	c = \sup_{\substack{\pi \in \cP([q]) \\ v \in \mathbbm{1}^\perp \subseteq \bR^q}} \frac{f\left(\lambda \pi + \frac {1-\lambda} q, v\right)}{f(\pi, v)}. \label{eqn:h2-hess-ratio-const}
	% \end{align}

  % By Lemma~\ref{lem:h2-hess-ratio}, we have
  % \begin{align}
  %   c \le q^{5/2}.
  % \end{align}

  Note that $f^H(\pi, v) = -2(q-1)v^\top \nabla^2 \phi^H(\pi) v$.
  Therefore by definition of $C^H(q,\lambda)$, we have
	\begin{align}
		\nabla^2 \left(\Phi^H(\FSC_\pi \circ P_\lambda) - \lambda^2 C^H(q,\lambda) \Phi^H (\FSC_\pi)\right) \succeq 0.
	\end{align}
	So by degradation,
	\begin{align} \label{eqn:prop-phi-H-contraction-term-1}
		\Phi^H(\wt M_k \circ P_\lambda) - \Phi^H(M_k \circ P_\lambda) \le \lambda^2 C^H(q,\lambda) \left(\Phi^H(\wt M_k) - \Phi^H(M_k)\right).
	\end{align}

  By Prop.~\ref{prop:bot-majority} and Lemma \ref{lem:h2-chi2-bound}, for any $\epsilon>0$, for $k$ large enough, we have
  \begin{align} \label{eqn:prop-phi-H-contraction-term-2}
    \Phi^H(M_k \circ P_\lambda)
    \le \left(1-c^H(q,\lambda,d)\cdot\frac{d\lambda^2-1}{d-\mathbbm{1}_R}+\epsilon\right)_+^{1/2}.
  \end{align}

	Let
	\begin{align} \label{eqn:prop-phi-H-contraction-term-c}
		c_i = \lambda^2 \Phi^H(W) C^H(q,\lambda)  \bE_b \left[\mathbbm{1}\{b\ge i\} \left(1-c^H(q,\lambda,d)\cdot\frac{d\lambda^2-1}{d-\mathbbm{1}_R}+\epsilon\right)_+^{\frac{b-1}2} \right].
	\end{align}

  Combining \eqref{eqn:prop-phi-H-contraction-master}\eqref{eqn:prop-phi-H-contraction-term-1}\eqref{eqn:prop-phi-H-contraction-term-2}\eqref{eqn:prop-phi-H-contraction-term-c}, we get
	\begin{align} \label{eqn:prop-phi-H-contraction-step-i}
		\Phi_i - \Phi_{i-1} \le c_i \left(\Phi^H(\wt M_k) - \Phi^H(M_k)\right).
	\end{align}

	Let us compute sum of $c_i$.
  In the regular tree case, we have
  \begin{align}
		\sum_{i\ge 1} c_i
    =&~ d \lambda^2 \Phi^H(W) C^H(q,\lambda) \left(1-c^H(q,\lambda,d)\cdot\frac{d\lambda^2-1}{d-1}+\epsilon\right)_+^{\frac{d-1}2}.
	\end{align}
  For the Poisson tree case, we have
  \begin{align}
		\sum_{i\ge 1} c_i
    =&~ d \lambda^2 \Phi^H(W) C^H(q,\lambda) \exp\left(-d\left(1-\left(1-c^H(q,\lambda,d)\cdot \frac{d\lambda^2-1}{d}+\epsilon\right)_+^{1/2}\right)\right).
	\end{align}
  Note that $\epsilon>0$ can be chosen to be arbitrarily small.
  Therefore in both cases, for any $\epsilon'>0$, for $k$ large enough, we can choose $c_i$ such that \eqref{eqn:prop-phi-H-contraction-step-i} holds and
  \begin{align}
		\sum_{i\ge 1} c_i
    \le &~ d \lambda^2 \Phi^H(W) C^H(q,\lambda) \exp\left(-c^H(q,\lambda,d)\cdot \frac{d\lambda^2-1}{2}+\epsilon'\right).
	\end{align}

	% \begin{align}
	% 	&~\sum_{i\ge 1} c_i\\
  %   \nonumber =&~ d \lambda^2 \Phi^H(W) C^H(q,\lambda) \cdot \left\{\begin{array}{ll}
	% 		\left(1-\frac{d\lambda^2-1}{d-1}\right)^{\frac{d-1}2} & \text{Regular tree case,} \\
	% 		\exp\left(-d\left(1-\sqrt{1-\frac{d\lambda^2-1}{d}}\right)\right) & \text{Poisson tree case,}
	% 	\end{array}\right. \\
  %   \nonumber \le&~ d \lambda^2 \Phi^H(W) C^H(q,\lambda) \exp\left(-\frac{d \lambda^2-1}{2}\right).
	% 	% &\le d \lambda^2 \Phi^H(W) q^{5/2} \exp\left(-\frac{d \lambda^2-1}{2}\right)
	% \end{align}
  % where the last step is by Lemma~\ref{lem:h2-hess-ratio}.

	Then
	\begin{align}
		&\Phi^H(\wt M_{k+1}) - \Phi^H(M_{k+1}) \\
		&\le \left(\sum_{i\ge 1} c_i\right) \left(\Phi^H(\wt M_k) - \Phi^H(M_k)\right) \nonumber\\
		&\le d \lambda^2 \Phi^H(W) C^H(q,\lambda)  \exp\left(-c^H(q,\lambda,d)\cdot \frac{d\lambda^2-1}{2}+\epsilon'\right) \left(\Phi^H(\wt M_k) - \Phi^H(M_k)\right). \nonumber
	\end{align}
  Because \eqref{eqn:thm:bi-high-snr-cond} or \eqref{eqn:thm:bi-w-high-snr-cond} holds, we can choose $\epsilon'>0$ small enough so that
  \begin{align}
    d \lambda^2 \Phi^H(W) C^H(q,\lambda)  \exp\left(-c^H(q,\lambda,d)\cdot \frac{d\lambda^2-1}{2}+\epsilon'\right) < 1.
  \end{align}
  This leads to the desired contraction.
\end{proof}

Theorem~\ref{thm:bi-high-snr} follows from combining everything.
\begin{proof}[Proof of Theorem~\ref{thm:bi-high-snr}]
  Combine Prop.~\ref{prop:phi-H-contraction}, Lemma \ref{lem:phi-H-concave}, and Prop.~\ref{prop:contraction-imply-bi}.
\end{proof}

\subsection{Majority decider} \label{sec:proof-bi:majority}
\begin{proposition}\label{prop:bot-majority}
	Consider the Potts model $\BOT(q,\lambda,d)$ or $\BOT(q,\lambda,\Pois(d))$ with leaf observations through a non-trivial FMS channel $U$.
	Let $M_k^U$ denote the channel $\sigma_\rho \to \nu_{L_k}$ where $\nu_v \sim U(\cdot |\sigma_v)$.
  Assume that $d\lambda^2>1$.
	Then
	\begin{align}
		\lim_{k\to \infty} C_{\chi^2}\left((M_k^U \circ P_\lambda)^R\right) \ge \left\{\begin{array}{ll} c(q,\lambda,d)\cdot \frac{d\lambda^2-1}{d-1} & \text{Regular tree case,} \\ c(q,\lambda,d) \cdot \frac{d\lambda^2-1}{d} & \text{Poisson tree case.}\end{array}\right.
	\end{align}
  where
  \begin{align}
    c(q,\lambda,d) := \left(\frac 2q + \frac{q-2}q \cdot \frac{d\lambda^2-1}{d\lambda-1}\right)^{-1}.
  \end{align}

  Furthermore, $c(q,\lambda,d) \ge 1$ for all $\lambda \in \left[-\frac 1{q-1},1\right]$ and $d\lambda^2>1$.
\end{proposition}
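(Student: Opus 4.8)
The plan is to analyze the broadcasting recursion for the Potts model via a "majority-style" estimator at the root and track the $\chi^2$-information it carries, restricted to a binary sub-problem. First I would recall that $M_k^U$ satisfies the belief-propagation recursion $M_{k+1}^U = \bE_b (M_k^U \circ P_\lambda)^{\star b}$, and that $C_{\chi^2}$ contracts under $P_\lambda$ (Lemma~\ref{lem:contraction}: $C_{\chi^2}(P\circ P_\lambda)\le \lambda^2 C_{\chi^2}(P)$) while $\star$-convolution is additive in $C_{\chi^2}$ \emph{at leading order} — but here, since $d\lambda^2>1$, I cannot simply iterate a contraction; instead I want a \emph{lower} bound at the fixed point. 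The standard route (cf. \cite{mossel2001reconstruction,sly2011reconstruction}) is: restrict attention to the event $\{\sigma_\rho \in \{1,2\}\}$, so that the broadcast process on colors $\{1,2\}$ (conditioned appropriately, with the other colors acting as a symmetric "noise/erasure" state) becomes a binary broadcast process, and apply the second-moment / recursive-majority estimator there. The quantity $C_{\chi^2}((M_k^U\circ P_\lambda)^R)$ is then, up to constants, the squared $L^2$-correlation of the optimal $\{1,2\}$-distinguishing statistic.

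The key steps, in order: (1) Write the linearized recursion for the dominant eigenvalue direction: the magnetization-type statistic $S_k = \sum_{v\in L_k} g(\nu_v)$ for the appropriate eigenfunction $g$ of the channel $U\circ P_\lambda$ satisfies $\bE[S_k\mid \sigma_\rho] \propto (d\lambda)^k$-type growth (more precisely the relevant per-edge multiplier is $\lambda$ on the color part and $d$ from branching), and $\Var(S_k)$ grows at a rate governed by $d\lambda^2$ versus $d$; since $d\lambda^2>1$ the signal-to-noise ratio of $S_k$ stabilizes to a positive constant rather than vanishing. (2) Compute this limiting SNR explicitly: the variance recursion has the form $v_{k+1} = d\lambda^2 v_k + (\text{fluctuation term of size } \sim d)$, and solving the fixed-point (geometric series) gives the limiting normalized correlation $\asymp \frac{d\lambda^2-1}{d\lambda^2}\cdot(\text{something})$; carefully keeping the $q$-dependent constants from the $q$-ary structure (the "other $q-2$ colors" contribute a factor governed by $\frac{d\lambda-1}{d\lambda^2-1}$ in the denominator) produces exactly the claimed $c(q,\lambda,d)^{-1} = \frac 2q + \frac{q-2}q\cdot\frac{d\lambda^2-1}{d\lambda-1}$. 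For the regular tree the per-level variance normalization is $d-1$ (since $\sum_{k} (d\lambda^2)^{-k}$-type sums shift the constant), while for the Poisson tree it is $d$ — this is the source of the two cases. (3) Translate the SNR of $S_k$ into a lower bound on $C_{\chi^2}$ of the restricted channel via the data-processing / variational characterization: $C_{\chi^2}((M_k^U\circ P_\lambda)^R) \ge \big(\bE[S_k\mid \sigma_\rho=1]-\bE[S_k\mid\sigma_\rho=2]\big)^2 / \Var(S_k)$ up to normalization, since $C_{\chi^2}$ is the supremum of such ratios over statistics; take $k\to\infty$. (4) Finally, verify $c(q,\lambda,d)\ge 1$: since $d\lambda^2>1$ and $\lambda\in[-\tfrac1{q-1},1]$, one checks $\frac{d\lambda^2-1}{d\lambda-1}\le 1$ (equivalently $d\lambda^2-1\le d\lambda-1$ when $\lambda<1$, i.e. $\lambda\le 1$, with care about the sign of $d\lambda-1$), hence $\frac 2q+\frac{q-2}q\cdot\frac{d\lambda^2-1}{d\lambda-1}\le \frac 2q+\frac{q-2}q=1$, giving $c\ge 1$; the sign analysis when $d\lambda\le 1$ needs a short separate check (there $d\lambda^2-1 \le 0$ would contradict $d\lambda^2>1$, so in fact $d\lambda>1$ whenever $d\lambda^2>1$ and $\lambda\le 1$, resolving it).

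The main obstacle I expect is step (2)–(3): getting the \emph{exact} constant $c(q,\lambda,d)$ rather than just "a positive constant." This requires tracking the full covariance structure of the $q$-ary broadcast — not just the dominant eigenvalue $\lambda$ but how the cross-terms among the $q-2$ "spectator" colors enter the variance of the binary-restricted statistic — and correctly identifying which normalization ($d$ vs.\ $d-1$) appears in the two tree models. The cleanest way is probably to set up the recursion directly for $C_{\chi^2}((M_k^U\circ P_\lambda)^R)$ using a recursive lower bound of the form $C_{\chi^2}(\text{level }k+1) \ge \frac{(d\lambda)^2 \cdot C_{\chi^2}(\text{level }k)}{1 + (\text{noise term depending on }q,\lambda,d)\cdot C_{\chi^2}(\text{level }k)}$ — a Riccati-type recursion whose stable fixed point is exactly the claimed bound — and then pass to the limit using monotonicity (the sequence is monotone by the degradation structure established earlier). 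The subadditivity/tensorization tools (Lemma~\ref{lem:subadd}, Lemma~\ref{lem:phi-H-mult-under-star}) and the contraction Lemma~\ref{lem:contraction} supply the pieces needed to justify each inequality in the recursion rigorously.
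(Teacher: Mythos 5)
Your proposal follows essentially the same route as the paper's proof: define a two-color magnetization statistic $S_k = \sum_{v\in L_k} e_{\nu_v}$ (with $e_+=1$, $e_-=-1$, and $e_i=0$ for the $q-2$ spectator colors), track $\bE^i[S_k]$ and the pair $(\Var^+ S_k,\Var^0 S_k)$ via a linear variance-decomposition recursion in which the regular-versus-Poisson distinction enters through the $\Var^i(\bE[S_{k+1}\mid b])$ term (yielding $d-1$ versus $d$ exactly as you suspect), and lower-bound $C_{\chi^2}$ of the binary restriction by the Cauchy--Schwarz ratio $\bE^+[S_k\circ P_\lambda]^2/\bE^+[S_k^2\circ P_\lambda]$, with the spectator-color contribution producing the $\frac{q-2}{q}\cdot\frac{d\lambda^2-1}{d\lambda-1}$ piece of $c(q,\lambda,d)^{-1}$. (The paper first degrades $U$ to a Potts channel $P_\eta$ so it can work with an explicit $\eta$ rather than a generic eigenfunction $g$; this is a convenience, not a different idea.) One genuine slip in your sketch is the verification of $c(q,\lambda,d)\ge 1$: the claim that ``$d\lambda>1$ whenever $d\lambda^2>1$ and $\lambda\le 1$'' fails in the antiferromagnetic regime --- take $\lambda=-\frac 1{q-1}$ and $d>(q-1)^2$, which gives $d\lambda^2>1$ but $d\lambda<0$. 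In that case $\frac{d\lambda^2-1}{d\lambda-1}<0$, so $c^{-1}<\frac 2q\le 1$ trivially; what actually needs checking there is $c^{-1}>0$, which holds because $\frac{d\lambda^2-1}{1-d\lambda}$ is increasing in $d$ with limit $|\lambda|\le\frac 1{q-1}$, hence $(q-2)\cdot\frac{d\lambda^2-1}{1-d\lambda}<\frac{q-2}{q-1}<2$. With that repair the plan is sound and matches the paper.
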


\begin{proof}
  Let $U^*$ be the reverse channel of $U$.
  Then the composition $U^*\circ U$ is a non-trivial ferromagnetic Potts channel.
  So there exists $\eta>0$ such that $P_\eta \le_{\deg} U$.
  By replacing $U$ with $P_\eta$ (and using Lemma~\ref{lem:fms-deg-info-measure}), we can wlog assume that $U=P_\eta$ for some $\eta>0$.

	Fix any embedding $\{\pm\} \subseteq [q]$.
	Let $e\in \bR^q$ denote the vector with $e_+ = 1, e_-=-1, e_i=0$ for $i\not \in \{\pm\}$.
	Let
	\begin{align}
		S_k = \sum_{v\in L_k} e_{\nu_v}.
	\end{align}
	We view $S_k$ as a channel $[q] \to \bZ$.
	By variational characterization of $\chi^2$-divergence, we have
	\begin{align}
		C_{\chi^2}((M_k^U \circ P_\lambda)^R) \ge \frac{\left(\bE^+[S_k \circ P_\lambda]\right)^2}{\bE^+[S_k^2 \circ P_\lambda]}
	\end{align}
	where $\bE^+$ denotes expectation conditioned on root label being $+$.
	Similarly, we use $\bE^-$ to denote expectation conditioned on root label being $-$, and use $\bE^0$ to denote expectation conditioned on root label being any label not $\pm$.
	Same for $\Var^+$, $\Var^-$, $\Var^0$.

	For simplicity, in this proof, we use the following notation.
	Let $\mathbbm{1}_R$ be $1$ if we are working with regular trees, and $0$ otherwise.
	Let $\mathbbm{1}_P$ be $1$ if we are working with Poisson trees, and $0$ otherwise.
  Clearly $\mathbbm{1}_P + \mathbbm{1}_R = 1$.

	It is easy to see that
	\begin{align}
		\bE^i S_k = e_i \eta (d\lambda)^k.
	\end{align}

	Using variance decomposition formula, we have
	\begin{align}
		\Var^i(S_{k+1}) & = \Var^i(\bE[S_{k+1} | b]) + \bE_b \Var^i(\bE[S_{k+1} | b, \sigma_1, \ldots, \sigma_b])\\
		& + \bE \Var^i( S_{k+1} | b, \sigma_1,\ldots,\sigma_b) \nonumber
	\end{align}
	where $\sigma_1,\ldots, \sigma_b$ are labels of the children.

	Let us compute each summand.
	\begin{align}
		&\Var^i(\bE[S_{k+1} | b]) = \Var^i(b \lambda e_i \eta (d\lambda)^k)
		= e_i^2 d \lambda^2 \eta^2 (d\lambda)^{2k} \mathbbm{1}_P,\\
		&\bE_b \Var^i (\bE[S_{k+1} | b, \sigma_1, \ldots, \sigma_b])
		= d \eta^2 (d\lambda)^{2k} \Var_{j\sim P_\lambda(\cdot | i)}(e_j),\\
		&\bE \Var^i( S_{k+1} | b, \sigma_1,\ldots,\sigma_b)
		 = d \bE_{j\sim P_\lambda(\cdot | i)}[\Var^j(S_k)].
	\end{align}

	We have $\Var^-(S_k) = \Var^+(S_k)$ and
	\begin{align}\label{eqn:maj-var-plus-recursion}
		\Var^+(S_{k+1}) =&~ d \eta^2 (d\lambda)^{2k} \left(\left(\lambda + \frac{1-\lambda}q\cdot 2\right) - \lambda^2 \mathbbm{1}_R\right)\\
		&~+ d \left(\left(\lambda + \frac{1-\lambda}q\cdot 2\right) \Var^+(S_k) + \frac{1-\lambda}q\cdot (q-2) \Var^0(S_k)\right), \nonumber\\
    \label{eqn:maj-var-0-recursion}
		\Var^0(S_{k+1}) =&~ d \eta^2 (d\lambda)^{2k} \left(\frac{1-\lambda}q\cdot 2\right)\\
		&~+ d \left(\frac{1-\lambda}q\cdot 2 \Var^+(S_k) + \left(\lambda + \frac{1-\lambda}q\cdot (q-2)\right) \Var^0(S_k)\right). \nonumber
	\end{align}
	By computing linear combinations of \eqref{eqn:maj-var-plus-recursion}\eqref{eqn:maj-var-0-recursion}, we get
	\begin{align}
    \label{eqn:maj-var-diff-recursion}
		\Var^+(S_{k+1}) - \Var^0(S_{k+1}) =&~ d \lambda \left(\Var^+(S_k) - \Var^0(S_k) \right)\\
		&~+ d \eta^2 (d\lambda)^{2k} (\lambda - \lambda^2 \mathbbm{1}_R). \nonumber\\
    \label{eqn:maj-var-sum-recursion}
		\Var^+(S_{k+1}) + \frac{q-2}2 \Var^0(S_{k+1})
		=&~ d \left(\Var^+(S_k) + \frac{q-2}2 \Var^0(S_{k+1})\right) \\
		&~+ d \eta^2 (d\lambda)^{2k} (1-\lambda^2 \mathbbm{1}_R). \nonumber
	\end{align}
	Solving \eqref{eqn:maj-var-diff-recursion}\eqref{eqn:maj-var-sum-recursion} we get
	\begin{align}
    \label{eqn:maj-var-diff-solution}
		&~\Var^+(S_{k}) - \Var^0(S_{k})\\
		=&~ \left(\Var^+(S_0) - \Var^0(S_0)\right) (d\lambda)^k
		+ \sum_{1\le i\le k} d \eta^2(d\lambda)^{2i-2}(d\lambda)^{k-i} (\lambda - \lambda^2 \mathbbm{1}_R) \nonumber\\
		=&~ O\left((d\lambda)^k\right) + d \eta^2 (d\lambda)^{k-1} \frac{(d\lambda)^k-1}{d\lambda-1} (\lambda - \lambda^2 \mathbbm{1}_R) \nonumber\\
		=&~ (1+o(1)) \frac{1-\lambda \mathbbm{1}_R}{d\lambda-1} \eta^2 (d\lambda)^{2k}\nonumber
  \end{align}
  and
  \begin{align}
    \label{eqn:maj-var-sum-solution}
		&~\Var^+(S_k) + \frac{q-2}2 \Var^0(S_k) \\
		=&~ \left(\Var^+(S_0) + \frac{q-2}2 \Var^0(S_0)\right) d^k
		+ \sum_{1\le i\le k} d \eta^2 (d\lambda)^{2i-2} d^{k-i} (1-\lambda^2 \mathbbm{1}_R) \nonumber\\
		=&~ O\left(d^k\right) + \eta^2 d^k \frac{(d\lambda^2)^k-1}{d\lambda^2-1} (1-\lambda^2 \mathbbm{1}_R) \nonumber\\
		=&~ (1+o(1)) \frac{1-\lambda^2 \mathbbm{1}_R}{d\lambda^2-1} \eta^2 (d\lambda)^{2k}. \nonumber
	\end{align}
  Combining \eqref{eqn:maj-var-diff-solution}\eqref{eqn:maj-var-sum-solution} we have
	\begin{align}
		\Var^+(S_k) = (1+o(1)) \frac 2q \left(\frac{1-\lambda^2 \mathbbm{1}_R}{d\lambda^2-1} + \frac{1-\lambda \mathbbm{1}_R}{d\lambda-1} \cdot \frac{q-2}2\right) \eta^2 (d\lambda)^{2k}.
	\end{align}

	Now we compute moments of $S_k \circ P_\lambda$.
	\begin{align}
		\bE^+[S_k \circ P_\lambda] = \lambda \eta (d\lambda)^k.
	\end{align}
	\begin{align}
		\bE^+[S_k^2 \circ P_\lambda] &= \left(\lambda + \frac{1-\lambda}q \cdot 2\right) \bE^+[S_k^2] + \frac{1-\lambda}q \cdot (q-2) \bE^0[S_k^2]\\
		&= \left(\lambda + \frac{1-\lambda}q \cdot 2\right) \left(\Var^+(S_k) + (\bE^+ S_k)^2\right) + \frac{1-\lambda}q \cdot (q-2) \Var^0(S_k) \nonumber\\
		&= \lambda (1+o(1)) \frac 2q \left(\frac{1-\lambda^2\mathbbm{1}_R}{d\lambda^2-1} + \frac{1-\lambda\mathbbm{1}_R}{d\lambda-1} \cdot \frac{q-2}2\right) \eta^2 (d\lambda)^{2k} \nonumber\\
		&+ \left(\lambda + \frac{1-\lambda}q \cdot 2\right) \eta^2 (d\lambda)^{2k}
		+ \frac{1-\lambda}q \cdot 2 \cdot (1+o(1)) \frac{1-\lambda^2 \mathbbm{1}_R}{d\lambda^2-1} \eta^2 (d\lambda)^{2k} \nonumber \\
		% & \le (1+o(1)) \left(\lambda + \frac{1-\lambda}q \cdot 2\right) \frac{d\lambda^2-\lambda^2\mathbbm{1}_R}{d\lambda^2-1} \eta^2 (d\lambda)^{2k} \nonumber.
    & = (1+o(1))\left(\frac 2q \cdot \frac{d\lambda^2-\lambda^2\mathbbm{1}_R}{d\lambda^2-1} + \lambda \cdot \frac{q-2}q \cdot \frac{d\lambda-\lambda \mathbbm{1}_R}{d\lambda-1}\right) \eta^2 (d\lambda)^{2k} \nonumber \\
    % & = (1+o(1))\left(\frac 2q \cdot \frac{1}{d\lambda^2-1} + \lambda \cdot \frac{q-2}q \cdot \frac 1{d\lambda^2-\lambda}\right) (d-\mathbbm{1}_R) \lambda^2 \eta^2 (d\lambda)^{2k}. \nonumber
    & = (1+o(1))c(q,\lambda,d)^{-1} \frac{d-\mathbbm{1}_R}{d\lambda^2-1} \lambda^2 \eta^2 (d\lambda)^{2k}. \nonumber
	\end{align}
	% (The inequality step uses $\frac{1-\lambda\mathbbm{1}_R}{d\lambda-1} \le \frac{1-\lambda^2\mathbbm{1}_R}{d\lambda^2-1}$.)
	Finally,
	\begin{align}
		C_{\chi^2}((M_k^U \circ P_\lambda)^R)
    \ge \frac{\left(\bE^+[S_k \circ P_\lambda]\right)^2}{\bE^+[S_k^2 \circ P_\lambda]}
		=  (1+o(1)) c(q,\lambda,d) \cdot \frac{d\lambda^2-1}{d-\mathbbm{1}_R}.
		% & \ge \frac{(1+o(1)) (d\lambda^2-1)}{d-\mathbbm{1}_R}. \nonumber
	\end{align}
\end{proof}

\subsection{Bounds on key constants} \label{sec:proof-bi:const}
In this section we prove bounds on key constants used in Theorem~\ref{thm:bi-low-snr} and~\ref{thm:bi-high-snr}.

\begin{proposition} \label{prop:bi-low-snr-C-bound}
  For $q\in \bZ_{\ge 2}$, $\lambda \in \left[-\frac 1{q-1},1\right]$, we have $C^L(q,\lambda) \le q^2$,
  where $C^L(q,\lambda)$ is defined in~\eqref{eqn:thm:bi-low-snr-defn-C}.
\end{proposition}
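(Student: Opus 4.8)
The plan is to bound the ratio $f^L(\lambda\pi + \tfrac{1-\lambda}{q}, v)/f^L(\pi, v)$ uniformly over $\pi \in \cP([q])$ and $v \in \mathbbm{1}^\perp$. Write $\pi' := \lambda\pi + \tfrac{1-\lambda}{q}\mathbbm{1}$, the pushforward of $\pi$ under the Potts channel. Recall $f^L(\pi, v) = \langle \pi^{-1} + \tfrac1q \pi^{-2}, v^2\rangle = \sum_i v_i^2(\pi_i^{-1} + \tfrac1q\pi_i^{-2})$. The strategy is to control numerator and denominator separately: bound $f^L(\pi', v)$ from above by a multiple of $\|v\|_2^2$ using a lower bound on the entries of $\pi'$, and bound $f^L(\pi, v)$ from below by a multiple of $\|v\|_2^2$ using the fact that the weights $\pi_i^{-1} + \tfrac1q\pi_i^{-2}$ are each at least their value at $\pi_i = 1$ — or more simply, $\pi_i^{-1} \ge 1$ and $\tfrac1q\pi_i^{-2} \ge \tfrac1q$, giving $f^L(\pi, v) \ge (1 + \tfrac1q)\|v\|_2^2$.

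For the numerator: each entry of $\pi'$ satisfies $\pi_i' = \lambda\pi_i + \tfrac{1-\lambda}{q}$. When $\lambda \ge 0$ this is at least $\tfrac{1-\lambda}{q} \ge 0$, which is not bounded away from $0$, so a naive entrywise bound fails as $\lambda \to 1$; however, in that regime $\pi'$ is close to $\pi$ and the ratio is close to $1$. The cleaner route is to observe that $f^L(\pi', v)$ is a weighted $\ell^2$ norm of $v$ with weights $g(\pi_i')$ where $g(t) = t^{-1} + \tfrac1q t^{-2}$, and that $g$ is decreasing and convex on $(0,1]$. Since $\pi_i' \ge \tfrac{1-\lambda}{q}$ when $\lambda\ge 0$ (and $\pi_i' \ge \tfrac{1-\lambda}{q} \cdot \tfrac{1}{?}$-type bounds when $\lambda < 0$, where in fact $\pi_i' \ge 0$ always and one uses instead that $\pi_i' \le 1$ combined with... ), we get $g(\pi_i') \le g(\tfrac{1-\lambda}{q})$. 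This yields $f^L(\pi', v) \le g(\tfrac{1-\lambda}{q})\|v\|_2^2$. Then the ratio is at most $g(\tfrac{1-\lambda}{q})/(1 + \tfrac1q)$, and one checks this is $\le q^2$: indeed $g(\tfrac{1-\lambda}{q}) = \tfrac{q}{1-\lambda} + \tfrac1q \cdot \tfrac{q^2}{(1-\lambda)^2} = \tfrac{q}{1-\lambda} + \tfrac{q}{(1-\lambda)^2} \le \tfrac{2q}{(1-\lambda)^2}$, which blows up as $\lambda \to 1$ — so this crude bound is \emph{not} good enough near $\lambda = 1$ and must be combined with the observation that near $\lambda=1$ one instead bounds the ratio directly by noting $f^L(\pi',v) \le C^L(q,\lambda) f^L(\pi,v)$ trivially gives $C^L \le 1$ in the limit $\lambda\to 1$ since $\pi' \to \pi$.

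The main obstacle, therefore, is handling the full range $\lambda \in [-\tfrac1{q-1}, 1]$ with a single clean argument rather than patching cases: the crude entrywise bound degrades exactly where the map $f \mapsto f^L(\lambda\pi + \tfrac{1-\lambda}{q}\cdot, \cdot)$ becomes close to the identity. I expect the right move is to note $\pi_i' \ge \tfrac1q\min\{1, \text{something}\}$ is false, and instead use that for the ratio $f^L(\pi',v)/f^L(\pi,v)$ one can reduce to rank-one $v = e_i - e_j$ by a convexity/extreme-point argument on the quadratic form (the sup over $v \in \mathbbm{1}^\perp$ of a ratio of two diagonal quadratic forms is attained on a two-dimensional coordinate subspace, hence at $v \propto \sqrt{w_j'/w_j}\,e_i - \ldots$ type vectors), reducing the problem to a two-variable optimization in $(\pi_i, \pi_j)$ that can be bounded by $q^2$ by elementary calculus — the worst case being $\pi_i$ or $\pi_j$ of order $1/q$ while the other is of order $1$. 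I would carry out: (1) reduce the sup to coordinate 2-subspaces; (2) on such a subspace, write the ratio explicitly as $\frac{w_i' a^2 + w_j' b^2}{w_i a^2 + w_j b^2}$ with $w_i = g(\pi_i)$, $w_i' = g(\pi_i')$, whose sup over $(a,b)$ is $\max\{w_i'/w_i,\, w_j'/w_j\}$; (3) bound $\max_i g(\pi_i')/g(\pi_i) \le q^2$ by checking that $g(\lambda t + \tfrac{1-\lambda}{q})/g(t) \le q^2$ for all $t \in (0,1]$, $\lambda \in [-\tfrac1{q-1},1]$, which near $t \to 0$ uses $g(t) \sim \tfrac1{qt^2}$ and $g(\lambda t + \tfrac{1-\lambda}{q}) \le g(\tfrac{1-\lambda}{q}) \le \tfrac{2q}{(1-\lambda)^2}$, so the ratio is $\lesssim \tfrac{2q^2 t^2}{(1-\lambda)^2} \cdot q = o(q^2)$ — wait, this needs $t$ bounded below, so the genuine worst case is $t$ at the scale where $\lambda t + \tfrac{1-\lambda}{q} \approx t$, i.e. $\lambda \approx 1$, where the ratio $\to 1$. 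Thus step (3) is the real content and I expect it to come down to verifying a one-variable inequality $h(t) := g(\lambda t + \tfrac{1-\lambda}{q}) - q^2 g(t) \le 0$ on $(0,1]$ by checking endpoint behavior and monotonicity.
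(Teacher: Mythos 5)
Your ferromagnetic half is sound and is essentially what the paper does: for $\lambda\ge 0$ one has $\lambda \pi_i+\frac{1-\lambda}{q}\ge \frac{\pi_i}{q}$ entrywise, hence $g\bigl(\lambda\pi_i+\frac{1-\lambda}{q}\bigr)\le g(\pi_i/q)=\frac{q}{\pi_i}+\frac{q}{\pi_i^2}\le q^2 g(\pi_i)$ with $g(t)=t^{-1}+\frac1q t^{-2}$, so the ratio of the two diagonal quadratic forms is at most $q^2$ for \emph{every} $v$, the constraint $v\in\mathbbm{1}^\perp$ being unnecessary. The genuine gap is the antiferromagnetic case $\lambda<0$, which your argument never handles: the one-variable inequality you reduce to in step (3), namely $g\bigl(\lambda t+\frac{1-\lambda}{q}\bigr)\le q^2 g(t)$ for all $t\in(0,1]$, is false there. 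Take $\lambda=-\frac{1}{q-1}$ and $t\to 1$: then $\lambda t+\frac{1-\lambda}{q}\to 0$, so the left-hand side blows up while $g(t)\to 1+\frac1q$. Concretely, at $\lambda=-\frac{1}{q-1}$ with $\pi=(1-(q-1)\epsilon,\epsilon,\dots,\epsilon)$ one gets $\pi_1'=\epsilon$ while $\pi_1\to 1$, so $\max_i g(\pi_i')/g(\pi_i)\to\infty$ as $\epsilon\to 0$; the dangerous regime is not ``$\lambda\approx 1$'' but $\lambda<0$ with $\pi$ near a vertex of the simplex. The root cause is that by evaluating the sup over a coordinate $2$-subspace as $\max\{w_i'/w_i,\,w_j'/w_j\}$ you have silently dropped the constraint $v\in\mathbbm{1}^\perp$ (with the constraint the ratio on that subspace is $\frac{w_i'+w_j'}{w_i+w_j}$, which stays bounded in the example above because the huge weight appears in both numerator and denominator). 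That constraint is exactly what rescues the antiferromagnetic case. Your extreme-point reduction is also not valid as stated: over the hyperplane $\mathbbm{1}^\perp$ the maximizer of a ratio of diagonal quadratic forms generically has all coordinates nonzero (try $w=(1,1,1)$, $w'=(100,1,1)$ for $q=3$). Finally, your first, cruder attempt (lower-bounding $f^L(\pi,v)$ by $(1+\frac1q)\|v\|_2^2$ and upper-bounding the numerator by $g(\frac{1-\lambda}{q})\|v\|_2^2$) fails near $\lambda=1$ as you noticed, and ``$\pi'\to\pi$ as $\lambda\to 1$'' is not a bound for any fixed $\lambda<1$.

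For comparison, the paper splits $f^L$ into its $\pi^{-1}$ and $\pi^{-2}$ parts by the mediant inequality and proves, for each $\alpha\ge1$, that $\sup \langle(\lambda\pi+\frac{1-\lambda}{q})^{-\alpha},v^2\rangle/\langle\pi^{-\alpha},v^2\rangle\le q^\alpha$ over $\pi\in\cP([q])$ and $v\in\mathbbm{1}^\perp$ (Lemma~\ref{lem:quad-form-ratio}). The antiferromagnetic case of that lemma is the real content: it shows at most one coordinate of $b=(\pi/q)^{-\alpha}-(\lambda\pi+\frac{1-\lambda}{q})^{-\alpha}$ can be negative, optimizes $v$ exactly over the hyperplane (yielding $b_1+(\sum_{i\ge2}b_i^{-1})^{-1}$), reduces to two nonzero coordinates of $\pi$ by convexity of $b_i^{-1}$ in $\pi_i$, and finishes by convexity in $\lambda$, checking only the endpoints $\lambda=0$ and $\lambda=-\frac{1}{q-1}$. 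Some argument of this shape, which genuinely uses $v\in\mathbbm{1}^\perp$, is unavoidable for $\lambda<0$.
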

\begin{proof}
We have
\begin{align}
  &~\frac{\left\langle \left(\lambda\pi+\frac{1-\lambda}q\right)^{-1}+\frac 1q \left(\lambda\pi+\frac{1-\lambda}q\right)^{-2}, v^2\right\rangle}{\left\langle \pi^{-1} + \frac 1q\pi^{-2}, v^2 \right\rangle} \\
  \le&~ \max\left\{\frac{\left\langle \left(\lambda\pi+\frac{1-\lambda}q\right)^{-1},v^2\right\rangle}{\left\langle \pi^{-1}, v^2 \right\rangle}, \frac{\left\langle\left(\lambda\pi+\frac{1-\lambda}q\right)^{-2}, v^2\right\rangle}{\left\langle\pi^{-2},v^2\right\rangle}\right\} \nonumber \\
  \le&~ \max\{q,q^2\}=q^2. \nonumber
\end{align}
where the second step is by Lemma~\ref{lem:quad-form-ratio}.
\end{proof}

\begin{proposition} \label{prop:bi-high-snr-C-bound}
  For $q\in \bZ_{\ge 2}$, $\lambda \in \left[-\frac 1{q-1},1\right]$, we have
  $C^H(q,\lambda) \le q^{5/2}$,
  where $C^H(q,\lambda)$ is defined in~\eqref{eqn:thm:bi-high-snr-defn-C}.
\end{proposition}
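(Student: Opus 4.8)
The plan is to bound the numerator $f^H(\pi',v)$ and the denominator $f^H(\pi,v)$ of the ratio defining $C^H(q,\lambda)$ separately, reusing the estimate already contained in the proof of Lemma~\ref{lem:phi-H-concave}. Write $\pi':=\lambda\pi+\frac{1-\lambda}q$ and recall that
\begin{align}
  f^H(\pi,v)=\Big(\sum_{i\in[q]}\sqrt{\pi_i}\Big)\langle\pi^{-3/2},v^2\rangle-\langle\pi^{-1/2},v\rangle^2,
\end{align}
which, up to the factor $-2(q-1)$ relating it to $v^\top\nabla^2\phi^H(\pi)v$, is exactly the expression handled in Lemma~\ref{lem:phi-H-concave}. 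We may assume $\pi$ has full support (for a boundary $\pi$, finiteness of $f^H$ forces $v$ to vanish off the support of $\pi$, and the whole argument runs verbatim on that smaller simplex) and $v\neq 0$, so that $f^H(\pi,v)>0$ by the equality case of Cauchy--Schwarz together with $v\in\mathbbm{1}^\perp$.

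For the numerator I would drop the nonnegative subtracted square and apply Cauchy--Schwarz to the prefactor, $\sum_i\sqrt{\pi'_i}\le\sqrt{q\sum_i\pi'_i}=\sqrt q$, obtaining $f^H(\pi',v)\le\sqrt q\,\langle(\pi')^{-3/2},v^2\rangle$. For the denominator I would invoke the quantitative inequality proved inside Lemma~\ref{lem:phi-H-concave}, namely that for $v\in\mathbbm{1}^\perp$ one has $\langle\pi^{-1/2},v\rangle^2\le(1-\tfrac1{\sqrt q})\big(\sum_i\sqrt{\pi_i}\big)\langle\pi^{-3/2},v^2\rangle$; this gives $f^H(\pi,v)\ge\frac1{\sqrt q}\big(\sum_i\sqrt{\pi_i}\big)\langle\pi^{-3/2},v^2\rangle\ge\frac1{\sqrt q}\langle\pi^{-3/2},v^2\rangle$, using $\sum_i\sqrt{\pi_i}\ge\sum_i\pi_i=1$.

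Combining the two estimates yields $\frac{f^H(\pi',v)}{f^H(\pi,v)}\le q\cdot\frac{\langle(\pi')^{-3/2},v^2\rangle}{\langle\pi^{-3/2},v^2\rangle}$, and the remaining ratio of quadratic forms is at most $q^{3/2}$ by Lemma~\ref{lem:quad-form-ratio} (the same lemma used in Prop.~\ref{prop:bi-low-snr-C-bound} with exponents $1$ and $2$, here with exponent $3/2$). Taking the supremum over $\pi$ and $v$ then gives $C^H(q,\lambda)\le q\cdot q^{3/2}=q^{5/2}$. The one point that needs care is to bound numerator and denominator against the \emph{same} quadratic form $\langle\,\cdot^{-3/2},v^2\rangle$ so that Lemma~\ref{lem:quad-form-ratio} can be applied cleanly — a cruder estimate such as $f^H(\pi,v)\gtrsim\|v\|_2^2$ would be too lossy, since $\min_i\pi'_i$ can be arbitrarily small — plus the routine handling of boundary $\pi$ mentioned above; beyond that it is just a chaining of inequalities already established, so I do not anticipate a genuine obstacle.
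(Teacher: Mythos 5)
Your proposal is correct and follows essentially the same route as the paper's proof: bound the numerator by dropping the subtracted Cauchy--Schwarz term and using $\sum_i\sqrt{\pi'_i}\le\sqrt q$, bound the denominator from below via inequality~\eqref{eqn:h2-hess-ratio-step-1} established in Lemma~\ref{lem:phi-H-concave} together with $\sum_i\sqrt{\pi_i}\ge1$, and then apply Lemma~\ref{lem:quad-form-ratio} with $\alpha=3/2$ to the resulting ratio of quadratic forms. Your extra remarks on $f^H(\pi,v)>0$ and the boundary-$\pi$ case are routine and not needed for the paper's supremum to make sense, but they do no harm.
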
\begin{proof}
	By \eqref{eqn:h2-hess-ratio-step-1},
	\begin{align} \label{eqn:prop-bi-high-snr-C-bound-denom}
		f(\pi, v) \ge \frac 1{\sqrt q} \left(\sum_{i\in [q]} \pi^{1/2}_i\right) \left\langle \pi^{-3/2}, v^2\right\rangle \ge \frac 1{\sqrt q} \left\langle \pi^{-3/2}, v^2\right\rangle.
	\end{align}
	On the other hand,
	\begin{align} \label{eqn:prop-bi-high-snr-C-bound-num}
		f\left(\lambda \pi + \frac {1-\lambda}{q}, v\right)
		& \le \left(\sum_{i\in [q]} \left(\lambda \pi_i + \frac {1-\lambda}{q}\right)^{1/2}\right) \left\langle \left(\lambda \pi + \frac {1-\lambda}{q}\right)^{-3/2}, v^2\right\rangle \\
    & \le \sqrt q \cdot \left\langle \left(\lambda \pi + \frac {1-\lambda}{q}\right)^{-3/2}, v^2\right\rangle. \nonumber
		% & \le \sqrt q  \cdot \left\langle \left(\frac{\pi}q\right)^{-3/2}, v^2\right\rangle \nonumber\\
		% & = q^2 \left\langle \pi^{-3/2}, v^2\right\rangle. \nonumber
	\end{align}
	Combining \eqref{eqn:prop-bi-high-snr-C-bound-denom}\eqref{eqn:prop-bi-high-snr-C-bound-num} we get
	\begin{align}
		% c = \sup_{\substack{\pi \in \cP([q]) \\ v \in \mathbbm{1}^\perp \subseteq \bR^q}}
    \frac{f\left(\lambda \pi + \frac {1-\lambda} q, v\right)}{f(\pi, v)} \le q \cdot \frac{\left\langle \left(\lambda \pi + \frac {1-\lambda}{q}\right)^{-3/2}, v^2\right\rangle}{\left\langle \pi^{-3/2}, v^2\right\rangle}\le q^{5/2}
	\end{align}
  where the last step is by Lemma~\ref{lem:quad-form-ratio}.
\end{proof}

The following lemma is the crucial step in the proof of Prop.~\ref{prop:bi-low-snr-C-bound} and~\ref{prop:bi-high-snr-C-bound}.
\begin{lemma} \label{lem:quad-form-ratio}
  For $q\in \bZ_{\ge 2}$, $\alpha \in \bR_{\ge 1}$, $\lambda \in \left[-\frac 1{q-1},1\right]$, we have
  \begin{align}
    \sup_{\substack{\pi \in \cP([q]) \\ v \in \mathbbm{1}^\perp \subseteq \bR^q}} \frac{\left\langle\left(\lambda \pi + \frac {1-\lambda} q\right)^{-\alpha}, v^2\right\rangle}{\left\langle \pi^{-\alpha}, v^2\right\rangle} \le q^\alpha.
  \end{align}
\end{lemma}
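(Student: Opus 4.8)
The plan is to rephrase the claimed bound as a semidefiniteness statement and split on the sign of $\lambda$. Put $\bar\pi := \lambda\pi + \frac{1-\lambda}{q}\mathbbm{1}$, which is again a probability vector; the asserted inequality is equivalent to saying that the diagonal quadratic form $v\mapsto\sum_i c_iv_i^2$, with $c_i := q^\alpha\pi_i^{-\alpha} - \bar\pi_i^{-\alpha}$, is nonnegative on $\mathbbm{1}^\perp$. We may assume $\pi$ lies in the open simplex, the boundary case following by continuity.

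For $\lambda\in[0,1]$ the constraint $v\perp\mathbbm{1}$ is not even needed: since $t\mapsto t^{-\alpha}$ is convex on $(0,\infty)$ (here $\alpha\ge1$) and $\bar\pi_i = \lambda\pi_i + (1-\lambda)\frac1q$ is a convex combination, $\bar\pi_i^{-\alpha}\le\lambda\pi_i^{-\alpha} + (1-\lambda)q^\alpha$, whence $\langle\bar\pi^{-\alpha},v^2\rangle \le \lambda\langle\pi^{-\alpha},v^2\rangle + (1-\lambda)q^\alpha\|v\|_2^2 \le (\lambda + (1-\lambda)q^\alpha)\langle\pi^{-\alpha},v^2\rangle \le q^\alpha\langle\pi^{-\alpha},v^2\rangle$, using $\pi_i^{-\alpha}\ge1$ and $q^\alpha\ge1$.

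For $\lambda<0$ the constraint is essential (it cannot be dropped: without it the ratio is unbounded as $\pi\to e_1$ and $\lambda\to-\frac1{q-1}$). The key point is that at most one coordinate can be \emph{bad}, i.e.\ have $c_i<0$: one checks $c_i<0\iff\pi_i>p^\ast$ where $p^\ast:=\frac{1-\lambda}{1-q\lambda}$, and $-\frac1{q-1}\le\lambda<0$ forces $p^\ast>\frac12$, so at most one $\pi_i$ exceeds $p^\ast$ (and then all other $c_j>0$ strictly). If no coordinate is bad, $\mathrm{diag}(c)\succeq0$ outright. If coordinate $1$ is bad, I would use that a diagonal form with entries $c_i$, exactly one of which ($c_1$) is negative, is PSD on $\mathbbm{1}^\perp$ iff $\sum_i c_i^{-1}\le0$: parametrising the critical points of the Rayleigh quotient on $\mathbbm{1}^\perp$ by a Lagrange multiplier shows the restricted eigenvalues are the roots of $\mu\mapsto\sum_i(c_i-\mu)^{-1}$, and the least one is $\ge0$ exactly when $\sum_i c_i^{-1}\le0$. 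Writing $t_i := \pi_i/\bar\pi_i = g(\pi_i)$ with $g(p):=\frac{qp}{q\lambda p + 1-\lambda}$ (increasing in $p$, with $g(p^\ast)=q$), and clearing denominators, the condition $\sum_i c_i^{-1}\le0$ becomes
\begin{align*}
  \frac{\pi_1^\alpha}{t_1^\alpha - q^\alpha}\ \ge\ \sum_{i\ge2}\frac{\pi_i^\alpha}{q^\alpha - t_i^\alpha},\qquad t_1 > q > t_i\ (i\ge2).
\end{align*}

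To close this, I would bound the right-hand side: for $i\ge2$ one has $\pi_i\le1-\pi_1$, hence $t_i\le g(1-\pi_1)<q$ (as $1-\pi_1<p^\ast$), and combined with $\sum_{i\ge2}\pi_i^\alpha\le(\sum_{i\ge2}\pi_i)^\alpha=(1-\pi_1)^\alpha$ (power-mean, $\alpha\ge1$) this gives $\sum_{i\ge2}\frac{\pi_i^\alpha}{q^\alpha-t_i^\alpha}\le\frac{(1-\pi_1)^\alpha}{q^\alpha-g(1-\pi_1)^\alpha}$. It then suffices to establish, writing $p:=\pi_1\in(p^\ast,1)$, the one-variable inequality $\frac{(1-p)^\alpha}{q^\alpha-g(1-p)^\alpha}\le\frac{p^\alpha}{g(p)^\alpha-q^\alpha}$; clearing denominators and dividing by $(p(1-p))^\alpha$ turns it into
\begin{align*}
  A^{-\alpha}+B^{-\alpha}\ \le\ p^{-\alpha}+(1-p)^{-\alpha},\qquad A:=q\lambda p + 1-\lambda,\quad B:=q\lambda(1-p)+1-\lambda,
\end{align*}
with $A,B>0$ and $A+B=2+(q-2)\lambda$. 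I expect this final inequality to be the main obstacle: it is elementary but needs care (one checks $B>A$, controls the two discrepancies $A^{-\alpha}-p^{-\alpha}>0$ and $(1-p)^{-\alpha}-B^{-\alpha}>0$, and exploits convexity of $x\mapsto x^{-\alpha}$), and it is the only genuinely computational step — the spectral reduction, the $\lambda\ge0$ case, the ``at most one bad coordinate'' count, and the secular criterion are all routine.
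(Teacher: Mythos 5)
Your proposal reproduces the correct overall structure and, after the reduction, lands on precisely the same one-variable inequality that the paper derives. The $\lambda\ge 0$ case is handled by a valid alternative route (convexity of $t^{-\alpha}$ plus $\pi_i^{-\alpha}\ge 1$; the paper instead uses the simpler pointwise bound $\lambda x + \frac{1-\lambda}{q}\ge \frac xq$). For $\lambda<0$, your secular/Lagrange reduction to $\sum_i c_i^{-1}\le 0$ is the same as the paper's Step~1, and your power-mean argument to collapse to two mass points genuinely differs from the paper's Step~2 (which proves convexity of $\pi_i\mapsto b_i^{-1}$ in Lemma~\ref{lem:quad-form-ratio-helper1} and takes the extreme point); your route avoids that slightly technical computation and is arguably shorter, while the paper's yields an exact reduction rather than a bound — both are fine.

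The gap is the final inequality $A^{-\alpha}+B^{-\alpha}\le p^{-\alpha}+(1-p)^{-\alpha}$, which you flag as the ``main obstacle'' but do not actually prove, and the hint you give (convexity of $x\mapsto x^{-\alpha}$) is not the right lever: since $A+B=2+(q-2)\lambda\ne 1=p+(1-p)$ there is no majorization, and convexity alone with $A<B$ is not enough. What does work, and is close to the ``control the discrepancies'' idea, is the nesting $1-p\le A<p\le B$. To see it: $A-(1-p)=p(q\lambda+1)-\lambda$ and $B-p$ are both affine in $p$, and each is nonnegative at the endpoints $p=p^*$ (where $A(p^*)=p^*>1-p^*$) and $p=1$ (where $A(1)=(q-1)\lambda+1\ge0$ and $B(1)=1-\lambda>1$). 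With the nesting in hand and $f(t)=t^{-\alpha}$ decreasing, $f(A)-f(p)=\int_A^p(-f')\le\int_{1-p}^B(-f')=f(1-p)-f(B)$, which is the desired rearrangement. The paper proves this same inequality by a cleaner trick that avoids the orderings entirely: the right-hand side (in their parametrization, $g_{q,\alpha,x}(\lambda)$) is \emph{convex in $\lambda$}, so it suffices to verify the inequality at $\lambda=0$ and $\lambda=-\frac1{q-1}$, which reduce to $x^{-\alpha}+(1-x)^{-\alpha}\ge 2$ and $q^\alpha\ge(q-1)^\alpha$. You should either spell out the nesting argument or adopt the paper's $\lambda$-convexity argument; as written, the last step is not a proof.
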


\begin{proof}
  We prove the ferromagnetic case $(\lambda \in [0,1])$ and antiferromagnetic case $(\lambda \in \left[-\frac 1{q-1},0\right])$ separately.

  \textbf{Ferromagnetic case $(\lambda\in [0,1])$.}
  In this case, we have
  \begin{align}
    \lambda x + \frac {1-\lambda} q \ge \frac xq
  \end{align}
  for all $x\in [0, 1]$.
  Therefore
  \begin{align}
    \left\langle\left(\lambda \pi + \frac {1-\lambda} q\right)^{-\alpha}, v^2\right\rangle \le \left\langle\left(\frac \pi q\right)^{-\alpha}, v^2\right\rangle = q^\alpha \left\langle \pi^{-\alpha}, v^2\right\rangle.
  \end{align}
  Note that we did not use the assumption that $v\in \mathbbm{1}^\perp$.

  \textbf{Antiferromagnetic case $\left(\lambda\in \left[-\frac 1{q-1},0\right]\right)$.}
  We would like to prove that
  \begin{align} \label{eqn:lemma-quad-form-ratio-diff}
    q^\alpha \left\langle \pi^{-\alpha}, v^2\right\rangle - \left\langle\left(\lambda \pi + \frac {1-\lambda} q\right)^{-\alpha}, v^2\right\rangle =: \left\langle b, v^2\right\rangle
  \end{align}
  is non-negative for all $\pi \in \cP([q])$, $v\in \mathbbm{1}^\perp$,
  where
  \begin{align}
    b := \left(\frac \pi q\right)^{-\alpha} - \left(\lambda \pi + \frac {1-\lambda} q\right)^{-\alpha}.
  \end{align}

  \textbf{Step 1.}
  We fix $\pi \in \cP([q])$ and determine the optimal $v\in \mathbbm{1}^\perp$ to plug in~\eqref{eqn:lemma-quad-form-ratio-diff}, reducing the statement to one involving $\pi$ only.

  If $\lambda x + \frac {1-\lambda}q \le \frac xq$ for some $x\in [0,1]$, then
  $x\ge \frac{1-\lambda}{1-\lambda q} \ge \frac{q}{2q-1} > \frac 12$.
  So there exists at most one $i$ such that
  $\lambda \pi_i + \frac {1-\lambda}q \le \frac {\pi_i} q$ (equivalently, $b_i \le 0$).
  We can wlog assume that $\pi_1 \ge \pi_2 \ge \cdots \ge \pi_q$.
  Then we know $\lambda \pi_i + \frac {1-\lambda}q > \frac {\pi_i} q$ (equivalently, $b_i > 0$) for all $2\le i\le q$.
  If $b_1\ge 0$, then $\left\langle b, v^2\right\rangle$ is non-negative for all $v$ and we are done.
  Therefore, it remains to consider the case $b_1 < 0$.

  If $v_1=0$, then $\left\langle b, v^2\right\rangle$ is non-negative.
  Therefore we can assume $v_1\ne 0$. By rescaling, we can assume that $v_1=1$.
  So $v_2+\cdots+v_q = -1$.
  Because $b_2,\ldots, b_q$ are all positive, to minimize $\sum_{2\le i\le q} b_i v_i^2$ under linear constraint $v_2+\cdots+v_q=-1$, the optimal choice is
  $v_i = -b_i^{-1} Z^{-1}$ for $2\le i\le q$ where $Z := \sum_{2\le i\le q} b_i^{-1}$.
  For this choice of $v$, we have
  \begin{align}
    \left\langle b, v^2\right\rangle = b_1 + \sum_{2\le i\le q} b_i \cdot (-b_i^{-1} Z^{-1})^2
    = b_1 + Z^{-1}.
  \end{align}
  Therefore, it remains to prove
  \begin{align}
    Z \le (-b_1)^{-1} \label{eqn:lemma-quad-form-ratio-after-step-1}
  \end{align}
  where $\pi_1\ge \cdots \ge \pi_q$, $b_1<0$, and $b_2,\ldots,b_q>0$.

  \textbf{Step 2.}
  We reduce to the case where $\pi_3=\cdots=\pi_q=0$.
  Note that
  \begin{align}
    Z = \sum_{2\le i\le q} b_i^{-1} =
    \sum_{2\le i\le q} \left(\left(\frac {\pi_i} q\right)^{-\alpha} - \left(\lambda \pi_i + \frac {1-\lambda} q\right)^{-\alpha}\right)^{-1}.
  \end{align}
  By Lemma~\ref{lem:quad-form-ratio-helper1}, for fixed $\pi_1$, the optimal choice (for maximizing $Z$) of
  $\pi_2,\ldots, \pi_q$ is $\pi_3 = \cdots = \pi_q=0$.

  Write $\pi_1 = 1-x$, $\pi_2 = x$ where $x\in \left[0, \frac {\lambda-\lambda q}{1-\lambda q}\right]$.
  Then
  \begin{align}
    b_1 &= \left(\frac {1-x} q\right)^{-\alpha} - \left(\lambda (1-x) + \frac {1-\lambda} q\right)^{-\alpha},\\
    Z &= \left(\left(\frac x q\right)^{-\alpha} - \left(\lambda x + \frac {1-\lambda} q\right)^{-\alpha}\right)^{-1}.
  \end{align}
  By rearranging terms in~\eqref{eqn:lemma-quad-form-ratio-after-step-1}, we reduce to proving
  \begin{align}
    \left(\frac x q\right)^{-\alpha} + \left(\frac {1-x} q\right)^{-\alpha} \ge \left(\lambda x + \frac {1-\lambda} q\right)^{-\alpha} + \left(\lambda (1-x) + \frac {1-\lambda} q\right)^{-\alpha}
    \label{eqn:lemma-quad-form-ratio-after-step-2}
  \end{align}
  for $q\in \bZ_{\ge 2}$, $\alpha \in \bR_{\ge 1}$, $\lambda \in \left[-\frac 1{q-1},0\right]$, $x\in \left[0, \frac{\lambda-\lambda q}{1-\lambda q}\right]$.

  \textbf{Step 3.}
  Let $g_{q,\alpha,x}(\lambda) := \left(\lambda x + \frac {1-\lambda} q\right)^{-\alpha} + \left(\lambda (1-x) + \frac {1-\lambda} q\right)^{-\alpha}$ be the RHS of~\eqref{eqn:lemma-quad-form-ratio-after-step-2}.
  Then
  \begin{align*}
    g_{q,\alpha,x}''(\lambda) &= \alpha(\alpha+1) \left(x-\frac 1q\right)^2 \left(\lambda x + \frac {1-\lambda} q\right)^{-\alpha-2}\\
    &+ \alpha(\alpha+1) \left(1-x-\frac 1q\right)^2 \left(\lambda (1-x) + \frac {1-\lambda} q\right)^{-\alpha-2} \\
    &>0.
  \end{align*}
  So $g_{q,\alpha,x}$ is convex in $\lambda$.
  Therefore it suffices to verify~\eqref{eqn:lemma-quad-form-ratio-after-step-2} for $\lambda=0$ and $\lambda=-\frac 1{q-1}$.
  When $\lambda=0$, we have
  \begin{align}
    g_{q,\alpha,x}(\lambda) = \left(\frac 1q\right)^{-\alpha} + \left(\frac 1q\right)^{-\alpha} \le \left(\frac x q\right)^{-\alpha} + \left(\frac {1-x} q\right)^{-\alpha}.
  \end{align}
  When $\lambda = -\frac 1{q-1}$, we have
  \begin{align}
    g_{q,\alpha,x}(\lambda) = \left(\frac {1-x}{q-1}\right)^{-\alpha} + \left(\frac x{q-1}\right)^{-\alpha} \le \left(\frac x q\right)^{-\alpha} + \left(\frac {1-x} q\right)^{-\alpha}.
  \end{align}
  This finishes the proof.
\end{proof}

\begin{lemma} \label{lem:quad-form-ratio-helper1}
  For $q\in \bZ_{\ge 2}$, $\alpha \in \bR_{\ge 1}$, $\lambda \in \left[-\frac 1{q-1},0\right]$, the function
  \begin{align}
    f(x) := \left(\left(\frac x q\right)^{-\alpha} - \left(\lambda x + \frac {1-\lambda} q\right)^{-\alpha}\right)^{-1}.
  \end{align}
  is convex in $x\in [0, \frac{1-\lambda}{1-\lambda q}]$.
\end{lemma}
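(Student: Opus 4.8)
The plan is to write $f = 1/g$ where
\[
  g(x) := \left(\tfrac{x}{q}\right)^{-\alpha} - \left(\lambda x + \tfrac{1-\lambda}{q}\right)^{-\alpha} = u(x) - v(x), \qquad u(x) := (x/q)^{-\alpha},\quad v(x) := \left(\lambda x + \tfrac{1-\lambda}{q}\right)^{-\alpha},
\]
and to prove convexity of $f$ by showing $f'' \ge 0$. First I would record that $g > 0$ on the open interval $\left(0, \tfrac{1-\lambda}{1-\lambda q}\right)$, so that $f$ is well-defined there and the base $\lambda x + \tfrac{1-\lambda}{q}$ appearing in $v$ stays positive: since $1 - \lambda q > 0$, the quantity $\lambda x + \tfrac{1-\lambda}{q}$ decreases from $\tfrac{1-\lambda}{q} > 0$ at $x = 0$ down to a positive value at $x = \tfrac{1-\lambda}{1-\lambda q}$, and the equation $\lambda x + \tfrac{1-\lambda}{q} = \tfrac{x}{q}$ has its unique root exactly at $x = \tfrac{1-\lambda}{1-\lambda q}$; because $t \mapsto t^{-\alpha}$ is decreasing on $(0,\infty)$, this gives $g(x) > 0$ for $x < \tfrac{1-\lambda}{1-\lambda q}$ and $g = 0$ at the right endpoint. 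With the conventions $f(0) = 0$ and $f\!\left(\tfrac{1-\lambda}{1-\lambda q}\right) = +\infty$, the function $f$ is continuous on the closed interval.

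The heart of the argument is the formula $f'' = \dfrac{2(g')^2 - g g''}{g^3}$, valid where $g \ne 0$; since $g > 0$ in the interior it suffices to prove $2(g')^2 - g g'' \ge 0$. Expanding $g = u - v$ yields the identity
\[
  2(g')^2 - g g'' = \bigl(2(u')^2 - u u''\bigr) + \bigl(2(v')^2 - v v''\bigr) + \bigl(-4 u' v' + u v'' + v u''\bigr),
\]
and I would check that all three parentheses are nonnegative. For any $a,b$ with $ax+b>0$, the function $h(x) = (ax+b)^{-\alpha}$ satisfies $2(h')^2 - h h'' = \alpha(\alpha-1)\,a^2 (ax+b)^{-2\alpha-2} \ge 0$, using $\alpha \ge 1$; applying this to $u$ (with $a = 1/q$, $b = 0$) and to $v$ (with $a = \lambda$, $b = \tfrac{1-\lambda}{q}$) handles the first two parentheses. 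For the third: $u > 0$ and $u'' > 0$; $v > 0$ and $v'' = \alpha(\alpha+1)\lambda^2\left(\lambda x + \tfrac{1-\lambda}{q}\right)^{-\alpha-2} \ge 0$; hence $u v'' \ge 0$ and $v u'' \ge 0$; finally $u' < 0$ while $v' = -\alpha\lambda\left(\lambda x + \tfrac{1-\lambda}{q}\right)^{-\alpha-1} \ge 0$ because $\lambda \le 0$, so $-4 u' v' \ge 0$. Therefore $2(g')^2 - g g'' \ge 0$, whence $f'' \ge 0$ on the interior, and by continuity $f$ is convex on $\left[0, \tfrac{1-\lambda}{1-\lambda q}\right]$.

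I do not anticipate a genuine obstacle: once the identity for $2(g')^2 - g g''$ is written down, the entire content is that $\alpha \ge 1$ makes the two "diagonal'' terms nonnegative and $\lambda \le 0$ makes the three "cross'' terms nonnegative. The only points needing care are that the base $\lambda x + \tfrac{1-\lambda}{q}$ does not vanish on the interval (handled in the first step, using $1-\lambda q>0$ and the location of the root of $g$) and that $g$ is strictly positive in the interior so that dividing by $g^3$ is legitimate; the endpoint behaviour ($f=0$ at $0$, $f=+\infty$ at $\tfrac{1-\lambda}{1-\lambda q}$) is then just a continuity remark.
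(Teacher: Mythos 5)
Your argument is correct and reaches the same identity $f'' = (2(g')^2 - g g'')/g^3$ as the paper, but the way you show $2(g')^2 - g g'' \ge 0$ is genuinely different. You split additively along $g = u - v$ into two ``diagonal'' pieces $2(u')^2 - u u''$ and $2(v')^2 - v v''$ and one ``cross'' piece $-4u'v' + u v'' + v u''$; the diagonal pieces are nonnegative because $2(h')^2 - h h'' = \alpha(\alpha-1) a^2 (ax+b)^{-2\alpha-2}$ for a power function $h = (ax+b)^{-\alpha}$ with $\alpha \ge 1$, and the cross piece is nonnegative term-by-term precisely because $\lambda \le 0$ makes $v' \ge 0$ while $u' < 0$. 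The paper instead substitutes $u=x/q$, $v=\lambda x + (1-\lambda)/q$, $c = q\lambda$, uses $\alpha \ge 1$ once to replace $2\alpha^2$ by $\alpha(\alpha+1)$, and then observes that what remains factors as a perfect square,
\begin{align}
  (u^{-\alpha-1} - c v^{-\alpha-1})^2 - (u^{-\alpha}-v^{-\alpha})(u^{-\alpha-2}-c^2 v^{-\alpha-2}) = u^{-\alpha}v^{-\alpha}(u^{-1}-cv^{-1})^2,
\end{align}
which is nonnegative irrespective of the sign of $c$. So the paper's Hessian inequality does not actually use $\lambda \le 0$ at this stage, whereas yours leans on it in the cross term; in exchange your decomposition makes the roles of $\alpha \ge 1$ (diagonal) and $\lambda \le 0$ (cross) transparent, and you also add the explicit check that $g>0$ in the interior and the endpoint conventions ($f(0)=0$, $f=+\infty$ at the right endpoint), which the paper leaves implicit. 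Both routes are valid and of comparable length.
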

\begin{proof}
  Let $g(x) = \frac 1{f(x)}$.
  Then
  \begin{align}
    f''(x) = \frac{2g'(x)^2-g(x)g''(x)}{g(x)^3}.
  \end{align}
  It suffices to prove that
  \begin{align}
    2g'(x)^2-g(x)g''(x)\ge 0.
  \end{align}
  We have
  \begin{align}
    2g'(x)^2-g(x)g''(x)
    =&~2\alpha^2 \left(q^{-1}\left(\frac x q\right)^{-\alpha-1}-\lambda  \left(\lambda x + \frac {1-\lambda} q\right)^{-\alpha-1}\right)^2\\
    \nonumber  &-\left(\left(\frac x q\right)^{-\alpha} - \left(\lambda x + \frac {1-\lambda} q\right)^{-\alpha}\right) \\
    \nonumber &\cdot
    \alpha(\alpha+1)\left(q^{-2}\left(\frac x q\right)^{-\alpha-2}-\lambda^2 \left(\lambda x + \frac {1-\lambda} q\right)^{-\alpha-2}\right).
  \end{align}
  Write $u = \frac xq$, $v = \lambda x + \frac {1-\lambda}q$, $c = q\lambda$.
  Then we have $0\le u\le v\le 1$ and $-\frac{q}{q-1}\le c\le 0$.
  It suffices to prove
  \begin{align}
    2\alpha^2 (u^{-\alpha-1} - c v^{-\alpha-1})^2 - \alpha (\alpha+1)(u^{-\alpha} - v^{-\alpha})(u^{-\alpha-2}-c^2 v^{-\alpha-2}) \ge 0.
  \end{align}
  We have
  \begin{align}
    &~2\alpha^2 (u^{-\alpha-1} - c v^{-\alpha-1})^2 - \alpha (\alpha+1)(u^{-\alpha} - v^{-\alpha})(u^{-\alpha-2}-c^2 v^{-\alpha-2})\\
    \nonumber \ge&~\alpha (\alpha+1) ((u^{-\alpha-1} - c v^{-\alpha-1})^2 - (u^{-\alpha} - v^{-\alpha})(u^{-\alpha-2}-c^2 v^{-\alpha-2})) \\
    \nonumber =&~\alpha(\alpha+1)u^{-\alpha} v^{-\alpha}(u^{-1}-cv^{-1})^2\\
    \nonumber \ge &~0.
  \end{align}
  This finishes the proof.
\end{proof}

\section{Proofs in Section~\ref{sec:non-bi}} \label{sec:proof-non-bi}
\subsection{Local subadditivity} \label{sec:proof-non-bi:subadd}
In this section we prove Lemma~\ref{lem:subadd}.
We first prove the special case of FSCs.
\begin{lemma}\label{lem:subadd-fsc}
  Fix $q\in \bZ_{\ge 2}$. For any $\epsilon>0$ and $\pi,\pi'\in \cP([q])/\Aut([q])$ with $C_{\chi^2}(\FSC_{\pi'}) \le \epsilon$, we have
  \begin{align}
    C_{\chi^2} (\FSC_\pi \star \FSC_{\pi'}) \le (1+O_q(\epsilon^{1/2})) (C_{\chi^2}(\FSC_\pi) + C_{\chi^2}(\FSC_{\pi'})).
  \end{align}
\end{lemma}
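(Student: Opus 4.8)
The plan is to start from an exact formula for the post-convolution $\chi^2$-capacity and Taylor-expand around the uniform distribution, exploiting that $C_{\chi^2}(\FSC_{\pi'})\le\epsilon$ forces $\pi'$ to be pointwise $O_q(\sqrt\epsilon)$-close to uniform. Recalling $1+C_{\chi^2}(\FSC_\pi)=q\|\pi\|_2^2$, I would first use~\eqref{eqn:star-formula} to write, for fixed representatives $\pi,\pi'\in\cP([q])$,
\begin{align}
  1+C_{\chi^2}(\FSC_\pi\star\FSC_{\pi'})=\frac{q}{(q-1)!}\sum_{\tau\in\Aut([q])}\frac{N_\tau}{Z_\tau},\qquad N_\tau:=\sum_{i\in[q]}\pi_i^2\,(\pi'_{\tau(i)})^2,\qquad Z_\tau:=\sum_{j\in[q]}\pi_j\pi'_{\tau(j)}.
\end{align}
Then I would substitute $\pi=u+\gamma$ and $\pi'=u+\delta$ with $u=\tfrac1q\mathbbm{1}$ and $\gamma,\delta\in\mathbbm{1}^\perp$, so that $a:=C_{\chi^2}(\FSC_\pi)=q\|\gamma\|_2^2$ and $b:=C_{\chi^2}(\FSC_{\pi'})=q\|\delta\|_2^2\le\epsilon$, hence $\|\delta\|_\infty\le\sqrt{\epsilon/q}$. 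A direct computation gives $Z_\tau=\tfrac1q+e_\tau$ with $e_\tau=\sum_j\gamma_j\delta_{\tau(j)}$ (the contributions of $u$ vanish because $\gamma,\delta$ are mean-zero) and $|e_\tau|\le\|\gamma\|_2\|\delta\|_2=\sqrt{ab}/q$. For $\epsilon$ below a threshold $\epsilon_0(q)$ this is at most $\tfrac1{2q}$, so the elementary bound $\tfrac1{1/q+e}\le q-q^2e+2q^3e^2$ applies and yields
\begin{align}
  1+C_{\chi^2}(\FSC_\pi\star\FSC_{\pi'})\le\frac{q}{(q-1)!}\sum_\tau\bigl(qN_\tau-q^2N_\tau e_\tau+2q^3N_\tau e_\tau^2\bigr).
\end{align}
The complementary range $\epsilon\ge\epsilon_0(q)$ is handled by a crude universal bound and is irrelevant for the applications, where $\epsilon\to0$.

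Next I would estimate the three resulting sums. The first is the multiplicative main term: since $\tfrac1{(q-1)!}\sum_\tau(\pi'_{\tau(i)})^2=\|\pi'\|_2^2$ for every $i$, it equals $q^2\|\pi\|_2^2\|\pi'\|_2^2=(1+a)(1+b)$. The decisive term is the linear one, $-\tfrac{q^3}{(q-1)!}\sum_\tau N_\tau e_\tau$: symmetrizing over $\tau$, separating the diagonal $i=j$ from $i\ne j$ and using $\sum_i\gamma_i=\sum_i\delta_i=0$, I expect the identity
\begin{align}
  \frac1{q!}\sum_\tau N_\tau e_\tau=\frac{AB}{q-1},\qquad A:=\sum_k(\pi'_k)^2\delta_k,\qquad B:=\sum_k\pi_k^2\gamma_k.
\end{align}
The gain is that in $A$ the expansion $(\pi'_k)^2=(u_k+\delta_k)^2$ makes the $u_k^2$-part annihilate $\sum_k\delta_k=0$, leaving $A=\tfrac2q\|\delta\|_2^2+\sum_k\delta_k^3$, so $|A|=O_q(b)$; likewise $|B|=O_q(a)$. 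Hence the linear sum is $O_q(ab)$, and the quadratic remainder is also $O_q(ab)$ using $N_\tau\le\|\pi'\|_\infty^2\|\pi\|_2^2=O_q(1)$ and $e_\tau^2\le\|\gamma\|_2^2\|\delta\|_2^2$. Since $b\le\epsilon$ and $ab\le b(a+b)\le\epsilon(a+b)$, all error terms are $O_q(\epsilon)\,(a+b)$, and combining with $(1+a)(1+b)=1+(a+b)+ab\le1+(1+\epsilon)(a+b)$ gives $C_{\chi^2}(\FSC_\pi\star\FSC_{\pi'})\le(1+O_q(\epsilon))(a+b)$, which is stronger than the claimed $(1+O_q(\epsilon^{1/2}))(a+b)$.

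The main obstacle is exactly the evaluation $\tfrac1{q!}\sum_\tau N_\tau e_\tau=\tfrac{AB}{q-1}$ together with the observation that $A$ and $B$ are each \emph{quadratically} small in the perturbations $\delta$ and $\gamma$. A naive termwise estimate $\bigl|\sum_\tau N_\tau e_\tau\bigr|\le\max_\tau|e_\tau|\cdot\sum_\tau N_\tau$ would only give an $O_q(\sqrt{ab})$ error, which cannot beat $a+b$ when $a$ and $b$ are comparable; the real work is to extract the cancellation produced by averaging over $\Aut([q])$ rather than bounding each summand.
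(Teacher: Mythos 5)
Your proposal is correct, and it is the same basic Taylor-expansion strategy as the paper's proof, but it organizes the computation differently and actually yields a sharper bound. The paper keeps $\pi$ opaque and perturbs only $\pi'$ (writing $\pi'_i=(1+\epsilon_i)/q$), multiplies out the numerator and the truncated geometric series for $1/Z_\tau$ into nine labeled terms, and then evaluates or bounds each one separately, discarding the sign-definite $\circnum{2}\circnum{4}$ term and bounding $\circnum{2}\circnum{5},\circnum{3}\circnum{4},\circnum{3}\circnum{5}$ crudely as $O_q(\epsilon^{3/2})$; since it normalizes $b=\epsilon$, this additive error becomes the $(1+O_q(\epsilon^{1/2}))$ multiplicative error. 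You instead center \emph{both} representatives around the uniform vector, observe that $Z_\tau=\tfrac1q+e_\tau$ with $e_\tau$ bilinear in $(\gamma,\delta)$, and compress the entire first-order contribution into the single exact identity $\tfrac1{q!}\sum_\tau N_\tau e_\tau=\tfrac{AB}{q-1}$ with $A=\sum_k(\pi'_k)^2\delta_k$, $B=\sum_k\pi_k^2\gamma_k$. I checked this identity directly: separating the $i=j$ and $i\ne j$ cases in $\sum_\tau\sum_{i,j}\pi_i^2(\pi'_{\tau(i)})^2\gamma_j\delta_{\tau(j)}$ and using $\sum_j\gamma_j=\sum_i\delta_i=0$ gives $\tfrac{AB[(q-1)!+(q-2)!]}{q!}=\tfrac{AB}{q-1}$, as you claim. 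The decisive point, which in the paper's accounting appears as the vanishing of $\circnum{1}\circnum{4}$, is that $A$ and $B$ each lose their would-be leading linear part: $A=\tfrac2q\|\delta\|_2^2+\sum_k\delta_k^3=O_q(b)$ and $B=\tfrac2q\|\gamma\|_2^2+\sum_k\gamma_k^3=O_q(a)$ (the latter using only $\|\gamma\|_\infty\le1$, so it holds without any smallness of $a$). Hence the linear sum is $O_q(ab)$, and since the quadratic remainder is also trivially $O_q(ab)$, you arrive at $C_{\chi^2}(\FSC_\pi\star\FSC_{\pi'})\le a+b+O_q(ab)\le(1+O_q(\epsilon))(a+b)$, which is strictly stronger than the lemma's stated $(1+O_q(\epsilon^{1/2}))(a+b)$; fed through the proof of Lemma~\ref{lem:subadd} this would improve the exponent $1/5$ there to $1/3$.

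One point you should spell out: the ``crude universal bound'' for the complementary regime. The elementary upper bound on $1/(1/q+e)$ needs $|qe_\tau|\le\tfrac12$, i.e.\ $\sqrt{ab}\le\tfrac12$, which is a condition on the pair $(a,b)$, not on $\epsilon$ alone. But when $ab>\tfrac14$ the lemma is vacuous for another reason: by AM--GM $a+b\ge2\sqrt{ab}>1$, while $C_{\chi^2}(\FSC_\pi\star\FSC_{\pi'})\le q-1$ always and $b>\tfrac1{4a}\ge\tfrac1{4(q-1)}$ forces $\epsilon>\tfrac1{4(q-1)}$, so choosing $C_q\ge2(q-2)\sqrt{q-1}$ already gives $(1+C_q\epsilon^{1/2})(a+b)\ge q-1$. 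So the dichotomy should be on $\sqrt{ab}\le\tfrac12$ versus $>\tfrac12$, not on $\epsilon\lessgtr\epsilon_0(q)$, but once phrased this way the proof closes cleanly.
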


\begin{proof}[Proof of Lemma~\ref{lem:subadd} given Lemma~\ref{lem:subadd-fsc}]
  Let $\pi_P$ (resp.~$\pi_Q$) be the $\pi$-component of $P$ (resp.~$Q$).

  Because the constant does not depend on $Q$, it suffices to prove the case where $Q$ is an FSC, i.e., $\pi_Q$ is fixed.

  If $C_{\chi^2}(Q) \le \epsilon^{2/5}$, then by Lemma~\ref{lem:subadd-fsc}, we have
  \begin{align}
    \nonumber C_{\chi^2}(P\star Q) &= \bE_{\pi_P} C_{\chi^2}(\FSC_{\pi_P} \star \FSC_{\pi_Q})\\
    \nonumber &  \le \bE_{\pi_P}\left[(1+O_q(\epsilon^{1/5})) (C_{\chi^2}(\FSC_{\pi_P}) + C_{\chi^2}(\FSC_{\pi_Q})) \right]\\
    & = (1+O_q(\epsilon^{1/5})) (C_{\chi^2}(P) + C_{\chi^2}(Q)).
  \end{align}

  In the following we assume that $C_{\chi^2}(Q) > \epsilon^{2/5}$.
  By Markov's inequality, we have
  \begin{align}
    \bP\left[ C_{\chi^2}(\FSC_{\pi_P}) \ge \epsilon^{2/5} \right] \le \epsilon^{3/5}.
    \label{eqn:thm-subadd-markov}
  \end{align}
  Write
  \begin{align}
    \nonumber C_{\chi^2}(P\star Q) =&~ \bE_{\pi_P} \left[C_{\chi^2}(\FSC_{\pi_P} \star \FSC_{\pi_Q}) \mathbbm{1}\{C_{\chi^2}(\FSC_{\pi_P}) \le \epsilon^{2/5}\}\right] \\
    \nonumber  &~+ \bE_{\pi_P} \left[C_{\chi^2}(\FSC_{\pi_P} \star \FSC_{\pi_Q}) \mathbbm{1}\{C_{\chi^2}(\FSC_{\pi_P}) > \epsilon^{2/5}\}\right] \\
    =:&~ L+R.
  \end{align}
  For $L$, by Lemma~\ref{lem:subadd-fsc}, we have
  \begin{align}
    \nonumber L &\le (1+O_q(\epsilon^{1/5})) \bE_{\pi_P} \left[(C_{\chi^2}(\FSC_{\pi_P}) + C_{\chi^2}(\FSC_{\pi_Q})) \mathbbm{1}\{C_{\chi^2}(\FSC_{\pi_P}) \le \epsilon^{2/5}\}\right] \\
    &\le (1+O_q(\epsilon^{1/5})) (C_{\chi^2}(P) + C_{\chi^2}(Q)).
    \label{eqn:thm-subadd-L-bound}
  \end{align}

  For $R$, by \eqref{eqn:thm-subadd-markov} and the assumption that $C_{\chi^2}(Q) > \epsilon^{2/5}$, we have
  \begin{align}
    \bE_{\pi_P} \left[C_{\chi^2}(\FSC_{\pi_P} \star \FSC_{\pi_Q}) \mathbbm{1}\{C_{\chi^2}(\FSC_{\pi_P}) > \epsilon^{2/5}\}\right] \le O_q(\epsilon^{3/5}) \le O_q(\epsilon^{1/5}) C_{\chi^2}(Q).
    \label{eqn:thm-subadd-R-bound}
  \end{align}

  Combining \eqref{eqn:thm-subadd-L-bound} and \eqref{eqn:thm-subadd-R-bound} we finish the proof.
\end{proof}

\begin{proof}[Proof of Lemma~\ref{lem:subadd-fsc}]
  % Note that statement of Lemma~\ref{lem:subadd-fsc} is monotone in $\epsilon$. So we can wlog assume that $C_{\chi^2}(\FSC_{\pi'}) = \epsilon$.
  Because the statement is monotone in $\epsilon$, we can wlog assume that $C_{\chi^2}(\FSC_{\pi'}) = \epsilon$.

  % Throughout this proof, statements like ``$f\le Cg$'' should be understood as ``there exists a constant $C_q$ depending only on $q$ such that $f\le C_q g$''.

  Let $\pi'_i = \frac{1+\epsilon_i} q$. Then
  \begin{align}
    \sum_i \epsilon_i = 0, \quad C_{\chi^2}(\FSC_{\pi'}) = \frac 1q \sum_i \epsilon_i^2 = \epsilon.
  \end{align}
  By \eqref{eqn:star-formula},
  \begin{align}
    \nonumber C_{\chi^2}(\FSC_{\pi} \star \FSC_{\pi'}) &= q \sum_{\tau \in S_q} \frac 1{(q-1)!} \cdot \frac{\sum_i \pi_i^2 \pi_{\tau(i)}^{\prime 2}}{\sum_i \pi_i \pi'_{\tau(i)}} -1\\
    &= q \sum_{\tau \in S_q} \frac 1{q!} \cdot \frac{\sum_i \pi_i^2 (1+\epsilon_{\tau(i)})^2}{1+\sum_i \pi_i \epsilon_{\tau(i)}} -1.
    \label{eqn:lem-subadd-fsc-1}
  \end{align}

  Recall the following basic equality.
  \begin{align}
    \frac 1{1+x} = 1-x+x^2 - \frac{x^3}{1+x}. \label{eqn:lem-subadd-fsc-basic}
  \end{align}
  We apply \eqref{eqn:lem-subadd-fsc-basic} with $x=\sum_i \pi_i \epsilon_{\tau(i)}$.
  Because $|x| = O_q(\epsilon^{1/2})$, we have
  \begin{align}
    \left|\frac{x^3}{1+x}\right| = O_q(\epsilon^{3/2}).
  \end{align}

  So
  \begin{align} %\label{eqn:lem-subadd-fsc-2}
    & \frac{\sum_i \pi_i^2 (1+\epsilon_{\tau(i)})^2}{1+\sum_i \pi_i \epsilon_{\tau(i)}} \nonumber \\
    &= \left(\sum_i \pi_i^2 (1+\epsilon_{\tau(i)})^2\right)\left(1-x+x^2 - \frac{x^3}{1+x}\right) \nonumber \\
    &\le \left(\sum_i \pi_i^2 (1+\epsilon_{\tau(i)})^2\right)\left(1-\sum_i \pi_i \epsilon_{\tau(i)}+\left(\sum_i \pi_i \epsilon_{\tau(i)}\right)^2 + O_q(\epsilon^{3/2})\right) \nonumber \\
    &\le \left(\sum_i \pi_i^2 (1+\epsilon_{\tau(i)})^2\right)\left(1-\sum_i \pi_i \epsilon_{\tau(i)}+\left(\sum_i \pi_i \epsilon_{\tau(i)}\right)^2\right) + O_q(\epsilon^{3/2}), \label{eqn:lem-subadd-fsc-3}
  \end{align}
  where the last step is by
  \begin{align}
    \sum_i \pi_i^2 (1+\epsilon_{\tau(i)})^2 = O(1).
  \end{align}

  Let us expand the first summand in \eqref{eqn:lem-subadd-fsc-3}.
  \begin{align}
    \nonumber &\left(\sum_i \pi_i^2 (1+\epsilon_{\tau(i)})^2\right)\left(1-\sum_i \pi_i \epsilon_{\tau(i)}+\left(\sum_i \pi_i \epsilon_{\tau(i)}\right)^2\right)\\
    &= \left(\sum_i \pi_i^2 + 2 \sum_i \pi_i^2 \epsilon_{\tau(i)} + \sum_i \pi_i^2 \epsilon_{\tau(i)}^2\right)\left(1-\sum_i \pi_i \epsilon_{\tau(i)}+\left(\sum_i \pi_i \epsilon_{\tau(i)}\right)^2\right)\nonumber \\
    &=: (\circnum{1} + \circnum{2} + \circnum{3})(1 - \circnum{4} + \circnum{5}) \nonumber\\
    &= \circnum{1}-\circnum{1}\circnum{4}+\circnum{1}\circnum{5}
    +\circnum{2}-\circnum{2}\circnum{4}+\circnum{2}\circnum{5}
    +\circnum{3}-\circnum{3}\circnum{4}+\circnum{3}\circnum{5}.
    \label{eqn:lem-subadd-fsc-4}
    % &= \left(\sum_i \pi_i^2\right)
    % + \left(2 \sum_i \pi_i^2 \epsilon_{\tau(i)}\right)
    % -\left(\sum_i \pi_i^2\right)\left(\sum_i \pi_i \epsilon_{\tau(i)}\right) \nonumber \\
    % &+\left(\sum_i \pi_i^2 \epsilon_{\tau(i)}^2\right)
    % -\left(2 \sum_i \pi_i^2 \epsilon_{\tau(i)}\right)\left(\sum_i \pi_i \epsilon_{\tau(i)}\right)
    % +\left(\sum_i \pi_i^2\right)\left(\sum_i \pi_i \epsilon_{\tau(i)}\right)^2 \nonumber \\
    % &+ C \epsilon^{3/2}. \nonumber
  \end{align}

  Note that we have the following loose bounds:
  \begin{align}
    \circnum{1} = O_q(1), \quad |\circnum{2}| = O_q(\epsilon^{1/2}), \quad \circnum{3} \le O_q(\epsilon), \quad |\circnum{4}| \le O_q(\epsilon^{1/2}), \quad \circnum{5} \le O_q(\epsilon). \label{eqn:lem-subadd-fsc-loose}
  \end{align}

  Let us study every term under $\sum_{\tau\in S_q} \frac 1{q!}$.
  For simplicity, write
  \begin{align}
    A = \sum_i \pi_i^2 = \frac 1q\left(1+C_{\chi^2}(\FSC_{\pi})\right).
  \end{align}

  \circnum{1}:
  \begin{align} \label{eqn:lem-subadd-fsc-c1}
    \sum_{\tau\in S_q} \frac 1{q!} \cdot \circnum{1} = \sum_{\tau\in S_q} \frac 1{q!} \sum_i \pi_i^2 = A.
  \end{align}

  \circnum{1}\circnum{4}:
  \begin{align} \label{eqn:lem-subadd-fsc-c1c4}
    \sum_{\tau\in S_q} \frac 1{q!} \cdot \circnum{1}\circnum{4} = \sum_{\tau\in S_q} \frac 1{q!} \left(\sum_i \pi_i^2\right) \left(\sum_i \pi_i \epsilon_{\tau(i)}\right)= 0.
  \end{align}

  \circnum{1}\circnum{5}:
  \begin{align}
    \nonumber \sum_{\tau\in S_q} \frac 1{q!} \cdot \circnum{1}\circnum{5}
    &=\sum_{\tau\in S_q} \frac 1{q!} \left(\sum_i \pi_i^2\right)\left(\sum_i \pi_i \epsilon_{\tau(i)}\right)^2 \\
    &=A \sum_{i,j} \sum_{\tau\in S_q} \frac 1{q!} \cdot \pi_i \pi_j \epsilon_{\tau(i)} \epsilon_{\tau(j)} \nonumber \\
    &=A \sum_{i,j} \epsilon_i \epsilon_j \sum_{\tau\in S_q} \frac 1{q!} \cdot \pi_{\tau(i)} \pi_{\tau(j)} \nonumber \\
    &=A \sum_{i,j} \epsilon_i \epsilon_j \left\{
      \begin{array}[]{ll}
        \frac 1q \sum_k \pi_k^2 & i=j,\\
        \frac 1{q(q-1)} \left(1-\sum_k \pi_k^2\right) & i\ne j,
      \end{array}
    \right. \nonumber \\
    &= A \left(\sum_i \epsilon_i^2 \cdot \frac 1q \sum_k \pi_k^2 + \sum_i \epsilon_i (-\epsilon_i)\cdot \frac 1{q(q-1)} \left(1-\sum_k \pi_k^2\right) \right) \nonumber \\
    & = A \cdot \frac 1q \sum_i \epsilon_i^2 \left(\sum_k \pi_k^2 - \frac 1{q-1} \left(1-\sum_k \pi_k^2\right) \right) \nonumber \\
    &= \epsilon A \cdot \frac{q A-1}{q-1}.
    \label{eqn:lem-subadd-fsc-c1c5}
  \end{align}

  \circnum{2}:
  \begin{align} \label{eqn:lem-subadd-fsc-c2}
    \sum_{\tau\in S_q} \frac 1{q!} \cdot \circnum{2} = \sum_{\tau\in S_q} \frac 1{q!} \cdot 2 \sum_i \pi_i^2 \epsilon_{\tau(i)} = 0.
  \end{align}

  \circnum{2}\circnum{4}:
  \begin{align}
    \nonumber \sum_{\tau\in S_q} \frac 1{q!} \cdot \circnum{2}\circnum{4}
    &=\sum_{\tau\in S_q} \frac 1{q!} \left(2 \sum_i \pi_i^2 \epsilon_{\tau(i)}\right)\left(\sum_i \pi_i \epsilon_{\tau(i)}\right) \\
    &= \sum_{i,j} \sum_{\tau\in S_q} \frac 1{q!} \cdot 2 \pi_i^2 \pi_j \epsilon_{\tau(i)} \epsilon_{\tau(j)} \nonumber \\
    &=\sum_{i,j} 2 \epsilon_i \epsilon_j \sum_{\tau\in S_q} \frac 1{q!} \cdot \pi_{\tau(i)}^2 \pi_{\tau(j)} \nonumber \\
    &=\sum_{i,j} 2 \epsilon_i \epsilon_j \left\{
      \begin{array}{ll}
        \frac 1q \sum_k \pi_k^3 & i=j,\\
        \frac 1{q(q-1)} \sum_k \pi_k^2(1-\pi_k) & i\ne j,
      \end{array}\right. \nonumber \\
    &=2\sum_i \epsilon_i^2 \cdot \frac 1q \sum_k \pi_k^3 + 2\sum_i \epsilon_i(-\epsilon_i) \cdot \frac 1{q(q-1)} \sum_k \pi_k^2(1-\pi_k) \nonumber \\
    &=2 \cdot \frac 1q \sum_i \epsilon_i^2 \left(\sum_k \pi_k^3 - \frac 1{q-1} \sum_k \pi_k^2(1-\pi_k)\right) \nonumber \\
    &=2 \epsilon \cdot  \frac 1{q-1}\left(q\sum_k \pi_k^3 - \sum_k \pi_k^2\right) \ge 0,
    \label{eqn:lem-subadd-fsc-c2c4}
  \end{align}
  where the last step is by
  \begin{align}
    \sum_k \pi_k^3= \left(\sum_k \pi_k^3\right)\left(\sum_k \pi_k\right) \ge \left(\sum_k \pi_k^2\right)^2 \ge \frac 1q \left(\sum_k \pi_k^2\right).
  \end{align}

  \circnum{2}\circnum{5}:
  By \eqref{eqn:lem-subadd-fsc-loose},
  \begin{align} \label{eqn:lem-subadd-fsc-c2c5}
    \left|\sum_{\tau\in S_q} \frac 1{q!} \cdot \circnum{2}\circnum{5} \right| = O_q(\epsilon^{3/2}).
  \end{align}

  \circnum{3}:
  \begin{align} \label{eqn:lem-subadd-fsc-c3}
    \sum_{\tau\in S_q} \frac 1{q!} \cdot \circnum{3} = \sum_{\tau\in S_q} \frac 1{q!} \left(\sum_i \pi_i^2 \epsilon_{\tau(i)}^2\right) = \sum_i \pi_i^2 \cdot \frac 1q \sum_j \epsilon_j^2 = \epsilon A.
  \end{align}

  \circnum{3}\circnum{4}:
  By \eqref{eqn:lem-subadd-fsc-loose},
  \begin{align} \label{eqn:lem-subadd-fsc-c3c4}
    \left|\sum_{\tau\in S_q} \frac 1{q!} \cdot \circnum{3}\circnum{4}\right| = O_q(\epsilon^{3/2}).
  \end{align}

  \circnum{3}\circnum{5}:
  By \eqref{eqn:lem-subadd-fsc-loose},
  \begin{align} \label{eqn:lem-subadd-fsc-c3c5}
    \left|\sum_{\tau\in S_q} \frac 1{q!} \cdot \circnum{3}\circnum{5}\right| = O_q(\epsilon^2).
  \end{align}

  Plugging \eqref{eqn:lem-subadd-fsc-c1} - \eqref{eqn:lem-subadd-fsc-c3c5} into \eqref{eqn:lem-subadd-fsc-4}\eqref{eqn:lem-subadd-fsc-3}\eqref{eqn:lem-subadd-fsc-1}, we get
  \begin{align}
    \nonumber C_{\chi^2}(\FSC_{\pi} \star \FSC_{\pi'})
    &\le q\left(A+\epsilon A \cdot \frac{qA-1}{q-1} + \epsilon A + O_q(\epsilon^{3/2})\right)-1\\
    \nonumber &=(qA-1)\left(1+\frac{q\epsilon A}{q-1} + \epsilon\right) + \epsilon + O_q(\epsilon^{3/2})\\
    &=\left(1+\frac{q\epsilon A}{q-1} + \epsilon\right) C_{\chi^2}(\FSC_{\pi}) + (1+O_q(\epsilon^{1/2})) C_{\chi^2}(\FSC_{\pi'}) \nonumber \\
    &=(1+O_q(\epsilon^{1/2})) (C_{\chi^2}(\FSC_{\pi}) + C_{\chi^2}(\FSC_{\pi'})),
    \label{eqn:lem-subadd-fsc-final}
  \end{align}
  where the last step is by $A\le 1$.
  % Because $A \le 1$, for $\epsilon$ small enough (it suffices to take $\epsilon = c \delta^2$ for a small enough constant $c=c(q)>0$), we finish the proof.
\end{proof}

\subsection{Proof of Prop.~\ref{prop:no-robust-recon}} \label{sec:proof-non-bi:no-robust-recon}
\begin{proof}[Proof of Prop.~\ref{prop:no-robust-recon}]
  We treat the regular tree case and the Poisson tree case uniformly.
  Let $D$ be the offspring distribution, i.e., $D=\mathbbm{1}_d$ for the regular case and $D=\Pois(d)$ for the Poisson case.

  Let $C_1>0$ be the constant in Lemma~\ref{lem:subadd}, i.e., for all $\epsilon>0$, FMS channels $P: \cX\to \cY$, $Q:\cX\to \cZ$ with $C_{\chi^2}(P)\le \epsilon$, we have
  \begin{align} \label{eqn:proof-thm-no-robust-recon-local-subadd}
    C_{\chi^2}(P \star Q) \le \left(1+C_1 \epsilon^{1/5}\right) (C_{\chi^2}(P) + C_{\chi^2}(Q)).
  \end{align}
  Let $C_2>0$ be such that $C_{\chi^2}(P) \le C_2$ for all FMS channels $P$.

  Take $c_1,c_2>0$ such that
  \begin{align} \label{eqn:proof-thm-no-robust-recon-c1-c2}
    \exp(c_1) d\lambda^2 + c_2 < 1.
  \end{align}

  Take $b_0$ such that for all $t\ge b_0$, we have
  \begin{align} \label{eqn:proof-thm-no-robust-recon-tail-bound}
    t^5 \bP_{b\sim D}[b>t] < c_2 C_2^{-1} (c_1 C_1^{-1})^5.
  \end{align}
  For the regular case we can take $b_0=d$. For the Poisson case the existence of such $b_0$ follows from Poisson tail behavior.

  For $\epsilon>0$, define
  \begin{align}
    b(\epsilon) := c_1 C_1^{-1} \epsilon^{-1/5}.
  \end{align}
  Take $\epsilon_0>0$ such that $b(\epsilon_0) > b_0$.

  Take $\epsilon_1>0$ such that $\epsilon_1<\min\{\epsilon_0, \delta\}$.
  Take $\epsilon>0$ such that $\epsilon < \epsilon_1$ and
  \begin{align}
    (\exp(c_1) d\lambda^2 + c_2)\epsilon_1 + \exp(c_1)\epsilon < \epsilon_1.
  \end{align}

  Let $W$ (resp.~$P$) be a $q$-FMS channel satisfying $C_{\chi^2}(W) \le \epsilon$ (resp.~$C_{\chi^2}(P) \le \epsilon_1$).
  We have
  \begin{align}
    \nonumber C_{\chi^2}(\BP_W(P)) =&~ \bE_{b\sim D} C_{\chi^2}((P\circ P_\lambda)^{\star b}\star W) \\
    \nonumber =&~ \bE_{b\sim D} \left[C_{\chi^2}((P\circ P_\lambda)^{\star b} \star W) \mathbbm{1} \{b \le b(\epsilon_1)\}\right]\\
    \nonumber &~+ \bE_{b\sim D} \left[C_{\chi^2}((P\circ P_\lambda)^{\star b} \star W) \mathbbm{1} \{b > b(\epsilon_1)\}\right] \\
    =:&~ L+R.
    \label{eqn:proof-prop-no-robust-recon-sum-L-R}
  \end{align}

  For $L$, by induction on $b$ we have
  \begin{align}
    C_{\chi^2}((P\circ P_\lambda)^{\star b}\star W) \le (1+C_1 \epsilon_1^{1/5})^b (b C_{\chi^2}(P\circ P_\lambda)+\epsilon) \le (1+C_1 \epsilon_1^{1/5})^b (b \lambda^2 \epsilon_1+\epsilon).
  \end{align}
  Then
  \begin{align}
    \nonumber L&\le \bE_{b\sim D} \left[\exp(C_1 \epsilon_1^{1/5} b) (b \lambda^2 \epsilon_1+\epsilon) \mathbbm{1} \{b \le b(\epsilon_1)\}\right] \\
    \nonumber &\le \bE_{b\sim D} \left[\exp(C_1 \epsilon_1^{1/5} b(\epsilon_1)) (b \lambda^2 \epsilon_1+\epsilon) \right] \\
    \nonumber &\le \exp(C_1 \epsilon_1^{1/5} b(\epsilon_1)) (d \lambda^2 \epsilon_1+\epsilon) \\
    &=\exp(c_1) (d \lambda^2 \epsilon_1+\epsilon).
    \label{eqn:proof-prop-no-robust-recon-L-bound}
  \end{align}

  For $R$ we have
  \begin{align}
    R \le C_2 \cdot c_2 C_2^{-1} \left(c_1 C_1^{-1}\right)^5 \cdot b(\epsilon_1)^{-5}
    = c_2 \epsilon_1.
    \label{eqn:proof-prop-no-robust-recon-R-bound}
  \end{align}

  Combining \eqref{eqn:proof-prop-no-robust-recon-sum-L-R}\eqref{eqn:proof-prop-no-robust-recon-L-bound}\eqref{eqn:proof-prop-no-robust-recon-R-bound} we get
  \begin{align} \label{eqn:proof-prop-no-robust-recon-contraction}
    C_{\chi^2}(\BP_W(P)) \le (\exp(c_1) d\lambda^2 +c_2)\epsilon_1 + \exp(c_1)\epsilon < \epsilon_1.
  \end{align}

  Let $M_k$ be the channel $\sigma_\rho \mapsto (T_k, \omega_{T_k})$.
  Then $M_{k+1} = \BP_W(M_k)$.
  By \eqref{eqn:proof-prop-no-robust-recon-contraction} and $\epsilon\le \epsilon_1$ we see that
  \begin{align}
    C_{\chi^2}(M_k) \le \epsilon_1 < \delta
  \end{align}
  for all $k$. This finishes the proof.
\end{proof}

\section{Asymmetric fixed points} \label{sec:asymm}
Up to now we have focused on symmetric fixed points of the $\BP$ operator.
% In this section we discuss asymmetric fixed points of the BP operator.
If we view the $\BP$ operator as an operator on the space of $q$-ary input (possibly asymmetric) channels, then a natural question to determine the (possibly asymmetric) fixed points.
In the case $q=2$, \cite{yu2022ising} showed that there is only one non-trivial fixed point, and the fixed point is symmetric.
For $q\ge 3$, it is no longer the case.
\begin{proposition} \label{prop:bp-fixed-point-asymm}
  Work under the setting of Theorem~\ref{thm:bi-imprecise}.
  If $q\ge 3$ and $d\lambda^2>1$, then the $\BP$ operator \eqref{eqn:bp-operator} has at least one non-trivial asymmetric fixed point.
\end{proposition}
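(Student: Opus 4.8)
The plan is to construct the asymmetric fixed point as a \emph{lift} of the reconstruction fixed point of an auxiliary binary broadcasting model obtained by recording only whether a color equals $1$.

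\emph{Setting up the collapse map.} Let $R\colon[q]\to\{0,1\}$ be the deterministic channel sending $i$ to $\mathbbm{1}\{i=1\}$. Since $P_\lambda(1\mid i)=\lambda\mathbbm{1}\{i=1\}+\frac{1-\lambda}q$ depends on $i$ only through $\mathbbm{1}\{i=1\}$, one gets the intertwining $R\circ P_\lambda=Q\circ R$, where $Q\colon\{0,1\}\to\{0,1\}$ is the binary channel with $Q(1\mid 1)=\lambda+\frac{1-\lambda}q$ and $Q(1\mid 0)=\frac{1-\lambda}q$; note $Q$ is a genuine channel for every $\lambda\in\left[-\frac1{q-1},1\right]$, and its trace equals $1+\lambda$, so its second eigenvalue is exactly $\lambda$. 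Because $R$ is deterministic, $\star$-convolution commutes with pre-composition by $R$, i.e.\ $(N_1\circ R)\star\cdots\star(N_b\circ R)=(N_1\star\cdots\star N_b)\circ R$ for any binary-input channels $N_1,\dots,N_b$. Combining these, if $M=B\circ R$ for some binary-input channel $B$ then $M\circ P_\lambda=(B\circ Q)\circ R$, hence $\BP(M)=\bE_b(B\circ Q)^{\star b}\circ R=\BP_Q(B)\circ R$, where $\BP_Q(B):=\bE_b(B\circ Q)^{\star b}$ is the BP operator of the binary broadcasting model with transition channel $Q$. Since pre-composition by a fixed channel preserves channel equivalence, $B\circ R$ is a fixed point of $\BP$ whenever $B$ is a fixed point of $\BP_Q$.

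\emph{A non-trivial fixed point of $\BP_Q$.} I would take $B^{*}:=\lim_{k\to\infty}\BP_Q^k(\Id)$, which exists by monotone degradation (Lemma~\ref{lem:deg-down-seq-limit}) and, exactly as for the symmetric operator used throughout the paper, is a fixed point of $\BP_Q$ by continuity of $\BP_Q$ under weak convergence. To see $B^{*}$ is non-trivial, note that $\tau_v:=\mathbbm{1}\{\sigma_v=1\}$ defines a genuine Markov chain on the tree with transition matrix $Q$ (since $\bP[\tau_{\mathrm{child}}=1\mid\sigma_{\mathrm{parent}}]$ depends on $\sigma_{\mathrm{parent}}$ only through $\tau_{\mathrm{parent}}$), and $\BP_Q^k(\Id)$ is exactly the channel $\tau_\rho\mapsto(T_k,\tau_{L_k})$ of this chain. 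Since $Q$ has second eigenvalue $\lambda$ and expected offspring $d$ with $d\lambda^2>1$, the Kesten--Stigum bound \cite{kesten1966additional} (via the census estimator, which applies to arbitrary finite-state chains on Galton--Watson trees) shows this chain is reconstructible, i.e.\ $\lim_k I(\tau_\rho;\tau_{L_k}\mid T_k)>0$; equivalently $B^{*}$ is non-trivial.

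\emph{Conclusion and asymmetry.} Set $M^{*}:=B^{*}\circ R$. By the first step it is a fixed point of $\BP$, and it is non-trivial because $B^{*}$ is. It remains to verify $M^{*}$ is not equivalent to any $q$-FMS channel, which is where $q\ge3$ enters. Suppose it were; then its posterior distribution under a uniform prior would be $\Aut([q])$-invariant. But because the output of $M^{*}$ depends on $\sigma_\rho$ only through $\mathbbm{1}\{\sigma_\rho=1\}$, the posterior of $\sigma_\rho$ given an output of $M^{*}$ is of the form $\bigl(\alpha,\frac{1-\alpha}{q-1},\dots,\frac{1-\alpha}{q-1}\bigr)$ for a random $\alpha\in[0,1]$, so this posterior distribution is supported on the segment $L=\bigl\{\bigl(\alpha,\frac{1-\alpha}{q-1},\dots,\frac{1-\alpha}{q-1}\bigr):\alpha\in[0,1]\bigr\}\subseteq\cP([q])$. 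For $q\ge3$ the $q$ images of $L$ under $\Aut([q])$ pairwise intersect only at the barycenter, so an $\Aut([q])$-invariant probability measure supported on $L$ must be the point mass at $\bigl(\frac1q,\dots,\frac1q\bigr)$, forcing $M^{*}$ to be trivial — a contradiction. Hence $M^{*}$ is a non-trivial asymmetric fixed point of $\BP$. The step I expect to require the most care is justifying that $B^{*}$ is an \emph{exact} fixed point of $\BP_Q$ (continuity of the BP operator under weak convergence, including the Poisson-offspring case), which is handled as in the symmetric setting used elsewhere in the paper, together with confirming that the Kesten--Stigum reconstruction bound applies to the finite-state chain $Q$; both are standard but should be stated explicitly.
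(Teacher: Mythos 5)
Your proposal is correct and is essentially the paper's own argument: your collapse map $R$ is exactly the channel $U:[q]\to\{\pm\}$ the paper iterates, non-triviality is obtained the same way (census/Kesten--Stigum reconstruction of the induced binary chain with second eigenvalue $\lambda$), and the asymmetry argument is the same observation that the output law is identical for inputs $2,\dots,q$. Your explicit intertwining $R\circ P_\lambda=Q\circ R$ and the invariant-measure-on-a-segment argument are just a slightly more structured packaging of the paper's proof.
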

\begin{proof}
  Consider the channel $U: [q]\to \{\pm\}$, which maps $1$ to $+$ and $2,\ldots,q$ to $-$.
  Because $\BP(U)\le_{\deg} U$, the sequence $(\BP^k(U))_{k\ge 0}$ is non-increasing in degradation preorder.
  Therefore a limit channel $\BP^\infty(U)$ exists by Lemma~\ref{lem:deg-down-seq-limit}.
  % This means that for any convex potential function $\phi$ on the probability simplex $\cP([q])$, the sequence $(\Phi(\BP^k(U)))_{k\ge 0}$ is non-increasing (where $\Phi$ is the induced function on the space of $q$-ary input channels).
  % By monotone convergence theorem, the limit
  % \begin{align}
  %   \lim_{k\to \infty} \Phi(\BP^k(U))
  % \end{align}
  % exists for all convex potential functions $\phi$.
  % These limit potentials are compatible with each other and uniquely determines the limit channel $\BP^\infty(U)$.

  We would like to show that $\BP^\infty(U)$ is a non-FMS non-trivial fixed point.
  When $d\lambda^2>1$, count-reconstruction is possible (see e.g.~\cite{mossel2001reconstruction}).
  So it is possible to gain non-trivial information about whether the input is $1$ by counting the number of $+$.
  So $\BP^\infty(U)$ is non-trivial.

  On the other hand, $\BP^\infty(U) (\cdot | i)$ are the same for $i=2,\ldots, q$.
  This cannot happen for any non-trivial FMS channel.
  Therefore $\BP^\infty(U)$ is not an FMS channel.
\end{proof}

Nevertheless, when the condition in Theorem~\ref{thm:bi-imprecise} holds, for an open set of initial channels, it will converge to the unique FMS fixed point under BP iterations.
We make the following definition.
\begin{definition}
  Let $U: \cX \to \cY$ be a channel where $\cX = [q]$.
  We say $U$ has full rank if there exists a partition of
  $\cY$ into $q$ measurable subsets $\cY = E_1 \cup \cdots \cup E_q$ such that the $q\times q$ matrix
  \begin{align}
    (U(E_j | i))_{i\in [q], j\in [q]}
  \end{align}
  is invertible.
\end{definition}
\begin{proposition} \label{prop:bp-fixed-point-full-rank}
  Work under the setting of Theorem~\ref{thm:bi-imprecise}.
  If $(q,\lambda,d)$ satisfies \eqref{eqn:thm:bi-low-snr-cond} or \eqref{eqn:thm:bi-high-snr-cond}, then for any $q$-ary input (possibly asymmetric) channel $U$ of full rank, we have
  \begin{align}
    \BP^\infty(U) = \BP^\infty(\Id).
  \end{align}
\end{proposition}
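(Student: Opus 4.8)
The plan is to trap the sequence $\BP^k(U)$ in degradation order between $\BP^k(M)$ and $\BP^k(\Id)$ for a well-chosen \emph{non-trivial FMS} channel $M$, and then invoke stability of the BP fixed point (Theorem~\ref{thm:bi-low-snr}/Theorem~\ref{thm:bi-high-snr}) to see that the two bounding sequences converge weakly to the common limit $M_\infty := \BP^\infty(\Id)$, forcing $\BP^k(U)$ to do the same. The only place full rank is used is in producing $M$.

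\textbf{Step 1--2: a non-trivial FMS channel below $U$.} Since $U$ has full rank, fix a partition $\cY=E_1\cup\cdots\cup E_q$ with $A:=(U(E_j\mid i))_{i,j\in[q]}$ invertible. The deterministic postprocessing $y\mapsto j$ (for $y\in E_j$) exhibits $U':[q]\to[q]$, $U'(j\mid i):=A_{i,j}$, as a degradation $U'\le_{\deg}U$, where $U'$ is an invertible stochastic matrix. Under the uniform prior the posteriors of $U'$ are $\pi^{(j)}:=A_{\cdot,j}/\|A_{\cdot,j}\|_1\in\cP([q])$, with weights $w_j:=\|A_{\cdot,j}\|_1/q>0$ and $\sum_j w_j\pi^{(j)}=\Unif$; moreover $\pi^{(j)}_x=U'(j\mid x)/(q w_j)$. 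As the columns of $A$ are linearly independent, the $\pi^{(j)}$ are affinely independent, so $\Delta':=\operatorname{conv}\{\pi^{(1)},\dots,\pi^{(q)}\}$ is a nondegenerate simplex in $\cP([q])$ containing $\Unif$ in its relative interior with barycentric coordinates $w$. Pick $e\in\mathbbm{1}^\perp\setminus\{0\}$ and set $\pi^\ast:=\Unif+\epsilon' e$ with $\epsilon'>0$ small enough that the whole orbit $\{\tau(\pi^\ast):\tau\in\Aut([q])\}$ lies in $\Delta'$ (each orbit point is at distance $\epsilon'$ from $\Unif$ since $\tau$ fixes $\Unif$ and is an isometry). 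Write $\tau(\pi^\ast)=\sum_j\beta^{(\tau)}_j\pi^{(j)}$ in barycentric coordinates, $\beta^{(\tau)}\in\cP([q])$. Since $\frac1{q!}\sum_\tau\tau(\pi^\ast)=\Unif$ and barycentric coordinates are affine, $\frac1{q!}\sum_\tau\beta^{(\tau)}_j=w_j$. Define $R:[q]\to\Aut([q])$ by $R(\tau\mid j):=\beta^{(\tau)}_j/(q!\,w_j)$; it is a stochastic kernel by the last identity, and a direct computation using $\pi^{(j)}_x=U'(j\mid x)/(q w_j)$ gives $R\circ U'=\FSC_{\pi^\ast}$. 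Hence $M:=\FSC_{\pi^\ast}\le_{\deg}U'\le_{\deg}U$, and $M$ is non-trivial because $\pi^\ast\ne\Unif$ implies $C(M)=\log q-\sum_i\pi^\ast_i\log(1/\pi^\ast_i)>0$.

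\textbf{Step 3: the squeeze.} Since $\BP$ preserves the degradation preorder, $\BP^k(M)\le_{\deg}\BP^k(U)\le_{\deg}\BP^k(\Id)$ for all $k$. As $M$ is a non-trivial $q$-FMS channel and $(q,\lambda,d)$ satisfies \eqref{eqn:thm:bi-low-snr-cond} or \eqref{eqn:thm:bi-high-snr-cond}, Theorem~\ref{thm:bi-low-snr}/Theorem~\ref{thm:bi-high-snr} give that both $\BP^k(M)$ and $\BP^k(\Id)$ converge weakly to $M_\infty=\BP^\infty(\Id)$. For channels with a common finite input alphabet, $P\le_{\deg}Q$ implies that the distribution of the posterior of $P$ is dominated in the convex order by that of $Q$ (the posterior of $P$ is a conditional expectation of the posterior of $Q$ under the coupling induced by the degradation kernel). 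Convex order between probability measures on the compact space $\cP([q])$ is preserved under weak limits and is antisymmetric; therefore every subsequential weak limit $\rho$ of the tight sequence of posterior-distributions of $\BP^k(U)$ satisfies $\operatorname{dist}(\mu_{M_\infty})\preceq_{\mathrm{cx}}\rho\preceq_{\mathrm{cx}}\operatorname{dist}(\mu_{M_\infty})$, hence $\rho=\operatorname{dist}(\mu_{M_\infty})$. So $\BP^k(U)$ converges weakly to $M_\infty$, i.e.\ $\BP^\infty(U)=\BP^\infty(\Id)$. (Alternatively, Step~3 can be run verbatim as in the proof of Prop.~\ref{prop:phi-contraction-imply-l2}, with a strongly convex or concave $\Aut([q])$-invariant potential $\Phi$ extended to arbitrary $q$-input channels by $\Phi(P)=\bE\,\phi(\mu_P)$: then $\Phi(\BP^k(U))$ is squeezed between $\Phi(\BP^k(M))$ and $\Phi(\BP^k(\Id))$, both converging to $\Phi(M_\infty)$, and strong convexity controls $\bE\|\mu_{\BP^k(U)}-\mu_{\BP^k(M)}\|_2^2$.)

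\textbf{Expected main obstacle.} The crux is Step~2: input symmetry cannot be created by postprocessing, so one must genuinely use that full rank forces the posteriors of the coarsening $U'$ to span a full-dimensional simplex around $\Unif$, into which a small symmetric orbit can be dilated; verifying that the resulting $R$ is a bona fide stochastic kernel (equivalently, that the barycentric-coordinate average equals $w$) is the key computation. Step~3 is comparatively routine, provided one is careful that weak convergence of channels means weak convergence of the posterior distributions and that convex order is well behaved under limits.
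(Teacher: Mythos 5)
Your proposal is correct, and the overall strategy coincides with the paper's: find a non-trivial $q$-FMS channel $M$ with $M\le_{\deg}U\le_{\deg}\Id$, push through $\BP^k$, and squeeze. The difference is in how that lower-bounding FMS channel is produced, and in how carefully the squeeze at infinity is justified. The paper first coarsens $U$ to an invertible stochastic matrix $U'$ (exactly as you do), but then invokes Lemma~\ref{lem:sing-deg-potts}: writing $P_\eta=\eta\Id+\frac{1-\eta}{q}J$ and noting $(U')^{-1}J=J$, one has $(U')^{-1}P_\eta=\eta(U')^{-1}+\frac{1-\eta}{q}J$, whose entries are nonnegative for $\eta$ small relative to $\sigma_{\min}(U')$, giving a Potts channel $P_\eta\le_{\deg}U'$ in two lines. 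Your barycentric construction of an $\FSC_{\pi^\ast}\le_{\deg}U'$ is a correct alternative — the verification that $R(\tau\mid j)=\beta^{(\tau)}_j/(q!\,w_j)$ is a stochastic kernel and that $R\circ U'=\FSC_{\pi^\ast}$ both check out — but it is substantially more work for essentially the same payoff; the two routes are interchangeable since any non-trivial FMS degradation of $U$ will do. Where your write-up adds genuine value is Step~3: the paper simply writes $\BP^\infty(P_\eta)\le_{\deg}\BP^\infty(U)\le_{\deg}\BP^\infty(\Id)$ without comment on why the middle limit exists, whereas you make the squeeze rigorous via the convex-order characterization of degradation, its stability under weak limits, and its antisymmetry, which is a cleaner way to conclude that every subsequential limit of $\BP^k(U)$ equals $M_\infty$.
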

\begin{proof}
  We prove that under the condition that $U$ has full rank, there exists a channel $R$ such that $R\circ U$ is a non-trivial Potts channel.

  Because $U$ has full rank, there exists a partition $\cY = E_1 \cup \cdots \cup E_q$ such that
  \begin{align}
    (U(E_j | i))_{i\in [q], j\in [q]}
  \end{align}
  is invertible.
  Define $Q: \cY \to [q]$ by mapping $y\in E_i$ to $i$ for all $i\in [q]$.
  Then we can replace $U$ with $Q \circ U$ and wlog assume that $\cY=[q]$.

  By Lemma~\ref{lem:sing-deg-potts}, there exists $\lambda>0$ such that $P_\lambda \le_{\deg} U$.
  Therefore $P_\lambda \le_{\deg} U \le_{\deg} \Id$.
  Degradation of $q$-ary input (possibly asymmetric) channels is preserved under $\BP$ operator.
  So by iterating the $\BP$ operator, we get
  \begin{align}
    \BP^\infty(P_\lambda) \le_{\deg} \BP^\infty(U) \le_{\deg} \BP^\infty(\Id).
  \end{align}
  The first and third channels are equal by Theorem~\ref{thm:bi-imprecise}.
  Therefore $\BP^\infty(U) = \BP^\infty(\Id)$.
\end{proof}

\begin{lemma} \label{lem:sing-deg-potts}
  Fix $q\in \bZ_{\ge 2}$ and $\epsilon>0$.
  Then there exists $\lambda>0$ such that for any probability kernel $U:[q]\to [q]$ with $\sigma_{\min}(U) > \epsilon$, we have $P_\lambda \le_{\deg} U$.
\end{lemma}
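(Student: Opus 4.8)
The plan is to build the degradation kernel $R$ from $U$ explicitly by inverting $U$. Identify each channel with its transition matrix, with rows indexed by the input, columns by the output, and all row sums equal to $1$; in this language $P_\lambda \le_{\deg} U$ means $P_\lambda = U R$ for some stochastic matrix $R$ (the post-processing channel), where $P_\lambda = \lambda I + \frac{1-\lambda}{q}\mathbbm{1}\mathbbm{1}^\top$ and $\mathbbm{1}\in\bR^q$ is the all-ones vector. Since $\sigma_{\min}(U) > \epsilon$, the matrix $U$ is invertible with $\|U^{-1}\|_{\mathrm{op}} = \sigma_{\min}(U)^{-1} < \epsilon^{-1}$, so in particular $\max_{i,j}|(U^{-1})_{ij}| \le \|U^{-1}\|_{\mathrm{op}} < \epsilon^{-1}$. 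I would then set
\begin{align}
  \lambda := \frac{\epsilon}{q+\epsilon} \in (0,1), \qquad R := U^{-1} P_\lambda,
\end{align}
so that $U R = P_\lambda$ holds by construction, and it remains only to check that $R$ is a genuine channel, i.e. a stochastic matrix.

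For the row sums: since $U$ and $P_\lambda$ are stochastic we have $P_\lambda\mathbbm{1}=\mathbbm{1}$ and $U\mathbbm{1}=\mathbbm{1}$, hence $U^{-1}\mathbbm{1}=\mathbbm{1}$ and $R\mathbbm{1} = U^{-1}P_\lambda\mathbbm{1} = \mathbbm{1}$. For nonnegativity, the key observation is that $U^{-1}\mathbbm{1}\mathbbm{1}^\top = (U^{-1}\mathbbm{1})\mathbbm{1}^\top = \mathbbm{1}\mathbbm{1}^\top$, which lets one rewrite
\begin{align}
  R = U^{-1}\left(\lambda I + \frac{1-\lambda}{q}\mathbbm{1}\mathbbm{1}^\top\right) = \lambda U^{-1} + \frac{1-\lambda}{q}\mathbbm{1}\mathbbm{1}^\top,
\end{align}
so that $R_{ij} = \lambda (U^{-1})_{ij} + \frac{1-\lambda}{q} \ge \frac{1-\lambda}{q} - \frac{\lambda}{\epsilon} \ge 0$, where the last inequality is exactly $\lambda(q+\epsilon)\le\epsilon$, which holds for our choice of $\lambda$. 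Thus $R$ is stochastic and $P_\lambda \le_{\deg} U$, with $\lambda$ depending only on $q$ and $\epsilon$.

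I do not expect a real obstacle: all the content is in the explicit formula $R = U^{-1}P_\lambda$ together with the two consequences of the identity $U\mathbbm{1}=\mathbbm{1}$ (equivalently $U^{-1}\mathbbm{1}=\mathbbm{1}$), which simultaneously force the rows of $R$ to sum to $1$ and collapse the rank-one part of $R$ to the manifestly nonnegative matrix $\frac{1-\lambda}{q}\mathbbm{1}\mathbbm{1}^\top$. The only point needing care is the uniform bound $\max_{i,j}|(U^{-1})_{ij}| \le \|U^{-1}\|_{\mathrm{op}} < \epsilon^{-1}$, which is what makes $\lambda$ independent of $U$; any $\lambda \le \epsilon/(q+\epsilon)$ works, and one may equally take the slightly cleaner value $\lambda = \epsilon/(2q)$.
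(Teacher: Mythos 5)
Your proof is correct and follows essentially the same route as the paper: set $R = U^{-1}P_\lambda$, use that $U$ stochastic forces $U^{-1}\mathbbm{1}=\mathbbm{1}$ (so rows of $R$ sum to one and the rank-one part of $P_\lambda$ collapses), and bound the entries of $U^{-1}$ via $\sigma_{\min}(U)>\epsilon$ to make $R$ entrywise nonnegative for small $\lambda$. The only difference is cosmetic: you supply the explicit value $\lambda=\epsilon/(q+\epsilon)$ and the slightly sharper entrywise bound $\max_{i,j}|(U^{-1})_{ij}|\le\|U^{-1}\|_{\mathrm{op}}<\epsilon^{-1}$, where the paper uses $\sqrt{q}\,\epsilon^{-1}$ and leaves $\lambda$ implicit.
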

\begin{proof}
  Because $\sigma_{\min}(U) > \epsilon$, we have
  \begin{align}
    \max_{i,j\in [q]} \left|\left(U^{-1}\right)_{i,j}\right| \le \|U^{-1}\|_2 \le \sqrt q \epsilon^{-1}.
  \end{align}
  Let $J\in \bR^{q\times q}$ be the all ones matrix.
  Because $U$ is a stochastic matrix, we have $U^{-1} J=J$.
  Because the maximum (in absolute value) entry of $U^{-1}$ is bounded by an constant, for some constant $\lambda>0$ the matrix $U^{-1} P_\lambda$ has non-negative entries.
  Note that $U^{-1} P_\lambda \mathbbm{1} = U^{-1} \mathbbm{1} = \mathbbm{1}$.
  So $U^{-1} P_\lambda$ is a stochastic matrix.
  Let $R=U^{-1} P_\lambda$.
  Then $R\circ U = UR = P_\lambda$, thus $P_\lambda\le_{\deg} U$.
\end{proof}

For the boundary irrelevance operator $\BP_W$ \eqref{eqn:bp-w-operator}, the situation is simpler: when the survey FMS channel $W$ is non-trivial, there is no asymmetric fixed point.
\begin{proposition} \label{prop:bp-prime-fixed-point-asymm}
  Work under the setting of Theorem~\ref{thm:bi-imprecise}.
  If $(q,\lambda,d,W)$ satisfies \eqref{eqn:thm:bi-low-snr-cond} or \eqref{eqn:thm:bi-w-high-snr-cond}, then $\BP_W$ has only one fixed point.
\end{proposition}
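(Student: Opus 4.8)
The plan is to squeeze an arbitrary fixed point of $\BP_W$ between the two extremal trajectories $\BP_W^k(\Id)$ and $\BP_W^k(0)$ and then invoke boundary irrelevance with respect to $W$. The first thing to record is that $\BP_W$ preserves the degradation preorder on the space of \emph{all} channels with input alphabet $[q]$, not just the FMS ones: composition with $P_\lambda$, taking $\star$-powers, mixing over the observed branching number $b$, and $\star$-convolution with the fixed channel $W$ each send a degradation $M\le_{\deg}N$ to the corresponding degradation of the images, by post-composing with the same Markov kernel. Set $M_\infty:=\lim_{k\to\infty}\BP_W^k(\Id)$; this limit exists by Lemma~\ref{lem:deg-down-seq-limit} because $\BP_W(\Id)\le_{\deg}\Id$ makes the sequence non-increasing in degradation, and $M_\infty$ is an FMS channel by the observation closing Section~\ref{sec:chan-lim}.

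Next I would take an arbitrary $[q]$-input channel $M$ with $\BP_W(M)=M$. Since $0\le_{\deg}M\le_{\deg}\Id$ for every such channel, applying $\BP_W$ repeatedly and using $\BP_W(M)=M$ gives, by induction on $k$,
\begin{align}
  \BP_W^k(0)\ \le_{\deg}\ M\ \le_{\deg}\ \BP_W^k(\Id)\qquad\text{for all }k\ge 0.
\end{align}
The sequence $(\BP_W^k(0))_k$ is non-decreasing in degradation, so it converges to some $\wt M_\infty$ by Lemma~\ref{lem:deg-up-seq-limit}; and the degradation-method argument behind Theorem~\ref{thm:bi-imprecise} (Prop.~\ref{prop:contraction-imply-bi}, run with $M_0=\Id$, $\wt M_0=0$ and survey $W$, which applies precisely because $(q,\lambda,d,W)$ meets \eqref{eqn:thm:bi-low-snr-cond} or \eqref{eqn:thm:bi-w-high-snr-cond}) shows $\wt M_\infty=M_\infty$. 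So it only remains to pass the sandwich to the limit in $k$.

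For that I would use that the degradation preorder is closed under weak convergence of channels: for channels with a common finite input alphabet, $P\le_{\deg}Q$ is equivalent (Blackwell's theorem; the $\Aut([q])$-symmetrized version is Prop.~\ref{prop:fms-deg-couple}) to $\bE[g(\mu_P)]\le\bE[g(\mu_Q)]$ for every bounded continuous convex $g\colon\cP([q])\to\bR$, where $\mu_P,\mu_Q$ are the posterior-distribution variables under the uniform prior; since $\cP([q])$ is compact each such $g$ is bounded and continuous, so this family of inequalities is preserved under weak limits in either argument. Applying this to $M\le_{\deg}\BP_W^k(\Id)$ and letting $k\to\infty$ yields $M\le_{\deg}M_\infty$; applying it to $\BP_W^k(0)\le_{\deg}M$ yields $M_\infty=\wt M_\infty\le_{\deg}M$. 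Hence $M$ is equivalent to $M_\infty$, and since $\BP_W$ and the notion of fixed point are invariant under channel equivalence, $\BP_W$ has a unique fixed point, namely the FMS channel $M_\infty=\BP_W^\infty(\Id)$ (which is itself a fixed point, e.g.\ by weak continuity of $\BP_W$ along the monotone sequence $(\BP_W^k(\Id))_k$).

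The step I expect to be the main obstacle is the closedness of $\le_{\deg}$ under weak limits: one needs the Blackwell/convex-order characterization of degradation for possibly asymmetric $[q]$-input channels — the analogue, without the equivariance constraint, of the coupling criterion in Prop.~\ref{prop:fms-deg-couple} — together with bounded convergence on the compact simplex $\cP([q])$. Everything else, namely the monotonicity of $\BP_W$, the existence of the two one-sided limits, and their coincidence, is either immediate or already contained in Theorem~\ref{thm:bi-imprecise} and Section~\ref{sec:chan-lim}.
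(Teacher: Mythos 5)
Your proof is correct and follows essentially the same route as the paper's: sandwich the fixed point between $\BP_W^k(0)$ and $\BP_W^k(\Id)$ using monotonicity of $\BP_W$ under degradation, and conclude via the coincidence of the two extremal limits guaranteed by the boundary irrelevance argument. The only difference is that you make explicit (via the Blackwell/convex-order characterization) the step of passing the degradation relations to the weak limit, which the paper leaves implicit in writing $\BP_W^{\infty}(0)\le_{\deg}\BP_W^{\infty}(U)\le_{\deg}\BP_W^{\infty}(\Id)$.
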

\begin{proof}
  Suppose $U$ is a fixed point of $\BP_W$. We have
  \begin{align}
    0 \le_{\deg} U \le_{\deg} \Id.
  \end{align}
  Degradation for $q$-ary input (possibly asymmetric) channels is preserved under $\BP_W$ operator.
  So by iterating the $\BP_W$ operator, we get
  \begin{align}
    \BP_W^{\infty} (0) \le_{\deg} \BP_W^{\infty}(U) \le_{\deg} \BP_W^{\infty}(\Id).
  \end{align}
  The first and third channels are equal by Theorem~\ref{thm:bi-imprecise}.
  Therefore $U = \BP_W^{\infty}(U) = \BP_W^{\infty}(\Id)$ is equal to the unique FMS fixed point.
\end{proof}

\section{SBM mutual information} \label{sec:mutual-info}
In this section we prove Theorem~\ref{thm:sbm-mutual-info}.
The proof is a direct generalization of \cite[Theorem 1]{abbe2021stochastic}.
\begin{proof}[Proof of Theorem~\ref{thm:sbm-mutual-info}]
  Let $Y_v^\epsilon \sim \EC_\epsilon(\cdot | X_v)$ for $v\in V$ and $\epsilon\in [0,1]$.
  Let $u\in V$ be a fixed vertex.
  Define $f(\epsilon) := \frac 1n I(X; G, Y^\epsilon)$.
  Then $f(0) = H(X_u)$ and $f(1) = \frac 1n I(X; G)$.
  Furthermore, calculation shows that
  \begin{align}
    f'(\epsilon) = -H(X_u | G, Y^\epsilon_{V\backslash u}).
  \end{align}
  Let $k\in \bZ_{\ge 1}$ be a constant, $B(u,k)$ be the set of vertices with distance $\le k$ to $u$, and
  $\partial B(u,k)$ be the set of vertices at distance $k$ to $u$.
  By the data processing inequality and Lemma \ref{lem:sbm-approx-cond-indep}, we have
  \begin{align}
    I(X_u; G, Y^\epsilon_{B(u,k) \backslash u}) \le I(X_u; G, Y^\epsilon_{V\backslash u}) \le I(X_u; G, Y^\epsilon_{B(u,k)\backslash u}, X_{\partial B(u,k)})+o(1).
  \end{align}
  By Lemma \ref{lem:sbm-bot-coupling}, we have
  \begin{align}
    I(\sigma_\rho; \omega^\epsilon_{T_k \backslash \rho} | T_k) - o(1) \le I(X_u; G, Y^\epsilon_{V\backslash u}) \le I(\sigma_\rho; \omega^\epsilon_{T_k \backslash \rho}, \sigma_{L_k} | T_k) + o(1).
  \end{align}
  Taking limit $n\to \infty$, then taking limit $k\to \infty$, we get
  \begin{align}
    \lim_{k\to \infty} I(\sigma_\rho; \omega^\epsilon_{T_k \backslash \rho} | T_k) \le \lim_{n\to \infty} I(X_u; G, Y^\epsilon_{V\backslash u}) \le \lim_{k\to \infty} I(\sigma_\rho; \omega^\epsilon_{T_k \backslash \rho}, \sigma_{L_k} | T_k).
  \end{align}
  The first and third terms are equal by the boundary irrelevance assumption.
  Therefore
  \begin{align}
    \lim_{n\to\infty} I(X_u; G, Y^\epsilon_{V\backslash u}) = \lim_{k\to \infty} I(\sigma_\rho; \omega^\epsilon_{T_k \backslash \rho} | T_k)
  \end{align}
  So
  \begin{align}
    \lim_{n\to \infty} \frac 1n I(X; G) &= H(X_u) - \int_0^1 \lim_{n\to\infty} H(X_u | G, Y^\epsilon_{V\backslash u}) d\epsilon \\
    &=\int_0^1 \lim_{n\to \infty} I(X_u; G, Y^\epsilon_{V\backslash u}) d\epsilon  \nonumber \\
    &=\int_0^1 \lim_{k\to \infty} I(\sigma_\rho; \omega^\epsilon_{T_k \backslash \rho} | T_k) d\epsilon. \nonumber
  \end{align}
\end{proof}

\begin{lemma}[SBM-BOT coupling] \label{lem:sbm-bot-coupling}
  Let $(X,G)\sim \mathrm{SBM}(n,q,\frac an,\frac bn)$.
  Let $v\in V$ and $k=o(\log n)$. Let $B(v,k)$ be the set of vertices with distance $\le k$ to $v$, and
  $\partial B(v,k)$ be the set of vertices at distance $k$ to $v$.
  Let $d = \frac {a+(q-1)b}q$ and $\lambda = \frac{a-b}{a+(q-1)b}$.
  Let $(T,\sigma)$ be the Potts model with broadcasting channel $P_\lambda$ on a Poisson tree with expected offspring $d$. Let $\rho$ be the root of $T$, $L_k$ be the set of vertices at distance $k$ to $\rho$, $T_k$ be the set of vertices at distance $\le k$ to $\rho$.

  Then $(G|_{B(v,k)}, X_{B(v,k)})$ can be coupled (with $o(1)$ TV distance) to $(T_k, \sigma_{T_k})$.
\end{lemma}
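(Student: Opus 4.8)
The plan is the standard local--weak--convergence coupling: I would grow the neighborhood of $v$ in $G$ by breadth-first search while simultaneously growing the Poisson Galton--Watson tree $(T,\sigma)$, and couple the two exploration processes layer by layer. Concretely, run BFS from $v$, maintaining the set $S$ of already-discovered vertices together with their revealed $X$-labels (and, on the tree side, the partially grown tree with its $\sigma$-labels), revealing a label only when the corresponding vertex is discovered; start by matching $X_v\sim\Unif([q])$ with $\sigma_\rho\sim\Unif([q])$. When a discovered vertex $u$ with label $X_u=j$ is processed, each not-yet-discovered $w\notin S$ is connected to $u$ with probability $\tfrac an$ if $X_w=j$ and $\tfrac bn$ otherwise; marginalizing over the (still essentially uniform) label $X_w$ gives connection probability $\tfrac{a+(q-1)b}{qn}=\tfrac dn$, and conditioned on connection the label $X_w$ follows $\bP(X_w=i\mid X_u=j)\propto a\mathbbm{1}\{i=j\}+b\mathbbm{1}\{i\ne j\}$.

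The key algebraic point to record is the identity
\begin{align}
  \frac{a\mathbbm{1}\{i=j\}+b\mathbbm{1}\{i\ne j\}}{a+(q-1)b}=\lambda\mathbbm{1}\{i=j\}+\frac{1-\lambda}q=P_\lambda(i\mid j),
\end{align}
valid precisely for $d=\tfrac{a+(q-1)b}q$ and $\lambda=\tfrac{a-b}{a+(q-1)b}$, so the offspring-label law of the SBM exploration is exactly the Potts channel. By Poisson thinning, the number of children of $u$ is close to $\Pois\big((1-\tfrac{|S|}n)d\big)$ carrying i.i.d.\ $P_\lambda(\cdot\mid j)$ marks, and provided $|S|=o(n)$ this can be coupled to a $\Pois(d)$ number of $P_\lambda(\cdot\mid j)$-labelled children at total-variation cost $O(|S|/n)+O(\sqrt{\log n/n})$ per processing step --- coming from the Binomial-to-Poisson estimate, the $\Pois((1-|S|/n)d)$-vs-$\Pois(d)$ estimate, and the whp bound $\big||\{w\notin S:X_w=i\}|-\tfrac{n-|S|}q\big|=O(\sqrt{n\log n})$ on the label counts.

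Next I would control two further discrepancies: collisions (a frontier edge to an already-discovered vertex, or a multi-edge --- impossible on a tree) and the requirement $|S|=o(n)$. Since $k=o(\log n)$ we have $\bE|T_k|=\Theta(d^k)=n^{o(1)}$, hence $|T_k|\le n^{o(1)}$ whp by a Markov-type bound; running the exploration with a cap at $n^{o(1)}$ vertices and pushing this bound through the coupling gives $|S|\le n^{o(1)}$ throughout, so the expected number of collisions is at most $\binom{n^{o(1)}}2\tfrac an=o(1)$. On the $(1-o(1))$-probability event that no collision occurs, the label counts stay balanced to within $O(\sqrt{n\log n})$, and all the per-step couplings succeed, the BFS-explored subgraph of $G$ around $v$ with its labels is literally equal to $(T_k,\sigma_{T_k})$; summing the per-step TV costs over the $n^{o(1)}$ processed vertices yields total distance $n^{o(1)}\big(O(n^{o(1)-1})+O(\sqrt{\log n/n})\big)=o(1)$. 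The same coupling run with the $\EC_\epsilon$-observations appended coordinatewise then gives the version needed in Theorem~\ref{thm:sbm-mutual-info}.

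The step I expect to be the main obstacle is the bookkeeping behind the phrase ``the undiscovered labels are still essentially i.i.d.\ uniform'': each undiscovered $w$ will have been tested for an edge against at most $n^{o(1)}$ frontier vertices, so conditioning on those (absent) edges drifts its label law from uniform by only $O(n^{o(1)-1})$ in total variation, but these drifts --- together with the Poisson-approximation errors --- must be aggregated carefully so that the accumulated error over all $n^{o(1)}$ processing steps still vanishes. Everything else is routine and follows the template of \cite{mossel2015reconstruction,mossel2018proof,abbe2015community}.
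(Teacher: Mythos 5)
The paper does not give its own proof of this lemma; it simply cites \cite[Lemma 6.2]{mossel2022exact}, which establishes it by exactly the kind of breadth-first exploration coupling you describe. Your outline is correct and matches that standard argument, including the two points that carry the technical load: the algebraic identity turning the conditional label law into $P_\lambda$, and the per-step total-variation bookkeeping $O(|S|/n)+O(\sqrt{\log n/n})$ summed over $n^{o(1)}$ processed vertices. The drift-of-undiscovered-labels concern you flag is genuine but mild: conditioning a given undiscovered $w$ on the absence of edges to the $n^{o(1)}$ explored vertices perturbs its label law by a multiplicative factor $\prod_{u}(1-w_{X_u,i}/n)=1-O(n^{o(1)-1})$ per label $i$, so the cumulative TV perturbation over the whole exploration is $n^{o(1)}\cdot O(n^{o(1)-1})=o(1)$, which folds into the same budget. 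Two minor nits: the collision bound should use $\max\{a,b\}/n$ rather than $a/n$ to cover the disassortative case, and when $d\le 1$ the estimate $\bE|T_k|=\Theta(d^k)$ should be replaced by $\bE|T_k|=O(k+1)=o(\log n)$, but both leave the conclusion $|T_k|\le n^{o(1)}$ whp intact.
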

\begin{proof}
  This is a well-known result and has appeared in many places. For a proof, see e.g.,~\cite[Lemma 6.2]{mossel2022exact}.
\end{proof}

In the following, we use a.a.s.~(asymptotically almost surely) to denote that a event happens with probability $1-o(1)$.
\begin{lemma}[No long range correlations] \label{lem:sbm-approx-cond-indep}
  Let $(X,G=(V,E))\sim \SBM(n,q,\frac an,\frac bn)$.
  Let $A=A(G),B=B(G),C=C(G) \subseteq V$ be a (random) partition of $V$ such that $B$ separates $A$ and $C$ in $G$.
  If $|A\cup B| = o(\sqrt n)$ a.a.s., then
  \begin{align}
    \bP(X_A | X_{B\cup C}, G) = (1\pm o(1)) \bP(X_A | X_B, G)~\aas
  \end{align}
\end{lemma}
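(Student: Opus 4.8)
The plan is to work with the conditional law $\bP(X=x\mid G)$ directly and exploit the separation hypothesis. Recall that
$$\bP(X=x\mid G)\propto\prod_{(i,j)\in E}p_{x_i x_j}\prod_{(i,j)\notin E}(1-p_{x_i x_j}),\qquad p_{kl}=\tfrac bn+\tfrac{a-b}n\mathbbm{1}\{k=l\}.$$
Since $B$ separates $A$ from $C$ in $G$ there are no $A$–$C$ edges, so every $A$–$C$ pair is a non‑edge and the posterior factors as
$$\bP(X=x\mid G)=\tfrac1Z\,\Psi(x_A,x_B)\,\Xi(x_A,x_C)\,\Theta(x_B,x_C),\qquad \Xi(x_A,x_C):=\prod_{i\in A,\,j\in C}(1-p_{x_i x_j}),$$
where $\Psi$ collects the $A$‑internal and $A$–$B$ factors and $\Theta$ the $C$‑internal and $B$–$C$ factors. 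Writing $P^*(x_A):=\Psi(x_A,X_B)\big/\sum_{x_A'}\Psi(x_A',X_B)$, this gives $\bP(X_A=x_A\mid X_{B\cup C},G)\propto_{x_A}\Psi(x_A,X_B)\Xi(x_A,X_C)$, while by the tower rule $\bP(X_A=x_A\mid X_B,G)=\bE_{x_C'\sim\bP(\cdot\mid X_B,G)}\!\big[\bP(X_A=x_A\mid X_{B\cup C'},G)\big]$. So it suffices to show that $\bP(X_A=x_A\mid X_{B\cup C'},G)$ equals $(1\pm o(1))P^*(x_A)$ uniformly in $x_A$, both for $x_C'=X_C$ and for all but a $\bP(\cdot\mid X_B,G)$‑negligible set of $x_C'$; the tower rule then forces both conditionals to be $(1\pm o(1))P^*$.

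The core estimate is on $\Xi$. Expanding $\log(1-p)=-p+O(p^2)$ and using $|A|\,|C|=o(n^{3/2})$, $p=\Theta(1/n)$, the quadratic remainders sum to $o(1)$, so
$$\log\Xi(x_A,x_C)=-\tfrac bn|A|\,|C|-\tfrac{a-b}n\sum_{k\in[q]}a_k(x_A)\,c_k(x_C)+o(1)\qquad\text{uniformly in }x_A,x_C,$$
where $a_k,c_k$ count the color‑$k$ coordinates of $x_A,x_C$. The first term is $x$‑free. Write $\sum_k a_k(x_A)c_k(x_C)=\tfrac{|A|\,|C|}q+\sum_k a_k(x_A)\big(c_k(x_C)-\tfrac{|C|}q\big)$, using $\sum_k a_k(x_A)=|A|$. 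Since $|A\cup B|=o(\sqrt n)$ a.a.s., one can pick $\omega_n\to\infty$ with $\omega_n|A|=o(\sqrt n)$; call $x_C$ \emph{nearly balanced} if $\max_k|c_k(x_C)-|C|/q|\le\omega_n\sqrt n$. For nearly balanced $x_C$ the cross term is $x_A$‑independent up to $o(1)$, hence $\Xi(x_A,x_C)=\xi_0\,e^{o(1)}$ with $\xi_0$ and the $o(1)$ error independent of $x_A$ and (within the nearly‑balanced set) of $x_C$; normalizing gives $\bP(X_A=x_A\mid X_{B\cup C'},G)=(1\pm o(1))P^*(x_A)$ uniformly for every nearly balanced $x_C'$. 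For an arbitrary $x_C'$ the same identity yields the cruder bound $\bP(X_A=x_A\mid X_{B\cup C'},G)\le\exp\!\big(2|a-b|\,|A|\,\|\delta_{C'}\|_\infty/n\big)\,P^*(x_A)$ with $\delta_{C'}:=(c_k(x_C')-|C|/q)_k$.

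It remains to control the posterior mass of the nearly‑balanced set. For the true $X_C$ this is immediate: $c_k(X_C)=n_k(X)-O(|A\cup B|)$ and $|C|=n-|A\cup B|$ with $|A\cup B|=o(\sqrt n)$ a.a.s., and the marginal law of $X$ is $\Unif([q])^{\otimes n}$, so $\|n(X)-\tfrac nq\mathbbm1\|_\infty=O_{\bP}(\sqrt n)$. For $x_C'\sim\bP(\cdot\mid X_B,G)$ I would invoke the standard fact that, under the SBM posterior, the community‑size vector concentrates around balanced at the $\sqrt n$ scale with sub‑Gaussian tails, $\bP(\|\delta_{C'}\|_\infty>t\mid X_B,G)\le Ce^{-ct^2/n}$ — this follows from a Laplace‑type expansion of the posterior free energy in the community‑size vector about the balanced point, and revealing the $o(\sqrt n)$ labels $X_B$ perturbs that landscape only negligibly. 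Taking $t=\omega_n\sqrt n$ gives $\bP(x_C'\text{ not nearly balanced}\mid X_B,G)=o(1)$; combining this tail with the reweighting bound above and $|A|/n=o(1/\sqrt n)$ shows $\bE_{x_C'}\!\big[e^{2|a-b||A|\|\delta_{C'}\|_\infty/n}\mathbbm1\{x_C'\text{ not nearly balanced}\}\mid X_B,G\big]=o(1)$, so the non‑nearly‑balanced $x_C'$ contribute $o(1)\cdot P^*(x_A)$ to the mixture. Hence $\bP(X_A=x_A\mid X_{B\cup C},G)$ and $\bP(X_A=x_A\mid X_B,G)$ both equal $(1\pm o(1))P^*(x_A)$ uniformly in $x_A$, which is the claim. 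The step I expect to be the main obstacle is precisely the $\sqrt n$‑scale (sub‑Gaussian) concentration of the posterior type — the only nontrivial input; everything else is bookkeeping once it is available. (If one wishes to avoid it, the reweighting argument in fact only uses that the posterior covariance of the community‑size vector is $O(n)$, which could be obtained by a second‑moment/variance estimate instead.)
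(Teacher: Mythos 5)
Your factorization of the posterior is the same one the paper uses: your $\Xi$ is their $Q_1$ (the $A$--$C$ non-edge factors), your $\Psi,\Theta$ are their $Q_2,Q_3$, and your estimate $\log\Xi(x_A,x_C)=-\tfrac bn|A||C|-\tfrac{a-b}n\sum_k a_k c_k+o(1)$ matches their computation that $Q_1=(1\pm o(1))e^{-d|A|}$ on nearly-balanced configurations. The proofs diverge — and a genuine gap appears — in how you get from this to $\bP(X_A\mid X_B,G)$.

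You route through the tower rule $\bP(X_A\mid X_B,G)=\bE_{x_C'\sim\bP(\cdot\mid X_B,G)}\bigl[\bP(X_A\mid X_{B\cup C'},G)\bigr]$ and must then show that non--nearly-balanced $x_C'$ contribute $o(1)\cdot P^*(x_A)$. After your reweighting bound this reduces to $\bE_{x_C'}\bigl[e^{c|A|\|\delta_{C'}\|_\infty/n}\mathbbm1\{\text{not nb}\}\mid X_B,G\bigr]=o(1)$. The factor $e^{c|A|\|\delta\|_\infty/n}$ is genuinely exponential: with $|A|$ as large as the hypothesis allows (say $|A|\approx\sqrt n/\log n$) and $\|\delta\|_\infty=\Theta(n)$ it is $e^{\Theta(\sqrt n/\log n)}$. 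A Chebyshev/variance bound $\bP(\|\delta_{C'}\|_\infty>t\mid X_B,G)\lesssim n/t^2$ gives only polynomially decaying tails, and the integral $\int e^{c|A|t/n}\,(n/t^2)\,\tfrac{dt}{t}$ blows up at the top end; so your parenthetical fallback ``only posterior covariance $O(n)$ is needed'' is not correct, and you do need exponential (e.g.\ sub-Gaussian on the full scale $t\le n$) tail decay. But the sub-Gaussian posterior-type concentration you invoke is not a standard fact: it is a statement about the posterior $\bP(\cdot\mid X_B,G)$ \emph{uniformly over typical $G$}, and your sketch (Laplace expansion of the free energy in the type variable) is exactly the nontrivial ingredient you would need to prove. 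As written this step is a hole in the argument.

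The paper avoids this difficulty by never conditioning first and averaging over the posterior of $X_C$. They instead work with the joint law: with $\Omega=\{Y:|N_i(Y)-n/q|\le\alpha_n\ \forall i\}$, $\alpha_n=\omega(\sqrt n)$, $\alpha_n|A|=o(n)$, they show $\bP(G,X_U)=(1\pm o(1))\,\bP(G,X_U,X\in\Omega)$ a.a.s.\ for $U\in\{B,A\cup B,B\cup C\}$. The only concentration input needed there is $\bP(X\notin\Omega)=o(1)$ under the \emph{prior} (plain multinomial concentration of $N_i(X)$), combined with Markov's inequality applied to the nonnegative random variable $\bP(X\notin\Omega\mid G,X_U)$, whose expectation is $\bP(X\notin\Omega)$. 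After restricting to $\Omega$, both conditionals factor to $(1\pm o(1))\,Q_2(G,X)\big/\sum_{Y\in\Omega_{B\cup C}}Q_2(G,Y)$, and no posterior tail bound beyond Markov is ever used. If you want to salvage your write-up, the cleanest fix is to adopt that restriction-to-$\Omega$ bookkeeping on the joint law rather than trying to establish sub-Gaussianity of the posterior type.
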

\begin{proof}
  This is a special case of \cite[Prop.~5.6]{gu2023channel}. For completeness we present a self-contained proof here. The proof is a generalization of \cite[Lemma 6]{mossel2015reconstruction}.

  For $e=\{u,v\}\in \binom V2$, define
  \begin{align}
    \psi_e(G, X) &:= \left\{
      \begin{array}{ll}
        \frac {w_{X_u,X_v}}n, & \text{if}~e\in E,\\
        1-\frac {w_{X_u,X_v}}n, & \text{if}~e\not\in E,\\
      \end{array}
    \right. \\
    \text{where}\quad w_{i,j} &:= \left\{
      \begin{array}{ll}
        a, & \text{if}~i=j,\\
        b, & \text{if}~i\ne j.\\
      \end{array}
    \right.
  \end{align}
  Then
  \begin{align}
    \bP(G,X) := \bP(X) \bP(G|X) = q^{-n} \prod_{e\in \binom V2} \psi_e(G,X).
  \end{align}
  We partition $\binom V2$ into four parts. Define
  \begin{align}
    E_1 &:= \left\{e\in \binom V2: |e\cap A|=|e\cap C|=1\right\},\\
    E_2 &:= \left\{e\in \binom V2: |e\cap C|=0\right\},\\
    E_3 &:= \left\{e\in \binom V2: |e\cap A|=0, |e\cap C|\ge 1\right\}.
  \end{align}
  Then $E_1 \cup E_2 \cup E_3 = \binom V2$ is a partition of $\binom V2$.
  Define
  \begin{align}
    Q_i := Q_i(G, X) := \prod_{e\in E_i} \psi_e(G,X)\quad \forall i\in [3].
  \end{align}
  Then
  \begin{align}
    \bP(G,X) = q^{-n} Q_1 Q_2 Q_3.
  \end{align}

  We prove that $Q_1$ is approximately independent of $X_{B\cup C}$ a.a.s.
  Let $(\alpha_n)_{n\ge 0}$ be a deterministic sequence with $\alpha_n=\omega(\sqrt n)$ and $\alpha_n|A| = o(n)$ a.a.s.
  Define
  \begin{align}
    &\Omega := \{Y\in [q]^V: \left|N_i(Y) - \frac nq\right| \le \alpha_n \forall i\in [q]\}, \\
    &\Omega_U := \{Y\in \Omega: Y_U = X_U\},\\
    \text{where}\quad&N_i(Y) := \#\{v\in V: Y_v=i\}.
  \end{align}
  By concentration of $N_i(X)$, we have $X\in \Omega$ a.a.s.

  Note that $E_1\cap E = \emptyset$.
  So
  \begin{align}
    \nonumber Q_1 &= \prod_{u\in A, v\in C} \psi_{uv}(G,X) \\
    \nonumber &=\prod_{u\in A, v\in C} \left(1-\frac{w_{X_u,X_v}}{n}\right) \\
    &= (1+o(1)) \prod_{u\in A, v\in C} \exp\left(-\frac{w_{X_u,X_v}}{n}\right)~\aas
    \label{eqn:prop-sbm-approx-cond-indep-q1-step-1}
  \end{align}
  where the third step is because $|A|=o(\sqrt n)$ a.a.s.~and
  \begin{align}
    \exp\left(-\frac{w_{X_u,X_v}}{n}\right) = \left(1 + O\left(n^{-2}\right)\right)\left(1-\frac{w_{X_u,X_v}}{n}\right).
  \end{align}
  For every $u\in A$ and $X\in \Omega$, we have
  \begin{align}
    \nonumber &~\prod_{v\in C} \exp\left(-\frac{w_{X_u,X_v}}{n}\right) \\
    \nonumber =&~ \exp\left(-\sum_{v\in C}\frac{w_{X_u,X_v}}{n}\right) \\
    % \nonumber =&~ \exp\left(-\frac 1{(r-1)!}\sum_{v_1,\ldots,v_{r-1}\in C}\frac{w_{X_u,X_v}}{n} \pm O(n^{-1})\right) \\
    \nonumber =&~ \exp\left(-\sum_{i\in [q]} \frac{w_{X_u,i}}{n} \cdot \left(\frac nq \pm O(\alpha_n)\right) \pm O(n^{-1})\right) \\
    \nonumber =&~ \exp\left(-\frac 1q \sum_{i\in [q]} w_{X_u,i} \pm O(\alpha_n n^{-1})\right) \\
    =&~ \exp\left(-d \pm O(\alpha_n n^{-1})\right) ~\aas
    \label{eqn:prop-sbm-approx-cond-indep-q1-step-2}
  \end{align}
  % where last step is by Condition~\ref{prop:sbm-approx-cond-indep}.

  Combining \eqref{eqn:prop-sbm-approx-cond-indep-q1-step-1} and \eqref{eqn:prop-sbm-approx-cond-indep-q1-step-2} we get
  \begin{align}
    \nonumber Q_1 &= (1+o(1)) \exp\left(- d |A| \pm O(\alpha_n |A| n^{-1})\right)\\
    &= (1\pm o(1)) \exp\left(- d |A| \right) =: (1\pm o(1)) K(G) ~\aas
    \label{eqn:prop-sbm-approx-cond-indep-q1}
  \end{align}

  By \eqref{eqn:prop-sbm-approx-cond-indep-q1}, we have
  \begin{align}
    \bP(G,X) = (1\pm o(1)) q^{-n} K(G) Q_2 Q_3 ~\aas
  \end{align}
  Furthermore, for any $U=U(G)\subseteq V$ we have
  \begin{align}
    \nonumber \bP(G,X_U) &= (1\pm o(1))\bP(G,X_U,X\in\Omega)\\
    &= (1\pm o(1)) \sum_{Y\in \Omega_U}q^{-n} K(G) Q_2(G,Y) Q_3(G,Y) ~\aas
  \end{align}
  % Slightly abusing notation, we write $Q_2(G,Y)=Q_2(G,Y_{A\cup B})$ and $Q_3(G,Y)=Q_3(G,Y_{B\cup C})$.

  Therefore
  \begin{align}
    \bP(X_A | X_B, G) = \frac{\bP(X_{A\cup B}, G)}{\bP(X_B, G)}
    = (1\pm o(1)) \frac{\sum_{Y\in \Omega_{A\cup B}} Q_2(G,Y) Q_3(G,Y)}{\sum_{Y\in \Omega_B} Q_2(G,Y) Q_3(G,Y)}~\aas
    \label{eqn:prop-sbm-approx-cond-indep-condB-step}
  \end{align}
  Note that $Q_2(G,Y)$ is a function of $(G,Y_{A\cup B})$ and $Q_3(G,Y)$ is a function of $(G,Y_{B\cup C})$.
  So the numerator of \eqref{eqn:prop-sbm-approx-cond-indep-condB-step} is a.a.s.~equal to
  \begin{align}
    Q_2(G,X)\sum_{Y\in \Omega_{A\cup B}} Q_3(G,Y)
    \label{eqn:prop-sbm-approx-cond-indep-condB-step-1}
  \end{align}
  and the denominator of \eqref{eqn:prop-sbm-approx-cond-indep-condB-step} is a.a.s.~equal to
  \begin{align}
    \left(\sum_{Y\in \Omega_{B\cup C}} Q_2(G,Y)\right)
    \left(\sum_{Y\in \Omega_{A\cup B}} Q_3(G,Y)\right).
    \label{eqn:prop-sbm-approx-cond-indep-condB-step-2}
  \end{align}
  Combining \eqref{eqn:prop-sbm-approx-cond-indep-condB-step}\eqref{eqn:prop-sbm-approx-cond-indep-condB-step-1}\eqref{eqn:prop-sbm-approx-cond-indep-condB-step-2}, we get
  \begin{align}
    \bP(X_A | X_B, G)= (1\pm o(1)) \frac{Q_2(G,X)}{\sum_{Y\in \Omega_{B\cup C}} Q_2(G,Y)}~\aas
    \label{eqn:prop-sbm-approx-cond-indep-condB}
  \end{align}
  Similarly, we have
  \begin{align}
    \nonumber &~\bP(X_A | X_{B\cup C}, G) = \frac{\bP(X, G)}{\bP(X_{B\cup C}, G)} \\
    \nonumber =&~ (1\pm o(1)) \frac{Q_2(G,X) Q_3(G,X)}{\sum_{Y\in \Omega_{B\cup C}} Q_2(G,Y) Q_3(G,Y)} \\
    =&~(1\pm o(1)) \frac{Q_2(G,X)}{\sum_{Y\in \Omega_{B\cup C}} Q_2(G,Y)}~\aas
    \label{eqn:prop-sbm-approx-cond-indep-condBC}
  \end{align}
  Comparing \eqref{eqn:prop-sbm-approx-cond-indep-condB} and \eqref{eqn:prop-sbm-approx-cond-indep-condBC} we finish the proof.
\end{proof}

\section{Optimal recovery algorithm} \label{sec:opt-rec}
In this section we prove Theorem~\ref{thm:opt-rec-survey} and~\ref{thm:opt-rec-vanilla}.
\begin{algorithm}[!ht]\caption{Belief propagation algorithm for SBM with side information}\label{alg:opt-rec-survey}
  \begin{algorithmic}[1]
    \State \textbf{Input: } SBM graph $G=(V,E)$, side information $Y\in \cY^V$
    \State \textbf{Output: } $\wh X\in [q]^V$
    \State $m_{u\to v}^{(0)} \gets \left(\frac{W(Y_u | i)}{\sum_{j\in [q]} W(Y_u | j)}\right)_{i\in [q]} \quad \forall (u,v)\in E$
    \State $r \gets \lfloor \log^{0.9} n\rfloor $
    \For{$t=0\to r-1$}
    \For{$(u,v)\in E$}
    \State $$Z_{u\to v}^{(t+1)} \gets \sum_{j\in [q]} W(Y_u | j) \prod_{(u,w)\in E, w\ne v} \sum_{k\in [q]} m_{w\to u}^{(t)}(k) P_\lambda(k | j)$$
    \State $$m_{u\to v}^{(t+1)} \gets \left( \left(Z_{u\to v}^{(t+1)}\right)^{-1} \cdot W(Y_u | i) \prod_{(u,w)\in E, w\ne v} \sum_{k\in [q]} m_{w\to u}^{(t)}(k) P_\lambda(k | i)\right)_{i\in [q]}$$
    \EndFor
    \EndFor
    \For{$u\in V}$
    \State $$Z_u \gets \sum_{j\in [q]} W(Y_u | j) \prod_{(u,w)\in E} \sum_{k\in [q]} m_{w\to u}^{(r)}(k) P_\lambda(k | j)$$
    \State $$m_u \gets \left(Z_u^{-1} \cdot W(Y_u | i) \prod_{(u,w)\in E} \sum_{k\in [q]} m_{w\to u}^{(r)}(k) P_\lambda(k | i)\right)_{i\in [q]}$$
    \State $\wh X_u \gets \argmax_{i\in [q]} m_u(i)$
    \EndFor
    \State \Return $\wh X$
  \end{algorithmic}
\end{algorithm}

\begin{proof}[Proof of Theorem~\ref{thm:opt-rec-survey}]
  We run Algorithm~\ref{alg:opt-rec-survey}.
  Let $\rho \in V$ be a fixed vertex.
  For $k\in \bZ_{\ge 1}$, define $B(\rho,k)$, $\partial B(\rho,k)$ as in Lemma~\ref{lem:sbm-bot-coupling}.
  By Lemma~\ref{lem:sbm-bot-coupling} and induction on $t$, we see that $m_{u\to v}^{(t)}$ has the same distribution (up to $o(1)$ TV distance) as the posterior distribution of $\sigma_\rho$ conditioned $\omega_{T_t}$.
  Therefore $m_{u\to v}^{(r)}$ has the same distribution (up to $o(1)$ TV distance) as the posterior distribution of $\sigma_\rho$ conditioned $\omega_{T_r}$.
  So as $n\to \infty$, Algorithm~\ref{alg:opt-rec-survey} achieves accuracy
  \begin{align}
    1-\lim_{k\to \infty} P_e(\sigma_\rho | T_k, \omega_{T_k}).
  \end{align}

  On the other hand, we have
  \begin{align}
    \nonumber P_e(X_\rho | G, Y) &\ge P_e(X_\rho | G, Y, X_{\partial B(\rho,k)}) \\
    \nonumber &= P_e(X_\rho | G, Y_{B(\rho,k)}, X_{\partial B(\rho,k)}) \pm o(1) \\
    \nonumber &\ge P_e(\sigma_\rho | T_k, \omega_{T_k}, \sigma_{L_k}) - o(1). \\
    &=P_e(\sigma_\rho | T_k, \omega_{T_k}) - o(1).
  \end{align}
  where the first step is by data processing inequality,
  the second step is by Lemma \ref{lem:sbm-approx-cond-indep},
  the third step is by Lemma~\ref{lem:sbm-bot-coupling},
  the fourth step is by boundary irrelevance with respect to $W$.
  Taking limit $n\to \infty$, then $k\to \infty$, we see that
  \begin{align}
    P_e(X_\rho | G, Y) \ge \lim_{k\to \infty} P_e(\sigma_\rho | T_k, \omega_{T_k}).
  \end{align}
  This shows that Algorithm~\ref{alg:opt-rec-survey} is optimal.
\end{proof}

\begin{algorithm}[!ht]\caption{Belief propagation algorithm for SBM}\label{alg:opt-rec-vanilla}
  \begin{algorithmic}[1]
    \State \textbf{Input: } SBM graph $G=(V,E)$, initial recovery algorithm $\cA$
    \State \textbf{Output: } $\wh X\in [q]^V$
    \State $s\gets 1$ if model is assortative; $s\gets -1$ if model is disassortative
    \State $r \gets \lfloor \log^{0.9} n\rfloor $
    \State $U \gets $ random subset of $V$ of size $\lfloor \sqrt n\rfloor$
    \State $Y \gets \cA(G\backslash U)$
    \State For $i\in [q]$, $u\in U$, compute $N_Y(u,i) \gets \#\{Y_v=i : v\in V, (u,v)\in E\}$
    \State For $i\in [q]$, choose $u_i\in U$ such that  \label{line:alg-opt-rec-choose-u}
    \begin{enumerate}[label=(\alph*)]
      \item $u_i$ has at least $\sqrt{\log n}$ neighbors in $V\backslash U$, and \label{item:alg-opt-rec-choose-u-req-a}
      \item $s N_Y(u_i,i) > s N_Y(u_i,j)$ for $j\in [q]\backslash i$. \label{item:alg-opt-rec-choose-u-req-b}
    \end{enumerate}
    \For{$v\in V\backslash U$}
      \State $Y^v \gets \cA(G\backslash B(v,r-1) \backslash U)$
      \State Relabel $Y^v$ by performing a permutation $\tau \in \Aut([q])$, so that $s N_{Y^v}(u_i,i) > s N_{Y^v}(u_i,j)$ for $i\in [q]$, $j\in [q]\backslash i$. Report failure if this cannot be achieved.
      \State $M^v_{i,j} \gets \frac{N_{Y^v}(u_i,j)}{\sum_{j\in [q]}N_{Y^v}(u_i,j)}$
      \State Run belief propagation on $B(v,r-1)$ with boundary condition $Y^v_{\partial B(v,r)}$, assuming the channel from $\partial B(v,r-1)$ to $\partial B(v,r)$ is $H^v$ \label{line:alg-opt-rec-bp}
      \State $\wh X_v \gets $ maximum likelihood label according to belief propagation \label{line:alg-opt-rec-bp-result}
    \EndFor
    \State $\wh X_v \gets 1$ for all $v\in U$
    \State \Return $\wh X$
  \end{algorithmic}
\end{algorithm}

\begin{proof}[Proof of Theorem~\ref{thm:opt-rec-vanilla}]
  We run Algorithm~\ref{alg:opt-rec-vanilla}.
  The proof is a variation of the proof in \cite{mossel2016belief}.

  \textbf{Choice of $u_i$.} For every $i\in [q]$, the set $\{u\in U: X_u=i\}$ has size $\frac nq \pm o(n)$.
  Therefore with high probability, there exists $u\in U$ with $X_u=i$ that satisfies~\ref{item:alg-opt-rec-choose-u-req-a}.
  Furthermore, because $Y$ is independent of $U$, we can equivalently first generate the graph $G\backslash U$, then compute $Y$, then generate the edges adjacent to $U$.
  In this way, we see that with high probability, for all $u\in U$ satisfying~\ref{item:alg-opt-rec-choose-u-req-a}, the empirical distribution of $\{Y_v: v\in V, (u,v)\in E\}$ has $o(1)$ total variation distance to $P_\lambda F$.
  By assumption~\ref{item:thm-opt-rec-vanilla-req-1}\ref{item:thm-opt-rec-vanilla-req-3} in Theorem~\ref{thm:opt-rec-vanilla}, we have $s (P_\lambda F)_{i,\tau(i)} > s (P_\lambda F)_{i,\tau(j)} + |\lambda| \epsilon$ for $i\in [q]$, $j\in [q]\backslash i$.
  Therefore with high probability, for all $u\in U$ satisfying~\ref{item:alg-opt-rec-choose-u-req-a}, we can identify $X_u$ up to a permutation $\tau\in \Aut([q])$ by computing $\argmax_{j\in [q]} s N_Y(u,j)$.
  Therefore with high probability we are able to choose the $u_i$s in Line~\ref{line:alg-opt-rec-choose-u}.

  \textbf{Alignment of $Y$ with $Y^v$.} The above discussion still holds with $Y$ replaced by $Y^v$. One thing to note is that by Lemma~\ref{lem:sbm-bot-coupling}, $|B(v,r-1)| = n^{o(1)}$ with high probability. So removing $B(v,r-1)$ from $G$ has negligible influence to the the empirical distribution of labels of neighbors of $u_i$s.
  Therefore, with high probability, we are able to permute the labels $Y^v$ so that the empirical distributions align with that of $Y$.
  (Note that we do not assume the empirical distributions for $Y$ and $Y^v$ are the same; we only use that they both satisfy condition~\ref{item:thm-opt-rec-vanilla-req-3}.)
  Furthermore, we can compute the transition matrix
  \begin{align}
    M^v = P_\lambda F^v \pm o(1).
  \end{align}

  \textbf{Boundary condition of BP.}
  Because $Y^v$ is independent of edges between $\partial B(v,r-1)$ and $\partial B(v,r)$, we can equivalently first generate the graph $G\backslash B(v,r-1)\backslash U$, then compute $Y^v$, then generate $E(\partial B(v,r-1), \partial B(v,r))$.
  In this way, it is clear that $Y^v_w$ for one $(u,w)\in E(\partial B(v,r-1), \partial B(v,r))$ is equivalent to one observation of $X_u$ through channel $M^v$.

  \textbf{Property of $M^v$.}
  Note that $M^v \ge_{\deg} P_{\lambda-o(1)} F^v$.
  % By condition~\ref{item:thm-opt-rec-req-2}, $\sigma_{\min}(M^v) \ge |\lambda| \sigma_{\min}(F^v) - o(1) = |\lambda| \epsilon - o(1)$.
  By Lemma~\ref{lem:sing-deg-potts} and condition~\ref{item:thm-opt-rec-vanilla-req-2}, we have $F^v\ge_{\deg} P_{\lambda'}$ for some constant $\lambda'>0$ not depending on $n$.
  Therefore $M^v \ge_{\deg} P_{\lambda''}$ for some $\lambda''>0$ not depending on $n$.

  \textbf{Convergence of BP recursion.}
  Because $\lambda''>0$ is a constant, by the stability of BP fixed point assumption, for any $\kappa>0$, there exists some integer $k_0$ not depending on $n$ such that
  \begin{align}
    P_e(\BP^{k_0}(M^v)) \ge P_e(\BP^{k_0}(P_{\lambda''})) > \lim_{k\to \infty} P_e(\BP^k(\Id))-\kappa.
  \end{align}
  Because $r = \omega(1)$, belief propagation in Line~\ref{line:alg-opt-rec-bp} converges to $o(1)$ in TV distance to the fixed point.
  Therefore we achieve desired accuracy in Line~\ref{line:alg-opt-rec-bp-result}.
\end{proof}

\section{Discussions} \label{sec:discuss}
\paragraph{Phase transition for BP uniqueness and boundary irrelevance.}
We summarize the phase transition for BP uniqueness and boundary irrelevance as follows.
The important thresholds are the reconstruction threshold and the Kesten-Stigum threshold ($d\lambda^2=1$).
\begin{itemize}[nolistsep,leftmargin=*]
  \item Below the reconstruction threshold: The $\BP$ operator (without survey) has no non-trivial fixed points.
  We conjecture that BI always holds below the reconstruction threshold.
  Low SNR part in Theorem~\ref{thm:bi-imprecise} shows that BI holds whenever $d\lambda^2< q^{-2}$. % This is tight within a factor of at most $q^2$.
  \item Between the reconstruction and the KS threshold:
  Theorem~\ref{thm:non-bi} shows that boundary irrelevance does not hold in this regime.
  The BP operator has a non-trivial fixed point. This fixed point is not globally stable, i.e., there exists non-trivial channel $U$ such that $\BP^\infty(U)$ is trivial \cite{janson2004robust}.
  We do not know whether the BP operator has a unique non-trivial fixed point.
  \item Above the KS threshold: We conjecture that BP uniqueness and BI always hold in this regime, and the unique non-trivial fixed point is globally stable. High SNR part in Theorem~\ref{thm:bi-imprecise} is tight within a factor of $1+\log q$ for general $(\lambda,d)$, and asymptotically tight within a factor of $1+o_q(1)$ for $q\to \infty$ and $\lambda = o(\log q)$.
\end{itemize}

\paragraph{Asymmetric models.}
The Potts model is a very symmetric model and can be studied using FMS channels. For more general SBM and BOT models, the class of FMS channels is no longer suitable. Nevertheless, it might be possible to extend our degradation method to the asymmetric case to achieve better results than \cite{chin2021optimal}, which uses a generalization of the method of \cite{mossel2016belief}.

Fix a finite alphabet $\cX$ and a distribution $\pi$ on $\cX$ with full support.
Then a channel $\cP:\cX\to \cY$ can be viewed as a distribution of posterior distributions under prior $\pi$, i.e., the distribution of $P_{X|Y}$ where $P_X = \pi$, $Y \sim P(\cdot | X)$. Note that this is a distribution on $\cP(\cX)$.
Similarly to Prop.~\ref{prop:fms-deg-couple}, degradation between channels with input alphabet $\cX$ can be equivalently characterized as a coupling between the two distributions of posterior distributions.
Let $\phi$ be a strongly convex function on $\cP(\cX)$. Using the above distributional characterization of channels, we can extend $\phi$ to a potential function $\Phi$ on the space of channels with input alphabet $\cX$.
For two sequences $(M_k)_{k\ge 0}$, $(\wt M_k)_{k\ge 0}$ satisfying the BP recursion and related by degradation $\wt M_k \le_{\deg} M_k$, if $\lim_{k\to \infty} (\Phi(M_k)-\Phi(\wt M_k))=0$, then we should expect $M_\infty = \wt M_\infty$.

The difficulty in carrying out the above plan is to analyze behavior of $\Phi$ under BP recursion.
One also needs to keep in mind that the Potts model admits more than one non-trivial fixed points in the space of all $q$-ary input channels (Section~\ref{sec:asymm}). So extra assumptions are needed when dealing with the general case.

\paragraph{\cite{yu2022ising}'s degradation method.}
\cite{yu2022ising} proved uniqueness of BP fixed point and boundary irrelevance for all symmetric Ising models. Therefore a natural idea is to generalize their proof to the Potts model. However, it seems difficult to make this work.

While \cite{yu2022ising}'s proof is also based on degradation of BMS channels, their method is quite different from \cite{abbe2021stochastic}.
Specifically, \cite{yu2022ising} uses a stronger version of the stringy tree lemma of \cite{evans2000broadcasting}, which says that $(P^{\star d})\circ \BSC_\delta \le_{\deg} (P\circ \BSC_\delta)^{\star d}$ for all BMS channels $P$, and under additional assumptions one even has $(P^{\star d})\circ \BSC_\delta \le_{\deg} (P\circ \BSC_\delta)^{\star d} \circ \BSC_\epsilon$ for some $\epsilon>0$ depending on $P,\delta,d$.
The natural generalization of the original stringy tree lemma to the Potts model (with $\BSC_\delta$ replaced by $P_\lambda$ and BMS channels replaced by FMS channels) is not true, even for $d\lambda^2>1$. Therefore it is unclear how to generalize \cite{yu2022ising}'s proof to the Potts model.

\paragraph{Robust reconstruction.}
The boundary irrelevance problem is related to the robust reconstruction problem, which asks whether the trivial fixed point of the BP operator is locally stable, i.e., whether for weak enough initial channel $U$, we have $\BP^\infty(U)=0$.
In fact, our proof of Theorem~\ref{thm:non-bi} can be slightly modified to show that the Potts model does not admit robust reconstruction below the Kesten-Stigum threshold.

The robust reconstruction problem for BOT models has been extensively studied in \cite{janson2004robust}, which showed that the robust reconstruction threshold is at the KS threshold, i.e., the model admits robust reconstruction when $d\lambda^2>1$ and does not when $d \lambda^2 < 1$.
Their proof uses the contraction of a $\chi^2$-like information measure, which can take value $\infty$ in some corner cases, e.g., for the coloring model with erasure leaf observations, or for the boundary irrelevance problem. Furthermore, they only considered trees with bounded maximum degree, which does not include the Poisson tree.
Therefore to prove Theorem~\ref{thm:non-bi} we cannot directly use \cite{janson2004robust}.

\end{document}